\newtheorem*{lemma*}{Lemma}
\newtheorem{theorem}{Theorem}[section]
\newtheorem{corollary}[theorem]{Corollary}
\newtheorem{lemma}[theorem]{Lemma}
\newtheorem{proposition}[theorem]{Proposition}
\theoremstyle{definition}
\newtheorem{definition}{Definition}
\newtheorem{remark}[theorem]{Remark}
\DeclareMathOperator*{\argmax}{arg\,max}
\DeclareMathOperator{\Binom}{Binom}
\newcommand{\calG}{\mathcal{G}}
\renewcommand{\P}{\mathbb{P}}
\newcommand{\ri}{\rightarrow \infty}
\newcommand{\ro}{\rightarrow 0}
\newcommand{\E}{{\mathbb{E}}}
\newcommand{\1}{\mathds{1}}
\newcommand{\R}{\mathbb{R}}
\newcommand{\norm}[1]{\left\lVert#1\right\rVert}
\newcommand{\e}{\varepsilon}
	\renewcommand{\P}{\mathbb{P}}
\newcommand{\cB}{\mathcal{B}}
\newcommand{\cC}{\mathcal{C}}
\newcommand{\cD}{\mathcal{D}}
\newcommand{\cE}{\mathcal{E}}
\newcommand{\cF}{\mathcal{F}}
\newcommand{\cG}{\mathcal{G}}
\newcommand{\cJ}{\mathcal{J}}
\newcommand{\cK}{\mathcal{K}}
\newcommand{\cL}{\mathcal{L}}
\newcommand{\cM}{\mathcal{M}}
\newcommand{\cP}{\mathcal{P}}
\newcommand{\cR}{\mathcal{R}}
\newcommand{\cS}{\mathcal{S}}
\newcommand{\cT}{\mathcal{T}}
\newcommand{\cW}{\mathcal{W}}
\DeclareMathOperator{\argmin}{arg\,min}
\DeclareMathOperator{\sgn}{sgn}
\renewcommand{\setminus}{\backslash}
\def\ba{\begin{align}}
\def\ea{\end{align}}
\def\bs{\begin{split}}
\def\es{\end{split}}
\begin{document}

\title[Spectral large deviations of sparse random matrices]{Spectral large deviations of sparse random matrices}

\author{Shirshendu Ganguly, Ella Hiesmayr, and Kyeongsik Nam}

\begin{abstract}  
Eigenvalues of Wigner matrices has been a major topic of investigation.
A particularly important subclass of such random matrices, useful in many applications, are what are known as sparse or diluted random matrices, where each entry in a Wigner matrix is multiplied by an independent Bernoulli random variable with mean $p$. Alternatively, such a matrix can be viewed as the adjacency matrix of an  Erd\H{o}s-R\'{e}nyi graph  $\cG_{n,p}$ equipped with  i.i.d. edge-weights.

An observable of particular interest is the largest eigenvalue. 
In this paper, we study the large deviations behavior of the largest eigenvalue of such matrices, a topic that has received considerable attention over the years.
While certain techniques have been devised for the case when $p$ is fixed or perhaps going to zero not too fast with the matrix size, we focus on the case $p = \frac{d}{n}$, i.e. constant average degree regime of sparsity, which is a central example due to its connections to many models in statistical mechanics and other applications. Most known techniques break down in this regime and even the typical  behavior of the spectrum of such random matrices is not very well understood.
So far, results were known only for $\cG_{n,\frac{d}{n}}$  \emph{without} edge-weights   \cite{KS,ganguly1}  and with  \emph{Gaussian}  edge-weights \cite{gn}.

In the present article, we consider the effect of general  weight distributions. More specifically, we consider the  entries whose tail probabilities decay at rate $e^{-t^\alpha}$ with $\alpha>0$, where the regimes $0<\alpha < 2$ and $\alpha > 2$ correspond to tails heavier and lighter than the Gaussian tail respectively.  While in many natural settings the large deviations behavior is expected to depend crucially on the entry distribution, we establish a surprising and rare universal behavior showing that this is not the case when $\alpha > 2.$

In contrast, in the $\alpha< 2$ case, the  large deviation rate function is no longer universal and is given by the solution to a variational problem, the description of which involves a generalization of the Motzkin-Straus theorem, a classical result from spectral graph theory.

As a byproduct of our large deviation results, we also establish the  law of large numbers behavior for the largest eigenvalue, which also seems to be new and difficult to obtain using existing methods. In particular, we show that the typical value of the largest eigenvalue exhibits a phase transition at $\alpha = 2$, i.e. corresponding to the Gaussian distribution.
\end{abstract}

\address{ Department of Statistics,  University of California, Berkeley, CA
94720, USA} 

\email{sganguly@berkeley.edu }

\address{ Department of Statistics,  University of California, Berkeley, CA 94720, USA} 

\email{ella.hiesmayr@berkeley.edu}

\address{ Department of Mathematical Sciences, KAIST, South Korea} 

\email{ksnam@kaist.ac.kr }

\maketitle

\tableofcontents

\section{Introduction}

{
Random graphs are used to model many real world phenomena, like electrical and social networks. One of the oldest and most studied random graph models is the Erd\H{o}s-R\'enyi random graph $\cG_{n,p}$, where there is an edge between any two of $n$ vertices independently with probability $p$. How this graph looks like changes a lot according to the dependence of $p$ on $n$, for instance with high probability the graph is  connected if $p \gg \frac{\log n}{n}$ and  disconnected if $p \ll \frac{\log n}{n}$.    
Often for more effective encoding of physical phenomena, edges in a random network are equipped with random weights, which for instance could denote resistances in an electrical network. 
This leads one to consider  Erd\H{o}s-R\'enyi random graph  $\cG_{n,p}$ with random weights assigned to the edges.  Its adjacency matrix can  be regarded as \emph{sparse} or  \emph{diluted}  random matrices, where each entry of a Wigner matrix is multiplied by an independent Bernoulli random variable with mean $p$.

Some important properties of  the weighted random graph are captured by the spectrum of its adjacency matrix. For instance, the largest eigenvalue and the corresponding eigenvector can be used to measure the spread of diseases on graphs. Regarding this eigenvalue, a lot of research has been devoted towards studying its typical and atypical behavior: the former is concerned with the value and fluctuations that the largest eigenvalue has with high probability, and the latter is about the probability that the eigenvalue deviates significantly its typical value.  A particularly important question in this direction is how the spectral behavior depends on the precise matrix entry distribution, and we  say that the  \emph{universality} phenomenon holds if there is no such dependency in some asymptotical sense.

 In terms of many applications, the regime of $p$ of most interest is the sparse regime (i.e. $p \rightarrow 0$ as $n\rightarrow \infty$). In particular, the constant average degree regime of sparsity $p = \frac{d}{n}$ ($d>0$ is a constant), i.e. the typical number of connections of a single vertex tends to stay constant, arises naturally in  several models in the statistical mechanics. An important distinction from denser matrices is that it is believed that
the universality phenomenon breaks down for  the latter and the spectrum depends rather crucially on the entry distribution. However, despite such predictions, the precise spectral behavior of such random matrices has still been mostly out of reach of the known methods.\\

The present paper is aimed at advancing our understanding in this direction, focusing on the large deviation properties.
Towards this, we study  the large deviation properties of the largest eigenvalue of  the adjacency matrix of the  Erd\H{o}s-R\'enyi  random graph  $\cG_{n,\frac{d}{n}}$ with general  edge-weight distributions $(Y_{ij})_{1\leq i<j\leq n}$. The topic of large deviations of spectral observables of random matrices have attracted much interest over the last few decades leading to a considerable literature, some of which will be reviewed in Section \ref{related results}.

We will be working in the particular setting where the edge-weights $(Y_{ij})_{1\leq i<j\leq n}$ have  tails of the form
\begin{align*}
\mathbb{P}(|Y_{ij}| > t) \approx e^{-t^\alpha},\qquad t>1
\end{align*}
for some $\alpha>0$ (we will keep the notion of $\approx$ somewhat imprecise momentarily). It turns out that $\alpha=2$ is critical in a sense which will become apparent once we state our main results which will address both the $\alpha>2$ (light-tailed) and $\alpha<2$ (heavy-tailed) cases.\footnote{The special case of the Gaussian distribution which can be thought to be corresponding to $\alpha=2$  was studied previously in \cite{gn} by the first and the third named authors.}

 At this point, before embarking on stating our main results precisely in the next section, we choose to highlight some of their key features.
  
Perhaps the most interesting consequence of our main results is the surprising universality  phenomenon for the large deviation of the largest eigenvalue in the light-tailed case $\alpha>2$ where the deviation probabilities (ignoring smaller order terms)  does not depend on $\alpha$ and is also identical to the one for Erd\H{o}s-R\'enyi graphs without edge-weights, which essentially corresponds to  ``$\alpha = \infty$”. Our results also yield new law of large numbers (LLN) results about the largest eigenvalue, which seem to be rather challenging to obtain using previous methods. Interestingly, it turns out that the LLN behavior exhibits a transition as well at $\alpha = 2$.

~

Let us now precisely define our model and state our main results. A short summary of the history of this problem will be provided subsequently.

}

\subsection{Main results}
 
Throughout this paper,
for any symmetric matrix $Z$,  we denote by $\lambda_1(Z) \geq \lambda_2(Z) \geq \cdots \geq \lambda_n(Z)$ its eigenvalues in non-increasing order. We denote by $\cG_{n,p}$ the  Erd\H os-R\'enyi graph  with vertex set $[n]:= \{1,\cdots,n\}$ and edge density $p$,
which will be fixed to be $\frac{d}{n}$ for some constant $d>0$ throughout the article.  

Let  $X =  (X_{ij})_{i,j\in [n]} $ be the adjacency matrix of $\cG_{n, p}$ and $Y = (Y_{ij})_{i,j\in [n]}$ be a standard (symmetric) Wigner matrix, where we assume that $Y_{ji} = Y_{ij}$, $Y_{ii} = 0$ and $\{Y_{ij}\}_{1\leq i< j\leq n}$ are i.i.d random variables. The matrix of interest for us is $Z=X\odot Y,$ i.e.,  $Z_{ij}=X_{ij}Y_{ij}.$  This is a sparse random matrix which can be regarded as the adjacency matrix of a \emph{weighted} random graph, whose underlying random graph is  $\cG_{n,\frac{d}{n}}$ with i.i.d.  edge-weights coming from $\{Y_{ij}\}_{1\leq i< j\leq n}$. Throughout the paper, we  interchangeably use the notation $X$ both for the  Erd\H os-R\'enyi graph   $\cG_{n,\frac{d}{n}}$  as well as its adjacency matrix.

Throughout the paper, matrix entries will have the Weibull distribution:
 {
\begin{definition}
A random variable $W$ has the \emph{Weibull distribution} with a shape parameter $\alpha > 0$ if there exist constants $C_1,C_2>0$ such that for all $t>1$,
\begin{align} \label{weibull}
\frac{C_1}{2} e^{-   t^\alpha} \leq \P\big (W \geq t \big ) \leq \frac{C_2}{2} e^{-   t^\alpha} \quad  \text{ and } \quad  \frac{C_1}{2} e^{-   t^\alpha} \leq \P \big ( W \leq -t \big ) \leq \frac{C_2}{2} e^{-   t^\alpha}.
\end{align}
\end{definition}

The definition is chosen is such a way that \begin{equation}
C_1 e^{- t^\alpha} \leq \P \big ( |W| \geq  t \big ) \leq C_2 e^{-   t^\alpha},
\end{equation} 
which will be notationally convenient later.
}

The cause for assuming a symmetric tail behavior is that a much heavier lower tail causes the spectral norm of the random matrix to be governed by not the largest but the smallest eigenvalue which is a large negative number and hence the largest eigenvalue isn't an interesting object of study anymore.

We now state the main results of the paper.
 
\subsubsection{Light-tailed case, $\alpha>2$.}
Let
\begin{align}\label{typ1}
\lambda^{\textup{light}}_{\alpha  } := 2^\frac{1}{\alpha} \alpha^{-\frac{1}{2}} (\alpha - 2)^{\frac{1}{2}-\frac{1}{\alpha}}   \frac{(\log n) ^\frac{1}{2}}{ (\log \log n )  ^{\frac{1}{2}- \frac{1}{\alpha}}}.
\end{align}
We will shortly state (see Corollary \ref{cor:light_typical}) that the above is the typical value of $\lambda_1(Z)$ for $\alpha > 2$ .

\begin{restatable}{theorem}{lightupper} \label{thm:light_upper}
For any $\delta>0$,
\begin{align*}
\lim_{n \to \infty} - \frac{ \log \P \left  ( \lambda_{1}(Z)\geq (1+\delta) \lambda^{\textup{light}}_{\alpha  } \right )}{\log{n}} = (1+\delta)^2 - 1.
\end{align*}
\end{restatable}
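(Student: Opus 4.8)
The plan is to prove matching upper and lower bounds on $\P(\lambda_1(Z) \geq (1+\delta)\lambda^{\textup{light}}_\alpha)$. For the lower bound, the natural strategy is to plant a structure that forces a large eigenvalue. Since $\lambda_1(Z) \geq \frac{\langle v, Zv\rangle}{\langle v,v\rangle}$ for any test vector $v$, and a single heavy edge-weight $Y_{ij}$ on an edge of $X$ already gives $\lambda_1(Z) \geq |Y_{ij}|$ via $v = e_i \pm e_j$, the dominant mechanism should be the appearance of one edge whose weight is of order $(1+\delta)\lambda^{\textup{light}}_\alpha$. I would estimate $\P(|Y_{ij}| \geq (1+\delta)\lambda^{\textup{light}}_\alpha) \approx \exp(-((1+\delta)\lambda^{\textup{light}}_\alpha)^\alpha)$ using the Weibull tail \eqref{weibull}; plugging in \eqref{typ1} one checks the exponent is $\big((1+\delta)^2 - o(1)\big)\log n \cdot (1+o(1))$ — actually the calibration of $\lambda^{\textup{light}}_\alpha$ is designed precisely so that $(\lambda^{\textup{light}}_\alpha)^\alpha \sim \log n$ asymptotically in the relevant sense, so that scaling by $(1+\delta)$ costs $(1+\delta)^2\log n$ (note $((1+\delta)\lambda)^\alpha = (1+\delta)^\alpha \lambda^\alpha$, so I would need to be careful: more likely the heavy edge argument alone gives a $(1+\delta)^\alpha$ rate, and to get the stated $(1+\delta)^2$ one must instead use a near-clique / many-moderately-large-edges mechanism). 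So the right planted structure is presumably a small set $S$ of $O(1)$ or slowly growing size on which $X$ restricted to $S$ is dense and all the weights $Y_{ij}$, $i,j\in S$, are simultaneously of a moderately large size $\theta$; optimizing the size of $S$ against $\theta$ subject to $\lambda_1 \gtrsim \theta \cdot \lambda_1(X|_S) \geq (1+\delta)\lambda^{\textup{light}}_\alpha$ and the cost $\exp(-\binom{|S|}{2}\theta^\alpha - |S|\log(1/p))$ should reproduce the exponent $(1+\delta)^2 - 1$. I would carry out this optimization carefully; the convexity structure of $t \mapsto t^\alpha$ for $\alpha > 2$ is what makes spreading the "largeness" over several edges cheaper than concentrating it, and is the source of the $(1+\delta)^2$ (versus $(1+\delta)^\alpha$) rate.

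For the upper bound, I would first localize: with overwhelming probability $\lambda_1(Z)$ is carried by a small subset of vertices. Concretely, one uses the standard fact that for the constant-degree Erd\H{o}s--R\'enyi graph the bulk of the spectrum is $O(1)$ (or at most $\sqrt{\log n/\log\log n}$ from high-degree vertices), so that an eigenvalue of order $\lambda^{\textup{light}}_\alpha \asymp \sqrt{\log n}$ must come from a "heavy" local structure. Then I would run a union bound over candidate structures: over all small vertex subsets $S$ and all ways the weights and edges on $S$ can conspire to produce $\lambda_1(Z|_S) \geq (1+\delta)\lambda^{\textup{light}}_\alpha$. The key quantitative input is a trace / moment bound: $\lambda_1(Z)^{2k} \leq \tr(Z^{2k}) = \sum_{\text{closed walks}} \prod Z_{ij}$, and taking $k$ growing slowly with $n$ and using the Weibull moment estimates $\E|Y|^m \lesssim (m/\alpha)^{m/\alpha}$ together with the sparsity $p = d/n$ (which kills most walks), one bounds $\E\,\tr(Z^{2k})$ and applies Markov. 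One must choose $k$ so that $\tr(Z^{2k})$ is dominated exactly by the walks living on the optimal planted structure identified in the lower bound, and so that $\E\,\tr(Z^{2k})^{1/2k} = (1+o(1))\lambda^{\textup{light}}_\alpha$ while a factor $(1+\delta)^{2k}$ in Markov's inequality produces the $\exp(-((1+\delta)^2-1)\log n)$ decay. This is the technical heart of the argument.

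I expect the main obstacle to be the upper bound, specifically the combined moment/union-bound computation: one needs sharp control of $\E\,\tr(Z^{2k})$ for slowly growing $k$, where the combinatorics of closed walks on a sparse random graph (how many walks of length $2k$ use a given multiset of edges, and the contribution of vertices of atypically high degree) interacts delicately with the Weibull tail of the weights. Getting the constant right — showing the leading contribution is exactly that of the extremal configuration and that all other configurations are subleading by a polynomial-in-$n$ factor — requires a careful classification of walk types and an induction or encoding argument bounding the number of walks by their "excess" structure, in the spirit of the sparse-matrix trace method (as in \cite{KS, ganguly1}). A secondary technical point is handling the contribution of high-degree vertices of $\cG_{n,d/n}$, which alone can push $\lambda_1(X)$ up to order $\sqrt{\log n/\log\log n}$; one must verify that after multiplication by the weights these do not create a competing mechanism at the same exponential scale, or if they do, that they are already accounted for in the variational optimum. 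Throughout, matching the precise constants in \eqref{typ1} against the output of the optimization is the bookkeeping that ties the two bounds together.
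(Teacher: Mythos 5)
Your proposal misidentifies the extremal structure, and this breaks the lower bound. You correctly discard the single-heavy-edge mechanism (which would give rate $(1+\delta)^\alpha$) and correctly sense that for $\alpha>2$ the largeness should be spread over many edges, but you then spread it over the wrong graph: a dense set / near-clique $S$ with all pairwise weights of size $\theta$. In $\cG_{n,d/n}$ a clique of size $k$ costs $n^{-\binom{k}{2}+k}$, and for a clique of constant size the weight cost is $\binom{k}{2}\theta^\alpha \asymp (\log n)^{\alpha/2}(\log\log n)^{1-\alpha/2} \gg \log n$ once you solve $\theta(k-1)\geq(1+\delta)\lambda^{\textup{light}}_\alpha$; either way the probability is super-polynomially small and cannot match $n^{-((1+\delta)^2-1)}$. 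The correct structure is a \emph{star}: a vertex of degree $\ell=\gamma\frac{\log n}{\log\log n}$ (which costs only $n^{-\gamma+o(1)}$, or exists in $n^{1-\gamma}$ copies for $\gamma<1$) whose $\ell$ edge-weights are all of the modest size $\theta\asymp(\log\log n)^{1/\alpha}$; since $\lambda_1$ of such a star is $\theta\sqrt{\ell}$ (sum of squares of the weights under a square root), the weight cost is $\ell\theta^\alpha\asymp\gamma^{1-\alpha/2}\log n$, and optimizing $\gamma$ yields exactly $\gamma_\delta=(1+\delta)^2(1-\tfrac2\alpha)$ and the exponent $(1+\delta)^2-1$. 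Tellingly, you treat the high-degree vertices as a nuisance to be ruled out in the upper bound, when they are in fact the driving mechanism.

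For the upper bound you propose the trace method with $\lambda_1^{2k}\leq\tr(Z^{2k})$ and slowly growing $k$. This is a genuinely different route from the paper, which avoids moments entirely: it truncates the weights at $(\e\log\log n)^{1/\alpha}$, shows the small-weight part is negligible via the known LDP for $\lambda_1(\cG_{n,d/n})$, decomposes the sparse large-weight graph into vertex-disjoint stars plus a low-degree tree-like remainder, kills the remainder by an eigenvector-localization argument, and reduces the star part to tail bounds for sums of squares of Weibull variables grouped by degree. Beyond being unexecuted, your moment plan has a calibration problem: to extract a deviation probability $n^{-((1+\delta)^2-1)}$ from Markov at threshold $(1+\delta)\lambda^{\textup{light}}_\alpha$ one needs $k\asymp\log n$ (not slowly growing), at which point $\E|Y|^{2k}\asymp(\log n)^{2k/\alpha}$ and the closed-walk combinatorics on $\cG_{n,d/n}$ must be controlled to within $n^{o(1)}$ multiplicative precision; there is no indication this can be done, and since your candidate extremal configuration is wrong, the "dominant walk type" you would aim to isolate is also wrong. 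As it stands the proposal does not constitute a proof of either direction.
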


Next, we establish the lower tail large deviation.
\begin{restatable}{theorem}{lightlower} \label{thm:light_lower}
For any $0<\delta<1$,
\begin{align*}
\lim_{n \to \infty}   \frac{1}{\log{n}} \Big( \log \log \frac{1}{\P   ( \lambda_{1}(Z) \leq (1 - \delta) \lambda^{\textup{light}}_{\alpha  }    )} \Big)= 1 - (1-\delta)^2.
\end{align*}
\end{restatable}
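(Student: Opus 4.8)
The plan is as follows. Throughout write $\Lambda:=\lambda^{\textup{light}}_{\alpha}$, and for a vertex $v$ set $T_v:=\sum_j Z_{vj}^2=\|Ze_v\|_2^2$, the squared $\ell^2$‑norm of the $v$‑th column of $Z$. Everything rests on the \emph{star reduction}: on suitable high‑probability events, $\lambda_1(Z)=(1+o(1))\max_v\sqrt{T_v}$. The lower bound $\lambda_1(Z)\ge(1-o(1))\sqrt{T_v}$ for every $v$ is essentially deterministic — plug into the Rayleigh quotient the vector supported on $\{v\}\cup N(v)$ with entries proportional to the incident weights; the corrections caused by edges inside $N(v)$ are negligible because the weights relevant for $\max_v\sqrt{T_v}$ turn out to have size only $(\log\log n)^{1/\alpha}$ whereas $\max_v\sqrt{T_v}\asymp(\log n)^{1/2}$, so this fails only on a superpolynomially unlikely event. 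The upper bound, and a quantitative refinement of it, is produced along the way to Theorem~\ref{thm:light_upper} and is recalled below. Granting the reduction, the problem reduces to order statistics of the $T_v$, sums of $\deg(v)\sim\dBin(n-1,\tfrac dn)$ i.i.d.\ squared Weibull weights. The key input is a single optimisation over $\deg(v)$: writing $\deg(v)=cL$ with $L:=\log n/\log\log n$, one has $\P(\deg(v)=cL)\,\P\!\big(T_v\ge((1-\delta)\Lambda)^2\,\big|\,\deg(v)=cL\big)=n^{-g_\delta(c)+o(1)}$, where $g_\delta(c)=c+(1-\delta)^\alpha f(\alpha)\,c^{1-\alpha/2}$ and $f(\alpha)$ is the $\alpha$‑th power of the leading constant in \eqref{typ1}; a short computation gives $\min_{c>0}g_\delta(c)=(1-\delta)^2$, attained at $c_\delta:=(1-\tfrac2\alpha)(1-\delta)^2$, at which value $\P\!\big(T_v\ge((1-\delta)\Lambda)^2\,\big|\,\deg(v)=c_\delta L\big)=n^{-(1-\delta)^2\cdot 2/\alpha+o(1)}$. (For $\delta=0$ this reads $\min_c g_0(c)=1$, which is exactly why $\Lambda$ is the typical value.)

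\smallskip
\noindent\textbf{Lower bound on the limit (upper bound on the probability).} By the star reduction, $\{\lambda_1(Z)\le(1-\delta)\Lambda\}$ lies, up to a superpolynomially small set, inside $\{\max_v\sqrt{T_v}\le(1-\delta+o(1))\Lambda\}$. Condition on $X=G$ with $G$ in the typical class $\cG^\star=\{G:\ |\{v:\deg_G(v)\ge c_\delta L\}|\ge n^{1-c_\delta-o(1)}\}$; since those high‑degree vertices span $o(n)$ edges, $G$ contains an independent set $I$ made of at least a quarter of them. Conditionally on $G$ the variables $\{T_v\}_{v\in I}$ depend on disjoint blocks of edge‑weights, hence are independent, and $\P\big(T_v>((1-\delta+o(1))\Lambda)^2\mid\deg_G v\big)\ge n^{-(1-\delta)^2\cdot2/\alpha+o(1)}$ for $v\in I$ (because $\P(\sum_{j\le k}w_j^2>s)$ is non‑decreasing in $k$), so
\begin{align*}
\P\!\big(\max_v\sqrt{T_v}\le(1-\delta+o(1))\Lambda\,\big|\,G\big)\le\prod_{v\in I}\big(1-n^{-(1-\delta)^2\cdot2/\alpha+o(1)}\big)\le\exp\!\big(-|I|\,n^{-(1-\delta)^2\cdot2/\alpha+o(1)}\big)\le\exp\!\big(-n^{1-(1-\delta)^2-o(1)}\big),
\end{align*}
using $(1-\tfrac2\alpha)+\tfrac2\alpha=1$. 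A standard concentration estimate (e.g.\ bounding the count of high‑degree vertices below by an independent sum of "forward‑degree" indicators) gives $\P(X\notin\cG^\star)\le e^{-n^{1-c_\delta+o(1)}}$, and $1-c_\delta>1-(1-\delta)^2$; hence $\P(\lambda_1(Z)\le(1-\delta)\Lambda)\le e^{-n^{1-(1-\delta)^2-o(1)}}$, so $\liminf_n\frac{1}{\log n}\log\log\frac{1}{\P(\lambda_1(Z)\le(1-\delta)\Lambda)}\ge 1-(1-\delta)^2$.

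\smallskip
\noindent\textbf{Upper bound on the limit (lower bound on the probability).} Fix $\delta'\in(\delta,1)$ and set $\cE:=\bigcap_v\{\sqrt{T_v}\le(1-\delta')\Lambda\}$, a decreasing event in $(X,|Y|)$. By FKG for the product law,
\begin{align*}
\P(\cE)\ge\prod_v\P\!\big(\sqrt{T_v}\le(1-\delta')\Lambda\big)\ge\exp\!\Big(-2\sum_v\P\!\big(T_v>((1-\delta')\Lambda)^2\big)\Big)\ge e^{-n^{1-(1-\delta')^2+o(1)}},
\end{align*}
since $\sum_v\P(T_v>((1-\delta')\Lambda)^2)=n\cdot n^{-(1-\delta')^2+o(1)}$ by the optimisation above. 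It then suffices to show $\P(\lambda_1(Z)>(1-\delta)\Lambda\mid\cE)\to0$. On $\cE$ every column norm is $\le(1-\delta')\Lambda<(1-\delta)\Lambda$, so no single star is responsible; by the quantitative star reduction, $\lambda_1(Z)>(1-\delta)\Lambda$ forces a connected \emph{non‑star} subgraph $H\subseteq X$ on $O(\log n)$ vertices with $\lambda_1(Z|_H)\ge(1-o(1))(1-\delta)\Lambda$. For each such $H$, the event $\{H\subseteq X\}\cap\{\lambda_1(|Z||_H)\ge(1-o(1))(1-\delta)\Lambda\}$ depends only on edges and weights inside $H$, whereas $\bigcap_{v\notin V(H)}\{T_v\le((1-\delta')\Lambda)^2\}$ depends only on edges and weights meeting $[n]\setminus V(H)$ — disjoint coordinates — and FKG gives $\P\big(\bigcap_{v\notin V(H)}\{T_v\le((1-\delta')\Lambda)^2\}\big)\le 2\,\P(\cE)$; summing over $H$,
\begin{align*}
\P\!\big(\lambda_1(Z)>(1-\delta)\Lambda,\ \cE\big)\le 2\,\P(\cE)\sum_{H}\P(H\subseteq X)\,\P_Y\!\Big(\lambda_1(|Z||_H)\ge(1-o(1))(1-\delta)\Lambda,\ \max_{v\in V(H)}\sqrt{T_v^{(H)}}\le(1-\delta')\Lambda\Big).
\end{align*}
A non‑star $H$ attaining $\lambda_1(Z|_H)\gtrsim(1-\delta)\Lambda\asymp(\log n)^{1/2}$ with all its column $\ell^2$‑norms below $(1-\delta')\Lambda$ must contain a short \emph{heavy} substructure (e.g.\ a path on $\ge4$ vertices with weights of order $(\log n)^{1/2}$), of weight cost $e^{-\Omega((\log n)^{\alpha/2})}$; multiplying by $\P(H\subseteq X)=(\tfrac dn)^{|E(H)|}$ and summing against the number of such subgraphs makes the displayed sum $o(1)$. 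Hence $\P(\lambda_1(Z)\le(1-\delta)\Lambda)\ge\tfrac12\P(\cE)\ge e^{-n^{1-(1-\delta')^2+o(1)}}$, so $\limsup_n\frac{1}{\log n}\log\log\frac{1}{\P(\lambda_1(Z)\le(1-\delta)\Lambda)}\le 1-(1-\delta')^2$; letting $\delta'\downarrow\delta$ finishes the proof.

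\smallskip
\noindent\textbf{Main obstacle.} The genuinely hard step is the quantitative star reduction invoked in the second half: bounding the probability of the \emph{anomalous} event — all column $\ell^2$‑norms $\le\tau$ yet $\lambda_1(Z)>(1+\varepsilon)\tau$ — by $o(1)$ times $\P(\text{all column norms}\le\tau)$. Since that reference probability is only $e^{-n^{\Theta(1)}}$, one cannot discard the anomalous event via its (merely superpolynomially small, not $e^{-n^{\Theta(1)}}$) unconditional probability; one must genuinely use the constraint $\cE$ to suppress the offending subgraphs and then arrange the subgraph union bound — balancing the number of candidate subgraphs of a given size against the weight cost of the heavy substructure they must contain, and keeping the eigenvector's effective support below $(\log n)^{\alpha/2-1}$ — so that the local factors sum to $o(1)$. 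This is most delicate when $\alpha$ is close to $2$, and it is essentially the only place where the precise Weibull tail, rather than its logarithmic order, enters.
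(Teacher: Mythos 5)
Your exponent calculus is correct --- the optimization $\min_{c>0}\bigl[c+(1-\delta)^\alpha B_\alpha^\alpha c^{1-\alpha/2}\bigr]=(1-\delta)^2$, attained at $c_\delta=(1-\tfrac2\alpha)(1-\delta)^2$, is exactly the paper's Lemma \ref{star_probability_max} --- and your first half is in spirit the paper's proof of the upper bound for the lower tail. But there is a scale error running through both halves: the target probability is $e^{-n^{1-(1-\delta)^2+o(1)}}$, so an exceptional event may only be discarded if its probability is itself of order $e^{-n^{1-(1-\delta)^2+o(1)}}$ or smaller. ``Superpolynomially unlikely'' is nowhere near enough. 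Concretely, your star reduction $\lambda_1(Z)\ge(1-o(1))\sqrt{T_v}$ fails whenever some neighborhood $N(v)$ contains an edge whose weight is of order $(\log n)^{1/2}$ (the Rayleigh-quotient correction is then comparable to $\sqrt{T_v}$, and it can be negative); that event has probability roughly $e^{-c(\log n)^{\alpha/2}}$, which is astronomically larger than the bound you are trying to prove. The paper sidesteps this entirely via Proposition \ref{sequential_revealing}: it plants $\lceil\tfrac14 n^{1-\gamma'_\delta-\rho}\rceil$ vertex-disjoint stars whose leaf sets are \emph{edge-free}, with failure probability $e^{-n^{1-\gamma'_\delta-\rho+o(1)}}$, which is dominated since $\gamma'_\delta<(1-\delta)^2$; on that event $\lambda_1(Z)\ge\max_k\lambda_1(S_k)$ holds exactly. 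Your independent-set extraction (``high-degree vertices span $o(n)$ edges, so a quarter of them form an independent set'') likewise needs a quantitative tail at the $e^{-n^{\Theta(1)}}$ scale, not a with-high-probability statement, and it does not by itself make the neighborhoods edge-free.

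For the other direction, the FKG lower bound $\P(\cE)\ge\prod_v\P(\sqrt{T_v}\le(1-\delta')\lambda_\alpha^{\textup{light}})\ge e^{-n^{1-(1-\delta')^2+o(1)}}$ is a genuinely different and attractive route (the paper instead conditions on a detailed event $\cK_1$ and multiplies conditionally independent probabilities over star groups and over the residual network). However, the step you defer to the ``main obstacle'' --- that on $\cE$ the event $\lambda_1(Z)>(1-\delta)\lambda_\alpha^{\textup{light}}$ forces a heavy connected non-star subgraph on $O(\log n)$ vertices, and that the ensuing union bound is $o(1)$ --- is not a loose end; it is the theorem. Without the truncation $Z=Z^{(1)}+Z^{(2)}$ at weight $(\e\log\log n)^{1/\alpha}$ there is no a priori localization of the top eigenvector on $O(\log n)$ vertices ($\cG_{n,d/n}$ has a giant component for $d>1$), and even granting localization, bounding $\lambda_1$ of a tree-like component whose column $\ell^2$-norms are all small but which may carry many moderate weights is precisely the content of Proposition \ref{prop:eigenvalue_components_Z^1_2}, which needs the component-size, tree-excess and maximum-degree inputs of Section \ref{connectivity} together with the large/small-coordinate split of the eigenvector. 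As written, the second half of your argument identifies the right reduction but does not carry it out, so the proof is incomplete there.
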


A crucial observation is that the upper and lower tail large deviation results establish a sense of \emph{universality} with the rate function not depending on $\alpha$.

Further, as a corollary we have the following law of large numbers.
\begin{corollary} \label{cor:light_typical}
We have
\begin{align*}
\lim_{n\to \infty}   \frac{ (\log \log n )  ^{\frac{1}{2}- \frac{1}{\alpha}}}{(\log n) ^\frac{1}{2}} \lambda_1(Z)= 2^\frac{1}{\alpha} \alpha^{-\frac{1}{2}} (\alpha - 2)^{\frac{1}{2}-\frac{1}{\alpha}}
\end{align*}
in probability.
\end{corollary}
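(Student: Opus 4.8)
The plan is to combine the two large deviation estimates, Theorems \ref{thm:light_upper} and \ref{thm:light_lower}, via a union bound; this is the only real step. First, unwinding the definition \eqref{typ1} of $\lambda^{\textup{light}}_{\alpha}$ one has
\[
\frac{(\log\log n)^{\frac12-\frac1\alpha}}{(\log n)^{\frac12}}\,\lambda^{\textup{light}}_{\alpha}=2^{\frac1\alpha}\alpha^{-\frac12}(\alpha-2)^{\frac12-\frac1\alpha},
\]
so the assertion of the corollary is precisely that $\lambda_1(Z)/\lambda^{\textup{light}}_{\alpha}\to 1$ in probability.

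To establish this I would fix $\delta\in(0,1)$ — it suffices to treat arbitrarily small $\delta$ — and write
\[
\P\Big(\big|\lambda_1(Z)/\lambda^{\textup{light}}_{\alpha}-1\big|>\delta\Big)\ \leq\ \P\big(\lambda_1(Z)\geq(1+\delta)\lambda^{\textup{light}}_{\alpha}\big)+\P\big(\lambda_1(Z)\leq(1-\delta)\lambda^{\textup{light}}_{\alpha}\big).
\]
By Theorem \ref{thm:light_upper} the first term equals $n^{-((1+\delta)^2-1+o(1))}$, which tends to $0$ since $(1+\delta)^2-1>0$; by Theorem \ref{thm:light_lower} the second term equals $\exp\big(-n^{\,1-(1-\delta)^2+o(1)}\big)$, which tends to $0$ (super-polynomially fast) since $1-(1-\delta)^2\in(0,1)$ for $\delta\in(0,1)$. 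Letting $n\to\infty$ and then $\delta\downarrow 0$ yields the corollary.

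I do not expect any genuine obstacle here, since all of the content is in the two tail bounds; the only point worth noting is that Theorem \ref{thm:light_lower} is stated exactly on the range $\delta\in(0,1)$ that is relevant above, and any larger $\delta$ may be replaced by $\delta=1/2$. Since the two decay rates are polynomial and stretched-exponential respectively, a Borel--Cantelli argument along a geometric subsequence $n_k=2^{k}$ would in fact upgrade the conclusion to almost sure convergence along that subsequence, but the union bound already suffices for convergence in probability as claimed.
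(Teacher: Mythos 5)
Your proposal is correct and is exactly the intended argument: the paper states the corollary as an immediate consequence of Theorems \ref{thm:light_upper} and \ref{thm:light_lower} without writing out a separate proof, and the union bound over the two tail events (polynomial decay for the upper tail, stretched-exponential for the lower tail) is all that is needed. No gaps.
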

{
Note that the case of no edge-weights can be thought of as corresponding to ``$\alpha=\infty$''. 
Now while this was treated in \cite[Theorem 1.1]{ganguly1} (see Theorem \ref{thm:eigenvalues_bbg} stated later), in some sense the same can be deduced in a limiting sense from  Theorem   \ref{thm:light_upper}.} In fact,  noting that
\begin{align*}
\lim_{\alpha\rightarrow \infty} \lambda^{\text{light}}_\alpha =  \lim_{\alpha\rightarrow \infty} \Big[   2^\frac{1}{\alpha} \alpha^{-\frac{1}{2}} (\alpha - 2)^{\frac{1}{2}-\frac{1}{\alpha}}   \frac{(\log n) ^\frac{1}{2}}{ (\log \log n )  ^{\frac{1}{2}- \frac{1}{\alpha}}}  \Big]=   \frac{(\log n) ^\frac{1}{2}}{ (\log \log n )  ^{\frac{1}{2}}} ,
\end{align*} 
one obtains  \cite[Theorem 1.1]{ganguly1}
by taking $\alpha \rightarrow \infty$ in Theorem    \ref{thm:light_upper}. Note that the large deviation rate function turns out to be not only the same for all light-tailed distribution, i.e. whenever $\alpha > 2$, but also in the limit, when ${}\alpha = \infty$.

\subsubsection{Heavy-tailed case, $\alpha<2$}
Counterpart to \eqref{typ1}, we define
\begin{align}\label{typ2}
 \lambda_{\alpha  }^{\textup{heavy}} :=  (\log n)^\frac{1}{\alpha}.
\end{align}

Stating the upper tail  large deviation needs the following definition.  For $\theta>1,$ let $\phi_\theta : \{2,3,\cdots\} \rightarrow \R$ be defined by 
\begin{align} \label{var}
\phi_{\theta} (k)  := \sup_ {v \in \R^k, v= (v_1,\cdots,v_k)\norm{v}_1=1} \sum_{i,j\in [k], i \neq j} |v_i| ^{\theta}  |v_j|^{\theta}.
\end{align}

The statement involves further considering two sub-cases. 

\begin{restatable}{theorem}{heavyupper} \label{thm:heavy_upper}
Let $\delta>0$.

1. In the case $1<\alpha<2$, let $\beta>2$ be the conjugate of $\alpha$ (i.e. $\frac{1}{\alpha}+\frac{1}{\beta} = 1$). For an integer $k\geq 2$,  define
\begin{align} \label{psi}
\psi _ {\alpha,\delta} (k):=  \frac{k(k-3)}{2}   + \frac{1}{2} (1+ \delta)^\alpha  \phi_{\beta/2}(k)^{1-\alpha } . 
\end{align}
Then,
\begin{equation} \label{rate}
\lim_{n \to \infty} - \frac{ \log \P \left ( \lambda_{1}(Z) \geq (1+\delta)   \lambda^{\textup{heavy}}_{\alpha  }  \right ) }{\log{n}}= \min_{k=2,3,\cdots}  \psi _  {\alpha,\delta}  (k).
\end{equation}

2. In the case $0<\alpha\leq 1$,
\begin{equation}
\lim_{n \to \infty} - \frac{ \log \P \left  ( \lambda_{1} (Z) \geq (1+\delta) \lambda^{\textup{heavy}}_{\alpha  }  \right  ) }{\log{n}}=    (1+\delta)^\alpha-1.
\end{equation}
\end{restatable}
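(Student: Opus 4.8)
# Proof Proposal for Theorem \ref{thm:heavy_upper}

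The plan is to match a lower bound, obtained by planting a cheap atypical substructure, with an upper bound, obtained from a union bound over all substructures capable of producing a large eigenvalue — the latter only after removing the diffuse part of the spectrum by truncating the weights. For the lower bound, fix $L:=(1+\delta)\lambda^{\textup{heavy}}_{\alpha}$ and an integer $k\ge 2$. On a $k$-clique $S$ of $\cG_{n,\frac dn}$ one has $Z|_S=Y|_S$, so $\lambda_1(Z)\ge\lambda_1(Y|_S)$, and the deterministic problem
\[
c_k(L):=\min\Big\{\,\sum_{1\le i<j\le k}|w_{ij}|^{\alpha}\ :\ \lambda_1\big((w_{ij})_{i,j\in[k]}\big)\ge L\,\Big\}=\tfrac12\,L^{\alpha}\,\phi_{\beta/2}(k)^{1-\alpha}
\]
is solved by writing $u_i=v_i^2$ for the top eigenvector and applying Hölder against $2v_iv_j$ with conjugate exponent $\beta$ (using $\alpha+\beta=\alpha\beta$). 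Planting weights just above this minimiser and invoking monotonicity of $\lambda_1$ in nonnegative entries gives $\P(\lambda_1(Y|_S)\ge L)\ge n^{-\frac12(1+\delta)^{\alpha}\phi_{\beta/2}(k)^{1-\alpha}-o(1)}$ (the tail of the sum of $\binom k2$ near-exponential variables $|w_{ij}|^\alpha$ contributes only polynomial-in-$\log n$, hence $n^{o(1)}$, corrections). The expected number of $k$-cliques carrying such weights is then $n^{-\psi_{\alpha,\delta}(k)+o(1)}$, and a second moment computation (trivial for $k\le 3$ by near-independence, with disjoint clique-pairs dominating for $k\ge4$) upgrades this to $\P(\lambda_1(Z)\ge L)\ge n^{-\psi_{\alpha,\delta}(k)-o(1)}$. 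Optimising over $k$ gives the matching lower bound in Part 1; in Part 2 it suffices to take $k=2$ (one heavy edge, $\psi_{\alpha,\delta}(2)=(1+\delta)^{\alpha}-1$), where the construction is a one-line inclusion--exclusion over $\binom n2$ independent events.

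For the upper bound, fix an intermediate truncation level $K=K_n$ with $1\ll K_n\ll(\log n)^{1/\alpha}$ and split $Z=Z_{\le K}+Z_{>K}$ according to whether $|Y_{ij}|\le K$. A trace/moment bound for the sparse matrix $Z_{\le K}$, together with the maximum-degree tail estimate $\P(\Delta(\cG_{n,\frac dn})\ge C\tfrac{\log n}{\log\log n})\le n^{1-C+o(1)}$, shows $\|Z_{\le K}\|=o((\log n)^{1/\alpha})$ off an event of probability $n^{-\omega(1)}$; this is where $\alpha<2$ enters, as $\sqrt{\log n}=o((\log n)^{1/\alpha})$. Hence on the good event $\lambda_1(Z)\ge L$ forces $\lambda_1(Z_{>K})\ge L(1-o(1))$, and since $\lambda_1(Z_{>K})=\max_C\lambda_1(Z|_C)$ over the connected components $C$ of the heavy-edge graph $G_{>K}$, it is enough to bound $\P(\exists\,C:\lambda_1(Z|_C)\ge L(1-o(1)))$. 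Union bounding over the graph $H$ underlying $C$ and over its size $k$ up to the largest possible component size $O(\log n/K_n^{\alpha})$, and combining the same Hölder step (from the eigenvalue condition) with $|Y_e|>K_n$ for every $e\in E(H)$ (memorylessness of the tail), one obtains
\[
\P\big(\lambda_1(H\odot Y)\ge L(1-o(1))\big)\ \le\ \P\Big(\sum_{e\in E(H)}|Y_e|^{\alpha}\ \ge\ \max\big(e(H)K_n^{\alpha},\ L^{\alpha}s(H)^{1-\alpha}(1-o(1))\big)\Big),
\]
where $s(H):=\sup_{\|v\|_2=1}\sum_{ij\in E(H)}(2v_iv_j)^{\beta}$ satisfies $s(K_k)=2^{\beta-1}\phi_{\beta/2}(k)$. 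Multiplying by the number $n^{k-e(H)+o(1)}$ of copies of $H$, the exponent of $n$ to beat is, for bounded $H$, exactly $e(H)-k+(1+\delta)^{\alpha}s(H)^{1-\alpha}$ (equal to $\psi_{\alpha,\delta}(k)$ for $H=K_k$); for trees the bound $\lambda_1(T\odot W)\le(\sum_e W_e^{2})^{1/2}$ forces $\sum_e|Y_e|^{\alpha}\ge L^{\alpha}(1-o(1))$, giving exponent $\ge(1+\delta)^{\alpha}-1=\psi_{\alpha,\delta}(2)$, and the $\max(\cdot)$ term lets even the unbounded-$H$ (large tree) contributions telescope to the same value; complex $H$ with $k$ large are negligible since $e(H)-k\ge0$, $s(H)^{1-\alpha}\ge1$, and the crude factor $2^{\binom k2}$ is dominated by $(\tfrac dn\P(|Y|>K_n))^{e(H)}$ for $K_n$ large. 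The proof is thus reduced to the deterministic claim
\[
\min_{H\ \mathrm{connected}}\Big(e(H)-|V(H)|+(1+\delta)^{\alpha}s(H)^{1-\alpha}\Big)\ =\ \min_{k\ge2}\psi_{\alpha,\delta}(k),
\]
i.e. that the extremal $H$ is a clique — which is exactly where the announced generalisation of the Motzkin--Straus theorem is used, controlling $s(H)$ through the complete subgraphs sitting inside $H$.

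When $0<\alpha\le1$ all of this collapses: subadditivity of $t\mapsto t^{\alpha}$ gives $\lambda_1(Z|_C)\le\sum_{e\in C}|Y_e|$, hence $\sum_{e}|Y_e|^{\alpha}\ge L^{\alpha}$ for \emph{every} substructure, so a single union bound over vertices, using the single-big-jump estimate for the weighted degree $\sum_{j\sim i}|Y_{ij}|$ (heavy-tailed: subexponential for $\alpha<1$, a compound-Poisson computation for $\alpha=1$), yields $\P(\lambda_1(Z)\ge L)\le n^{1-(1+\delta)^{\alpha}+o(1)}$, matching the single-edge construction. \textbf{Main obstacle.} The delicate part is the upper bound in Part 1: since the deviation is only by a \emph{constant} factor above the typical value $(\log n)^{1/\alpha}$, one cannot afford logarithmic losses, so the truncation scale $K_n$ must be tuned so that simultaneously $\|Z_{\le K_n}\|=o((\log n)^{1/\alpha})$ and $G_{>K_n}$ is sparse enough that its large and complex components are negligible; and one must establish the genuinely new extremal-combinatorics fact that cliques are the cheapest weighted structure for pushing $\lambda_1$ up, which is the generalised Motzkin--Straus statement responsible for $\phi_{\beta/2}$ appearing in the rate. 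The remaining chores are the $k\ge4$ second moment estimate in the lower bound and the uniformity of the $o(1)$ errors across the $k$-indexed family of estimates.
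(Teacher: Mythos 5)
Your proposal follows essentially the same route as the paper: a lower bound obtained by planting optimally distributed heavy weights on a $k$-clique (the optimal configuration being exactly the equality case of the entrywise $L^\alpha$ bound, i.e.\ the generalized Motzkin--Straus statement behind $\phi_{\beta/2}$), and an upper bound obtained by truncating the weights at scale $(\e\log\log n)^{1/\alpha}$, discarding the light part via the unweighted large deviation estimate for $\lambda_1(\cG_{n,d/n})$, and controlling each component of the heavy-edge graph through H\"older against $\phi_{\beta/2}$ of its maximum clique size together with tail bounds for $\sum_e|Y_e|^\alpha$. Two organizational differences are worth recording. First, for the lower bound the paper needs no second-moment computation: it multiplies the known bound $\P(X\supseteq K_k)\gtrsim n^{k-\binom{k}{2}}$ (Lemma \ref{lemma clique}) by the conditional probability of the planted weights, which is cleaner than Paley--Zygmund applied to weighted clique counts, though your route also works. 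Second, for the upper bound the paper does not union-bound directly over all connected shapes $H$; it first establishes structural facts about the heavy graph (components of size $O(\log n/\log\log n)$, $O(1)$ excess edges, at most $\log n$ non-tree components via the BK inequality) and then applies a single per-component proposition (Proposition \ref{keyprop}) whose exponent depends only on the maximum clique size. Your reduction to the deterministic claim that the minimum of $e(H)-|V(H)|+(1+\delta)^\alpha s(H)^{1-\alpha}$ over connected $H$ is attained at cliques is correct — it follows from Lemma \ref{same} (which gives $s(H)\le s(K_k)$ for $H$ of maximum clique size $k$) together with $e(H)\ge\binom{k}{2}+|V(H)|-k$ for connected $H\supseteq K_k$ — but carrying the union bound over shapes requires balancing the entropy of shapes against the per-edge cost $n^{-1}e^{-K_n^\alpha}$, which the structural route packages more systematically. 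One overstatement to fix: $\|Z_{\le K}\|=o((\log n)^{1/\alpha})$ holds only off an event of probability $n^{-C}$ for a constant $C$ that can be chosen larger than the target rate (this is exactly Lemma \ref{heavy_lem:bulk_negligible}), not off an event of probability $n^{-\omega(1)}$; the polynomial scale suffices but must be tracked against $\min_k\psi_{\alpha,\delta}(k)$.
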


The above theorem shows that once heavy-tailed edge-weights are induced on the \emph{sparse} graph $\cG_{n,\frac{d}{n}}$,   the large deviation rate function for the largest eigenvalue exhibits a phase transition: the  rate function  is a  \emph{piecewise smooth}  function whose behavior changes as $\argmax_k \psi_{\alpha,\delta}(k)$ (i.e. size of the clique needed to have atypically large  $\lambda_1$) varies. 
This is in a sharp contrast to the large deviation result for the  standard  (i.e. dense) Wigner matrices with heavy-tailed entries (i.e. edge-weights are induced on the complete  graph) \cite{augeri2}, where it is proved that there exists a \emph{smooth}  function $I(\delta)$ such that for all $\delta>0$,
\begin{align*}
\mathbb{P}(\lambda_1 > 2+\delta ) \approx e^{-I(\delta ) n^{\alpha/2}}.
\end{align*}

\begin{remark}
We do not have a closed expression for the quantity  $\phi_\theta ( k)$ defined in \eqref{var}, unless $\theta=1$ in which case we know $\phi_1(k) = \frac{k-1}{k}$ ({by the classical Motzkin-Straus theorem} \cite{turan}). While the definition of $\phi_{\theta}(\cdot),$ might seem somewhat unmotivated at this point, it appears quite naturally once we bound $\lambda_1(Z)$ in terms of the `entrywise' $L^p$-(quasi)norm of $Z$. See Section \ref{sec:spectral_prop} for details.
\end{remark}

Next, we obtain the lower tail large deviation.

\begin{restatable}{theorem}{heavylower}  \label{thm:heavy_lower}
For any $0<\delta<1$,
\begin{align*}
\lim_{n \to \infty } \frac{1}{\log n} \left ( \log  \log \frac{1}{\P \left  ( \lambda_{1}(Z) \leq (1 - \delta)  \lambda^{\textup{heavy}}_{\alpha  }  \right  )} \right ) = 1 - (1 - \delta)^\alpha.
\end{align*}
\end{restatable}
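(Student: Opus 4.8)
The plan is to establish matching upper and lower bounds on $\P(\lambda_1(Z) \le (1-\delta)\lambda^{\textup{heavy}}_\alpha)$, showing that this probability behaves like $\exp(-n^{1-(1-\delta)^\alpha + o(1)})$. The guiding heuristic is that for the largest eigenvalue to be \emph{smaller} than typical, essentially every vertex of large degree must fail to produce a large eigenvalue contribution; since the typical value $(\log n)^{1/\alpha}$ comes from the single heaviest edge-weight in the graph (there are $\sim dn/2$ edges, and the max of that many i.i.d.\ Weibull$(\alpha)$ variables is $\approx(\log n)^{1/\alpha}$), forcing $\lambda_1$ down by a factor $(1-\delta)$ amounts to requiring that \emph{all} edge-weights be at most roughly $(1-\delta)(\log n)^{1/\alpha}$, an event of probability $\bigl(1 - n^{-(1-\delta)^\alpha+o(1)}\bigr)^{\Theta(n)} = \exp(-n^{1-(1-\delta)^\alpha+o(1)})$, together with a constraint that the graph itself carries no large eigenvalue from its combinatorial structure, which is a lower-order cost.

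\textbf{Lower bound on the probability (upper bound on the rate).} I would produce a specific event forcing $\lambda_1(Z)\le (1-\delta)\lambda^{\textup{heavy}}_\alpha$. A clean choice: intersect the event $E_1 = \{\max_{i<j} X_{ij}|Y_{ij}| \le (1-\delta')(\log n)^{1/\alpha}\}$ for $\delta'$ slightly less than $\delta$ with a high-probability structural event $E_2$ on $X=\cG_{n,d/n}$ ensuring that after deleting $o(1)$ fraction of vertices the graph is a forest / has bounded "local" eigenvalue, so that on $E_1\cap E_2$ one has $\lambda_1(X\odot Y)\le (1-\delta)(\log n)^{1/\alpha}$ for $n$ large --- here one uses that a weighted graph whose weights are all $\le M$ and whose underlying graph has maximum "effective spectral contribution" controlled (e.g. it is locally tree-like / has spectral radius $O(\sqrt{\log n})$ via the bounds of Section \ref{sec:spectral_prop}) has $\lambda_1 = M\cdot(1+o(1))$ at most. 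Since $\P(E_2)\to 1$ and, by independence of the edge-weights, $\P(E_1) = \prod_{i<j}(1 - \P(X_{ij}|Y_{ij}| > (1-\delta')(\log n)^{1/\alpha})) \ge \exp(-C n \cdot \tfrac{d}{n}\cdot n^{-(1-\delta')^\alpha})= \exp(-n^{1-(1-\delta')^\alpha+o(1)})$, letting $\delta'\uparrow\delta$ gives the bound $\limsup \frac{1}{\log n}\log\log\frac{1}{\P(\cdots)} \le 1 - (1-\delta)^\alpha$.

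\textbf{Upper bound on the probability (lower bound on the rate).} This is the harder direction and I expect it to be the main obstacle. The idea is: if $\lambda_1(Z)\le (1-\delta)(\log n)^{1/\alpha}$ then in particular no single edge-weight is large, since a single edge $\{i,j\}$ with $|Y_{ij}| = w$ forces $\lambda_1(Z) \ge w$ (the restriction to the $2\times 2$ principal submatrix on $\{i,j\}$ has eigenvalue $w$, and by interlacing $\lambda_1(Z)\ge w$). Hence $\{\lambda_1(Z)\le (1-\delta)\lambda^{\textup{heavy}}_\alpha\} \subseteq \{\max_{i<j} X_{ij}|Y_{ij}| \le (1-\delta)(\log n)^{1/\alpha}\}$, and the latter event, conditioned on $X$, has probability $\prod_{\{i,j\}\in X}\P(|Y_{ij}|\le(1-\delta)(\log n)^{1/\alpha})$. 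The subtlety is that $\P(\cG_{n,d/n}$ has few edges$)$ is not super-exponentially small, so I must show that the number of edges $|E(X)|$ is $\Theta(n)$ except with probability $\exp(-\Omega(n))$, which is much smaller than the target; indeed $|E(X)|$ concentrates around $dn/2$ with Gaussian-type tails at scale $n$, and $\P(|E(X)| \le \epsilon n)\le \exp(-c(\epsilon) n)$, so on the complement of a negligible event $|E(X)|\ge \epsilon n$ and then, using the $C_1$-lower bound in \eqref{weibull}, $\E[\,\P(\max |Y| \le (1-\delta)(\log n)^{1/\alpha} \mid X)\,\mathbf 1_{|E(X)|\ge \epsilon n}] \le (1 - C_1 n^{-(1-\delta)^\alpha})^{\epsilon n} = \exp(-n^{1-(1-\delta)^\alpha - o(1)})$. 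Taking logarithms twice and combining with the negligible graph-atypicality term gives $\liminf \frac{1}{\log n}\log\log\frac{1}{\P(\cdots)} \ge 1-(1-\delta)^\alpha$, completing the proof. The one genuinely delicate point to check carefully is the interlacing step and, in the lower-bound direction, verifying that on $E_1 \cap E_2$ the bound $\lambda_1(Z)\le(1-\delta)\lambda^{\textup{heavy}}_\alpha$ truly holds --- i.e. that the combinatorial part of $\cG_{n,d/n}$ cannot inflate $\lambda_1$ beyond $(1+o(1))\max|Y_{ij}|$ when all weights are comparably sized --- which should follow from the spectral estimates developed in Section \ref{sec:spectral_prop} (bounding $\lambda_1(Z)$ via entrywise $\ell^p$ norms and the structure of $\cG_{n,d/n}$), but must be invoked with the right uniformity in $n$.
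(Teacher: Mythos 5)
Your direction giving the upper bound on the probability (the lower bound on the rate) is essentially the paper's argument verbatim: $\lambda_1(Z)\geq \max_{(i,j)\in E(X)}|Z_{ij}|$ (Lemma \ref{graphs:max_entry}; no interlacing needed), restrict to the event $|E(X)|\geq \frac{d(n-1)}{4}$ whose complement has probability $e^{-cn}$, and use the $C_1$-lower bound on the Weibull tail. That part is fine.

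The other direction has a genuine gap. Your event $E_1\cap E_2$ does \emph{not} imply $\lambda_1(Z)\leq(1-\delta)\lambda^{\textup{heavy}}_\alpha$, and the claim that a locally tree-like weighted graph with all weights $\leq M$ has $\lambda_1\leq M(1+o(1))$ is false: a star of degree $k$ with all weights equal to $M$ is a tree and has $\lambda_1=M\sqrt{k}$ (Lemma \ref{spectrum_star_weights}), and $\cG_{n,d/n}$ typically contains stars of degree $\asymp\frac{\log n}{\log\log n}$. So capping only the \emph{maximum} edge-weight at $(1-\delta')(\log n)^{1/\alpha}$ leaves open the possibility that several moderately large weights on one star or small component conspire to produce $\lambda_1\geq(1-\delta)\lambda_\alpha$; this event is indeed unlikely, but your $E_1$ does not exclude it, and quantifying its cost is precisely the crux of the proof. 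The paper handles this with two ingredients you do not have: (i) a two-scale decomposition of the weights, $Z=Z^{(1)}+Z^{(2)}$ with threshold $(\e\log\log n)^{1/\alpha}$, so that $\lambda_1(Z^{(2)})\leq\lambda_1(X)\cdot(\e\log\log n)^{1/\alpha}\ll\lambda_\alpha$ (using $\alpha<2$), while the graph $X^{(1)}$ carrying the large weights is highly subcritical with small tree-like components; and (ii) Proposition \ref{keyprop}, which bounds $\lambda_1$ of each such component via the entrywise $L^\alpha$-(quasi)norm (the Motzkin--Straus generalization of Proposition \ref{prop:spectral_bound_L_p}) together with the tail estimate of Lemma \ref{chi tail} for sums $\sum|\tilde Y_i|^\alpha$ of conditioned Weibulls, showing each component exceeds $(1-\delta'')\lambda_\alpha$ with probability at most $n^{-(1-\delta)^\alpha+o(1)}$; independence over the at most $n$ components then yields the lower bound $\exp(-n^{1-(1-\delta)^\alpha+o(1)})$. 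Without a quantitative substitute for (ii) — i.e.\ a proof that conditioning all weights to be below the threshold still controls the \emph{sums} of weights on every star and small component uniformly — your lower bound on the probability does not go through.
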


Note that unlike for light-tail edge weights, in this case the large deviation rate function \emph{does} depend on $\alpha.$

Finally, as a counterpart to Corollary \ref{cor:light_typical}, we have the following law of large numbers result.
\begin{corollary} \label{cor:heavy_typical}
We have
\begin{align*}
\lim_{n\to \infty}  \frac{\lambda_1(Z)}{(\log n)^\frac{1}{\alpha}} = 1
\end{align*}
in probability.
\end{corollary}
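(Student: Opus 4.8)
The plan is to deduce Corollary \ref{cor:heavy_typical} directly from the two heavy-tailed large deviation results, Theorem \ref{thm:heavy_upper} and Theorem \ref{thm:heavy_lower}, by a standard squeeze argument. Write $\lambda^{\textup{heavy}}_\alpha = (\log n)^{1/\alpha}$. To prove convergence in probability to $1$ after dividing by $\lambda^{\textup{heavy}}_\alpha$, it suffices to show that for every fixed $\delta \in (0,1)$ both $\P(\lambda_1(Z) \geq (1+\delta)\lambda^{\textup{heavy}}_\alpha) \to 0$ and $\P(\lambda_1(Z) \leq (1-\delta)\lambda^{\textup{heavy}}_\alpha) \to 0$ as $n \to \infty$.

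For the upper bound, the content of Theorem \ref{thm:heavy_upper} is that, in both sub-cases $0<\alpha\le 1$ and $1<\alpha<2$, the limit $\lim_{n\to\infty} -\frac{1}{\log n}\log \P(\lambda_1(Z)\ge (1+\delta)\lambda^{\textup{heavy}}_\alpha)$ equals a strictly positive constant $c(\alpha,\delta)$: in the $0<\alpha\le 1$ case this constant is $(1+\delta)^\alpha - 1 > 0$, and in the $1<\alpha<2$ case it is $\min_{k\ge 2}\psi_{\alpha,\delta}(k)$, which is positive because $\psi_{\alpha,2}$ evaluated at $k=2$ equals $\tfrac{2(2-3)}{2} + \tfrac12(1+\delta)^\alpha \phi_{\beta/2}(2)^{1-\alpha} = -1 + \tfrac12(1+\delta)^\alpha 2^{1-\alpha}\cdot(\text{value of }\phi)$; one checks $\phi_{\beta/2}(2) = 2\cdot(1/2)^{2\cdot\beta/2} = 2^{1-\beta}$ so that $\psi_{\alpha,\delta}(2) = -1 + \tfrac12(1+\delta)^\alpha 2^{(1-\beta)(1-\alpha)}$, and since $(1-\beta)(1-\alpha) = (\beta-1)(\alpha-1)\cdot(-1)\cdot(-1)$... in any case, the monotonicity in $\delta$ of each $\psi_{\alpha,\delta}(k)$ together with the fact that at $\delta = 0$ the minimum equals $0$ (this is exactly the typical-value threshold) forces $\min_k \psi_{\alpha,\delta}(k) > 0$ for every $\delta > 0$. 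Granting $c(\alpha,\delta) > 0$, we get $\P(\lambda_1(Z)\ge (1+\delta)\lambda^{\textup{heavy}}_\alpha) \le n^{-c(\alpha,\delta)/2} \to 0$ for all large $n$.

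For the lower bound, Theorem \ref{thm:heavy_lower} states that $\lim_{n\to\infty}\frac{1}{\log n}\log\log \frac{1}{\P(\lambda_1(Z)\le(1-\delta)\lambda^{\textup{heavy}}_\alpha)} = 1 - (1-\delta)^\alpha$, which is strictly positive for every $\delta\in(0,1)$ since $0<\alpha<2$ implies $(1-\delta)^\alpha < 1$. Hence, for all $n$ large enough, $\log\log\frac{1}{\P(\cdots)} \ge \tfrac12(1-(1-\delta)^\alpha)\log n$, i.e. $\P(\lambda_1(Z)\le(1-\delta)\lambda^{\textup{heavy}}_\alpha) \le \exp(-n^{(1-(1-\delta)^\alpha)/2}) \to 0$. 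Combining the two estimates, $\P(|\lambda_1(Z)/\lambda^{\textup{heavy}}_\alpha - 1| > \delta) \to 0$ for every $\delta\in(0,1)$, which is precisely the claimed convergence in probability.

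The only genuine point requiring care — and thus the main (minor) obstacle — is verifying that the rate constants appearing in Theorem \ref{thm:heavy_upper} are strictly positive for every $\delta > 0$; in the $1<\alpha<2$ regime this means checking that $\min_{k\ge 2}\psi_{\alpha,\delta}(k) > 0$, which one does by noting that $\delta \mapsto \psi_{\alpha,\delta}(k)$ is strictly increasing for each $k$ (the coefficient $\tfrac12\phi_{\beta/2}(k)^{1-\alpha}$ is positive since $\phi_{\beta/2}(k)>0$), that the infimum over $k$ of the resulting functions is attained (only finitely many $k$ can achieve values below any fixed level, as $\psi_{\alpha,\delta}(k)\sim k^2/2 \to \infty$), and that at $\delta=0$ the minimum is exactly $0$ — this last fact being implicit in $\lambda^{\textup{heavy}}_\alpha$ being the typical value, or it can be taken as a consequence of the statement of Theorem \ref{thm:heavy_upper} applied in the limit $\delta\downarrow 0$. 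Everything else is a one-line Borel–Cantelli-free squeeze.
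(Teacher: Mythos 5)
Your proposal is correct and is exactly the (implicit) argument the paper intends: the corollary follows from Theorems \ref{thm:heavy_upper} and \ref{thm:heavy_lower} by the standard squeeze, once the rate constants are seen to be strictly positive. For that last point you do not need the somewhat circular detour through ``the minimum equals $0$ at $\delta=0$'': positivity is immediate from $\psi_{\alpha,\delta}(2)=(1+\delta)^\alpha-1>0$ (as computed in \eqref{formula}) together with $\psi_{\alpha,\delta}(k)\geq \frac{k(k-3)}{2}+\frac12(1+\delta)^\alpha\phi_{\beta/2}(k)^{1-\alpha}>0$ for $k\geq 3$, and the minimum is attained since $\phi_{\beta/2}(k)$ is bounded (Lemma \ref{lemma 02}\eqref{uniform}) so $\psi_{\alpha,\delta}(k)\to\infty$.
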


\begin{remark}\label{poly}While for the sake of simplicity as well as brevity we have chosen to work with distributions as in \eqref{weibull}, certain generalizations are not hard to make. E.g., a simple rescaling shows that similar large deviation results hold for more general Weibull distributions where for $t>1$,
\begin{align} \label{eta}
\frac{C_1}{2} e^{-  \eta t^\alpha} \leq \P\big (W  \geq  t \big ) \leq \frac{C_2}{2} e^{- \eta  t^\alpha} \quad  \text{ and } \quad  \frac{C_1}{2} e^{-  \eta t^\alpha} \leq \P \big ( W \leq -t \big ) \leq \frac{C_2}{2} e^{-\eta   t^\alpha}
\end{align}
for some additional scale parameter $\eta>0$.  A straightforward algebra shows that the rate function for  $\lambda_1(Z)$ does not change with $\eta$.

{Further,} a little additional work also allows one to consider an even more general class of distributions given by
\begin{align*}
\frac{C_1}{2}   t^{-c_1} e^{-\eta t^\alpha}  \leq \P\big (W   \geq  t \big ) \leq   \frac{C_2}{2} t^{-c_2} e^{-\eta t^\alpha}  \quad  \text{ and } \quad     \frac{C_1}{2}   t^{-c_1} e^{-\eta t^\alpha}  \leq \P \big ( W \leq  -t \big ) \leq  \frac{C_2}{2} t^{-c_2} e^{-\eta t^\alpha}
\end{align*}
for parameters $c_1,c_2\geq 0$. {Note that while this is not the focus of the paper, in particular, this includes the Gaussian distribution when $\alpha = 2$  and $\eta = c_1=c_2 = \frac{1}{2}$}.

\end{remark}

We next include a brief overview of the literature on large deviations of spectral observables of random matrices.

 {
\subsection{Related results} \label{related results}

Much effort has been devoted to proving whether spectral properties of random matrix ensembles exhibit  certain universality features i.e.  do not depend on the precise entry distribution as the matrix size goes to infinity. For instance, under some moment conditions, the appropriately scaled empirical distribution of the eigenvalues converges to the  Wigner’s semi-circle law \cite{wigner58, arnold67} and the largest eigenvalue lies near the edge of the  semi-circle distribution \cite{juhasz81, furedi81, bai_yin88}. 
On the other hand, large deviation behavior is generally far from universal making this an intriguing research direction which has also witnessed considerable activity.
 Large deviations for the empirical distribution and the largest eigenvalue were first derived for the Gaussian ensembles \cite{freeentropy,adg}. Progress stalled for a while, until a surprising recent result by Guionnet and Husson  \cite{guionnet1}, where the large deviation universality was established for  the largest eigenvalue of `sharp' sub-Gaussian matrices. Their proof relied on the method of spherical integrals.  Together with Augeri \cite{agh}, they also showed that for a more general class of sub-Gaussian matrices, the rate function is universal for small large deviations, but beyond that also depends on other properties of the moment generating function. 

The behavior changes a lot when the entry distribution possesses  tails   \emph{heavier} than Gaussian.  Bordenave and Caputo   \cite{bc}  proved a large deviation result for the empirical distribution of Wigner matrices with stretch-exponential entries whose tails decay at the rate $e^{-t^\alpha}$ with $\alpha \in (0,2)$. The proof  relies on the observation that the atypical behavior is the result of a few atypically large entries. Building on the same idea, Augeri  \cite{augeri2} subsequently obtained a large deviation principle for the largest eigenvalue. In both cases, the  large deviation speed as well as the rate function depend on $\alpha$ and thus on the precise tail behavior of the entries.

We next move on to the results on the spectral properties of the adjacency matrix of  Erd\H{o}s-R\'{e}nyi random graphs $\cG_{n,p}$.
When the edge density $p$ is allowed to decay fast enough in $n$, the aforementioned methods no longer apply and  the spectral behavior  heavily relies on the geometry of the graph. Krivelevich and Sudakov  \cite{KS} first showed that  for almost the entire regime of $p$, except for some threshold values, the largest eigenvalue is typically equal to $(1+o(1)) \max\{ \sqrt{\Delta}, np \}$, where $\Delta$ denotes the maximum degree of $\cG_{n,p}$. This in particular implies that for our case of interest, namely $p = \frac{d}{n}$,  the largest eigenvalue is typically $(1+o(1))\sqrt{\frac{\log n}{\log \log n}}$,  since it is not hard to show that  the largest degree of $\cG_{n,\frac{d}{n}}$ is $(1+o(1)) \frac{\log n}{\log  \log n}$ with high probability. The cases of the threshold values were later treated, for instance the case $np \asymp \log n$ was studied in \cite{adk,tikhomirov20}.

Beyond typical behavior, the study of the large deviation behavior for the spectrum of $\cG_{n,p}$   becomes much more delicate and  requires  new methods. In the dense case (i.e. $p$ fixed in $n$), Chatterjee and Varadhan \cite{CV11, CV12} established a large deviation principle using the powerful graph limit theory and  characterized the rate function in terms of the variational problem. This variational problem  was later analyzed by Lubetzky and Zhao  \cite{LZ-dense}. In the sparse case $p\rightarrow 0$,  the graph limit theory no longer applies.  In the breakthrough work by Chatterjee and Dembo \cite{CD16}, a general framework of \emph{nonlinear large deviations} was developed leading to a similar variational problem. This was later extended \cite{CD18,augeri1} and analyzed in \cite{uppertail_eigenvalue} to obtain large deviations for the largest eigenvalue in the sparsity regime $\frac{1}{\sqrt{n}} \ll p \ll 1 $. 
Finally, the sparsity we will consider in this paper, namely $p = \frac{d}{n}$ was covered in \cite{ganguly1}. It was shown that the large deviation behavior of edge eigenvalues is a consequence of the emergence of vertices of atypically large degree.

Finally, let us review the few existing results about the spectral behavior of the adjacency matrix of  \emph{weighted} Erd\H{o}s-R\'{e}nyi  graphs, or in other words, sparse or diluted  Wigner matrices.
The typical largest eigenvalue of dense graphs (i.e. $p$ fixed in $n$) with general edge-weights, under suitable moment conditions, belongs to the universality class of general Wigner matrices.
In the sparser regime $p\rightarrow 0$, Khorunzhy  \cite{khor01} proved that once   $\frac{\log n}{n} \ll p \ll n^{\beta}$ for some $\beta >0$, the largest eigenvalue is  asymptotically $2 \sqrt{np}$ with high probability and does thus not depend on the precise distribution of the edge-weights.
Recently,
\cite{bbk_dense, tikhomirov20} treated the typical behavior of diluted Wigner matrices,
 where some of the above assumptions are relaxed. However  all results mentioned before are in the regime $p \gg \frac{1}{n}$, which excludes the case $p=\frac{d}{n}$.

However, despite the above advances, significantly less is known about the spectral \emph{large deviation} behavior in the setting of diluted Wigner matrices.
Large deviations for the dense $\cG_{n,p}$ with Gaussian edge-weights  can be deduced from the aforementioned result for  the sub-Gaussian Wigner matrices \cite{agh}. This result together with \cite{guionnet1} show that even for the same weight distribution, the large deviation behavior for the cases  $p=1$ and $p <1$ ($p$ is fixed) are different.
No other regime of $p$, in particular when $p\rightarrow 0$, is covered by previous results. In this direction, only  very recently, the case $p=\frac{d}{n}$ with \emph{Gaussian} edge-weights was treated by the first and last named authors in  \cite{gn}.

Recall that Theorem \ref{thm:light_upper} implies a universal spectral large deviations behavior when $\alpha > 2.$ 
While the large deviation result for the `sharp' sub-Gaussian Wigner matrices \cite{guionnet1} can be considered as a universality result for dense Wigner matrices, it seems Theorem \ref{thm:light_upper} is the first universality result of its kind in the sparse regime. \\
 
}

{The rest of the paper is structured as follows: In Section \ref{sec:idea_proof}, we describe the general proof strategy and the main ingredients that we use. In Section \ref{sec:spectral_prop} we prove our new deterministic results relating the largest eigenvalue of weighted graphs to their $\alpha$-norm and state some classical spectral results that we will use in our proofs. Section \ref{sec:str_erd_ren_graphs} is about the  structural results of sparse Erd\H{o}s-R\'{e}nyi graphs, in particular we prove results about their degree profile and state relevant results about their connectivity structure. After this preparatory work, we move on to prove the large deviation theorems for the cases of light- and heavy-tailed edge-weights in Section \ref{sec:light} and \ref{sec:heavy} respectively. Appendix contains more technical estimates on tails of sums of i.i.d. Weibull random variables.}

Finally, let us define a few notations we use throughout the paper before we move on to explain our proof strategy. 

\subsection{Notations}

For any graph $G$, denote by $V = V(G)$ and $E = E(G)$  the set of vertices and edges in $G$ respectively. For any vertex $v\in V(G)$, define 
$d(v)$ to be the degree of a vertex $v$, and let
$d_1(G)$  be  the maximum degree of  $G$.  For any graph $G = (V,E)$ with a vertex set $V = [n] := \{1,2,\cdots,n\}$,  we write each \emph{undirected} edge joining two vertices $i$ and $j$  with $i<j$ as $(i,j)$. We use the notation $i \sim j$ for two vertices $i,j$ if $i$ and $j$ are connected by an edge.
 
Finally we use the  notation  $G = (V,E,A)$ for denoting a network with an underlying graph $G = (V,E)$ (we abuse the notation)  having $A$ as  a conductance matrix.

\subsection{Acknowledgement}
KN's research was supported by Basic Science Research Program through the National Research Foundation of Korea(NRF) funded by the Ministry of Education\\(2019R1A6A1A10073887). SG was partially supported by NSF grant DMS-1855688, NSF Career grant DMS-1945172, and a Sloan Fellowship. Part of this work was completed when he was participating in the fall
2021 semester program `Universality and Integrability in Random Matrix Theory and Interacting Particle Systems program' at MSRI.

\section{Idea of the proof} \label{sec:idea_proof}
The proofs rely on identifying the correct geometric mechanisms responsible for the largest eigenvalue which we describe next.

\subsection{Light-tailed weights}  \label{heur_typ_val_light}

We start by considering the adjacency matrix $X$ without edge-weights.  In the influential work \cite{KS} it was shown that the largest eigenvalue is determined essentially by the star, incident on the vertex with the largest degree, which is typically $\frac{\log n}{\log \log n}$. Since the largest eigenvalue of a star of degree $\ell$ is equal to $\sqrt{\ell}$, we have  $\lambda_1(X) \approx \frac{(\log n)^\frac{1}{2}}{(\log \log n) ^{\frac{1}{2}}}.$ 
It was subsequently established in \cite{ganguly1} that in this case, even in the large deviations regime, the largest eigenvalue is essentially determined by  an atypically large degree vertex.

 Now, let us consider the case when the edges are equipped with light-tailed edge-weights. Owing to the lightness of the tail,
one might naturally expect  that vertices with degree of order $\frac{\log n}{\log \log n}$ will continue to be the determining structure, but the existence of weights does add another element of randomness that can increase the largest eigenvalue. This calls for the need to balance the fact that there are more stars of lower degree, while high degree stars are more likely to have a big largest eigenvalue necessitating the estimation of the contributions from vertices of degree close to $\gamma \frac{\log n}{\log \log n}$ for $0<\gamma < 1$.

Using a binomial tail estimate, one can deduce that  the probability that a vertex has degree close to  $\gamma \frac{\log n}{\log \log  n}$ is roughly $n^{-\gamma}$ and hence approximately there are around $n^{1-\gamma}$ vertices of that degree.  Because $\cG_{n,\frac{d}{n}}$ is sparse, it is likely that a constant proportion of these vertices has distinct neighborhoods with no edges present within those neighborhoods and hence these high-degree vertices and their neighborhoods induce vertex-disjoint stars. 
The next step is to consider the contribution from the edge-weights. Here we need two ingredients. First, the fact that the largest eigenvalue of a weighted star is the square root of the sum of the squares of the weights. The second fact we use is that for light-tailed weights, the probabilistically optimal way to obtain a large squared sum of the weights is by all of them being uniformly large. This allows us to deduce that for any weighted star $S$ of degree $k$,
\begin{align} \label{00}
 \mathbb{P}( \lambda_1(S) > t ) \approx e^{-t^\alpha k^{1-\frac{\alpha}{2}}}.
\end{align}
Using this, 
 {to find a typical value $t$ of the largest eigenvalue of such  collection of stars, the probability in \eqref{00} with $k=\gamma \frac{\log n}{\log \log n}$  should balance out the number of such  stars which is roughly $n^{1-\gamma}$.   A further optimization over $\gamma$ indeed indicates that typical value of the largest eigenvalue is close to $\lambda^{\textup{light}}_{\alpha}$ (defined in \eqref{typ1}). 
 
The proof of the upper bound in Theorem \ref{thm:light_upper} now relies on establishing the above heuristic even in the large deviations regime. 
A key subtle distinction arises from the fact that the large deviation of $\lambda_1$ may induce atypical behavior of both the degrees as well as the edge weights which has to be taken into consideration as well.\\

The proof of the lower bound is much simpler and consists of ensuring that there is a star of degree $\gamma_\delta \frac{\log n}{\log \log n},$ for some $\delta$ dependent constant $\gamma_\delta,$ with large enough weights on the edges.

\subsection{Heavy-tailed weights} \label{heur_typ_val_heavy} 
The results of \cite{gn} indicate that typically the largest eigenvalue of $\cG_{n,\frac{d}{n}}$   with \emph{Gaussian} weights is determined by the maximal edge-weight in absolute value. Since the latter increases as the tail of individual random variables becomes heavier, this suggests that the same mechanism persists when the edge-weights have heavier tails (i.e. $\alpha<2$).

Note that there are on average $\frac{dn}{2}$ edges present in the graph $\calG_{n,\frac{d}{n}}$. Thus, the probability that the  largest entry is greater in absolute value than $t$ is 
\begin{align} \label{600}
1- \P \big(\max_{(i,j) \in E(X)}|Z_{ij}| < t\big) & \approx   1- (1-e^{-  t^\alpha})^{\frac{dn}{2}} \approx  \frac{dn}{2}e^{-t^\alpha}.
\end{align}
This shows that the typical value of the maximum entry (in absolute value) is  $ \lambda_{\alpha }^{\textup{heavy}}$ defined in \eqref{typ2}.

However, unlike typical behavior, it turns out that large deviations, i.e., $\{\lambda_1(Z) \geq (1+\delta)\lambda^{\textup{heavy}}_\alpha  \} $, is achieved by the emergence 
 of a clique with high  edge-weights on it. One can  first  estimate the probability that $\cG_{n,\frac{d}{n}}$, contains a clique of size $k$ for each integer $k\geq 2$. Next, one estimates the probability that the edge-weights on the clique are high.  One of our main results is that we identify how to induce these high edge-weights  in the most efficient way, which  turns out to   involve the  variational problem \eqref{var}. 
 
The optimal clique size depends on both $\alpha$ and $\delta$. In particular, it is $2$ when $\alpha \leq 1,$ i.e., large deviations is dictated by the existence of an atypically  large edge-weight.
 
 Again as before, while the proof of the upper bound involves several technical ingredients making the above heuristics precise, the lower bound is obtained rather quickly by planting a clique of an appropriate size with high edge weights.
 
We end this section with a brief overview of the variational problem described in \eqref{var} which we believe is of independent interest.

\subsection{$\alpha-$norm generalization of the {Motzkin-Straus} theorem} \label{sec 2.3}
When the edge-weights are given by heavy-tailed Weibull distributions ($0<\alpha<2$), we study the spectral behavior by relying on a new result relating the largest eigenvalue and the entrywise $L^\alpha$-(quasi)norm of any symmetric matrix $A$, which we define by
\begin{align*}
\norm{A}_{\alpha} :=  \Big(\sum_{i,j} |a_{ij}|^\alpha \Big)^{1/\alpha}.
\end{align*}
In particular we show that for any $0 < \alpha < 2$ and any integer $k \geq 2$,
there exists an explicit and \emph{sharp}  constant $C(\alpha,k)>0$ such that for any network $G = (V,E,A)$ with maximum clique size $k$, 
\begin{align} \label{ms theorem}
\lambda_1(A) \leq   C(\alpha, k)  \norm{A}_{\alpha}.
\end{align} 
The constant $C(\alpha,k)$ is expressed in terms of the function $\phi_\alpha(k)$ defined in \eqref{var} {(see Proposition \ref{prop:spectral_bound_L_p} for details). Moreover, it turns out that $C(\alpha,k)$ does not depend on $k$ when $0< \alpha \leq 1$. 
The special case  $\alpha=2$ had been studied earlier where the bound counterpart to \eqref{ms theorem} reads as
\begin{align} \label{p=2}
\lambda_1(A)  \leq  \sqrt{\frac{k-1}{k}}    \norm{A}_{2}, 
\end{align}
and can be obtained as a straightforward consequence of the Motzkin-Straus theorem (see \cite[Proposition 3.1]{gn}).
}

We finish this discussion with a brief comparison of the heavy tailed case to the Gaussian case $\alpha=2.$ For the latter, it is shown in \cite{gn} that if we set
\begin{align}\label{keydef}
\bar{\psi}_\delta(k) :=    \frac{k(k-3)}{2} + \frac{1+\delta}{2} \frac{k}{k-1}
\end{align}
for integers $k\geq 2$, then
\begin{align} \label{112}
\lim_{n\ri}   -\frac{1}{\log n} \log  \mathbb{P}\left ( \lambda_1(Z) \geq \sqrt{2(1+\delta) \log n} \right ) =   \min_{k=2,3,\cdots} \bar{\psi}_\delta(k).
\end{align} 
{
While, $\lim_{\delta \to \infty} \argmin_{k\in \mathbb{N}_{\geq 2}} \bar{\psi}_\delta(k)    = \infty,$
it turns out that  when $1<\alpha<2$,  the  function $\phi_{\beta/2} (k)$ that appears in   \eqref{psi} becomes constant for large $k$ (see \eqref{uniform} in  Lemma \ref{lemma 02}), which implies that $\argmin_{k\in \mathbb{N}_{\geq 2}} \psi_{\alpha,\delta}(k)  $ stays bounded in $\delta$. 
Further, our proof indicates that the optimal way for the largest eigenvalue to be greater than $(1+\delta)\lambda_\alpha$  is to have a clique of size $\argmin_{k\in \mathbb{N}_{\geq 2}}  \psi _ {\alpha,\delta} (k)$ (in particular, 2, which is just a single edge, when $0<\alpha\leq 1$) in  the random graph $X = \cG_{n,\frac{d}{n}}$, and then have high valued edge-weights on  this clique. A counterpart result in the Gaussian case was proven in \cite{gn}, where it was shown that the corresponding clique size is given by $\argmin_{k\in \mathbb{N}_{\geq 2}} \bar{\psi}_\delta(k).$ Thus, by the above discussion, the governing clique size for $\alpha<2$ stays bounded as the deviation increases in contrast to the Gaussian case where the same goes to infinity.
}

}

\section{Spectral properties of weighted graphs} \label{sec:spectral_prop}

In this section we collect various bounds and other facts about spectral properties of graphs that will be in force throughout the article.

We start with, as outlined in the idea of proof in Section \ref{sec:idea_proof}, a key ingredient in our proofs for heavy-tailed weights in the form of a new deterministic bound on the largest eigenvalue in terms of the `entry-wise' $L^p$-(quasi)norm of the matrix, generalizing the classical Motzkin-Straus theorem \cite{turan} corresponding to $p=2$ case.

\subsection{Spectral norm and $L^p$-(quasi)norm}\label{swg}

For $p>0$,
we denote by $\norm{A}_{p}$ the entry-wise $L^p$-(quasi)norm of the symmetric matrix $A$:
\begin{align*}
\norm{A}_{p} :=   \Big(\sum_{1\leq i,j\leq n} |a_{ij}|^p\Big)^{1/p}.
\end{align*}
To state our bound for the largest eigenvalue of the symmetric matrix $A$ in terms of $\norm{A}_{p}$, we first recall the following auxiliary function which appeared in the statement of Theorem \ref{thm:heavy_upper}:
For  $\theta > 1$ and any integer $k\geq 2$, let 
\begin{align} \label{phi}
\phi_\theta (k)  := \sup_ {f = (f_1,\cdots,f_k): \norm{f}_1=1} \sum_{i,j\in [k], i \neq j} |f_i| ^\theta  |f_j|^\theta.
\end{align}
We will assume without loss of generality that the vector $f$ appearing above is non-negative.

\begin{proposition} \label{prop:spectral_bound_L_p}
Suppose that $1 < p  < 2$ and let $k \geq 2$ be an integer. Then, for any network $G = (V,E,A)$  such that the maximum size of clique contained in $G$ is $k$,
\begin{align} \label{case1}
\lambda_1(A) \leq    \phi_{ \frac{p}{2(p-1)}} (k)^{\frac{p-1}{p}}  \norm{A}_{p}.
\end{align}
In the case $0<p\leq 1$, for any network $G = (V,E,A)$,
\begin{align} \label{case2}
\lambda_1(A) \leq     2^{-\frac{1}{p}} \norm{A}_{p}.
\end{align}
\end{proposition}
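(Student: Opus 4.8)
The plan is to start from the variational characterization $\lambda_1(A) = \sup_{\norm{v}_2 = 1} v^\top A v = \sup_{\norm{v}_2=1} \sum_{i,j} a_{ij} v_i v_j$ and bound the right-hand side using Hölder's inequality against the entries of $A$. The natural pairing is $\sum_{i,j} |a_{ij}| |v_i v_j| \leq \norm{A}_p \cdot \big(\sum_{i,j} |v_i|^{q} |v_j|^{q}\big)^{1/q}$ where $q = p/(p-1)$ is the Hölder conjugate of $p$; note $q > 2$ since $1 < p < 2$. So the whole problem reduces to controlling $\sum_{i,j} |v_i|^{q} |v_j|^{q} = \big(\sum_i |v_i|^{q}\big)^2$ over unit $\ell_2$-vectors $v$, but crucially only over the \emph{support pattern} allowed by the graph: since $a_{ij} = 0$ whenever $i \not\sim j$, we only ever pair coordinates indexed by an edge (or a diagonal term, which vanishes).

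The key structural step is therefore: because $\lambda_1(A)$ is attained (or approximated) by an eigenvector, and more importantly because we may restrict attention to the subgraph induced on any vertex subset, a clean way to exploit the clique bound is to first reduce to the case where $G$ itself is a single clique (complete graph) on $k$ vertices. Concretely, I would argue that if $G$ has maximum clique size $k$, one can still bound $v^\top A v$ by something that, after optimizing $v$, only "sees" $k$ coordinates at a time — this is exactly where the Motzkin–Straus philosophy enters: the sup of $\sum_{i \neq j} x_i x_j$ over the probability simplex supported on a graph equals $1 - 1/\omega$ where $\omega$ is the clique number. Here we need the $\theta$-th power generalization: the relevant quantity is $\sup \sum_{i \neq j \in [k]} |f_i|^\theta |f_j|^\theta$ over $\norm{f}_1 = 1$, which is precisely $\phi_\theta(k)$. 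The substitution that makes this work is to set $f_i = |v_i|^2$ (so $\norm{f}_1 = \norm{v}_2^2 = 1$) and $\theta = q/2 = p/(2(p-1))$; then $\sum_{i,j} |v_i|^q |v_j|^q = \sum_{i,j} f_i^\theta f_j^\theta$, and restricting to the edge set plus the diagonal-exclusion gives $\sum_{i \neq j, i \sim j} f_i^\theta f_j^\theta \leq \phi_\theta(k)$ once we know the "active" coordinates sit inside a clique of size at most $k$. Combining, $\lambda_1(A) \leq \norm{A}_p \cdot \phi_{p/(2(p-1))}(k)^{1/q} = \phi_{p/(2(p-1))}(k)^{(p-1)/p} \norm{A}_p$, which is \eqref{case1}.

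For the case $0 < p \leq 1$, the quasi-norm behaves super-additively and the bound should become clique-independent. Here I would instead use the elementary inequality $\lambda_1(A) \leq \big(\sum_i \lambda_i(A)^2\big)^{1/2} = \big(\sum_{i,j} a_{ij}^2\big)^{1/2} = \norm{A}_2$ combined with the monotonicity of $\ell_p$-quasinorms, $\norm{A}_2 \leq \norm{A}_p$ for $p \leq 2$ — but that only gives constant $1$, not $2^{-1/p}$. To get the sharp constant $2^{-1/p}$, the point is that each off-diagonal entry of a symmetric matrix is counted \emph{twice} in $\norm{A}_p^p$ (as $a_{ij}$ and $a_{ji}$) while contributing to the Rayleigh quotient in a correlated way; more precisely, writing the sum over unordered edges $\{i,j\}$, one has $v^\top A v = 2\sum_{\{i,j\}\in E} a_{ij} v_i v_j$ and $\norm{A}_p^p = 2 \sum_{\{i,j\} \in E} |a_{ij}|^p$, and for $p \leq 1$ a single dominant edge already saturates the bound: $\lambda_1$ of a single weighted edge with weight $w$ is $|w|$, while $\norm{A}_p = (2|w|^p)^{1/p} = 2^{1/p}|w|$, giving exactly the factor $2^{-1/p}$. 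The general bound for $p \leq 1$ follows because $\big(\sum |a_{ij}|^2\big)^{1/2} \leq \big(\sum |a_{ij}|^p\big)^{1/p}$ crudely, and then one refines using the factor-of-two symmetry; I would run this via Hölder as in the first case but observe that for $\theta = p/(2(p-1)) \leq$ (appropriate range) the sup $\phi_\theta(k)$ is attained by a two-atom vector independently of $k$, yielding $\phi_\theta = 2 \cdot (1/2)^{2\theta} = 2^{1-2\theta}$ and hence the exponent bookkeeping collapses to $2^{-1/p}$.

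The main obstacle I anticipate is the reduction of a general graph with clique number $k$ to the clique-only estimate — i.e., rigorously justifying that the Hölder-dual quantity $\sum_{i,j: i\sim j} f_i^\theta f_j^\theta$ over the full graph is bounded by its value over a $k$-clique, $\phi_\theta(k)$. For $\theta = 1$ this is exactly the content of the Motzkin–Straus theorem, but for general $\theta$ one needs either a direct symmetrization/smoothing argument on the maximizing $f$ (pushing mass between non-adjacent vertices to show a maximizer is supported on a clique) or a convexity argument exploiting that $t \mapsto t^\theta$ for $\theta > 1$ is convex so mass concentrates. Establishing this "$\theta$-Motzkin–Straus" inequality, with the sharp constant $\phi_\theta(k)$ and a proof that it is attained on a clique, is the real work; everything else is Hölder plus bookkeeping. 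I would isolate it as a separate lemma before proving the proposition.
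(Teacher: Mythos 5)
Your argument for the case $1<p<2$ is essentially the paper's: the variational characterization, H\"older with conjugate $q=p/(p-1)$, the substitution $f_i=v_i^2$ turning the dual sum into $\sum_{i\sim j}f_i^{q/2}f_j^{q/2}$ with $\norm{f}_1=1$, and the reduction of the graph-constrained supremum to the clique value $\phi_{q/2}(k)$. You correctly isolate that reduction as the real content, and the paper proves it (Lemma \ref{same}) exactly by the symmetrization you suggest: given two non-adjacent vertices, move all the mass from the one with the smaller weighted neighborhood sum to the other, which does not decrease the objective, and iterate until the support is a clique. So the first half is fine and matches the paper.

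The case $0<p\leq 1$ as you have written it does not go through. You propose to ``run this via H\"older as in the first case'' with $\theta=p/(2(p-1))$, but for $p<1$ the conjugate exponent $p/(p-1)$ is negative and hence $\theta<0$, so H\"older in this form is unavailable and $\phi_\theta$ is not even defined (the paper requires $\theta>1$); for $p=1$ the conjugate is $\infty$. The value $2^{1-2\theta}$ you compute is $\phi_\theta(2)$ for $\theta>1$ and plays no role here. The correct (and easier) argument, which is the paper's, uses only two elementary facts: first, $|f_if_j|\leq\frac{1}{2}$ whenever $\norm{f}_2=1$ and $i\neq j$, so $\lambda_1(A)=2\sum_{i<j,\,i\sim j}a_{ij}f_if_j\leq\sum_{i<j,\,i\sim j}|a_{ij}|$; second, the monotonicity of $\ell^p$ quasinorms for $p\leq 1$, namely $\sum_e|a_e|\leq\big(\sum_e|a_e|^p\big)^{1/p}$, applied to the sum over \emph{unordered} edges --- this is where $2^{-1/p}$ appears, since $\norm{A}_p^p$ counts each off-diagonal entry twice. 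You mention all three ingredients (the factor-of-two symmetry, the single-edge saturation, quasinorm monotonicity), but the argument you actually assemble is the invalid H\"older route rather than this one, so this half needs to be rewritten.
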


{Before proving this proposition, we state some useful  lemmas.}
 {First, the next lemma identifies the structure that leads to the equality in the above expressions. We will need those characterizations when planting the structures that lead to an atypically large eigenvalue in the  heavy-tailed edge-weights case.}

\begin{lemma} \label{equality_L_p}
Assume that $1<p < 2$.
Then, for any  integer $k\geq 2$, there exist $k_1,k_2 \geq 0$ with $k_1+k_2\leq k$ and  $x,y\geq 0$ such that if $G  = (V,E) $ is a clique with  $V  = [k]$ and $A = (a_{ij})_{i,j \in [k]}$ is  a block matrix given by
\begin{align} \label{block}
a_{ij}=
\begin{cases}
x^2 \qquad  &i\neq j, i,j\in \{1,\cdots,k_1\} =: V_1 , \\
y^2  \qquad &i\neq j,  i,j\in \{k_1+1,\cdots,k_1+k_2\} =: V_2,\\
xy \qquad & i\in V_1, j\in V_2 \text{ or } i\in V_2,j\in V_1,\\
0 \qquad & \text{otherwise},
\end{cases}
\end{align}
 then  $A$ satisfies the equality in \eqref{case1}.

\noindent
In the case $0<p\leq 1$, the equality in \eqref{case2} holds when $A$ is the adjacency matrix of a clique of size 2, i.e. a graph consisting of a single edge.

{ Note that this matrix is not unique, as for any tuple $(k_1,k_2,x,y)$ and any constant $c$, the tuple $(k_1,k_2,cx,cy)$ also satisfies the equality.}
 
\end{lemma}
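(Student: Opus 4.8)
\textbf{Proof plan for Lemma \ref{equality_L_p}.}

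The plan is to analyze the variational problem \eqref{phi} defining $\phi_\theta(k)$ with $\theta = \frac{p}{2(p-1)} > 1$ (since $1<p<2$), and to show that its optimizer $f$ has at most two distinct nonzero absolute values, which then lets us read off the extremizing matrix $A$ from the equality case in the proof of Proposition \ref{prop:spectral_bound_L_p}. First I would recall from (the proof of) Proposition \ref{prop:spectral_bound_L_p} that the inequality $\lambda_1(A) \le \phi_\theta(k)^{(p-1)/p}\norm{A}_p$ is obtained by decomposing the Perron eigenvector $v$ of $A$ (supported on a clique of size $k$, by Perron--Frobenius applied to $|A|$) via Hölder/power-mean inequalities; tracing through when each inequality used is tight identifies that equality forces (i) $A$ is supported on a $k$-clique with all nonzero off-diagonal entries equal to $|v_i v_j|$ up to sign, i.e. $a_{ij} = \pm |v_i||v_j|$ for a nonnegative vector $v$, and (ii) the vector $w$ with $w_i = |v_i|^{\text{(appropriate power)}}$ must be an extremizer of $\phi_\theta(k)$ after renormalizing to $\norm{w}_1 = 1$. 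So the equality case reduces to understanding the extremizers of $\phi_\theta(k)$.

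Next I would study $\phi_\theta(k) = \sup\{\sum_{i\ne j} f_i^\theta f_j^\theta : f \ge 0,\ \norm{f}_1 = 1\}$. By compactness the sup is attained. Writing $g_i = f_i^\theta$, the objective is $(\sum_i g_i)^2 - \sum_i g_i^2$ subject to $\sum_i g_i^{1/\theta} = 1$, $g \ge 0$. A Lagrange multiplier computation at an interior critical point (over the coordinates where $f_i > 0$) gives, for each active coordinate, an equation of the form $2\theta f_i^{\theta-1}\big(\sum_{j\ne i} f_j^\theta\big) = \mu$, i.e. $f_i^{\theta-1}\big(S - f_i^\theta\big) = \text{const}$, where $S = \sum_j f_j^\theta$. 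This is a single one-variable equation $h(f_i) = \text{const}$ with $h(t) = t^{\theta-1}(S - t^\theta)$; since $h$ is (for the relevant range) unimodal — it increases then decreases — it takes each value at most twice, so the nonzero coordinates of the optimizer take at most two distinct values $x, y > 0$, with multiplicities $k_1, k_2 \ge 0$, $k_1 + k_2 \le k$. Feeding $v_i \in \{x', y'\}$ back into the structure from the previous paragraph (with $x' = x^{\text{power}}$, etc., and relabeling so that the $k_1$ coordinates with value $x$ come first) produces exactly the block matrix \eqref{block}: $a_{ij} = x^2$ within $V_1$, $y^2$ within $V_2$, $xy$ across, $0$ outside the clique. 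Then I would note the claimed non-uniqueness: scaling $v \mapsto cv$ scales $A \mapsto c^2 A$, which scales both sides of \eqref{case1} equally, hence $(k_1,k_2,cx,cy)$ also works.

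For the case $0 < p \le 1$, the bound \eqref{case2} reads $\lambda_1(A) \le 2^{-1/p}\norm{A}_p$, and this came (in the proof of Proposition \ref{prop:spectral_bound_L_p}) from the crude estimate $\lambda_1(A) \le \max_{i<j}(|a_{ij}| + |a_{ji}|) = 2\max_{i<j}|a_{ij}|$ combined with $\norm{A}_p \ge (2|a_{i_0 j_0}|^p)^{1/p} = 2^{1/p}\max_{i<j}|a_{ij}|$ for the maximizing pair; for equality both must be tight, which forces all mass to sit on a single edge, i.e. $A$ is (a scalar multiple of) the adjacency matrix of $K_2$. I would just verify directly that for $A = \begin{pmatrix} 0 & 1 \\ 1 & 0\end{pmatrix}$ one has $\lambda_1(A) = 1$ and $\norm{A}_p = 2^{1/p}$, so equality indeed holds.

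I expect the main obstacle to be the "at most two distinct values" step: justifying rigorously that the one-variable function $h(t) = t^{\theta-1}(S-t^\theta)$ is unimodal on the relevant interval and that the Lagrange analysis captures the true maximizer (i.e.\ handling boundary behavior and confirming no coordinate is driven to $0$ in a way that changes the count). One should be slightly careful that $S$ itself depends on the configuration, so the argument is really that \emph{conditional} on the support size $k_0 = k_1 + k_2$ and on $S$, the optimizer within that stratum has the two-value structure; since the global optimum is the best over all strata, it too has this structure. This is the only genuinely analytic point; everything else is bookkeeping transferring the extremality condition through the (already-established) proof of Proposition \ref{prop:spectral_bound_L_p}.
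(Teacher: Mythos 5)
Your proposal is correct and follows essentially the same route as the paper: reduce to the two-value structure of the optimizer of $\phi_\theta(k)$ (the paper's Lemma \ref{lemma 02}\eqref{phi:max_is_blocks}), build the block matrix so that $a_{ij}^p\propto f_i^{q/2}f_j^{q/2}$, and verify equality by pairing the variational lower bound at $\tilde f_i=\sqrt{f_i}$ with Proposition \ref{prop:spectral_bound_L_p}. The only cosmetic difference is in the ``at most two distinct values'' step, where you argue via unimodality of $h(t)=t^{\theta-1}(S-t^\theta)$ while the paper applies the mean value theorem to the convex map $x\mapsto x^{\bar\theta}$; these are equivalent, and your handling of the $0<p\leq 1$ case matches the paper's single-edge verification.
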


{We defer the proof of this lemma to the end of this section.}
 To prove Proposition \ref{prop:spectral_bound_L_p}, we rely on an alternative characterization of $\phi_\theta(k)$, which is given in the following lemma. It turns out that  $\phi_\theta(k)$  is equal to the supremum of the same objective function {(which we call $\widehat{\phi}_\theta(k)$)} over all graphs  $G$ whose \emph{maximum clique size}  is $k$.  

\begin{lemma} \label{same}

For  $\theta >  1$ and an integer $k\geq 2$, define
\begin{align} \label{varphi}
\widehat{\phi}_\theta (k) := \sup_{G=(V,E) } \sup_{f = (f_1,\cdots,f_{|V|}): \norm{f}_1=1 } \sum_{i,j\in [|V|], i\sim j} |f_i |^\theta | f_j|^\theta.
\end{align}  
{Here, the first supremum is taken over all graphs $G$ whose maximum clique size is $k$. Recall that in the summation,} $i\sim j$ means that vertices $i$ and $j$ are connected by an edge. Then, we have that
\begin{align*}
\widehat{\phi}_\theta  (k)= \phi_\theta(k).
\end{align*}

\end{lemma}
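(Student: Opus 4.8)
The plan is to prove the two inequalities $\widehat{\phi}_\theta(k) \geq \phi_\theta(k)$ and $\widehat{\phi}_\theta(k) \leq \phi_\theta(k)$ separately.

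For $\widehat{\phi}_\theta(k) \geq \phi_\theta(k)$: this direction is immediate. Given any feasible $f = (f_1,\dots,f_k)$ with $\norm{f}_1 = 1$ achieving a value close to $\phi_\theta(k)$ in \eqref{phi}, take $G$ to be the complete graph $K_k$ on $[k]$, whose maximum clique size is exactly $k$, and use the same $f$ as the test vector in \eqref{varphi}. Since every pair $i \neq j$ in $[k]$ is an edge of $K_k$, the sum $\sum_{i \sim j} |f_i|^\theta |f_j|^\theta$ equals $\sum_{i \neq j} |f_i|^\theta |f_j|^\theta$, so $\widehat{\phi}_\theta(k) \geq \phi_\theta(k)$.

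For the reverse inequality $\widehat{\phi}_\theta(k) \leq \phi_\theta(k)$: fix an arbitrary graph $G = (V,E)$ with maximum clique size $k$ and an arbitrary test vector $f$ with $\norm{f}_1 = 1$; I need to show $\sum_{i \sim j} |f_i|^\theta |f_j|^\theta \leq \phi_\theta(k)$. The natural approach is a \emph{smoothing / mass-shifting} argument on the support of $f$. Since $\theta > 1$, the function $t \mapsto t^\theta$ is strictly convex on $[0,\infty)$, which should make it favorable to push mass to the extremes. The key structural fact is that $G$ has no clique of size $k+1$. I would argue that among all optimal (or near-optimal) configurations, one can always reduce to the case where the support of $f$ induces a clique: if two vertices $i,j$ in the support are \emph{not} adjacent, then consider transferring all the mass of one onto the other (i.e. replace $(|f_i|, |f_j|)$ by $(|f_i| + |f_j|, 0)$, or the reverse). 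Because $i \not\sim j$, the term $|f_i|^\theta |f_j|^\theta$ does not appear in the objective, and one checks via convexity of $t \mapsto t^\theta$ and a direct comparison of the affected edge-contributions (the edges from $i$ and from $j$ to their common neighbors, plus the fact that moving mass to a single vertex only increases $\sum |f|^\theta$-type quantities) that the objective does not decrease under such a transfer. Iterating, the support shrinks to an independent-set-free configuration, i.e. a clique, which by the maximum clique hypothesis has size at most $k$; on such a configuration the objective is by definition at most $\phi_\theta(k)$ (padding with zeros if the clique has fewer than $k$ vertices, which is harmless since \eqref{phi} allows coordinates to vanish).

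The main obstacle I anticipate is verifying rigorously that the mass-transfer step between two non-adjacent support vertices never decreases the objective — one must handle the interaction with common neighbors carefully and pick the correct direction of transfer (onto whichever endpoint yields a net gain), and one should check that a maximizer of \eqref{varphi} actually exists (the supremum over $f$ on the compact simplex is attained for each fixed $G$; but the supremum over graphs $G$ is over infinitely many graphs, so some care is needed — however, once we know every configuration reduces to a clique of size $\leq k$, the graph-supremum is harmless). A clean way to organize this is: first prove that for \emph{fixed} $G$, any maximizer $f$ of the inner sup must be supported on a clique (by the transfer argument, using that a maximizer exists by compactness); then conclude that the value for that $G$ is at most $\phi_\theta(j)$ for $j = $ (size of that clique) $\leq k$, and finally note $\phi_\theta(j) \leq \phi_\theta(k)$ for $j \leq k$ (monotonicity, again by padding with zeros). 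Combining the two directions gives $\widehat{\phi}_\theta(k) = \phi_\theta(k)$.
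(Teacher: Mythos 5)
Your proposal is correct and follows essentially the same route as the paper: the easy direction via $G=K_k$, and the hard direction via a mass-transfer argument in which all weight of one of two non-adjacent support vertices is moved to the other (in the favorable direction), justified by convexity of $t\mapsto t^\theta$ for $\theta>1$ (equivalently, the paper's observation that $\max_{x+y=s}\,ax^\theta+by^\theta=as^\theta$ when $a\geq b\geq 0$, with the neighbor-sums $a,b$ unaffected by the transfer since the two vertices are non-adjacent), iterated until the support induces a clique of size at most $k$. The points you flag as needing care (direction of transfer, interaction with common neighbors, attainment of the inner supremum, padding with zeros) are exactly the ones the paper handles, and they go through as you describe.
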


We defer the proof to the end of this section.
Given this lemma, one can  conclude the proof of  Proposition \ref{prop:spectral_bound_L_p}.
\begin{proof}[Proof of Proposition \ref{prop:spectral_bound_L_p}]
Let $V = [n] $ and $a_{ij}$s denote the edge-weights.
By the variational characterization of the largest eigenvalue,
\begin{align} \label{variational}
\lambda_1(A) = \sup_{ \norm{f}_2  =1} \sum_{i\sim j}  a_{ij} f_if_j.
\end{align}
We now consider the different ranges of $p$. {In the case $1<p<2$, we apply  H\"older's inequality to bound the above quantity,  whereas in the case $0<p\leq 1$,  we simply use the monotomicity of $\ell^p$ norms.}

\textbf{Case 1: $\bm{1<p <  2}$.}
Setting $q=\frac{p}{p-1} >2$ to be the conjugate of $p$,  by H\"older's inequality,
\begin{align*}
 \sum_{i\sim j}  a_{ij} f_if_j \leq  \Big (  \sum_{i\sim j}  | a_{ij}|^p    \Big)^{\frac{1}{p}}  \Big( \sum_{i\sim j}   | f_i|^q|f_j|^q \Big)^{\frac{1}{q}} .
\end{align*}
By the definition of $\widehat{\phi}_{\frac{q}{2}}$ and since {$\phi_{ \frac{q}{2} }= \widehat{\phi}_{ \frac{q}{2} }$} (see Lemma \ref{same}),  for any vector $f$ such that $ \norm{f}_2  =1$,
\begin{align*}
\sum_{i\sim j}   | f_i|^q|f_j|^q \leq  \widehat{\phi}_{ \frac{q}{2} } (k)   =  \phi_{ \frac{q}{2} } (k)
\end{align*} 
(note that in \eqref{varphi}, supremum is taken over $\norm{f}_1=1$). Therefore,  we have 
\begin{align*}
\lambda_1(A)   \leq  \phi_{ \frac{q}{2}} (k)^{\frac{1}{q}} \norm{A}_p  =  \phi_{ \frac{p}{2(p-1)}} (k)^{\frac{p-1}{p}} \norm{A}_p.
\end{align*}  

\textbf{Case 2: $\bm{0<p\leq 1}$.}
Since $|f_if_j| \leq \frac{1}{2}$ for any $i\neq j,$ whenever $\norm{f}_2=1$,   by the monotonicity of $\ell^p$ norms, 
\begin{align*}
\lambda_1(A)  = 2 \sup_{ \norm{f}_2  =1} \sum_{i<j, i\sim j}  a_{ij} f_if_j \leq    \sum_{i<j, i\sim j}  |a_{ij} | \leq \Big( \sum_{i<j, i\sim j}  | a_{ij} |^p \Big )^{\frac{1}{p}} = 2^{-\frac{1}{p}} \Big ( \sum_{i\sim j}  | a_{ij} |^p   \Big)^{\frac{1}{p}}.
\end{align*} 
  
\end{proof}

We now establish some useful properties of the function $\phi_\theta$.

\begin{lemma} \label{lemma 02} Let $\theta > 1$. Then,
\begin{enumerate}
\item \label{phi:max_is_blocks} For each $k\geq 2$, there exists a $k$-dimensional vector $f$ of the form $(x,\cdots,x,y,\cdots,y,0,\cdots,0)$ which attains the maximum of $\phi_\theta(k)$  in  \eqref{phi}.  In other words, there exist $k_1,k_2\geq 0$ with $k_1+k_2\leq k$ and $x,y\geq 0$ such that $k_1 x + k_2 y=1$ and
\begin{align*}
\phi_\theta (k)  =  \sum_{i,j\in [k], i \neq j} f_i ^\theta  f_j^\theta
\end{align*}
holds with $f_1=\cdots= f_{k_1}  = x$,  $f_{k_1+1} = \cdots = f_{k_1+k_2} = y$ and $f_{k_1+k_2+1}  = \cdots  = f_k = 0$.
 {
\item \label{phi:general_bound} For any $k \geq 2$, 
\begin{equation} 
\phi_\theta(k) \leq \Big(\frac{2\theta-2}{2\theta-1}  \Big)^{2\theta-2}- \Big(\frac{2\theta-2}{2\theta-1}  \Big)^{2\theta-1}.
\end{equation}
}
\item \label{phi:bound_small_k} For  any
$k  \leq  \frac{2\theta-1}{2\theta-2}$,
\begin{align} \label{912}
 \phi_\theta(k) =  \frac{1}{k^{2\theta-2}} - \frac{1}{k^{2\theta-1}}.
\end{align}
In addition,
\begin{equation} \label{clique 2} 
\phi_\theta(2) = \frac{1}{2^{2 \theta - 1}}.
\end{equation}
\item \label{uniform} $ \phi_\theta(k)$ is non-decreasing and becomes constant for large enough $k$.
%\item \label{phi:clique2}
\end{enumerate}

\end{lemma}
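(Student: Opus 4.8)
The plan is to reduce all four parts to two structural facts about the functional
\[
F(f):=\Big(\sum_{i=1}^{k}|f_i|^{\theta}\Big)^{2}-\sum_{i=1}^{k}|f_i|^{2\theta}=S(f)^{2}-\sum_{i}|f_i|^{2\theta},\qquad S(f):=\sum_{i=1}^{k}|f_i|^{\theta},
\]
noting that $F(f)$ is exactly the objective in \eqref{phi} and that (using the reduction to $f\ge 0$) $\phi_\theta(k)=\max_{f\in\Delta_k}F(f)$, where $\Delta_k=\{f\ge 0:\sum_i f_i=1\}$ is compact. The two facts are: (a) at a maximizer the nonzero coordinates take at most two values; and (b) a sharp lower bound $\sum_i f_i^{2\theta}\ge S^{r}$ for a suitable exponent $r$.

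\textbf{Part \eqref{phi:max_is_blocks}.} A maximizer $f^\star$ exists by continuity and compactness. Restricting to its support $I=\{i:f^\star_i>0\}$, $f^\star$ is an interior maximizer of $F$ on the sub-simplex over $I$, so the Lagrange condition $\partial_iF(f^\star)=\lambda$ holds for $i\in I$. Since $\partial_iF=2\theta f_i^{\theta-1}\bigl(S-f_i^{\theta}\bigr)$, every positive coordinate $u=f_i^\star$ satisfies $G(u):=Su^{\theta-1}-u^{2\theta-1}=\lambda/(2\theta)$. The key point is that $G'(u)=u^{\theta-2}\bigl(S(\theta-1)-(2\theta-1)u^{\theta}\bigr)$ changes sign exactly once on $(0,\infty)$, so $G$ is unimodal and the equation $G(u)=c$ has at most two roots; hence $f^\star$ has at most two distinct positive values, i.e.\ the asserted block form.

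\textbf{Parts \eqref{phi:general_bound} and \eqref{phi:bound_small_k}.} The crux is the inequality
\[
\sum_{i}f_i^{2\theta}\ \ge\ S^{\,r},\qquad r:=\frac{2\theta-1}{\theta-1}>2,
\]
which I would obtain from the log-convexity of $p\mapsto\log\sum_i f_i^{\,p\theta}$ by interpolating between the exponents $p=1/\theta$ (where the sum is $\sum_i f_i=1$) and $p=2$ (where it is $\sum_i f_i^{2\theta}$) at the intermediate exponent $p=1$ (where it is $S$). Combined with $S\le\sum_i f_i=1$ (because $f_i^{\theta}\le f_i$ for $f_i\in[0,1]$), this yields $\phi_\theta(k)\le\sup_{s\in(0,1]}g(s)$ with $g(s):=s^{2}-s^{r}$; one-variable calculus shows $g$ is maximized on $(0,1]$ at $s^\star=\bigl(\tfrac{2\theta-2}{2\theta-1}\bigr)^{\theta-1}$ with $g(s^\star)=\bigl(\tfrac{2\theta-2}{2\theta-1}\bigr)^{2\theta-2}-\bigl(\tfrac{2\theta-2}{2\theta-1}\bigr)^{2\theta-1}$, which is \eqref{phi:general_bound}. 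For \eqref{phi:bound_small_k}, the power-mean inequality gives $S\ge k^{1-\theta}$ with equality at the uniform vector, and the condition $k\le\frac{2\theta-1}{2\theta-2}$ is exactly what makes $k^{1-\theta}\ge s^\star$, so $g$ is nonincreasing on $[k^{1-\theta},1]$ and $\phi_\theta(k)\le g(k^{1-\theta})=k^{2-2\theta}-k^{1-2\theta}$; since the uniform vector makes every inequality above an equality, it attains this value, giving \eqref{912}. The value \eqref{clique 2} follows either from \eqref{912} (when $\theta\le 3/2$) or, for any $\theta$, from the one-line computation $\phi_\theta(2)=2\sup_{x+y=1}(xy)^{\theta}=2\cdot 4^{-\theta}$.

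\textbf{Part \eqref{uniform}.} Monotonicity is immediate: padding a competitor for $\phi_\theta(k)$ with a zero coordinate gives one for $\phi_\theta(k+1)$ of equal value; together with \eqref{phi:general_bound} this shows $L:=\sup_k\phi_\theta(k)=\lim_k\phi_\theta(k)<\infty$. To see that $\phi_\theta$ is eventually equal to $L$, I would argue by contradiction: if $\phi_\theta(k)<L$ for all $k$, pick optimal two-block vectors $v^{(k)}$ (part \eqref{phi:max_is_blocks}) with larger value $x_k$ (the maximal coordinate) appearing $p_k$ times and smaller value $y_k$ appearing $q_k$ times. From $\phi_\theta(k)\le S(v^{(k)})^{2}\le x_k^{2\theta-2}$ one gets $x_k\ge c_0>0$, hence $p_k\le 1/x_k$ is bounded; if $q_k$ were bounded too then $v^{(k)}$ would have support at most a fixed $m$ infinitely often, forcing $\phi_\theta(m)\ge L$, a contradiction, so $q_k\to\infty$ and $y_k\to 0$ along a subsequence. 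The $x$--$y$ and $y$--$y$ contributions to $F(v^{(k)})$ are then $O(y_k^{\theta-1})=o(1)$, so $p_k(p_k-1)x_k^{2\theta}\to L$; passing to a further subsequence with $p_k\equiv p^\star$ (necessarily $p^\star\ge 2$ since $L\ge\phi_\theta(2)>0$) and $x_k\to x^\star\in(0,1/p^\star]$, the uniform vector on $p^\star$ coordinates has objective $p^\star(p^\star-1)(p^\star)^{-2\theta}\ge L$, so $\phi_\theta(p^\star)\ge L$ --- the desired contradiction. I expect part \eqref{uniform} to be the main obstacle: the first three parts reduce, after the two structural reductions, to elementary one-variable calculus, whereas showing that $\phi_\theta$ genuinely stabilizes (rather than merely converging to $L$) requires the above compactness argument to rule out optimal supports of unbounded size.
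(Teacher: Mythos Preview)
Your proof is correct and follows essentially the same route as the paper's: Lagrange multipliers plus a two-root argument for \eqref{phi:max_is_blocks}, the same H\"older/log-convexity inequality (you parametrize by $S=\sum f_i^{\theta}$ where the paper parametrizes by $r=\sum f_i^{2\theta}$, but these are equivalent since the inequality is $r\ge S^{(2\theta-1)/(\theta-1)}$) followed by one-variable calculus for \eqref{phi:general_bound}--\eqref{phi:bound_small_k}, and for \eqref{uniform} the same use of the two-block structure to bound the $y$-block contribution and force the support to be bounded. The only notable presentational difference is in \eqref{uniform}: the paper splits into a dichotomy (either $\phi_\theta(k)=\sup_n(n^{2-2\theta}-n^{1-2\theta})$ for all large $k$, or strict inequality eventually) before the compactness step, whereas you argue directly by contradiction against $\phi_\theta(k)<L$ for all $k$ and extract a convergent subsequence of $(p_k,x_k)$ to compare with the uniform vector on $p^\star$ coordinates --- this is arguably slightly cleaner, but the content is the same.
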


We give a brief interpretation  of this lemma. The statement
 \eqref{phi:max_is_blocks} implies that for any $k\geq 2$, maximum of $\phi_\theta(k)$ in  \eqref{phi} is attained at the vector $f$ with at most two distinct non-zero elements. {\eqref{phi:general_bound} states a general upper bound for the function $\phi_\theta(k)$. \eqref{phi:bound_small_k} states that for   $k  \leq  \frac{2\theta-1}{2\theta-2}$, the maximum of $\phi_\theta(k)$ is attained at the $k$-dimensional vector $(\frac{1}{k},\cdots,\frac{1}{k})$. 
The fact that $\phi_\theta(k)$ becomes constant for large $k$, stated in \eqref{uniform}, is a crucial ingredient in our analysis.}
 
\begin{proof}[Proof of Lemma \ref{lemma 02}] $\empty$

\textbf{Proof of  \eqref{phi:max_is_blocks}.}
{We assume, without loss of generality, that the supremum is taken over all $k$-tuples} $(f_1,\cdots,f_k)$ with $\sum_{i=1}^k  f_i=1$ and $f_i\geq 0$. 
Since the collection of  such $k$-tuples is a compact set, the function   $(f_1,\cdots,f_k) \mapsto \sum_{i,j\in [k], i\neq j} f_i^\theta f_j^\theta $ over this set attains its maximum. {Note that there may be several $k$-tuples which attain the maximum, and in this case we arbitrarily choose one of them.} We further assume, without loss of generality, that for some integer $1\leq \ell \leq k$, the maximum is attained in the interior of the $(\ell-1)$-dimensional simplex $f_1+\cdots + f_\ell=1$ with $f_{\ell+1} =\cdots = f_k = 0$ (i.e. $0<f_1,\cdots,f_\ell<1$). {By the Lagrange multiplier theorem \cite[Chapter 14.8]{lag}, which states that  the gradient of the objective function at a local extreme point is a scalar multiple of the gradient of the constraint function,} applied to our maximization problem on this simplex, setting $s := f_1^\theta + \cdots  + f_\ell^\theta$, we have
\begin{align*}
(s - f_1^\theta ) f_1^{\theta-1} =  \cdots = (s - f_\ell^\theta ) f_\ell^{\theta-1}.
\end{align*}
Defining $g_i:=f_i^{\theta-1}$ and $\bar{\theta}:= \frac{2\theta-1}{\theta-1}>1 $, 
this implies that {the quantities
$
\frac{g_i^{\bar{\theta}}- g_j^{\bar{\theta}} }{g_i-g_j}
$
are all equal to $s$ for any $1\leq i<j\leq \ell$ for which $g_i\neq g_j$}. By the mean value theorem applied to the convex function $x \mapsto x^{\bar{\theta}}$, one can deduce that there are at most two distinct (non-zero) values that $g_i$s (and thus $f_i$s)  for $1\leq i \leq \ell$ can take.  {This concludes the proof of \eqref{phi:max_is_blocks}.}

\textbf{Proof of \eqref{phi:general_bound}.}
Applying H\"older's inequality  and using that $\sum_{i=1}^k f_i  = 1$, 
\begin{align} \label{01}
\sum_{i=1}^k f_i^\theta \leq   \left ( \sum_{i=1}^k f_i  \right )^ \frac{\theta}{2\theta-1}  \left ( \sum_{i=1}^k f_i^{2\theta} \right )^ \frac{\theta-1}{2\theta-1} = \left ( \sum_{i=1}^k f_i^{2\theta} \right )^ \frac{\theta-1}{2\theta-1}.
\end{align}
Thus, setting $r :=   \sum_{i=1}^k f_i^{2\theta}$, we have
\begin{align} \label{353}
\sum_{i,j\in [k], i \neq j} f_i^\theta f_j^\theta  = \left (\sum_{i=1}^k f_i^\theta  \right )^2 -  \sum_{i=1}^k f_i^{2\theta}\leq r^{\frac{2\theta-2}{2\theta-1}} - r.
\end{align}
{The function $t \mapsto t^{\frac{2\theta-2}{2\theta-1}} - t$ is increasing} on $\Big (0, \big (\frac{2\theta-2}{2\theta-1} \big)^{2\theta-1} \Big )$ and decreasing on $\Big ( \big (\frac{2\theta-2}{2\theta-1} \big )^{2\theta-1}, \infty \Big )$.
 Thus, 
\begin{align} \label{02}
\sum_{i,j\in [k], i \neq j} f_i^\theta f_j^\theta  \leq   \left ( \frac{2\theta-2}{2\theta-1}  \right )^{2\theta-2}- \left ( \frac{2\theta-2}{2\theta-1}  \right )^{2\theta-1}.
\end{align}

Note that the equality above may not be attained in general. In fact, if the equality is attained, then by the equality condition in the H\"older's inequality \eqref{01}, $f_1=\cdots=f_m = \frac{1}{m}$ and $f_{m+1} = \cdots  = f_k = 0$ for some integer $1\leq  m\leq  k$. Also, in order that \eqref{02} becomes an equality,  $  \sum_{i=1}^k f_i^{2\theta} = r = \big (\frac{2\theta-2}{2\theta-1} \big )^{2\theta-1} $. Since $  \sum_{i=1}^k f_i^{2\theta} =m\cdot (\frac{1}{m})^{2\theta}  =   (\frac{1}{m})^{2\theta-1}$,  the equality in \eqref{02} is  possible only when  $\frac{2\theta-1}{2\theta-2}$ is a positive integer.

\textbf{Proof of \eqref{phi:bound_small_k}.}
We first prove the first part of the statement. By  H\"older's inequality,
\begin{align} \label{holder}
1 = \sum_{i=1}^k f_i \leq \Big  (\sum_{i=1}^k f_i^{2\theta}\Big)^{\frac{1}{2\theta}} k^{1-\frac{1}{2\theta}}.
\end{align}
Hence, if $k\leq \frac{2\theta-1}{2\theta-2}$, then $ r= \sum_{i=1}^k f_i^{2\theta} \geq \left ( \frac{1}{k} \right )^{2\theta-1} \geq \big ( \frac{2\theta-2}{2\theta-1} \big)^{2\theta-1}$. Thus, recalling that the function $t \mapsto t^{\frac{2\theta-2}{2\theta-1}} - t$ is decreasing on $\Big(\big(\frac{2\theta-2}{2\theta-1}\big )^{2\theta-1},\infty \Big)$ and using $r \geq (\frac{1}{k})^{2\theta  - 1}$, 
\begin{align*}
\sum_{i,j\in [k], i \neq j}  f_i^\theta f_j^\theta \overset{\eqref{353}}{\leq}   r^{\frac{2\theta-2}{2\theta-1}} -r \leq  \frac{1}{k^{2\theta-2}} - \frac{1}{k^{2\theta-1}}.
\end{align*}
One can also deduce that the maximum of $\phi_\theta(k)$ is attained when  $f_1=\cdots = f_k = \frac{1}{k}$. To see this, by the above inequality, if $f$ attains the maximum, then $r  =  (\frac{1}{k})^{2\theta  - 1}$, which implies that $f$ satisfies the equality in   \eqref{holder}. By the equality condition in the H\"older's inequality, all the $f_i$s are same and thus  $f_1=\cdots = f_k = \frac{1}{k}$.

The second statement \eqref{clique 2} follows from the fact that $|xy| \leq \frac{1}{4}$ whenever $|x|+|y|=1$.

\textbf{Proof of \eqref{uniform}.}
 It is straightforward to observe that  $\phi_\theta(k)$ is non-decreasing in $k$ since as $k$ increases, the supremum is taken over a larger class of vectors $f$. We now show that $ \phi_\theta(k)$ becomes constant for large enough $k$. By considering $f_1=\cdots = f_k = \frac{1}{k}$, we have
\begin{align*}
\phi_\theta( k) \geq   \frac{1}{k^{2\theta-2}} - \frac{1}{k^{2\theta-1}}.
\end{align*}
Since $\phi_\theta(k)$ is non-decreasing in $k$ and the function $t \mapsto  \frac{1}{t^{2\theta-2}} - \frac{1}{t^{2\theta-1}}$ is decreasing for any large enough $t$, we have  that for large enough $k$,
\begin{align*}
\phi_\theta( k) \geq  \sup_n \Big (  \frac{1}{n^{2\theta-2}} - \frac{1}{n^{2\theta-1}} \Big).
\end{align*}
If the equality holds for all large enough $k$, we are done.
Otherwise, there is $K_0>0$ such that
\begin{align}\label{355}
 \phi_\theta(k) > \sup_n  \Big ( \frac{1}{n^{2\theta-2}} - \frac{1}{n^{2\theta-1}} \Big)
\end{align}  for  all $k \geq K_0$. We show that this  implies that the the support of the maximizer $f$ of $\phi_\theta(k)$  in \eqref{phi} is uniformly bounded in $k$.

 By the  statement \eqref{phi:max_is_blocks} of this lemma, for any $k$, the maximum of the objective function $\phi_\theta(k)$ is attained at some vector $f =  (f_i)_i$ with $f_1 = \cdots = f_{k_1} = x$, $f_{k_1+1}= \cdots = f_{k_1+k_2} = y$, and $f_{k_1+k_2+1} = \cdots = f_k  =  0$ for some $k_1,k_2\geq 0$ with $k_1+k_2 \leq k$ and $x,y\geq 0$. Thus, if the support of $f$ is not bounded in $k$ (without loss of generality we assume $k_2\rightarrow \infty$), then for sufficiently small $\iota>0$,
\begin{align*}
\phi_\theta(k) = 2{k_1 \choose 2}x^{2\theta } +  2{k_2 \choose 2}y^{2\theta }  +2 k_1k_2  x^\theta y^\theta & \leq  2 {k_1 \choose 2}\frac{1}{k_1^{2\theta}}  + \Big[ 2 {k_2 \choose 2} \frac{1}{k_2^{2\theta}}  +2 k_2   \frac{1}{k_2^\theta}  \Big] \\
& \leq  \frac{1}{k_1^{2\theta-2}} - \frac{1}{k_1^{2\theta-1}}  + \iota \\
& \leq  \sup_n  \Big ( \frac{1}{n^{2\theta-2}} - \frac{1}{n^{2\theta-1}} \Big) + \iota \overset{\eqref{355}}{<} \phi_\theta(k),
\end{align*}  
{
where we used $k_1 x^\theta \leq k_1 x \leq 1, y\leq \frac{1}{k_2}$ in the first inequality and $k_2\rightarrow \infty,\theta>1$ in the second inequality. The final RHS bound yields a contradiction.}  Hence, we conclude that  the support {of the vector $f$ which maximizes} the objective function  $\phi_\theta(k)$  is uniformly bounded in $k$.  This implies that $\phi_\theta(k)$ becomes constant for large $k$.
\end{proof}
 
Using this lemma, one can  establish Lemma \ref{equality_L_p} which provides the equality condition of the inequalities in Proposition \ref{prop:spectral_bound_L_p}.

\begin{proof}[Proof of Lemma \ref{equality_L_p}]
 
{Let us first consider the case  $1<p <  2$.} Let $q > 2$ be the conjugate of $p$.  By   Lemma \ref{lemma 02},  there exist $k_1,k_2\geq 0$ with $k_1+k_2\leq k$  and $x,y\geq 0$ such that the vector $f = \left (f_1,\cdots,f_{[k]} \right )$ defined by
  \begin{align*}
  f_i = 
  \begin{cases}
  x \qquad &i \in   \{1,\cdots,k_1\} =: V_1 , \\
  y \qquad &i \in \{k_1+1,\cdots,k_1+k_2\} =: V_2, \\
  0 \qquad & \text{otherwise}
  \end{cases}
\end{align*}
satisfies  $\norm{f}_1=1$ and
\begin{align*}
\phi_{\frac{q}{2}} (k)  =  \sum_{i,j\in [k], i \neq j} f_i ^\frac{q}{2}  f_j^ \frac{q}{2}.
\end{align*}

Now define the vector $\tilde{f} = \left ( \tilde{f}_1,\cdots, \tilde{f}_{k} \right )$ by setting $\tilde{f}_i = \sqrt{f_i}$ so that $\lVert \tilde{f} \rVert_2 = \norm{f}_1=1$.
Next, define the $k \times k $ matrix $A = (a_{ij})_{i,j\in [k]}$ by
\begin{align*} 
a_{ij}:=
\begin{cases}
x^\frac{1}{p-1} \qquad  &i\neq j,i,j\in   V_1 ,\\
y^\frac{1}{p-1} \qquad &i\neq j,  i,j\in  V_2,\\
x^\frac{1}{2(p-1)} y^\frac{1}{2(p-1)}  \qquad & i\in V_1, j\in V_2 \text{ or } i\in V_2,j\in V_1,\\
0 \qquad & \text{otherwise}.
\end{cases}
\end{align*}
Note that we defined $A$ so that $a_{ij}^p = f_i ^\frac{q}{2}  f_j^ \frac{q}{2} =  \tilde{f}_i^q \tilde{f}_j^q$ for $i\neq j$. This implies that
\begin{align} \label{03} 
\norm{A}_{p} =  \Bigg(  \sum_{i,j\in  [k] , i\neq j}   \tilde{f}_i^q \tilde{f}_j^q \Bigg )^\frac{1}{p} =   \Bigg(  \sum_{i,j\in  [k] , i\neq j}  f_i^{\frac{q}{2}}   f_j^{\frac{q}{2}}   \Bigg)^\frac{1}{p}  = \phi_{\frac{q}{2}} (k)^\frac{1}{p} = \phi_{\frac{p}{2(p-1)}}(k)^\frac{1}{p}
\end{align}
and
\begin{align} \label{04}
\sum_{i,j\in  [k] , i\neq j}  a_{ij} \tilde{f}_i \tilde{f}_j =   \sum_{i,j\in  [k] , i\neq j}   \tilde{f}_i^q \tilde{f}_j^q  =     \sum_{i,j\in  [k] , i\neq j}   f_i^{\frac{q}{2}}   f_j^{\frac{q}{2}}  =  \phi_{\frac{q}{2}} (k) = \phi_{\frac{p}{2(p-1)}}(k) .
\end{align}
By the variational characterization of the largest eigenvalue (recall that $\lVert \tilde{f} \rVert_2 =1$) and Proposition \ref{prop:spectral_bound_L_p}, 
\begin{align*}
 \phi_{\frac{p}{2(p-1)}}(k) 
 \overset{\eqref{04}}{=} 
 \sum_{i,j\in  [k] , i\neq j}  a_{ij} \tilde{f}_i \tilde{f}_j  
\leq
 \lambda_1(A)
\leq
  \phi_{\frac{p}{2(p-1)}}(k)^\frac{p-1}{p} \norm{A}_{p} 
\overset{\eqref{03}}{=}
 \phi_{\frac{p}{2(p-1)}}(k),
\end{align*}
which establishes that equality holds in \eqref{case1}.

{Now, let us consider the case $0<p\leq 1$}. Note the largest eigenvalue of a network with a single edge is nothing other than the edge-weight which we call $a$.  Since  $\norm{A}_{p} = 2^{\frac{1}{p}}a$ for a $1\times 1$ matrix $A = (a)$,  we obtain the equality in \eqref{case2}.
\end{proof}

 We now establish Lemma \ref{same}, which claims that our two characterizations of $\phi_\theta$ and $\varphi_\theta$ are equivalent.
\begin{proof} [Proof of Lemma \ref{same}]
Assuming that $G$ is not a clique of size $k$, we can choose two vertices $v_1$ and $v_2$ that are not connected by an edge  in $G$. Without loss of generality, we assume  that $\sum_{i\sim v_1}  f_i ^\theta\geq \sum_{j \sim v_2}  f_j^\theta$. Since 
\begin{align*}
\sum_{ i\sim j} f_i^\theta f_j^\theta =\Big ( \sum_{i \sim v_1} f_i^\theta \Big  ) f_{v_1}^\theta + \Big ( \sum_{j \sim v_2} f_j ^\theta \Big  ) f_{v_2} ^\theta + \sum_{ i,j\neq v_1,v_2, i\sim j } f_i^\theta f_j^\theta,
\end{align*}
the objective function does not decrease when we move the weight from $v_2$ to $v_1$ by replacing $f=(\cdots, f_{v_1},\cdots, f_{v_2},\cdots)$ by $ f^{(1)}=( \cdots, f_{v_1}+f_{v_2}, \cdots, 0,\cdots ) $. 
 This follows from the fact that  for $\theta >  1$, if $a\geq b\geq 0$, then
$ \max_{x+y=s,x,y\geq 0} ax^\theta + by^\theta    = a \cdot s^\theta + b \cdot 0^\theta =  as^\theta.$

After removing the zero at $v_2$, we obtain a new vector $f ^{(1)}$ on the new graph $G_1$ obtained by a  deletion of the vertex $v_2$ and the edges incident on it. We repeat this procedure to get a sequence of vectors $f  ^{(1)},\cdots,f ^{(m)}$ and graphs $G_1,\cdots,G_m$ such that $G_{i+1}$ is  obtained by the removal of some vertex $w_{i+1}$ together with the edges incident on it in  $G_i$. This procedure can be repeated until $G_m$ becomes a clique, showing that the maximum  of  $\widehat{\phi}_\theta(k)$ is attained at a clique of size $k$.
\end{proof}

We finish this section by introducing a technical lemma that will be useful when estimating the largest eigenvalue of tree-like networks.

\begin{lemma} \label{lemma 03}
Suppose that $G$ is a tree with a  vertex set $[n]$.  Let $s, \xi>0$ and $\theta \geq 1$ be constants. Then, for any vector $f = (f_1,\cdots,f_n)$ with $\sum_{i=1}^n f_i = s$ and $0\leq f_i\leq \xi $ for all $i,$
\begin{align} \label{030}
\sum_{i<j, i\sim j} f_i^\theta f_j^\theta  \leq 
\begin{cases}
\frac{1}{4}s^{2\theta}  \quad  &\textup{if } s< 2\xi , \\
  \xi^\theta (s-\xi )  ^\theta  \quad &\textup{if } s\geq 2\xi .
\end{cases}
\end{align}
In particular, if $0<\xi\leq \frac{1}{2}$, $\theta\geq 1$, $\sum_{i=1}^n f_i ^2 = 1$ and $0\leq f_i\leq \xi  $, then
\begin{align} \label{lemma 36}
\sum_{i<j, i\sim j} f_i^{2\theta} f_j^{2\theta} \leq \xi^{2\theta}.
\end{align}
\end{lemma}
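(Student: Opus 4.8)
\textbf{Proof proposal for Lemma \ref{lemma 03}.}

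The plan is to reduce the tree case to a local computation at a single leaf and then induct on the number of vertices. First I would establish the dichotomy \eqref{030}. Fix a tree $G$ on $[n]$ and a feasible vector $f$ with $\sum_i f_i = s$ and $0 \le f_i \le \xi$. If $n = 1$ there are no edges and the bound is trivial, so assume $n \ge 2$ and pick a leaf $v$ with unique neighbor $w$. Write $a := f_v$ and let $G'$ be the tree obtained by deleting $v$; the key identity is
\begin{align*}
\sum_{i<j,\, i\sim j} f_i^\theta f_j^\theta = a^\theta f_w^\theta + \sum_{i<j,\, i\sim j \text{ in } G'} f_i^\theta f_j^\theta.
\end{align*}
The idea is to argue that the right-hand side is maximized (over the choice of which coordinate is ``merged'') by pushing as much mass as possible either onto a single edge or spreading it; since $\theta \ge 1$ makes $x \mapsto x^\theta$ convex, shifting mass between two non-adjacent coordinates towards the one with larger ``$\theta$-weighted degree'' only increases the objective — this is exactly the monotonicity used in the proof of Lemma \ref{same}. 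Iterating this merging, the worst case collapses to a configuration supported on as few vertices as the tree structure allows, and because a tree on $\ge 2$ vertices always contains an edge, the extremal configuration is either all mass on the two endpoints of one edge (giving, after optimizing $a + b = s$ with $a, b \le \xi$, the value $\tfrac14 s^{2\theta}$ when $s < 2\xi$ and $\xi^\theta(s-\xi)^\theta$ when $s \ge 2\xi$), or it does no better than this. The cleanest way to make this rigorous is induction on $n$: apply the inductive hypothesis to $G'$ with total mass $s - a$, so the sum over $G'$ is at most $\tfrac14(s-a)^{2\theta}$ or $\xi^\theta(s-a-\xi)^\theta$ as appropriate, and then maximize $a^\theta f_w^\theta + (\text{that bound})$ over $0 \le a \le \xi$ and $0 \le f_w \le \xi$ subject to the mass constraint. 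A short calculus/convexity check closes the step.

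For the particular claim \eqref{lemma 36}, I would specialize: assume $0 < \xi \le \tfrac12$, $\theta \ge 1$, $\sum_i f_i^2 = 1$, $0 \le f_i \le \xi$. Replace $f_i$ by $g_i := f_i^2$ and $\theta$ by $\theta$ (note $\sum_i g_i^\theta g_j^\theta = \sum_i (f_i^2)^\theta (f_j^2)^\theta = \sum_i f_i^{2\theta} f_j^{2\theta}$), with $\sum_i g_i = \sum_i f_i^2 = 1 =: s$ and $0 \le g_i \le \xi^2$. Hmm — one must be careful that the exponent matching is right: apply \eqref{030} with the vector $(g_i)_i$, exponent $\theta$, mass $s = 1$ and bound $\xi^2$ on each coordinate. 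Wait, this gives a bound in terms of $\xi^2$, not $\xi$. Let me instead apply \eqref{030} directly to $(f_i)_i$ with exponent $2\theta \ge 1$, mass $s = \sum_i f_i$, and coordinate bound $\xi$. Then I need to know whether $s < 2\xi$ or $s \ge 2\xi$. Since $s = \sum_i f_i \le \big(\sum_i f_i^2\big)^{1/2} \cdot (\#\{i: f_i \ne 0\})^{1/2}$, the mass can be large; but in the regime $s \ge 2\xi$ the bound is $\xi^{2\theta}(s - \xi)^{2\theta}$, which is $\ge \xi^{2\theta}$ and so too weak. This tells me the correct route is the $g_i = f_i^2$ substitution after all: with mass $1$ and coordinate bound $\xi^2 \le \tfrac14 < 2 \cdot \xi^2$ is false in general, so we are in the case $s = 1 \ge 2\xi^2$ (true since $\xi^2 \le \tfrac14$), giving $\sum f_i^{2\theta} f_j^{2\theta} \le (\xi^2)^\theta (1 - \xi^2)^\theta \le \xi^{2\theta}$, as $1 - \xi^2 \le 1$. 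That is exactly \eqref{lemma 36}.

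The main obstacle I anticipate is the inductive/merging step for \eqref{030}: one must verify that after applying the inductive hypothesis to $G'$ — which itself splits into two cases depending on whether $s - a < 2\xi$ or $\ge 2\xi$ — the resulting one- or two-variable optimization of $a^\theta f_w^\theta + \tfrac14(s-a)^{2\theta}$ (resp. $a^\theta f_w^\theta + \xi^\theta(s-a-\xi)^\theta$) over the box $\{0 \le a \le \xi,\ 0 \le f_w \le \xi,\ a + f_w + (\text{rest}) = s\}$ never exceeds the claimed right-hand side. The subtlety is bookkeeping the constraint $f_w \le \xi$ together with the fact that $f_w$ is not free (it is a coordinate of the $G'$-vector whose total mass is fixed), so one should phrase the induction with $f_w$ absorbed into the $G'$ subproblem and only peel off the leaf contribution $a^\theta f_w^\theta \le \xi^\theta f_w^\theta$; after that, monotonicity of the bound in the exposed coordinate and a derivative check in $a$ finish it. Everything else — the convexity inequality $ax^\theta + by^\theta \le a(x+y)^\theta$ for $a \ge b \ge 0$, $\theta \ge 1$, and the elementary maximization of $ab$ or $a^\theta b^\theta$ under a sum constraint with box bounds — is routine.
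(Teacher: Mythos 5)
Your derivation of \eqref{lemma 36} from \eqref{030} (substituting $g_i = f_i^2$, coordinate bound $\xi^2 \le \tfrac14$, mass $s=1\ge 2\xi^2$, second case of \eqref{030}) is correct and is exactly what the paper does. The problem is with your proof of \eqref{030} itself: the leaf-peeling induction, as you have set it up, has a false inductive step. Write $B(t)$ for the claimed bound ($\tfrac14 t^{2\theta}$ if $t<2\xi$, $\xi^\theta(t-\xi)^\theta$ if $t\ge 2\xi$). Your plan requires
\begin{align*}
a^\theta f_w^\theta + B(s-a) \le B(s).
\end{align*}
Take $\theta=1$, a path $v\sim w\sim u$, $s<2\xi$, and $f_v=f_w=s/2$, $f_u=0$. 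Peeling the leaf $v$ gives $a=s/2$, and the inductive hypothesis on $G'=\{w,u\}$ with mass $s/2$ gives $B(s/2)=\tfrac{1}{16}s^2$, so the left side is $\tfrac14 s^2+\tfrac1{16}s^2>\tfrac14 s^2=B(s)$, even though the actual edge sum is exactly $\tfrac14 s^2$. The failure is structural: $B(s-a)$ cannot ``see'' that the neighbor $f_w$ of the deleted leaf is already large, so adding $a^\theta f_w^\theta$ on top double-counts. Your proposed fix (bounding $a^\theta f_w^\theta\le\xi^\theta f_w^\theta$ and ``absorbing $f_w$ into the subproblem'') does not repair this — in the same example $\xi f_w+B(s/2)\approx \tfrac14 s^2+\tfrac1{16}s^2$ still exceeds $B(s)$. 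To make an induction work you would need a strictly stronger hypothesis that records the value at the attachment vertex. Separately, the mass-merging heuristic you import from Lemma \ref{same} is not available here: merging violates the box constraint $f_i\le\xi$, which is essential to the statement.

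The paper avoids all of this with a one-step global argument: root the tree at $\rho:=\argmax_i f_i$. Every edge pairs a vertex with its parent, and the parent's value is at most $f_\rho$, so
\begin{align*}
\sum_{i<j,\,i\sim j} f_i^\theta f_j^\theta \;\le\; f_\rho^\theta\sum_{i\ne\rho} f_i^\theta \;\le\; f_\rho^\theta\Big(\sum_{i\ne\rho}f_i\Big)^{\theta} \;=\; f_\rho^\theta (s-f_\rho)^\theta,
\end{align*}
using superadditivity of $x\mapsto x^\theta$ for $\theta\ge1$. Since $x\mapsto x(s-x)$ is increasing on $[0,s/2]$ and $f_\rho\le\xi$, the two cases of \eqref{030} follow immediately (for $s<2\xi$ one also uses $4^\theta\ge4$). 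I would replace your induction with this argument.
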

{Note  that the above estimates still  hold even when $G$ is  a  forest (i.e. vertex-disjoint union of trees), since $f_i$s are all  non-negative and one can easily construct a tree on the same vertex set as $G$ with the latter as a subgraph and apply the above result.}

\begin{proof}
{Let {$\rho:=\argmax_{i} f_i$ (there may be several vertices which attain the maximum, and in this case we choose any of them)} and regard $G$ as a tree rooted at $\rho$. Then,
\begin{align*}
\sum_{i<j, i\sim j}f_i^\theta  f_j^\theta \le \sum_{i\neq \rho} f_{\rho}^\theta f_i^\theta   \leq  f_\rho^\theta (s-f_\rho)^\theta,
\end{align*} 
where the last inequality follows from the fact that $x_1^\theta + \cdots + x_m^\theta \leq (x_1+\cdots+x_m)^\theta$ whenever $\theta \geq   1$ and $x_1,\cdots,x_m\geq 0$.  Since the function $ x\mapsto  x(s-x)$ is increasing on $[0,\frac{s}{2}]$ and  $f_{\rho}\le \xi ,$ \eqref{030} follows.

 \eqref{lemma 36} is a direct consequence of \eqref{030}, by replacing $f_i$ with $f_i^2$ and setting $s=1$. } 
\end{proof}

\subsection{Basic spectral properties of weighted graphs}
We end this section by introducing basic but crucial spectral properties of general weighted graphs. 
 
\begin{lemma}[Proposition 3.2 in \cite{KS}] \label{spectrum_graphs}
Let $G = (V,E,A)$ be any network. Suppose that $G_1,\cdots,G_k$  are subgraphs of $G$ and let $A_1,\cdots,A_k$ be the corresponding networks.
If $E(G) = \cup_{i=1}^k E(G_i)$, then $\lambda_1(A) \leq \sum_{i=1}^k \lambda_1(A_i)$. In particular, if the graphs $G_1,\cdots,G_k$ are vertex disjoint,  then $\lambda_1(A) = \max_{i=1,\cdots,k} \lambda_1(A_i)$.

\end{lemma}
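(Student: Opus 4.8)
The statement to prove is Lemma \ref{spectrum_graphs}: for a network $G=(V,E,A)$ with subgraphs $G_1,\dots,G_k$ having corresponding networks $A_1,\dots,A_k$, if $E(G)=\bigcup_{i=1}^k E(G_i)$ then $\lambda_1(A)\le\sum_{i=1}^k\lambda_1(A_i)$, with equality to the max when the $G_i$ are vertex disjoint.

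\textbf{Plan of proof.} The natural route is through the variational (Rayleigh quotient) characterization of the largest eigenvalue, exactly as used in \eqref{variational}. For the first claim, I would take a unit vector $f\in\R^{|V|}$ achieving $\lambda_1(A)=\sum_{u\sim v\text{ in }G}a_{uv}f_uf_v$ (sum over unordered edges, or with the usual symmetric convention $\sum_{u,v}a_{uv}f_uf_v$). Since every edge of $G$ lies in at least one $G_i$, and the weight $a_{uv}$ is the same in $A$ and in each $A_i$ containing that edge, I can write $\sum_{e\in E(G)}a_e f\otimes f(e)\le\sum_{i=1}^k\sum_{e\in E(G_i)}a_e f\otimes f(e)$ — here the inequality uses that each edge appears on the right at least once, \emph{provided} the summands are nonnegative. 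This is the one real subtlety: the terms $a_{uv}f_uf_v$ need not be nonnegative. The standard fix, which I expect to be the main (though minor) obstacle, is to replace $f$ by $|f|$ componentwise and use $|a_{uv}|$: one has $\lambda_1(A)\le\lambda_1(|A|)$ where $|A|$ is the entrywise absolute value (this follows from Perron–Frobenius / the variational principle since $\sum a_{uv}f_uf_v\le\sum|a_{uv}||f_u||f_v|$), and then all cross terms are nonnegative so overcounting edges only helps. Restricting $|f|$ to the vertex set of $G_i$ (and renormalizing, which only increases the quotient since we drop nonnegative-squared coordinates) gives $\sum_{e\in E(G_i)}|a_e||f_u||f_v|\le\lambda_1(|A_i|)=\lambda_1(A_i)$ if the $A_i$ have nonnegative entries, or more carefully $\le\lambda_1(|A_i|)$ in general; summing over $i$ yields $\lambda_1(A)\le\sum_{i=1}^k\lambda_1(|A_i|)$.

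\textbf{On the absolute-value issue.} Since the lemma is quoted from \cite{KS} and is applied in this paper to adjacency-type matrices where one really wants $\lambda_1(A)\le\sum\lambda_1(A_i)$ literally, I would handle it cleanly: first observe $\lambda_1(A)\le\sum_i\lambda_1(A_i)$ when all entries are nonnegative by the argument above, then note that in the applications (stars, cliques carved out of a weighted graph) one can absorb signs by a diagonal $\pm1$ conjugation on the eigenvector, or simply state the inequality for $|A|$ and observe $\lambda_1(|A_i|)$ is what is used. Concretely: let $f$ be a unit Perron-type vector for $A$ (or for $|A|$), set $g=|f|$; then $\lambda_1(A)\le\langle g,|A|g\rangle=\sum_{i=1}^k\langle g,|A_i|g\rangle\le\sum_{i=1}^k\lambda_1(|A_i|)\|g\|_2^2=\sum_{i=1}^k\lambda_1(|A_i|)$, using $E(G)=\bigcup E(G_i)$ and nonnegativity of each $\langle g,|A_i|g\rangle$-summand in the middle equality being replaced by $\le$.

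\textbf{The vertex-disjoint case.} When $G_1,\dots,G_k$ are vertex disjoint, $A$ is block-diagonal (after permutation) with blocks $A_1,\dots,A_k$ plus possibly isolated vertices contributing zero, so its spectrum is the union of the spectra of the $A_i$; hence $\lambda_1(A)=\max_i\lambda_1(A_i)$. I would just state this directly from the block structure — no variational argument needed — and note it is consistent with, and sharper than, the first inequality in this special case. Overall this lemma is essentially a packaging of the Rayleigh-quotient principle; the only thing requiring a moment's care is the sign/absolute-value bookkeeping, and I would dispatch it by passing to $|f|$ and $|A|$ at the outset.
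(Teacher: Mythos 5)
The paper does not actually prove this lemma: it quotes \cite{KS}, Proposition 3.2 (stated there for unweighted graphs) and remarks that the weighted version is straightforward. Measured against the standard argument, your plan is right in outline: the Rayleigh-quotient proof of the covering inequality is exactly the classical one and is complete when the conductances are nonnegative (replace $f$ by $|f|$, use that every edge is covered at least once and all summands are nonnegative, then restrict to each $V(G_i)$), and your block-diagonal treatment of the vertex-disjoint case is correct (the isolated vertices contribute eigenvalue $0\le\lambda_1(A_i)$ since each $A_i$ has zero trace).

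The one genuine gap is the sign issue, which you flag but do not actually resolve. Your argument proves $\lambda_1(A)\le\sum_i\lambda_1(|A_i|)$, and the passage from $\lambda_1(|A_i|)$ to $\lambda_1(A_i)$ is not available in general: for the triangle with all weights $-1$ one has $\lambda_1(A_i)=1$ while $\lambda_1(|A_i|)=2$, and no diagonal $\pm1$ conjugation removes the signs because the signing is unbalanced, so the proposed fix fails. This matters because the paper does apply the lemma to signed matrices (e.g. $\lambda_1(Z)\le\lambda_1(Z^{(1)})+\lambda_1(Z^{(2)})$, and again for $Z^{(1)}=Z^{(1)}_1+Z^{(1)}_2$), where the Weibull weights take both signs. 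The repair is easy and avoids absolute values entirely, but it is a different argument from yours: in those applications the edge sets are \emph{disjoint}, so after extending each $A_i$ by zeros to an $n\times n$ matrix one has $A=\sum_iA_i$, and testing the single top unit eigenvector $f$ of $A$ against each summand gives $\lambda_1(A)=f^TAf=\sum_if^TA_if\le\sum_i\lambda_1(A_i)$, with no sign bookkeeping. For a genuinely overlapping cover with signed weights this reduction does not work: one would pass to a partition $E_i'\subseteq E(G_i)$ and then need $\lambda_1(B_i)\le\lambda_1(A_i)$ for the edge-subset network $B_i$, which is false for signed matrices (deleting one edge of the all-$(-1)$ triangle raises $\lambda_1$ from $1$ to $\sqrt2$). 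So you should either state the covering inequality for nonnegative conductances (as in \cite{KS}) and prove the edge-disjoint signed case by the subadditivity argument above, or keep the conclusion in the form $\sum_i\lambda_1(|A_i|)$; as written, the middle step of your proof does not yield the lemma as stated.
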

{Note that \cite[Proposition 3.2]{KS} was only stated for unweighted graphs, but it is  straightforward that a weighted version holds as well.}

The next lemma characterizes the largest eigenvalue of the  weighted star graph in terms of the Frobenius norm of its conductance matrix.
\begin{lemma} \label{spectrum_star_weights}
If $G = (V,E,A)$ is a weighted star graph with $d+1$ vertices and edge-weights $w_1,\cdots,w_d$, then $\lambda_1(A) = \sqrt{\sum_{j = 1}^d {w_j^2}}.$
\end{lemma}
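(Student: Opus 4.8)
The plan is to compute directly the largest eigenvalue of a weighted star using its explicit structure. Label the center of the star as vertex $0$ and the leaves as $1,\dots,d$, so the conductance matrix $A$ is the $(d+1)\times(d+1)$ symmetric matrix with $a_{0j}=a_{j0}=w_j$ for $1\le j\le d$ and all other entries zero. By the variational characterization of the largest eigenvalue (which we have already recorded as \eqref{variational}), $\lambda_1(A)=\sup_{\norm{f}_2=1}\sum_{i\sim j}a_{ij}f_if_j = \sup_{\norm{f}_2=1} 2f_0\sum_{j=1}^d w_j f_j$, where $f=(f_0,f_1,\dots,f_d)$.

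First I would fix $f_0$ and optimize over $(f_1,\dots,f_d)$ subject to $\sum_{j=1}^d f_j^2 = 1-f_0^2$. By Cauchy--Schwarz, $\sum_{j=1}^d w_j f_j \le \big(\sum_j w_j^2\big)^{1/2}\big(\sum_j f_j^2\big)^{1/2} = \norm{w}_2\sqrt{1-f_0^2}$, with equality when $f_j \propto w_j$. Hence $\lambda_1(A) = \sup_{-1\le f_0\le 1} 2 f_0 \norm{w}_2 \sqrt{1-f_0^2}$. The scalar function $f_0\mapsto f_0\sqrt{1-f_0^2}$ on $[-1,1]$ attains its maximum $\tfrac12$ at $f_0=\tfrac{1}{\sqrt2}$ (elementary calculus, or AM--GM applied to $f_0^2$ and $1-f_0^2$), giving $\lambda_1(A) = 2\norm{w}_2\cdot\tfrac12 = \norm{w}_2 = \sqrt{\sum_{j=1}^d w_j^2}$, as claimed.

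Alternatively — and perhaps more cleanly — one can argue via the squared matrix: $A^2$ is block diagonal with the $(0,0)$ entry equal to $\sum_j w_j^2$ and the leaf--leaf block equal to the rank-one matrix $ww^{\mathsf T}$, whose only nonzero eigenvalue is $\norm{w}_2^2$; the diagonal center entry $\sum_j w_j^2$ coincides with this, so $\lambda_1(A^2)=\sum_j w_j^2$, and since $\lambda_1(A)\ge 0$ (e.g. because $\tr A = 0$ forces a nonnegative eigenvalue, or because $A$ has a zero entry pattern making $-A$ and $A$ similar via sign flips) we get $\lambda_1(A)=\sqrt{\sum_j w_j^2}$. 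Either route is short; there is no real obstacle here. I would present the Cauchy--Schwarz/variational version since \eqref{variational} is already on hand, and the only mild point to note is that the supremum is genuinely attained (the constraint set is compact), so the inequalities above are equalities.
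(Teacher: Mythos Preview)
Your proof is correct, but it takes a different route from the paper's. The paper first exhibits an explicit eigenvector: with $w:=\sqrt{\sum_j w_j^2}$, the vector $\mathbf{v}=(1,\tfrac{w_1}{w},\dots,\tfrac{w_d}{w})$ satisfies $A\mathbf{v}=w\mathbf{v}$, so $w$ is an eigenvalue. For the matching upper bound it invokes bipartiteness (so $\lambda_1(A)=-\lambda_{d+1}(A)$) together with $\sum_i \lambda_i(A)^2=\tr(A^2)=2w^2$, forcing $\lambda_1(A)\le w$.

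Your variational/Cauchy--Schwarz argument is arguably more direct: it handles both the upper bound and its attainment in one stroke, and it does not need the bipartite spectral symmetry or the trace identity. Your alternative via $A^2$ is essentially the same trace computation the paper uses, packaged slightly differently. What the paper's approach buys is an explicit eigenvector, though this is not used elsewhere. Either argument is perfectly adequate here.
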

\begin{proof}
Let $v_0$ be the root and  $v_1,\cdots,v_d$ be vertices connected to the root $v_0$, and assume that   $w_i$  is a weight on the edge connecting vertices $v_0$ and $v_i$. Setting $w := \sqrt{\sum_{j = 1}^d w_j^2}$,  the vector $ \textbf{v} =  \left (1, \frac{w_1}{w}, \dots, \frac{w_d}{w} \right )$  satisfies  $A \textbf{v} = w \textbf{v}$. This shows that $w$ is an eigenvalue of $A$.

 On the other hand, since $G$ is a bipartite graph, $\lambda_1 (A) =  - \lambda_{d+1}(A)$ ($\lambda_{d+1}(A)$ denotes the smallest eigenvalue of $A$). Combining this with the fact $$\sum_{i=1}^{d+1} \lambda_i(A)^2  =  \text{Tr}(A^2) =  2\sum_{j=1}^d w_j^2 = 2w^2,$$ we obtain
$
\lambda_1(A) \leq w.$ Thus, we conclude the proof.
\end{proof}

 We conclude this section by stating a lower bound for the largest eigenvalue of symmetric matrices.
 
\begin{lemma} \label{graphs:max_entry}
For any symmetric matrix $A = (a_{ij})_{i,j}$  whose diagonal entries are all zero,
\begin{align}
  \lambda_1(A)  \geq   \max_{i \neq j} |a_{ij}|.
\end{align}
\end{lemma}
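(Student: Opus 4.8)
The final statement to prove is Lemma \ref{graphs:max_entry}: for any symmetric matrix $A = (a_{ij})$ with zero diagonal, $\lambda_1(A) \geq \max_{i \neq j} |a_{ij}|$.

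The plan is to use the variational characterization of the largest eigenvalue, $\lambda_1(A) = \sup_{\norm{f}_2 = 1} f^\top A f$, and to exhibit an explicit unit test vector supported on the two coordinates $i,j$ achieving the maximal $|a_{ij}|$. First I would fix indices $i \neq j$ with $|a_{ij}| = \max_{k \neq \ell} |a_{k\ell}|$, and write $a := a_{ij} = a_{ji}$ (symmetry). Since the diagonal is zero, the $2\times 2$ principal submatrix of $A$ indexed by $\{i,j\}$ is $\begin{pmatrix} 0 & a \\ a & 0 \end{pmatrix}$, whose eigenvalues are $\pm|a|$. By Cauchy interlacing (or directly: testing $f = \tfrac{1}{\sqrt 2}(e_i + \operatorname{sgn}(a) e_j)$ gives $f^\top A f = |a|$, and $\norm{f}_2 = 1$), we get $\lambda_1(A) \geq |a| = \max_{k\neq \ell}|a_{k\ell}|$.

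Concretely the cleanest route: take $f = \tfrac{1}{\sqrt{2}} e_i + \tfrac{\operatorname{sgn}(a_{ij})}{\sqrt 2} e_j$, so $\norm{f}_2^2 = \tfrac12 + \tfrac12 = 1$, and compute $f^\top A f = 2 \cdot \tfrac12 \operatorname{sgn}(a_{ij}) a_{ij} = |a_{ij}|$, using that the diagonal entries $a_{ii}, a_{jj}$ vanish so they contribute nothing. Then $\lambda_1(A) = \sup_{\norm g_2 = 1} g^\top A g \geq f^\top A f = |a_{ij}|$. Taking the maximum over all pairs $i \neq j$ completes the argument. If one prefers to avoid sign bookkeeping, note $|a_{ij}|$ is an eigenvalue of the $2\times 2$ submatrix $\begin{pmatrix} a_{ii} & a_{ij} \\ a_{ji} & a_{jj}\end{pmatrix} = \begin{pmatrix} 0 & a_{ij} \\ a_{ij} & 0 \end{pmatrix}$, and by the Cauchy interlacing theorem the largest eigenvalue of a principal submatrix is at most $\lambda_1(A)$.

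There is essentially no obstacle here — the lemma is elementary. The only thing to be careful about is the sign of $a_{ij}$ (the test vector must have its two nonzero entries with matching signs relative to $a_{ij}$) and the use of the zero-diagonal hypothesis to kill the diagonal contributions in $f^\top A f$; without zero diagonal the bound would instead involve $a_{ii} + a_{jj}$ terms. I would write this up in a couple of lines using the test-vector computation.
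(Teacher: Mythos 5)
Your proposal is correct and is essentially identical to the paper's proof: both use the variational characterization of $\lambda_1(A)$ with the test vector having entries $\tfrac{1}{\sqrt 2}$ and $\tfrac{\sgn(a_{ij})}{\sqrt 2}$ on the maximizing pair of coordinates and zero elsewhere. The Cauchy interlacing remark is a valid alternative but not needed.
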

\begin{proof}
Let $|a_{k \ell}|$ be the maximal value, i.e. $|a_{k \ell}| = \max_{i \neq j} |a_{ij}|$.  Let the vector $\textbf{v} = (v_i)_i$  be defined by  
$v_k = \frac{1}{\sqrt{2}}$, $v_\ell =\frac{\sgn(a_{k \ell})}{\sqrt{2}}$ and $ v_i =
0$ otherwise, then $\norm{\textbf{v}}_2=1$ and  $\textbf{v}^T A \textbf{v} = |a_{k \ell}|$. The result now follows by the  variational formulation for $\lambda_1(A)$ applied to the vector $\textbf{v}$.
\end{proof}

\section{Structure of $\calG_{n,\frac{d}{n}}$} \label{sec:str_erd_ren_graphs}
{
In this section, we record various properties of the  Erd\H{o}s-R\'{e}nyi graph. In Section \ref{sec 4.1}, we study the degree profile of $\calG_{n,\frac{d}{n}}$, which will be a crucial input when the edge-weights are light-tailed. In Section \ref{sec 4.2}, we state a simple probability bound for the event that  $\calG_{n,\frac{d}{n}}$ contains a clique, which will be used in the heavy-tailed case. Finally, we present results regarding the connectivity structure of sparser  Erd\H{o}s-R\'{e}nyi graphs  in Section \ref{connectivity}. }

\subsection{Degree profiles}  \label{sec 4.1}
{We start by recording a  result regarding the degree distribution. }

\begin{proposition}\cite[Proposition 1.3]{ganguly1} 
{Let us denote by $d_s,$ the $s$-th largest degree of    $G=\cG_{n, \frac{d}{n}}$. Then, setting
\begin{align*}
t_n := \frac{\log n}{\log \log n},
\end{align*}
we have}
\begin{equation} \label{eq:degrees_bbg}
\lim_{n \to  \infty} \frac{- \log \P \left (d_1 \geq (1+\delta_1) t_n, \dots, d_r \geq  (1+\delta_r) t_n \right ) }{ \log n} = \sum_{s = 1}^r \delta_s.
\end{equation} 
\end{proposition}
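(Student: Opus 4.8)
The plan is to reduce the joint statement \eqref{eq:degrees_bbg} to a near-independent computation across the $r$ top degrees, and to control each single-vertex degree probability by a sharp binomial tail estimate. First I would recall that the degree of any fixed vertex in $\cG_{n,\frac{d}{n}}$ is $\dBin(n-1,\frac{d}{n})$, whose upper tail at level $(1+\delta)t_n$ satisfies, by a standard Chernoff/Stirling computation,
\[
\P\big(\dBin(n-1,\tfrac dn) \geq (1+\delta)t_n\big) = n^{-(1+\delta)(1+o(1))},
\]
since $\log\binom{n}{m}\approx m\log\frac nm$ and with $m=(1+\delta)t_n$ one gets $m\log\frac nm = (1+\delta)t_n(\log n - \log t_n + O(1)) = (1+\delta)\log n\,(1+o(1))$ because $\log t_n = \log\log n - \log\log\log n = o(\log n)$ and more precisely $\log t_n/\log\log n\to 1$, which exactly cancels the $\log\log n$ in the denominator of $t_n$. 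This gives the $s=1$, single-vertex version of the claim up to the $\log n$ normalization.

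Next I would pass from one vertex to the order statistics. For the \textbf{upper bound} on the probability (lower bound on $-\log\P/\log n$), I would take a union bound over the $\binom{n}{r}\binom{\,\cdot\,}{\,\cdot\,}$ choices of which $r$ vertices realize degrees $\geq (1+\delta_1)t_n,\dots,\geq(1+\delta_r)t_n$; the number of such choices is $n^{r+o(1)}$ (polynomial), and the events for disjoint vertex sets are \emph{negatively} associated — or one can simply bound the joint probability by the product of marginals after conditioning appropriately, since requiring $r$ specified vertices to have high degree only removes at most $O(r)$ potential edges and changes each marginal by a $1+o(1)$ factor. Thus $\P(\cdots)\leq n^{r+o(1)}\cdot n^{-\sum_s(1+\delta_s)(1+o(1))} = n^{-\sum_s\delta_s + o(1)}$. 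For the \textbf{lower bound} on the probability, I would exhibit $r$ disjoint candidate vertices, say $1,\dots,r$, restrict attention to edges from vertex $s$ into the block $\{r+1,\dots,n\}$ split into $r$ disjoint halves of size $\approx n/r$ (so the relevant edge sets are genuinely independent), and estimate $\P(\text{vertex }s\text{ has}\geq (1+\delta_s)t_n\text{ neighbors in its block})$; the block has size $n/r = n^{1+o(1)}$, so this is again $n^{-(1+\delta_s)(1+o(1))}$, and multiplying the $r$ independent events and noting we only needed $r$ specific vertices (no extra combinatorial factor is lost — it can only help) gives $\P(\cdots)\geq n^{-\sum_s(1+\delta_s) + o(1)} \cdot 1$; wait — here one must be careful that the lower bound should read $n^{-\sum(1+\delta_s)+o(1)}$, which is \emph{smaller} than $n^{-\sum\delta_s}$, so to get the matching lower bound one instead does not fix the vertices but counts: the expected number of vertices of degree $\geq(1+\delta_s)t_n$ is $n^{1-\delta_s+o(1)}$, and a second-moment argument shows such vertices exist and, being typically at mutual distance $>2$, can be chosen disjoint, yielding $\P(\cdots)\geq n^{-\sum_s\delta_s+o(1)}$.

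The \textbf{main obstacle} I anticipate is precisely this second-moment / disjointness bookkeeping for the lower bound: one must show that the high-degree vertices can simultaneously be realized by \emph{distinct} vertices with \emph{disjoint} neighborhoods (so the events decouple) without paying any $n^{o(1)}$ penalty, and that conditioning on $\delta_1 > \delta_2 > \cdots$ versus ties does not matter. This is handled by restricting to disjoint vertex-blocks as above combined with a Paley–Zygmund-type argument on the count of high-degree vertices within each block, using that the variance of this count is dominated by its mean because degrees of distinct vertices are only weakly correlated in the sparse regime. A secondary technical point is making the $(1+o(1))$ in the exponent uniform, which follows from the explicit Stirling bounds since $(1+\delta_s)t_n = o(\sqrt n)$, placing us comfortably in the regime where the binomial tail is governed by its leading Poisson-type term. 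Everything else — the union bound, the additivity $\sum_s\delta_s$ emerging as the sum of independent exponents — is routine once these two points are in place, and indeed the statement is quoted from \cite[Proposition 1.3]{ganguly1}, so I would cite that proof and only sketch the above as motivation.
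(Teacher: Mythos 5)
The statement is quoted from \cite[Proposition 1.3]{ganguly1} and the present paper offers no proof of its own, so I am judging your sketch on its merits and against the closely analogous arguments the paper does carry out for its degree lemmas (Lemma \ref{degree_tails}, Propositions \ref{sequential_revealing} and \ref{reveal2}, Lemma \ref{lem:moments_high_degree}). Your overall architecture — single-vertex binomial tail $n^{-(1+\delta)+o(1)}$, union bound over $n^{r+o(1)}$ vertex tuples with an edge-removal trick to decouple the $r$ marginals for the upper bound, and a decoupled existence argument for the lower bound — is exactly the standard route, and the upper bound half is essentially correct (note only that degrees of distinct vertices are \emph{positively} correlated through their shared edge, so the "negative association" claim is false; the fallback you give, passing to $d_{V\setminus S}(v_i)\geq (1+\delta_i)t_n - r$ which are genuinely independent and cost only $n^{o(1)}$, is the right one and is precisely what Lemma \ref{lem:moments_high_degree} does).

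The genuine gap is in your lower bound. The expected number of vertices of degree at least $(1+\delta_s)t_n$ is $n\cdot n^{-(1+\delta_s)+o(1)}=n^{-\delta_s+o(1)}$, not $n^{1-\delta_s+o(1)}$ as you wrote; you have slipped from the large-deviation regime $\gamma=1+\delta_s>1$ into the typical regime $\gamma<1$. Since this expectation tends to zero, no second-moment or Paley--Zygmund argument can show that such vertices exist with high probability — their existence \emph{is} the rare event whose probability $n^{-\delta_s+o(1)}$ you are trying to bound from below. The correct completion is the block/sequential-revealing device you half-describe: reserve $r$ disjoint sets of $\asymp n/(2r)$ candidate centers and a disjoint set of potential leaves, note that the probability that the $s$-th block contains \emph{some} vertex with at least $(1+\delta_s)t_n$ neighbours among the leaves is $1-\bigl(1-n^{-(1+\delta_s)+o(1)}\bigr)^{n/(2r)}=n^{-\delta_s+o(1)}$ (using $1-e^{-x}\geq x/2$ for small $x$, as in \eqref{1000}), and multiply over the $r$ independent blocks to get $n^{-\sum_s\delta_s+o(1)}$. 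One should also order $\delta_1\geq\cdots\geq\delta_r$ so that the order-statistics event coincides with the existence of $r$ distinct vertices of the prescribed degrees; without that normalization the right-hand side $\sum_s\delta_s$ is not the correct rate.
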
 
 
 An important input in our arguments will be that, for any  constant $\kappa>0$:
\begin{enumerate}
\item  \label{goal1}  For any fixed $0 < \gamma < 1$, with high probability, there exist  $n^{1-\gamma-\kappa}$  vertices having at least $\gamma \frac{\log n}{\log \log n}$ neighbors with no edges between each other.
\item  \label{goal2} For a suitable discretization $\{\gamma_i\}_{i=1,2,\cdots}$ of $(0,1)$, with high probability, the number of vertices of degree between $\gamma_i \frac{\log n}{\log \log n}$ and $\gamma_{i+1} \frac{\log n}{\log \log n}$ is at most $n^{1-\gamma_i+\kappa}$ for all $i=1,2,\cdots$.
\end{enumerate}
In order to establish{ \eqref{goal1}}, we first estimate the probability that a vertex  has a large degree in $\calG_{n,\frac{d}{n}}$. 
For a subset $L \subseteq V = V(G)$, we denote by $d_L(v)$ the number of vertices in $L$ connected to $v$.
Throughout the paper, to simplify the notation,  for  $\gamma \geq 0$, define
\begin{align} \label{g}
g(\gamma) := \Big \lceil \gamma \frac{ \log n }{ \log \log n } \Big \rceil.
\end{align}

By a well-known binomial tail estimate (see Lemma \ref{binomial_tails} in the Appendix),  for any vertex $v$,
\begin{align} \label{05}
\P (d(v) \geq g(\gamma)) = n^{-\gamma + o(1)}.
\end{align}
In the next lemma, we state a simple extension of \eqref{05}, i.e. $d(v)$ replaced with $d_L(v)$ for general subsets $L\subseteq V$.  Although a straightforward consequence of a binomial tail estimate, we provide a proof, which consists of straightforward but tedious algebra, for the sake of completeness.

\begin{lemma} \label{degree_tails}
{For  $0<\rho<1$, let $L$ be  any subset of $V$ of size $\lfloor \rho n \rfloor$}. Then for any vertex $v \in L^c$ and $\gamma>0$,
\begin{equation}
\P \left ( d_L(v) \geq  g(\gamma)  \right ) = n^{-\gamma + o(1)}.
\end{equation}
\end{lemma}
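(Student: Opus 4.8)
The claim is that for $L\subseteq V$ with $|L| = \lfloor \rho n\rfloor$ and $v\in L^c$, one has $\P(d_L(v)\geq g(\gamma)) = n^{-\gamma+o(1)}$, where $g(\gamma) = \lceil \gamma \log n/\log\log n\rceil$. The point is simply that $d_L(v)\sim \dBin(\lfloor\rho n\rfloor, d/n)$, so this is a binomial tail estimate with mean $\Theta(1)$ and threshold $g(\gamma)$, which is of order $\log n/\log\log n$. The plan is to reduce directly to the binomial tail bound stated in the appendix (Lemma \ref{binomial_tails}), which presumably says precisely that $\P(\dBin(m,c/n)\geq g(\gamma)) = n^{-\gamma+o(1)}$ when $m = \Theta(n)$ and $c$ is a constant; the only work is to check that replacing $n$ by $\lfloor\rho n\rfloor$ in the number of trials changes nothing at the level of $n^{o(1)}$.

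First I would observe that, since $v\in L^c$, the edges from $v$ to the vertices of $L$ are $\lfloor\rho n\rfloor$ i.i.d.\ $\dBer(d/n)$ random variables, so $d_L(v)\disteq \dBin(\lfloor\rho n\rfloor, d/n)$. For the upper bound I would use the elementary inequality $\P(\dBin(m,q)\geq k) \leq \binom{m}{k} q^k \leq (mq)^k/k!$ with $m = \lfloor\rho n\rfloor$, $q = d/n$, $k = g(\gamma)$, so $mq\leq \rho d =: c$ is a constant; then $\log\big((c)^k/k!\big) = k\log c - \log k! = -k\log k + k + O(k\log c) = -g(\gamma)\log g(\gamma)(1+o(1))$. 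Since $g(\gamma) = \gamma\log n/\log\log n\,(1+o(1))$ and $\log g(\gamma) = \log\log n\,(1+o(1))$, this gives $\log \P \leq -\gamma\log n\,(1+o(1))$, i.e.\ the bound $n^{-\gamma+o(1)}$. For the matching lower bound I would bound $\P(\dBin(m,q)\geq k)\geq \P(\dBin(m,q)=k) = \binom{m}{k}q^k(1-q)^{m-k}$; here $(1-q)^{m-k} = (1-d/n)^{\Theta(n)} = e^{-\Theta(1)}$ contributes only a constant, and $\binom{m}{k}\geq (m/k)^k = (\lfloor\rho n\rfloor/g(\gamma))^k$, so $\log\big(\binom{m}{k}q^k\big)\geq k\log(\lfloor\rho n\rfloor d/(n g(\gamma))) = k\log(\rho d/g(\gamma))(1+o(1)) = -k\log g(\gamma)(1+o(1)) = -\gamma\log n\,(1+o(1))$ by the same asymptotics. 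Combining, $\P(d_L(v)\geq g(\gamma)) = n^{-\gamma+o(1)}$.

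There is essentially no serious obstacle here: the statement is a routine corollary of \eqref{05} (the $L=V$ case), and the only thing to verify is that shrinking the index set from $n$ to $\lfloor\rho n\rfloor$ — a change of a constant factor in the number of trials — is absorbed into $n^{o(1)}$, which it is, since it perturbs $mq$ and $\binom{m}{k}$ only by constant factors per trial that amount to $e^{O(k)} = e^{O(\log n/\log\log n)} = n^{o(1)}$. The mild bookkeeping nuisance, as the statement itself anticipates (\emph{``straightforward but tedious algebra''}), is keeping careful track of the $o(1)$ errors coming from the ceiling in $g(\gamma)$, from $\log g(\gamma) = \log\log n - \log\log\log n + \log\gamma + o(1)$, and from Stirling's approximation of $g(\gamma)!$; but all of these are lower-order and do not affect the leading $-\gamma\log n$ term. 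I would therefore structure the write-up as: (i) identify the law of $d_L(v)$; (ii) invoke Lemma \ref{binomial_tails} (or reprove the two-sided binomial tail bound as above); (iii) substitute $m = \lfloor\rho n\rfloor$ and check the constant-factor changes are $n^{o(1)}$.
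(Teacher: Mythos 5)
Your proof is correct, and the core reduction is the same as the paper's: identify $d_L(v)\disteq \dBin(\lfloor\rho n\rfloor, d/n)$ and then run a two-sided tail estimate at the level $k=g(\gamma)=\Theta(\log n/\log\log n)$, checking that passing from $n$ to $\lfloor\rho n\rfloor$ trials only perturbs things by $n^{o(1)}$. The only difference is in which tail estimate is used: the paper invokes its Lemma \ref{binomial_tails} in the relative-entropy form $\P(X\geq \theta m)\asymp e^{-mI_{d/n}(\theta)}$ (up to a $n^{o(1)}$ prefactor) and then evaluates $\lfloor\rho n\rfloor I_{d/n}(\theta)=(\gamma+o(1))\log n$ via the entropy asymptotic of Lemma \ref{entropy bound}, whereas you re-derive the bound elementarily from $\P(\dBin(m,q)\geq k)\leq \binom{m}{k}q^k\leq (mq)^k/k!$ for the upper tail and $\P(\dBin(m,q)\geq k)\geq \binom{m}{k}q^k(1-q)^{m-k}$ with $\binom{m}{k}\geq (m/k)^k$ for the lower tail, plus Stirling. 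Both routes reduce to the same leading term $-k\log k=-( \gamma+o(1))\log n$; yours is self-contained and arguably more transparent about where the exponent comes from, while the paper's is shorter given that the entropy lemmas are already set up in the appendix for reuse elsewhere. Your asymptotic bookkeeping ($(1-d/n)^{\Theta(n)}=e^{-\Theta(1)}$, $\log g(\gamma)=(1+o(1))\log\log n$, the $e^{O(k)}=n^{o(1)}$ absorption of constant-per-trial factors) is all sound.
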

 Note that   this probability does not depend on the parameter $\rho$, which shows that as long as $|L|$ is of order $n$, the probability is of the same order.

\begin{proof} 

Since $d_L(v)$ is distributed as $\text{Bin} \left (\lfloor \rho n \rfloor, \frac{d}{n} \right )$,  by the mentioned bound in Lemma \ref{binomial_tails}, setting $\theta := \frac{ 1 }{\lfloor \rho n \rfloor  } g(\gamma) =   \frac{ 1 }{\lfloor \rho n \rfloor  } \lceil \gamma \frac{ \log n }{ \log \log n }  \rceil$, 
$$ \frac{1}{\sqrt{8 \lfloor \rho n \rfloor \theta (1-\theta) }} e^{ -\lfloor \rho n \rfloor I_{\frac{d}{n}}\left ( \theta \right )} \leq \P \left ( d_L(v) \geq g(\gamma) \right ) \leq e^{ -\lfloor \rho n \rfloor I_{\frac{d}{n}}\left ( \theta \right )}.$$
Since
$\frac{d}{n}  = o \left (   \frac{ 1 }{\lfloor \rho n \rfloor  } \lceil \gamma \frac{ \log n }{ \log \log n }  \rceil \right )$,  by the relative entropy estimate (see Lemma \ref{entropy bound}),
\begin{align*}
I_{\frac{d}{n}}  (\theta) 
 & = (1+o(1))   \frac{ 1 }{\lfloor \rho n \rfloor  } \Big \lceil \gamma \frac{ \log n }{ \log \log n } \Big \rceil   \log \left ( \frac{n}{d}    \frac{ 1 }{\lfloor \rho n \rfloor  } \Big \lceil \gamma \frac{ \log n }{ \log \log n } \Big \rceil   \right ). 
\end{align*}
Thus, 
$\lfloor \rho n \rfloor I_{\frac{d}{n}}   (\theta)  = \lfloor \rho n \rfloor \frac{ (1+o(1))\gamma  \log n }{\lfloor \rho n \rfloor}
%&  =  
%e^{-(1+o(1)) \gamma  \frac{\log(n) }{\log \log n} \log \left ( \frac{n \gamma\log n}{d \log \log n\lfloor (1- \rho) n \rfloor} \right ) }  \\
= (\gamma + o(1)) \log n .$

The correction term in the lower bound, namely $\frac{1}{\sqrt{8 \lfloor \rho n \rfloor \theta(1-\theta)} },$ is $n^{ o(1) },$ which implies the matching  bound    $ n^{- \gamma + o(1)}$.

\end{proof}
 
We now proceed to estimate the number of such high-degree vertices satisfying additional useful properties.
 
\begin{proposition} \label{sequential_revealing}
 
For $0 < \gamma,\rho < 1$, let $\mathcal{A}_{\gamma,\rho}$ be the event that there exist {$m:=\lceil  n^{1-\gamma -\rho }\rceil $} vertices $v_1,\cdots,v_m$ and $m$ subsets $W_1,\cdots,W_m\subseteq V$ of size  $g(\gamma)$  satisfying the following properties:
\begin{enumerate}
\item Vertices $v_1,\cdots,v_m$ and elements in $W_1,\cdots,W_m$ are  all  distinct.
\item For each $i$, the vertex $v_i$ is connected to all the elements in  $W_i$.
\item For each $i$, there are no edges within  $W_i$.
\end{enumerate}
Then, 
\begin{align*}
\mathbb{P}( \mathcal{A}_{\gamma,\rho}) \geq 1 -  e^{- n^{1 -\gamma  - \rho  +  o(1) }}.
\end{align*}
 
\end{proposition}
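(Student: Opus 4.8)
The plan is to prove the lower bound on $\mathbb{P}(\mathcal{A}_{\gamma,\rho})$ by a second-moment / sequential-revealing argument. First I would reveal the graph vertex-by-vertex in a greedy fashion: process the vertices $1,2,\ldots,n$ in order, and for each vertex $u$ not yet "used" (neither selected as a center nor placed in some $W_i$), examine its neighborhood within the set of vertices that have \emph{not yet been examined}. Call $u$ a \emph{good} center if it has at least $g(\gamma)$ neighbors among the not-yet-examined vertices and, moreover, one can select $g(\gamma)$ of those neighbors with no edges among them; if so, designate $u = v_i$, fix such a set as $W_i$, and mark $u$ together with all of $W_i$ as used. The key point of this scheme is independence: at the moment we examine $u$, the relevant randomness (edges from $u$ to the fresh pool, and edges within the fresh pool) has not been touched before, so the events "$u$ is a good center" are handled by an i.i.d.\ structure as we sweep through the vertices, up to the bookkeeping of how many vertices have been consumed.

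Second, I would lower-bound the single-step success probability. At any stage where the number of used vertices is at most, say, $\frac{n}{2}$ (which will hold throughout since we only need $m = \lceil n^{1-\gamma-\rho}\rceil \ll n$ centers, each consuming $1 + g(\gamma) = n^{o(1)}$ vertices), the fresh pool $L$ has size $\ge n/2$, so by Lemma~\ref{degree_tails} the probability that $u$ has $\ge g(\gamma)$ neighbors in $L$ is $n^{-\gamma + o(1)}$. Conditionally on this, since $\mathcal{G}_{n,d/n}$ restricted to the (random, size-$g(\gamma)$) neighborhood is itself sparse — the expected number of edges inside a set of $g(\gamma) = n^{o(1)}$ vertices is $\binom{g(\gamma)}{2}\frac{d}{n} = o(1)$ — with probability $1 - o(1)$ there are no edges at all within those $g(\gamma)$ neighbors, so we may simply take $W_i$ to be the whole neighborhood (or any $g(\gamma)$ of them). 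Hence each examined unused vertex becomes a good center with probability at least $p_\star := n^{-\gamma + o(1)}$, independently across the sweep given the used-set bookkeeping.

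Third, I would conclude by a large-deviation estimate for a sum of (essentially) independent Bernoulli$(p_\star)$ trials. We examine at least $n - m(1+g(\gamma)) \ge n/2$ vertices during the sweep, so the number $S$ of good centers found stochastically dominates a $\mathrm{Bin}(\lfloor n/2 \rfloor, p_\star)$ random variable, whose mean is $\frac{n}{2} \cdot n^{-\gamma+o(1)} = n^{1-\gamma+o(1)}$. Since $m = \lceil n^{1-\gamma-\rho}\rceil$ is smaller than this mean by a polynomial factor $n^{\rho - o(1)}$, a standard Chernoff bound for the lower tail of a binomial gives
\[
\mathbb{P}(S < m) \le \exp\!\big(-c\, n^{1-\gamma+o(1)}\big) = \exp\!\big(-n^{1-\gamma-\rho+o(1)}\big),
\]
which is exactly the claimed bound (the exponent is generous enough to absorb the $n^{o(1)}$ and the constant $c$). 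On the event $\{S \ge m\}$ we have produced the required $v_1,\ldots,v_m$ and $W_1,\ldots,W_m$ with all the distinctness and edge constraints built in by construction.

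The main obstacle is making the "essentially independent" claim fully rigorous: the set of used vertices at each step is random, so the Bernoulli trials are not literally independent, and one must set up the argument either via a filtration (revealing edges in the order described and showing the conditional success probability is always $\ge p_\star$, then invoking a Chernoff bound for sums of conditionally-dominated indicators / a supermartingale argument) or by a careful coupling with a genuine i.i.d.\ sequence. Care is also needed that the "no edges inside the neighborhood" event and the "$\ge g(\gamma)$ neighbors in the fresh pool" event are measurable with respect to the newly revealed edges only, which is why the precise order of revelation (first edges from $u$ to the fresh pool, then edges internal to the chosen neighbors) matters. Everything else — the binomial tail asymptotics and the $o(1)$ accounting — is routine given Lemma~\ref{degree_tails} and the Appendix estimates.
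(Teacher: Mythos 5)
Your proposal is correct and follows essentially the same route as the paper: a sequential revealing/exploration that builds vertex-disjoint stars from a fresh pool of size $\Theta(n)$, the per-vertex success probability $n^{-\gamma+o(1)}$ from Lemma~\ref{degree_tails}, stochastic domination of the count of successes by a binomial with mean $n^{1-\gamma+o(1)}$, and a Chernoff/relative-entropy lower tail, with the ``no edges inside $W_i$'' condition costing only a constant factor. The independence issue you flag is resolved in the paper exactly as you suggest, by fixing in advance a partition into potential centers $S$ and potential leaves $L$ and revealing edges in a prescribed order so that each trial uses only fresh randomness (the paper also defers the ``no internal edges'' check to a second, conditionally independent stage, but this is a cosmetic difference).
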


\begin{proof}
Let us partition the set of vertices into two subsets $S:=\{s_1, \dots, s_{ \lceil   \frac{n}{2} \rceil}\}$, which are the potential centers of the stars, and $L := S^c =  \{\ell_1, \dots, \ell_{\lfloor \frac{n}{2} \rfloor}\}$, which will be the potential leaves of the stars.  The ordering on these two sets of vertices is arbitrary and only necessary so that the following algorithm is well-defined.
Now we sequentially reveal the neighbors of vertex $s_1$ in $L$, by first checking whether $\ell_1$ is its neighbor, and so on. Then, 
\begin{enumerate}
\item We either obtain $g(\gamma)$ neighbors of $s_1$ before all edges from $s_1$ to vertices in $L$ are revealed, or
\item There are less than $g(\gamma)$  neighbors of $s_1$ in $L$.
\end{enumerate}

In the first case, we \emph{mark} $s_1$ and define $L_1$ to be the collection of the first $g(\gamma) $ revealed vertices connected to $s_1$. In the second case, we do not mark $s_1$ and set $L_1 = \varnothing$.

Assume that we implemented the above process up to the $k$-th vertex $s_k$  in $S$ and obtained  subsets $L_1,\cdots,L_k \subseteq L$.
We then proceed similarly for $s_{k+1}$, but we only reveal edges from $s_{k+1}$ to vertices in $L \setminus \cup_{i = 1}^k L_i$. {This guarantees that $L_i$s are all disjoint.} As before, we mark $s_{k+1}$ and define $L_{k+1}$ to  be the collection of the  first $g(\gamma)$ revealed vertices connected to $s_{k+1}$ in the former case, and set $L_{k+1} =\varnothing$ in the latter case. We stop this process either once $\lceil n^{1-\gamma -\rho }\rceil$ vertices  in $S$ are marked, in which case we consider the process to be \emph{successful}, or once we revealed edges to vertices in $L$ for all vertices in $S$.

Let $\mathcal{B}$ be the event that the this revealing process is successful. We now  show that this event happens with high probability. Since we discard exactly  $g(\gamma)$  vertices in $L$  for each marked vertex, at each $k$-th step, the set $L \setminus \cup_{i = 1}^{k-1} L_i $ contains at least $\big \lfloor \frac{n}{2} \big \rfloor - n^{1-\gamma -\rho } g(\gamma) \geq \frac{n}{4}$ vertices, as long as $n$ is large enough. Since the edges we reveal at each step are independent of any edges that have been revealed before,  by Lemma \ref{degree_tails}  the probability that $s_k$ has at least $g(\gamma)$  neighbors in $L \setminus \cup_{i = 1}^{k-1} L_i $ is $n^{-\gamma + w}$  for some $w=o(1)$. Hence 
 \begin{align*}
\P(\cB)\ge \P\left (\Binom \left (\Big \lceil \frac{n}{2} \Big  \rceil, n^{-\gamma + w} \right ) \geq n^{1-\gamma -\rho} \right),
\end{align*}  and thus, by Lemma \ref{binomial_tails},
$$ %\label{251}
\P(\cB) \geq    1 - \exp\Big(-\Big\lceil \frac{n}{2} \Big\rceil   I_{n^{-\gamma + w }} \Big(\frac{n^{1-\gamma -\rho}}{\lceil \frac{n}{2} \rceil } \Big )   \Big ).$$
Since $\frac{n^{1-\gamma -\rho}}{\lceil \frac{n}{2} \rceil } \leq \frac{1}{2} n^{-\gamma + w}$ for large $n$, by  Lemma \ref{rel_ent_bern}, there  exists  a constant $c>0$ such that 
{$$I_{n^{-\gamma + w }}\left (\frac{n^{1-\gamma -\rho}}{\lceil \frac{n}{2} \rceil } \right ) \geq c n^{-\gamma + w} = n^{-\gamma+o(1)}.$$} Thus,
\begin{equation*} %\label{251}
\P(\cB) \geq  1 - e^{-\lceil \frac{n}{2} \rceil n^{-\gamma + o(1)} } \geq  1 - e^{-n^{1 -\gamma + o(1)} }.
\end{equation*}

Conditioned on the event  $\mathcal{B}$, let us enumerate the marked vertices  by $v_1,\cdots, v_{  \big \lceil n^{1-\gamma-\rho } \big \rceil }$ and the collection of $g(\gamma)$ neighbors that we revealed by  $W_1,\cdots, W_{  \big \lceil n^{1-\gamma-\rho } \big \rceil }$ respectively.  We call a vertex $v_i$ \emph{good} if there are no edges within $W_i$. {Since} having edges within $W_i$ is independent of the revealing process, for large enough $n$,
\begin{equation} \label{no_edges_neighbours}
\mathbb{P}(v_i  \ \text{is good} | \mathcal{B} ) =  \left ( 1- \frac{d}{n} \right )^{\binom{  g(\gamma) }{2}}  \geq  \frac{1}{2}.
\end{equation}

Since  $W_i$s for $i = 1, \dots, \big \lceil n^{1-\gamma-\rho } \big \rceil$  are disjoint,  by independence, the number of good vertices  stochastically dominates $\Binom \left ( \lceil n^{1-\gamma-\rho } \rceil, \frac{1}{2} \right )$. Thus,  again by Lemma \ref{binomial_tails}, for  some  constant $c'>0$,
\begin{equation*} %\label{252}
\mathbb{P}\Big(\text{There exist at least} \    \frac{1}{4} n^{1-\gamma-\rho } \ \text{good vertices} \big\vert \mathcal{B} \Big) \geq  1- e^{- c'n^{1-\gamma-\rho }}.
\end{equation*}
Hence, putting things together yields that the probability that there exist at least {$ \frac{1}{4} n^{1-\gamma-\rho } $} good marked vertices is at least
\begin{align*}
\left (   1 - e^{- n^{1 -\gamma + o(1) }} \right ) \left (1- e^{- c'n^{1-\gamma-\rho }}  \right ) \geq 1 - e^{- n^{1 -\gamma - \rho + o(1) }}.
\end{align*}
{Since one can absorb the factor $\frac{1}{4}$ into the exponent of $n$ by adjusting the parameter $\rho>0$, we are done.}
\end{proof}

Now we focus on the unlikely appearance of vertices of degree close to     $\gamma \frac{\log n}{\log \log n}$ for $\gamma>1$.  By \eqref{eq:degrees_bbg},  the  probability  of the existence of   such vertex is  $n^{1-\gamma + o(1)}$. In the next proposition, we  improve this statement by further requiring the absence of edges between the neighbors of such vertex.

\begin{proposition}  \label{reveal2}
For  $\gamma>1$,  let  $\mathcal{A}_\gamma'$ be the event that there exists a vertex  $v$ and a subset $W\subseteq V$  of size   $g(\gamma) = \big \lceil \gamma \frac{ \log n}{\log \log n} \big \rceil$ satisfying the following properties:
\begin{enumerate}
\item The vertex  $v$ is connected to all elements in $W$.
\item There are no edges within $W$.
\end{enumerate} 
Then,
  \begin{align*}
  \mathbb{P} \left (\mathcal{A}_\gamma' \right ) =  n^{1-\gamma+o(1)}.
  \end{align*}
\end{proposition}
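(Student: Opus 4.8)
The plan is to establish matching upper and lower bounds, both of order $n^{1-\gamma+o(1)}$, on $\mathbb{P}(\mathcal{A}_\gamma')$. For the upper bound, the natural approach is a union bound: the event $\mathcal{A}_\gamma'$ is contained in the union over all vertices $v$ and all $g(\gamma)$-subsets $W$ of the event that $v$ is joined to every element of $W$ and $W$ is edge-free. There are $n$ choices for $v$ and $\binom{n}{g(\gamma)} = n^{o(1) \cdot g(\gamma)}$... actually $\binom{n}{g(\gamma)} \le n^{g(\gamma)}$, while the probability that $v$ is joined to all of $W$ is $(d/n)^{g(\gamma)}$ and the edge-free probability is at most $1$. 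Thus the expected number of such configurations is at most $n \cdot n^{g(\gamma)} (d/n)^{g(\gamma)} = n \cdot d^{g(\gamma)}$, which is far too big — the correct bookkeeping must not overcount the freedom in choosing $W$, since once we fix that $v$ has some set of neighbors, the leaves are determined. The right way is to note that the number of vertices of degree $\ge g(\gamma)$ is, by \eqref{eq:degrees_bbg} (or directly by \eqref{05} and Markov), at most $n^{1-\gamma+o(1)}$ with probability $1 - n^{-\omega(1)}$; on that event each such vertex $v$, together with the edge-free requirement on a size-$g(\gamma)$ subset of its neighborhood, contributes, and one sums a geometric-type bound. More cleanly: condition on the degree sequence, pick for each high-degree vertex $v$ an arbitrary $g(\gamma)$-subset $W_v$ of its neighbors, and bound $\mathbb{P}(W_v \text{ edge-free})$ — but the edges inside neighborhoods are not independent of the event $\{d(v)\ge g(\gamma)\}$, so one should instead reveal edges incident to $v$ first and then reveal edges inside $W$. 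The expected number of pairs $(v,W)$ with $v \sim W$, $|W| = g(\gamma)$, $W$ edge-free is
\begin{align*}
n \binom{n-1}{g(\gamma)} \Big(\frac{d}{n}\Big)^{g(\gamma)} \Big(1 - \frac{d}{n}\Big)^{\binom{g(\gamma)}{2}} \le n \cdot \frac{n^{g(\gamma)}}{g(\gamma)!} \Big(\frac{d}{n}\Big)^{g(\gamma)} = n \cdot \frac{d^{g(\gamma)}}{g(\gamma)!},
\end{align*}
and since $g(\gamma)! = g(\gamma)^{g(\gamma)(1+o(1))}$ with $g(\gamma) = \gamma \frac{\log n}{\log\log n}(1+o(1))$, one has $g(\gamma)! = n^{\gamma + o(1)}$, giving expected value $n^{1-\gamma+o(1)}$; Markov's inequality then yields $\mathbb{P}(\mathcal{A}_\gamma') \le n^{1-\gamma+o(1)}$.

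For the matching lower bound, the plan is the second-moment method, or more simply a direct construction mirroring Proposition \ref{sequential_revealing}: split the vertices into potential centers $S$ and potential leaves $L$ with $|S|,|L| \asymp n$, and for each $s\in S$ reveal its edges into $L$; by Lemma \ref{degree_tails} the probability that $s$ has at least $g(\gamma)$ neighbors in $L$ is $n^{-\gamma+o(1)}$, and conditioning on this, the probability that the first $g(\gamma)$ such neighbors form an edge-free set is $(1-d/n)^{\binom{g(\gamma)}{2}} \to 1$. Since $\gamma > 1$ the target count $n^{1-\gamma+o(1)}$ is $o(1)$, so we only need a single success: letting $N$ be the number of centers $s\in S$ realizing such a configuration (using disjoint leaf-reveals to get near-independence, as in Proposition \ref{sequential_revealing}), we have $\mathbb{E}[N] \ge |S| \cdot n^{-\gamma+o(1)} \cdot \tfrac{1}{2} = n^{1-\gamma+o(1)}$, and a second-moment computation — the configurations for distinct centers $s,s'$ are negatively correlated or at worst nearly independent after the disjointification — gives $\mathbb{E}[N^2] = (1+o(1))\mathbb{E}[N]^2 + \mathbb{E}[N]$, hence by Paley–Zygmund $\mathbb{P}(N\ge 1) \ge \mathbb{E}[N]^2/\mathbb{E}[N^2] = (1-o(1))\mathbb{E}[N] = n^{1-\gamma+o(1)}$. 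Combining, $\mathbb{P}(\mathcal{A}_\gamma') = n^{1-\gamma+o(1)}$.

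\textbf{Main obstacle.} The main subtlety, as usual in these sparse-graph computations, is handling the dependence between the event that $v$ has large degree (or is connected to a prescribed set $W$) and the event that $W$ is internally edge-free; these involve disjoint sets of potential edges and so are in fact independent once phrased correctly, but one must phrase the union/expectation carefully to exploit this — in particular reveal the $v$–$L$ edges and the within-$W$ edges as separate, independent batches. For the lower bound the analogous point is the dependence between configurations centered at different vertices $s,s'$, which is why I would reuse the disjoint sequential-revealing scheme from Proposition \ref{sequential_revealing} rather than a naive second moment over all of $V$; with that device the cross-terms in $\mathbb{E}[N^2]$ are controlled exactly as there. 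The factorial asymptotics $g(\gamma)! = n^{\gamma+o(1)}$ (equivalently $\log g(\gamma)! = g(\gamma)\log g(\gamma)(1+o(1)) = \gamma\log n(1+o(1))$, using $\log g(\gamma) = \log\log n(1+o(1))$) is the routine computation underlying both bounds and is the reason the exponent is exactly $1-\gamma$.
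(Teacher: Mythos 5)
Your proof is correct, and the two halves relate to the paper's argument differently. For the upper bound you give a self-contained first-moment computation: the expected number of pairs $(v,W)$ with $v$ joined to all of $W$ and $W$ edge-free is at most $n\binom{n-1}{g(\gamma)}(d/n)^{g(\gamma)} \le n\, d^{g(\gamma)}/g(\gamma)! = n^{1-\gamma+o(1)}$, using the independence of the $v$--$W$ edges from the within-$W$ edges and the factorial asymptotics $g(\gamma)! = n^{\gamma+o(1)}$; Markov then finishes. (Your initial union-bound false start is harmless — the $1/g(\gamma)!$ you restore is exactly the missing saving.) The paper instead dispenses with the upper bound in one line by noting that $\mathcal{A}_\gamma'$ forces a vertex of degree at least $g(\gamma)$ and citing the degree large-deviation estimate \eqref{eq:degrees_bbg} with $r=1$, $\delta_1=\gamma-1$; your route is longer but does not rely on that imported result, and it makes the (otherwise invisible) point that the edge-free requirement on $W$ costs nothing at exponential scale. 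For the lower bound your construction is essentially the paper's: split into centers $S$ and leaves $L$, apply Lemma \ref{degree_tails} to each center, and use that the within-$W$ edges are independent of the revealed star edges so the edge-free probability is at least $\tfrac12$. The only difference is that you invoke Paley--Zygmund with a second-moment estimate, which is more machinery than needed: the events $\{d_L(s)\ge g(\gamma)\}$ for distinct $s\in S$ involve disjoint edge sets and are therefore exactly independent, so $\P(\exists\, s) = 1-(1-n^{-\gamma+o(1)})^{\lceil n/2\rceil} \ge n^{1-\gamma+o(1)}$ directly, which is how the paper proceeds. Both versions are valid.
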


\begin{proof}
Since the upper bound immediately follows from \eqref{eq:degrees_bbg} with $r=1$ and $\delta_1 = \gamma-1$,  we only prove the lower bound.
We again partition the vertices into the subsets $S:=\left \{s_1, \dots, s_{ \lceil   \frac{n}{2} \rceil} \right \}$ and $L := S^c =  \left \{ \ell_1, \dots, \ell_{\lfloor \frac{n}{2} \rfloor} \right \}$. Then, by Lemma \ref{degree_tails}, for each $s_k\in S$,
\begin{align*}
\mathbb{P} \left ( d_L(s_k) \geq g(\gamma)  \right ) = n^{-\gamma  + o (1) }.
\end{align*}
Let $\mathcal{B}$ be the event that there exists a vertex $s_k \in S$ such that $d_L(s_k) \geq g(\gamma)  $. 
 Since $|S| = \lceil   \frac{n}{2} \rceil $,  
\begin{align}  \label{1000}
\mathbb{P} ( \mathcal{B} ) \geq   1- (1-n^{-\gamma  - o (1) } )^{  \lceil   \frac{n}{2} \rceil } \geq 1- e^{-n^{1-\gamma+o(1)}} \geq n^{1-\gamma+o(1)},
\end{align}   
where we used the fact that $1-e^{-x} \geq \frac{x}{2}$ for small $x>0$.
Given  the event $\mathcal{B}$, let us take any subset $W \subseteq L$ of size $g(\gamma) $ consisting of neighbors of $s_k$. Then, as in \eqref{no_edges_neighbours} in the previous proof, {using the independence between different edges},
\begin{align} \label{1001}
\mathbb{P} \left (\text{There are no edges within } W | \mathcal{B} \right )  \geq  \frac{1}{2}.
\end{align}
{Therefore, the statement follows by multiplying  \eqref{1000} and \eqref{1001}.}
\end{proof}

The next result concerns the degree profile of high-degree vertices of $G$.
To this end we define, for $\gamma \geq 0$, \begin{align} \label{d}
D_\gamma :=\Big \{v\in V: d(v) \geq g(\gamma)   \Big\}.
\end{align}
By \eqref{05},  {if $0\leq  \gamma<1$, then} the expected number of elements in $D_\gamma $ is  of order  $n^{1-\gamma }$. In Proposition \ref{sequential_revealing}, we established that with high probability, $|D_\gamma| \geq  n^{1-\gamma- \rho }$ for any $\rho>0$, which corresponds to a bound on the lower tail of $|D_\gamma|$ for $0\leq \gamma<1$. 

~

{Now, we establish \eqref{goal2} mentioned in the beginning of Section \ref{sec 4.1}.}
To accomplish this, we start by estimating the moments of $|D_\gamma| $.

\begin{lemma} \label{lem:moments_high_degree}
For any $0\leq \gamma <1$,  $\e>0$ and a positive integer $j$,   for sufficiently large $n$,
\begin{equation}
\E \left [ \left | D_\gamma \right |^j\right ]  \leq n^{j-j\gamma + j \e} .
\end{equation}
\end{lemma}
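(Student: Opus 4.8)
The plan is to bound $\E[|D_\gamma|^j]$ by expanding the $j$-th power as a sum over ordered $j$-tuples of vertices and controlling, for each tuple, the probability that all of them have degree at least $g(\gamma)$. Writing $|D_\gamma| = \sum_{v \in V} \1\{d(v) \geq g(\gamma)\}$, we get
\begin{align*}
\E[|D_\gamma|^j] = \sum_{(v_1,\dots,v_j) \in V^j} \P\big(d(v_1) \geq g(\gamma), \dots, d(v_j) \geq g(\gamma)\big).
\end{align*}
The key point is that the event $\{d(v) \geq g(\gamma)\}$ depends only on the edges incident to $v$, so for a tuple of \emph{distinct} vertices the only overlap among the relevant edge-sets is the single edge between any pair $v_a, v_b$. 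One should therefore argue that these events are ``almost independent'': conditioning on $d(v_1) \geq g(\gamma), \dots, d(v_{a-1}) \geq g(\gamma)$ only removes at most $a-1 \leq j$ potential neighbors of $v_a$, so that $\P(d(v_a) \geq g(\gamma) \mid \text{previous events}) \leq \P(\Binom(n-1-(a-1), \tfrac{d}{n}) \geq g(\gamma) - (a-1))$, which by the binomial tail bound (Lemma \ref{binomial_tails}) together with the relative entropy estimate is still $n^{-\gamma + o(1)}$ — the shift by the constant $j$ in both the number of trials and the threshold does not affect the leading exponential order, exactly as in the proof of Lemma \ref{degree_tails}. Multiplying over $a = 1,\dots,j$ gives that each distinct tuple contributes at most $n^{-j\gamma + o(1)}$.

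Next I would handle the tuples with repeated coordinates. If the tuple $(v_1,\dots,v_j)$ takes only $i < j$ distinct values, there are at most $\binom{j}{i} i^{j-i} \cdot n^i = O(n^i)$ such tuples, and the probability that all $i$ distinct vertices have large degree is at most $n^{-i\gamma + o(1)}$ by the same argument, so these contribute $O(n^{i - i\gamma + o(1)}) = O(n^{i(1-\gamma) + o(1)})$, which since $\gamma < 1$ is dominated by the $i = j$ term $n^{j(1-\gamma) + o(1)}$. Summing the $n^j$ distinct tuples each contributing $n^{-j\gamma + o(1)}$ gives $n^{j - j\gamma + o(1)}$, and absorbing the $o(1)$ into the prescribed $\e > 0$ for $n$ large yields $\E[|D_\gamma|^j] \leq n^{j - j\gamma + j\e}$.

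The main obstacle — though a mild one — is making the ``almost independence'' estimate rigorous: one must be careful that conditioning on the high-degree events of earlier vertices is a conditioning on an \emph{increasing} event in the incident edges, so one cannot naively multiply probabilities. The clean way around this is to reveal edges sequentially (as in the proof of Proposition \ref{sequential_revealing}): reveal all edges incident to $v_1$, then all \emph{not-yet-revealed} edges incident to $v_2$, and so on; at step $a$ the vertex $v_a$ has at least $n - 1 - (a-1)$ un-revealed potential neighbors, each present independently with probability $\tfrac{d}{n}$, and for $d(v_a) \geq g(\gamma)$ to hold it suffices (in fact is implied, up to the at most $a - 1 \leq j$ already-revealed incident edges) that this fresh binomial is at least $g(\gamma)$. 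This makes the conditional probabilities genuinely a product of independent binomial tail probabilities, each of order $n^{-\gamma + o(1)}$ by Lemma \ref{binomial_tails} and the entropy estimates (Lemma \ref{entropy bound}, Lemma \ref{rel_ent_bern}), closing the argument.
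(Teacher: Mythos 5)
Your proof is correct and follows essentially the same route as the paper's: both reduce the joint degree event for the distinct vertices of a tuple to independent binomial tails by discarding the at most $j-1$ edges among the chosen vertices and shifting the threshold by a constant (absorbed into the $n^{o(1)}$, as in Lemma \ref{degree_tails}), and both then sum over tuples grouped by their number of distinct entries, with the fully distinct case dominating since $\gamma<1$. The only cosmetic difference is that the paper gets the independence directly by counting neighbors in $L = V\setminus\{v_1,\dots,v_j\}$, so that $d_L(v_1),\dots,d_L(v_j)$ are exactly independent and $d(v_a)\geq g(\gamma)$ forces $d_L(v_a)\geq g(\gamma)-j+1$, whereas you achieve the same thing via sequential edge revealing.
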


\begin{proof}
 { First note that the bound is trivial for $\gamma = 0$, since $|D_\gamma|^j \leq |V|^j \leq n^j$ for any integer $j$.
As before, let us arbitrarily label all vertices and partition the set of vertices $V = \{v_1,\cdots,v_n\}$ into two subsets $S:= \{v_1, \dots, v_j\}$ and $ L := V \setminus  S$. This definition makes $d_{L}(v_1),\cdots,d_{L}(v_j)$ independent (recall that $d_L(v)$ denotes the number of vertices in $L$ connected to $v$), which we will crucially use in the following moment calculations. First note that 
\begin{align} 
\E \left [ \left | D_\gamma \right |^j \right ] & = \E \left [ \left ( \sum_{i=1}^n \mathbf{1} \left (d(v_i) \geq  g(\gamma) \right ) \right)^j \right ] \nonumber \\
&  \leq  \sum_{k = 1}^j  \binom{n}{k} k^j \P  \left (d(v_1), \dots, d(v_k) \geq  g(\gamma)  \right ), \label{moment_D_g}
\end{align}
where we used the symmetry of the distribution of the vertex degrees.

We now proceed to estimate the joint probabilities of interest. Since each vertex $v_i\in S$ can have at most $j-1$ edges into $S$, for any $1\leq k\leq j$, for large enough $n$,
\begin{align*}
 \P \left (d(v_1), \dots, d(v_k) \geq g(\gamma) \right )
&\leq \ \P  \left (d_{L}(v_1), \dots, d_{L}(v_k) \geq g(\gamma)  - j +1 \right )\\
& \leq \P  \left (d_{L}(v_1), \dots, d_{L}(v_k) \geq g\Big(\gamma - \frac{\e}{2}  \Big)  \right ) \leq \left ( \frac{1}{n^{\gamma - \frac{\e}{2} +o(1)}} \right )^k,
\end{align*}
where we used the independence of $d_{L}(v_1),\cdots,d_{L}(v_k)$ and  a tail probability estimate for  the vertex degree  (Lemma \ref{degree_tails})  in the last inequality.
Applying this estimate to each term in \eqref{moment_D_g}, for all $1\leq k \leq j$,
\begin{align*}
\binom{n}{k} k^j \P  \left (d(v_1), \dots, d(v_k) \geq g(\gamma)  \right ) 
& \leq \frac{1}{k!} k^j n^{k \left (1 - \gamma + \frac{\e}{2} \right ) + o(1) } \leq n^{j(1-\gamma + \frac{\e}{2}) + o(1)},
\end{align*}
since $1-\gamma + \frac{\e}{2} > 0$. Therefore,  \eqref{moment_D_g} is bounded by
$
j \cdot  n^{j(1-\gamma + \frac{\e}{2}) + o(1)} \leq n^{j (1-\gamma + \e)}
$ for large $n$, which concludes the proof. 
}
\end{proof}

The moment bound yields the following.

\begin{proposition} \label{degree_distribution}
 For any $0< \kappa <1 $,   let $m$ be an integer  such that $m \kappa < 1 \leq (m+1)\kappa $. Then, for any $\mu > 0$ and sufficiently large $n$,
\begin{equation}
\P \big( |D_{ i \kappa} |\leq n^{1- i \kappa  + \kappa} \textup{ for all} \   i =0,1,\cdots,m  \big) \geq 1 - n^{-\mu \kappa}.
\end{equation}
\end{proposition}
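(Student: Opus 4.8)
The plan is to combine Markov's inequality applied to a high moment of $|D_{i\kappa}|$ with a union bound over the finitely many indices $i = 0, 1, \dots, m$. The crucial observation is that $m$ depends only on $\kappa$ (indeed $m+1 \le 1 + \tfrac{1}{\kappa}$), so it is a constant as $n \to \infty$ and the union bound costs essentially nothing.

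For $i = 0$ one has $g(0) = 0$, hence $D_0 = V$ and $|D_0| = n \le n^{1+\kappa}$, so this term of the union bound vanishes deterministically. For $1 \le i \le m$ we have $i\kappa \le m\kappa < 1$, so Lemma \ref{lem:moments_high_degree} applies with $\gamma = i\kappa$: for any positive integer $j$ and any $\e > 0$, for large $n$,
\[
\E\big[|D_{i\kappa}|^j\big] \le n^{j - j i\kappa + j\e}.
\]
Then by Markov's inequality,
\[
\P\big(|D_{i\kappa}| > n^{1 - i\kappa + \kappa}\big) = \P\big(|D_{i\kappa}|^j > n^{j(1 - i\kappa + \kappa)}\big) \le \frac{\E[|D_{i\kappa}|^j]}{n^{j(1 - i\kappa + \kappa)}} \le n^{j\e - j\kappa}.
\]
Choosing $\e = \kappa/2$ turns this into $n^{-j\kappa/2}$, a bound uniform over $1 \le i \le m$.

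Finally, I would fix $j$ to be any integer with $j > 2\mu$, so that $j\kappa/2 > \mu\kappa$. Summing over the at most $m+1 \le 1 + 1/\kappa$ relevant indices and absorbing this $\kappa$-dependent constant factor into the power of $n$ for $n$ large, the union bound gives
\[
\P\big(|D_{i\kappa}| > n^{1 - i\kappa + \kappa} \text{ for some } 0 \le i \le m\big) \le (m+1)\, n^{-j\kappa/2} \le n^{-\mu\kappa},
\]
which is exactly the complement of the asserted event. There is no substantial analytic obstacle here since the moment control is already supplied by Lemma \ref{lem:moments_high_degree}; the only things to be careful about are the bookkeeping of parameters (taking $\e = \kappa/2$ and $j$ large relative to $\mu$) and the fact that $m+1$ is a constant depending only on $\kappa$, which is what makes the union bound harmless.
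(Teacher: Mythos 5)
Your argument is correct and is essentially identical to the paper's proof: both apply Markov's inequality to the $j$-th moment bound from Lemma \ref{lem:moments_high_degree} with $\e = \kappa/2$, union bound over the $m+1 \le 1/\kappa + 1$ indices, and use the arbitrariness of $j$ to beat $n^{-\mu\kappa}$. (The only cosmetic difference is that you treat $i=0$ deterministically, whereas the paper notes the moment bound is trivially valid at $\gamma=0$ and applies it uniformly.)
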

\begin{proof}
By a union bound and the Markov's inequality combined with Lemma \ref{lem:moments_high_degree}, for any $\e>0$ and a positive integer $j$,
\begin{align*}
\P \left (| D_{i \kappa} | \geq n^{1-   i \kappa  + \kappa} \text{ for some } i =0,1,\cdots,m  \right )&\leq \sum_{i=0}^m \P \left ( |D_{ i \kappa}| \geq n^{1-  i \kappa  + \kappa} \right)  \\
&\leq   \sum_{i=0}^m  \frac{ \E\left [|D_{ i \kappa}|^j \right ]}{n^{j- j  i \kappa + j\kappa }} \leq   (m+1) n^{j \e -j \kappa}.
\end{align*}
Taking $\e = \frac{\kappa}{2},$ and noticing that $m < \frac{1}{\kappa}$,  the above expression is bounded by
$ (m+1) n^{ -j\frac{\kappa}{2}}  \leq  \Big( \frac{1}{\kappa}  + 1\Big) n^{-j\frac{\kappa}{2}} .$
Since $j$ is an arbitrary integer, this concludes the proof.
\end{proof}

 Note that the previous  lemma and proposition could be stated with an asymptotic notation rather than the additional constants $\e$ and $\mu$, but the current phrasing will make it easier to use the results in our main proofs.

 \subsection{Probability of the existence of a clique} \label{sec 4.2}
As mentioned in the idea of proof section, {atypically} high degree stars (with high edge-weights on them) are the driving mechanism behind the largest eigenvalue in the case of light-tailed weights. In the case of heavy-tailed weights on the other hand, a similar role is played by cliques. In  this short section, we  state a bound for the probability that $G = \calG_{n, \frac{d}{n}} $ contains a clique of size $k$.

\begin{lemma}[{\cite[Lemma 4.3]{gn}}] \label{lemma clique}
For any integer $k \geq 3$, there exists a constant $C = C(k,d)>0$ such that {
\begin{equation*}
C n^{ -{k \choose 2} + k} \leq \mathbb{P}(G  \ \textup{contains a clique of size} \  k)  \leq    d^{{k \choose 2}} n^{ -{k \choose 2} + k} .
\end{equation*}
}
\end{lemma}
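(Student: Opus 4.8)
The plan is to estimate $\P(G \textup{ contains a clique of size } k)$ by a first and second moment argument applied to the number $N_k$ of $k$-subsets of $[n]$ that form a clique in $G = \calG_{n,\frac{d}{n}}$. Writing $N_k = \sum_{S} \1\{S \textup{ is a clique}\}$ where the sum runs over all ${n \choose k}$ subsets of size $k$, each indicator has expectation $(d/n)^{{k \choose 2}}$ since a fixed $k$-set is a clique exactly when all ${k \choose 2}$ edges among its vertices are present. Hence
\begin{align*}
\E[N_k] = \binom{n}{k} \Big(\frac{d}{n}\Big)^{{k \choose 2}}.
\end{align*}
Using $\binom{n}{k} \leq n^k$ and $\binom{n}{k} \geq (n/k)^k \geq c_k n^k$ for large $n$, we get $\E[N_k] \leq d^{{k \choose 2}} n^{k - {k \choose 2}}$, which together with Markov's inequality $\P(N_k \geq 1) \leq \E[N_k]$ yields the claimed upper bound immediately.

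For the lower bound, I would use the second moment (Paley--Zygmund) inequality $\P(N_k \geq 1) \geq \E[N_k]^2/\E[N_k^2]$, so it suffices to show $\E[N_k^2] = O(\E[N_k]^2)$ with a constant depending only on $k$ and $d$. Expanding, $\E[N_k^2] = \sum_{S,T} \P(S \textup{ and } T \textup{ are both cliques})$, and one groups the pairs $(S,T)$ by the size $j = |S \cap T|$ of their overlap, $j = 0,1,\dots,k$. For a pair with $|S\cap T| = j$, the union $S \cup T$ spans $2{k \choose 2} - {j \choose 2}$ distinct edges, so $\P(S,T \textup{ cliques}) = (d/n)^{2{k \choose 2} - {j \choose 2}}$, and the number of such pairs is at most $n^{2k - j}$ (choose $2k-j$ vertices and distribute them). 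Thus the $j$-th contribution is at most $n^{2k-j}(d/n)^{2{k \choose 2} - {j \choose 2}} = d^{2{k\choose 2}-{j\choose 2}}\, n^{(2k - 2{k\choose 2}) - (j - {j \choose 2})}$. Since $j - {j \choose 2} \leq 0$ for all $j \geq 0$ (with equality only at $j = 0$ and $j = 1$), the $j = 0$ term dominates and equals $(\E[N_k])^2(1+o(1))$, while every other term is $O(n^{2k - 2{k\choose 2} - 1})$, which is lower order since ${k \choose 2} \geq k$ forces the exponent to be at least one below. Therefore $\E[N_k^2] \leq C_k (\E[N_k])^2$ and Paley--Zygmund gives $\P(N_k \geq 1) \geq c(k,d)\, \E[N_k] \geq C n^{-{k\choose 2}+k}$, as desired.

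The only mild subtlety, and the one I would be careful about, is verifying that for every overlap size $j$ with $2 \leq j \leq k$ the exponent of $n$ is strictly smaller than $2k - 2{k\choose 2}$, i.e. that $j - {j \choose 2} < 1$; this holds because $j - {j\choose 2} = j(3-j)/2$, which is $1$ at $j=1$, $1$ at $j=2$, and negative for $j \geq 3$ — so one must note that the $j = 2$ term also contributes at the same order $n^{2k-2{k\choose 2}}$ as the $j=0$ term but with a smaller combinatorial constant, and this is harmless since it only inflates the constant $C_k$. This is exactly why the hypothesis $k \geq 3$ is needed rather than $k \geq 2$: one needs enough room in the counting, and for $k = 2$ the lemma statement is anyway excluded (that case is the trivial single-edge bound). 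All the estimates are elementary; no result beyond standard moment inequalities is required.
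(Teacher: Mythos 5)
This lemma is quoted from \cite{gn}, so the paper contains no proof of its own; your first-moment upper bound is correct and standard, and the second-moment strategy for the lower bound is the right idea in spirit. However, the execution of the lower bound contains a genuine error. You claim that the $j=0$ term dominates $\E[N_k^2]$ and that $\E[N_k^2]\le C_k\,\E[N_k]^2$. This is false for every $k\ge 4$: since $\E[N_k]=\Theta\bigl(n^{k-\binom{k}{2}}\bigr)\to 0$, the diagonal contribution $j=k$ to $\E[N_k^2]$, which is exactly $\E[N_k]$, already satisfies $\E[N_k]\gg \E[N_k]^2$. Indeed, the relative size of the $j$-th block compared to $\E[N_k]^2$ is $n^{\binom{j}{2}-j}=n^{j(j-3)/2}$, and $j(j-3)/2$ is a convex function of $j$ maximized over $\{0,\dots,k\}$ at $j=k$ (not at $j=0$); your sign analysis of $j-\binom{j}{2}=j(3-j)/2$ is also off (it is positive at $j=1,2$, zero at $j=0,3$, and negative for $j\ge 4$). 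Had your claim $\E[N_k^2]\le C\E[N_k]^2$ been true, Paley--Zygmund would give $\P(N_k\ge 1)\ge 1/C$, a constant, contradicting your own upper bound $d^{\binom{k}{2}}n^{k-\binom{k}{2}}\to 0$ for $k\ge 4$. So the final line, where you deduce $\P(N_k\ge 1)\ge c(k,d)\E[N_k]$ from that estimate, is a non sequitur.

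The argument is repaired by aiming for the correct normalization: show that $\E[N_k^2]\le C(k,d)\,\E[N_k]$, i.e. that the second moment is dominated by the diagonal. Using your own counts, the $j$-th block is $O\bigl(n^{2k-2\binom{k}{2}+j(j-3)/2}\bigr)$, and one checks $j(j-3)\le k(k-3)$ for all $0\le j\le k$ when $k\ge 3$, so every block is $O\bigl(n^{k-\binom{k}{2}}\bigr)=O(\E[N_k])$. Paley--Zygmund then gives
\begin{equation*}
\P(N_k\ge 1)\;\ge\;\frac{\E[N_k]^2}{\E[N_k^2]}\;\ge\;\frac{\E[N_k]}{C(k,d)}\;\ge\;C\,n^{-\binom{k}{2}+k},
\end{equation*}
which is the claimed lower bound. (As a side remark, the reason $k\ge 3$ is required is simpler than you suggest: for $k=2$ the asserted lower bound $Cn^{-1+2}=Cn$ exceeds $1$ and cannot hold for a probability.)
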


\subsection{Connectivity structures of highly  sub-critical  Erd\H{o}s-R\'{e}nyi graph} \label{connectivity}
{
As a key step in our proofs, we will decompose the underlying graph $X$ into  graphs with low and high edge-weights respectively.} Because of the threshold we choose, the latter graph turns out to be a highly subcritical graph $\cG_{n,q}$ with \begin{align} \label{q}
q \leq  \frac{d'}{n (\log n)^{\e}}
\end{align}  for some constants $\e,d'>0$.  In this section, we record some properties of such graphs, namely that all connected components look like trees and that their sizes are well-controlled.

Throughout this section, we assume that the edge density $q$ satisfies \eqref{q}.
First, we have a bound on the largest degree denoted by $d_1(\cG_{n,q})$, as a direct consequence of  \eqref{eq:degrees_bbg}. 
\begin{lemma} 
 \label{lemma 33}
For $\delta_1>0$,
define the event
\begin{align}\label{degsize}
\cD_{\delta_1} :=  \left \{d_1(\cG_{n,q}) \leq  (1+\delta_1) \frac{\log n}{\log \log n}  \right \}.
\end{align}
Then,
\begin{align*}
 \liminf_{n\ri} \frac{-\log \mathbb{P} \left (  \cD_{\delta_1}^c \right )  }{\log n}  \ge \delta_1.
\end{align*}
\end{lemma}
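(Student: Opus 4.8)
The plan is to deduce Lemma \ref{lemma 33} directly from the degree estimate \eqref{eq:degrees_bbg}, which is stated for $\cG_{n,\frac{d}{n}}$, after accounting for the fact that here the edge density $q$ satisfies $q \leq \frac{d'}{n(\log n)^\e}$ rather than being exactly $\frac{d}{n}$. Since decreasing the edge density only makes large degrees less likely, a natural first step is to set up a coupling: realize $\cG_{n,q}$ as a subgraph of $\cG_{n,\frac{d'}{n}}$ (valid once $q \leq \frac{d'}{n}$, which holds for large $n$), so that $d_1(\cG_{n,q}) \leq d_1(\cG_{n,\frac{d'}{n}})$ pointwise, and hence $\mathbb{P}(\cD_{\delta_1}^c) = \mathbb{P}(d_1(\cG_{n,q}) > (1+\delta_1)t_n) \leq \mathbb{P}(d_1(\cG_{n,\frac{d'}{n}}) > (1+\delta_1)t_n)$, where $t_n = \frac{\log n}{\log\log n}$.

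Next I would apply \eqref{eq:degrees_bbg} (with $r = 1$, $\delta_1$ playing the role of $\delta_1$ there, and the constant $d$ replaced by $d'$, noting the proposition is stated for arbitrary constant edge-parameter) to the graph $\cG_{n,\frac{d'}{n}}$. This gives
\begin{align*}
\lim_{n\to\infty} \frac{-\log \mathbb{P}\left(d_1(\cG_{n,\frac{d'}{n}}) \geq (1+\delta_1)t_n\right)}{\log n} = \delta_1.
\end{align*}
Combined with the coupling bound, this immediately yields $\liminf_{n\to\infty} \frac{-\log \mathbb{P}(\cD_{\delta_1}^c)}{\log n} \geq \delta_1$, which is exactly the claim. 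One should be slightly careful about the distinction between $d_1 \geq (1+\delta_1)t_n$ and $d_1 > (1+\delta_1)t_n$ appearing in \eqref{degsize}, but since the former event contains the latter, the inequality goes in the right direction and no issue arises.

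Alternatively, if one prefers not to invoke \eqref{eq:degrees_bbg} as a black box at a different density, the estimate can be proved from scratch via a union bound over the $n$ vertices together with the binomial tail bound of Lemma \ref{binomial_tails}: $\mathbb{P}(\cD_{\delta_1}^c) \leq n \cdot \mathbb{P}(\dBin(n-1, q) \geq (1+\delta_1)t_n)$, and then the relative entropy estimate (Lemma \ref{entropy bound}) shows $(n-1)I_q((1+\delta_1)t_n/(n-1)) \geq (1+\delta_1)(1+o(1))\log n \geq (1+\delta_1 + o(1))\log n$, which after the extra factor of $n$ still leaves $-\log \mathbb{P}(\cD_{\delta_1}^c) \geq (\delta_1 - o(1))\log n$ — in fact the smallness of $q$ only helps. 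I do not expect any serious obstacle here: the statement is a one-line consequence of results already established in the excerpt, and the only mild subtlety is bookkeeping the passage from density $\frac{d}{n}$ to the smaller density $q$, which is handled by monotonicity/coupling.
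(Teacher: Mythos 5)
Your primary argument (monotone coupling of $\cG_{n,q}$ into a denser Erd\H{o}s--R\'enyi graph, then invoking \eqref{eq:degrees_bbg} with $r=1$) is exactly the paper's proof, which phrases the same step as ``$\cD_{\delta_1}$ is a decreasing event and $\cG_{n,q}$ is stochastically dominated by $\cG_{n,\frac{d}{n}}$.'' The proposal is correct, and the alternative union-bound route you sketch is also sound but unnecessary.
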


\begin{proof}
Note that $\cD_{\delta_1}$ is a decreasing event and that $\cG_{n,q}$ is stochastically dominated by $\cG_{n,\frac{d}{n}}$.  Hence, by \eqref{eq:degrees_bbg}, we obtain the result.
\end{proof}

Next, we state a quantitative bound on the size of largest connected component. 

\begin{lemma}[{\cite[Lemma 5.4]{gn}}] \label{lem:biggest_component}
Let  $C_1,\cdots,C_L$ denote the connected components of $\cG_{n,q}$. For $\delta_2>0$,
define the event
\begin{align*}
\mathcal{C}_{\e,\delta_2} :=
 \left \{ \max_{i=1, \dots, L} |C_i|  \leq  ( 1 +\delta_2 )  \frac{1}{\e} \frac{\log n}{\log \log n}   \right  \} .
\end{align*}
Then,
\begin{align}
\liminf_{n \to \infty} \frac{ - \log \P\left (\mathcal{C}_{\e,\delta_2}^c \right )}{\log{n}} \geq {\delta_2}.
\end{align}

\end{lemma}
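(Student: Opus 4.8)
The plan is to estimate the upper tail of the largest component size of the highly subcritical graph $\cG_{n,q}$ with $q \leq \frac{d'}{n(\log n)^\e}$ via a first-moment/union-bound argument over connected subgraphs. First, I would recall the standard fact that for a connected graph on $\ell$ labeled vertices to appear as a subgraph of $\cG_{n,q}$, one needs at least $\ell-1$ edges to be present; hence, by a union bound over the $\binom{n}{\ell}$ choices of vertex set and the $\ell^{\ell-2}$ spanning trees (Cayley's formula) on each, the probability that some fixed set of $\ell$ vertices spans a connected subgraph is at most $\binom{n}{\ell}\ell^{\ell-2} q^{\ell-1}$. Thus
\begin{align*}
\P\Big( \max_{i} |C_i| \geq \ell \Big) \leq \P\big(\exists \text{ connected subgraph on exactly } \ell \text{ vertices}\big) \leq \binom{n}{\ell} \ell^{\ell-2} q^{\ell-1}.
\end{align*}

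Next, I would plug in $\ell = \ell_n := (1+\delta_2)\frac{1}{\e}\frac{\log n}{\log\log n}$ and bound the right-hand side crudely: using $\binom{n}{\ell} \leq n^\ell$, $\ell^{\ell-2} \leq \ell^\ell$, and $q^{\ell-1} \leq \big(\frac{d'}{n(\log n)^\e}\big)^{\ell-1}$, the bound becomes at most (up to a factor $n$)
\begin{align*}
n \cdot \Big( \frac{n \ell \, d'}{n (\log n)^\e} \Big)^{\ell} = n \cdot \Big( \frac{\ell \, d'}{(\log n)^\e} \Big)^{\ell}.
\end{align*}
Since $\ell = \ell_n = \Theta(\log n / \log\log n)$, we have $\frac{\ell \, d'}{(\log n)^\e} = \Theta\big( \frac{1}{(\log n)^{\e - 1}\log\log n} \big)$; as long as $\e$ is such that this quantity is eventually less than $1$ (which one may assume, possibly after noting the cases of small $\e$ separately, or simply because any fixed power of $\log n$ in the denominator beats the $\log n/\log\log n$ growth once $\e>1$; for $\e \leq 1$ one absorbs the extra $\log n$ factor by choosing $\ell$ slightly larger or arguing that the base is still $o(1)$ because $\ell/(\log n)^\e \to 0$ as $\ell \asymp \log n/\log\log n$), the base is $o(1)$, and so $\big(\frac{\ell d'}{(\log n)^\e}\big)^\ell = \exp\big(-\ell(\e-1+o(1))\log\log n\big) = \exp\big(-(1+\delta_2)\frac{\e-1+o(1)}{\e}\log n\big)$. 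Actually, to get the clean exponent $\delta_2$ claimed in the statement I would instead be slightly more careful: writing $\big(\frac{\ell d'}{(\log n)^\e}\big)^\ell$ and taking logarithms gives $\ell\big(\log\ell + \log d' - \e\log\log n\big) = \ell\big(-\e\log\log n + \log\log n + O(\log\log\log n)\big) = -\ell(\e-1+o(1))\log\log n$, and substituting $\ell = (1+\delta_2)\frac{1}{\e}\frac{\log n}{\log\log n}$ yields $-(1+\delta_2)\frac{\e-1}{\e}(1+o(1))\log n$. Combined with the leading factor $n = e^{\log n}$, the total exponent is $\big[1 - (1+\delta_2)\frac{\e-1}{\e}\big]\log n$ — so I will need to recheck the precise constant and very likely the intended reading is that $\e$ and $\delta_2$ together are in the regime where this exponent is at most $-\delta_2$, or the component-counting must be done more sharply (e.g. tracking that a connected graph on $\ell$ vertices with more than $\ell-1$ edges is even rarer, and that the dominant contribution is from tree components, for which a sharper count of $\ell^{\ell-2}$ rather than $\ell^\ell$ matters).

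The main obstacle, therefore, is not the structure of the argument — which is a routine first-moment bound — but pinning down the exact exponent $\delta_2$: one must track the polylogarithmic factors carefully and confirm that the $n (\log n)^\e$ in the denominator of $q$ is exactly what converts the entropy $\binom{n}{\ell} \approx n^\ell$ into the stated rate. I would handle this by writing $q = \frac{d'}{n(\log n)^\e}$ explicitly throughout, keeping $\ell = (1+\delta_2)\frac{1}{\e}t_n$ with $t_n = \frac{\log n}{\log\log n}$, and computing $\log\big[\binom{n}{\ell}\ell^{\ell-2}q^{\ell-1}\big]$ to leading order as $\ell\log n - \e\ell\log\log n + o(\ell\log\log n) = \ell\log\log n\big(\frac{\log n}{\log\log n}\cdot\frac{1}{\ell}\cdot\ell/\ldots\big)$ — i.e. $\ell(\log n - \e\log\log n + \ldots)$; since $\log n = t_n \log\log n$, this is $\ell\log\log n(t_n/\ell - \e + o(1))$ and with $\ell = (1+\delta_2)t_n/\e$ one gets $t_n/\ell = \e/(1+\delta_2)$, so the bracket is $\e\big(\frac{1}{1+\delta_2} - 1\big) + o(1) = -\e\frac{\delta_2}{1+\delta_2} + o(1)$, giving total $-\ell\log\log n \cdot \e\frac{\delta_2}{1+\delta_2}(1+o(1)) = -(1+\delta_2)t_n\log\log n\cdot\frac{\delta_2}{1+\delta_2}(1+o(1)) = -\delta_2\log n(1+o(1))$. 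This confirms $\liminf_{n\to\infty}\frac{-\log\P(\cC_{\e,\delta_2}^c)}{\log n} \geq \delta_2$, as desired. Since this is quoted verbatim as \cite[Lemma 5.4]{gn}, I would simply cite that reference and present the above sketch as the reason; if a self-contained proof is wanted, the computation above is complete modulo writing out the $o(1)$ terms.
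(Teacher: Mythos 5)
The paper does not prove this lemma itself but defers to \cite[Lemma 5.4]{gn}, whose argument is exactly your first-moment bound over spanning trees, and your final computation does land on the correct exponent $\delta_2$. Two remarks so the write-up does not mislead. First, your crude initial pass (using $\binom{n}{\ell}\le n^\ell$ and $\ell^{\ell-2}\le \ell^\ell$) genuinely fails, as you suspected: it discards the factor $1/\ell!\approx \ell^{-\ell}e^{\ell}$ from the binomial coefficient that is needed to cancel Cayley's $\ell^{\ell-2}$, and this costs a spurious factor $\ell!\ge(\ell/e)^\ell=n^{(1+\delta_2)/\e+o(1)}$; only the refined computation is valid, so the failed attempt should be cut. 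Second, in that refined computation the leading-order expression should read $\log n-\e\ell\log\log n+o(\ell\log\log n)$ rather than $\ell\log n-\e\ell\log\log n$: the $n^{\ell}$ coming from $\binom{n}{\ell}$ cancels against the $n^{-(\ell-1)}$ coming from $q^{\ell-1}$, leaving a single power of $n$, while $1/\ell!$ cancels $\ell^{\ell-2}$ up to $e^{O(\ell)}=n^{o(1)}$. The identity $\ell\log\log n\,(t_n/\ell-\e+o(1))$ that you actually manipulate is equal to this corrected expression, so the remaining algebra and the conclusion $-\delta_2\log n\,(1+o(1))$ are correct.
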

 
 To conclude this section we state two results about the structure of the connected components. The first one quantifies how similar all connected components are to trees, in the sense that they have a small number of tree-excess edges. The second one concerns the event that all connected components are trees.

\begin{lemma}[{\cite[Lemma 5.6]{gn}}]\label{lem:excess_edges}
Let    $C_1,\cdots,C_L$ denote the connected components of $\cG_{n,q}$.
For $ \delta_3 > 0 $, define the event 
\begin{equation}
\mathcal{E}_{\delta_3} := \left \{\max_{i=1, \dots, L} \big \{|E(C_i) |- |V(C_i)| \big  \} \leq \delta_3 \right \}.
\end{equation}
Then, 
\begin{equation}
\liminf_{n \to \infty} \frac{-\log \P \left ( \mathcal{E}_{\delta_3}^c \right ) }{\log n} \geq \delta_3.
\end{equation}
In addition,  define the event  
\begin{align*}
\cT:=\big \{ |E(C_i)| = |V(C_i)| - 1,  \ \forall i = 1,\cdots,L \big \}.
\end{align*}
In other words, $\cT$ is the event that  all the connected components of $\cG_{n,q}$ are trees. Then,   there is a constant $C>0$ that depends on $d$, such that
\begin{align} \label{351}
\mathbb{P} \big (\cT^c  \big ) \leq \frac{C}{(\log n)^{2\e}}.
\end{align}
 
\end{lemma}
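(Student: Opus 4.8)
The plan is to control, for the highly subcritical graph $\cG_{n,q}$ with $q \le \frac{d'}{n(\log n)^\e}$, the number of ``tree-excess'' edges in any connected component, i.e.\ the quantity $|E(C_i)| - |V(C_i)| + 1$. I would first reduce everything to a union bound over the possible shapes that a ``dense'' component can take. Concretely, if some component $C_i$ has $|E(C_i)| - |V(C_i)| \ge \delta_3$, then $C_i$ contains a connected subgraph $H$ on some $m$ vertices with exactly $m - 1 + \lceil \delta_3 \rceil + 1$ edges (a spanning tree plus $\lceil \delta_3\rceil + 1$ extra edges), and $H$ appears as a subgraph of $\cG_{n,q}$. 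So it suffices to bound
\[
\P\left(\mathcal{E}_{\delta_3}^c\right) \le \sum_{m \ge \lceil \delta_3\rceil + 2} \P\left(\cG_{n,q} \text{ contains a connected subgraph on } m \text{ vertices with } m + \lceil \delta_3 \rceil \text{ edges}\right).
\]

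For the second step I would estimate each term by first choosing the vertex set ($\binom{n}{m} \le n^m$ ways), then choosing which $m + \lceil\delta_3\rceil$ of the $\binom{m}{2}$ pairs are edges ($\le m^{2(m+\lceil\delta_3\rceil)}$ or simply $\binom{\binom m2}{m+\lceil\delta_3\rceil}$ ways), and then paying $q^{m+\lceil\delta_3\rceil}$ for those edges to be present. This gives a bound of roughly
\[
\sum_{m} n^m \cdot C_m \cdot \left(\frac{d'}{n(\log n)^\e}\right)^{m+\lceil\delta_3\rceil} = \sum_m \frac{C_m (d')^{m+\lceil\delta_3\rceil}}{n^{\lceil\delta_3\rceil} (\log n)^{\e(m+\lceil\delta_3\rceil)}},
\]
where the combinatorial factor $C_m$ (coming from the choice of which pairs form edges) is only polynomial in $m$; the decaying factor $(\log n)^{-\e m}$ handily beats it, so the sum over $m$ converges and is dominated by its first term. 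The leading behaviour is $n^{-\lceil\delta_3\rceil}(\log n)^{-\text{(something)}} \le n^{-\delta_3 + o(1)}$, which gives $\liminf_n \frac{-\log \P(\mathcal{E}_{\delta_3}^c)}{\log n} \ge \lceil \delta_3 \rceil \ge \delta_3$, as desired. One subtlety to handle carefully: $\delta_3$ need not be an integer, and $|E(C_i)| - |V(C_i)|$ being $\ge \delta_3$ forces it to be $\ge \lceil\delta_3\rceil$ since it is an integer (it equals the tree-excess minus one, an integer), so the correct exponent is $\lceil\delta_3\rceil$, which only helps.

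For the statement \eqref{351} about $\cT^c = \{\text{some component is not a tree}\}$, I would apply exactly the same union bound but now with just \emph{one} extra edge beyond a spanning tree, i.e.\ a connected subgraph on $m$ vertices with $m$ edges (equivalently, a connected unicyclic subgraph). The analogous computation gives
\[
\P(\cT^c) \le \sum_{m \ge 3} n^m \cdot C_m \cdot \left(\frac{d'}{n(\log n)^\e}\right)^{m} = \sum_{m\ge 3} \frac{C_m (d')^m}{(\log n)^{\e m}} \le \frac{C}{(\log n)^{3\e}} \le \frac{C}{(\log n)^{2\e}},
\]
the $n$-powers cancelling exactly (this is precisely why unicyclic components are rare only by logarithmic factors, not polynomial ones). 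The main obstacle — really the only point requiring care — is bounding the combinatorial factor counting the ways to place $m + \lceil\delta_3\rceil$ (resp.\ $m$) edges among $m$ vertices so as to keep the subgraph connected; a crude bound of $\binom{\binom{m}{2}}{m+\lceil\delta_3\rceil} \le m^{2(m+\lceil\delta_3\rceil)}$ suffices since it is absorbed by $(\log n)^{-\e m}$ for $n$ large, but one must make sure the sum over all $m$ (which a priori ranges up to $n$) genuinely converges; this follows because for every fixed $n$ the ratio of consecutive terms is $O(m^{O(1)}/(\log n)^\e) \to 0$. I can freely invoke the earlier stated facts (Lemmas \ref{lemma 33}, \ref{lem:biggest_component}) if needed to first restrict to components of size $O(\log n / \log\log n)$, which makes the combinatorial factors trivially polynomially bounded, but this is not strictly necessary.
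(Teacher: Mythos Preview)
The paper does not prove this lemma itself --- it is quoted verbatim from \cite[Lemma~5.6]{gn} --- so there is no in-paper argument to compare against. Your union-bound approach over connected subgraphs with a prescribed edge excess is the standard one and is essentially what the cited proof does, but your handling of the combinatorial factor has a genuine gap.

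You assert that $C_m$, the number of connected labelled graphs on $m$ vertices with $m+\lceil\delta_3\rceil$ edges, ``is only polynomial in $m$''. This is false: already the number of spanning trees on $[m]$ is $m^{m-2}$, and $C_m$ is of order $m^{m+O(1)}$. With your crude bound $C_m\le m^{2(m+\lceil\delta_3\rceil)}$ and $\binom{n}{m}\le n^m$, the ratio of consecutive terms in your sum is not $O(m^{O(1)}/(\log n)^\e)$ but of order $m^2/(\log n)^\e$, which exceeds $1$ once $m\gtrsim(\log n)^{\e/2}$; the series diverges. Restricting to $m\le C\frac{\log n}{\log\log n}$ via Lemma~\ref{lem:biggest_component} does not rescue the argument either: at that scale $m^{2m}=n^{2C(1+o(1))}$, an $n$-power whose exponent (depending on $\e^{-1}$) swamps the $n^{-\lceil\delta_3\rceil}$ you are trying to extract.

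The fix is to count more carefully: bound $C_m\le m^{m-2}\cdot\binom{\binom{m}{2}}{\lceil\delta_3\rceil+1}\le m^{m+2\lceil\delta_3\rceil}$ (spanning tree plus excess edges, via Cayley), and use $\binom{n}{m}\le n^m/m!$ with $m!\ge(m/e)^m$. The $m^m$ from $C_m$ then cancels the $m^m$ from $m!$, leaving
\[
\sum_{m}\binom{n}{m}C_m\,q^{m+k}\;\le\;C_k\,n^{-k}\sum_{m} m^{2k}\Big(\frac{ed'}{(\log n)^\e}\Big)^{m},
\]
a genuinely convergent series for large $n$, yielding the claimed $n^{-k+o(1)}$ with $k=\lfloor\delta_3\rfloor+1>\delta_3$. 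For $\P(\cT^c)$ the same correction applies; alternatively and more cleanly, $\cT^c$ is exactly the event that $\cG_{n,q}$ contains a cycle, and
\[
\P(\cT^c)\le\sum_{\ell\ge 3}\binom{n}{\ell}\frac{(\ell-1)!}{2}\,q^\ell\le\sum_{\ell\ge 3}\frac{(nq)^\ell}{2\ell}=O\big((nq)^3\big)=O\big((\log n)^{-3\e}\big),
\]
which is stronger than \eqref{351}.
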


\section{Light-tailed weights} \label{sec:light}
We consider $\alpha>2$ in this section and prove Theorems \ref{thm:light_upper} and \ref{thm:light_lower}. 
For the reader's benefit let us recall that 
\begin{align}\label{typ11} \lambda^{\textup{light}}_{\alpha  }  = 2^\frac{1}{\alpha} \alpha^{-\frac{1}{2}} (\alpha - 2)^{\frac{1}{2}-\frac{1}{\alpha}}   \frac{(\log n) ^\frac{1}{2}}{ (\log \log n )  ^{\frac{1}{2}- \frac{1}{\alpha}}}.
\end{align} 
For notational brevity, in this section we will denote $\lambda^{\textup{light}}_{\alpha  }$ simply by $\lambda_{\alpha  }.$
{To simplify the notation further, we set
\begin{align}\label{b_alpha}
B_\alpha := 2^\frac{1}{\alpha} \alpha^{-\frac{1}{2}} (\alpha - 2)^{\frac{1}{2}-\frac{1}{\alpha}} .
\end{align}
}

\subsection{The upper tail} \label{5.1}

Let us recall the theorem that we will prove in this section.

\lightupper*

{The governing structure for the upper tail of $\lambda_1(Z)$ will turn out to be  a star of degree $\lceil \gamma_\delta \frac{\log n}{\log \log n} \rceil$ with
\begin{align} \label{31}
\gamma_\delta := (1+\delta)^2 \left ( 1 - \frac{2}{\alpha} \right)
\end{align}
  and high edge-weights on the edges. This stems from the following optimization problem. The probability that the maximum of the largest eigenvalue among all the typically present $n^{1-\gamma}$ stars of degree  $\lceil\gamma \frac{\log n}{\log \log n}\rceil$ (see e.g., Lemma \ref{lem:moments_high_degree} and Proposition \ref{degree_distribution}) is greater than  $ (1+\delta) \lambda_\alpha$  is maximized at  $\gamma = \gamma_\delta$.}

\subsubsection{Lower bound for the upper tail}

  The strategy will change depending on whether $\gamma_\delta$ is less or greater than 1.

\textbf{Case 1:  $\bm{\gamma_\delta<1}$.}
For small enough $\rho>0$, we condition on the event $\mathcal{A}_{\gamma_\delta,\rho}$ measurable with respect to $X$, defined in Proposition \ref{sequential_revealing}. Conditioned on that event, there exist $m: = \left \lceil  \frac{1}{4} n^{1-\gamma_\delta  - \rho} \right \rceil $ vertices with  $ g(\gamma_\delta) =  \left \lceil   \gamma_\delta \frac{ \log n}{\log \log n} \right \rceil  $  disjoint   neighbors  with no edges between each neighbors.
Denote by $S_1,\cdots,S_m$ the vertex-disjoint stars induced by these vertices and their $g(\gamma)$ neighbors. Then,  by Lemma \ref{spectrum_graphs},
\begin{equation*}
\lambda_1(Z) \geq \max_{k=1,\cdots,m} \lambda_1 (S_k).
\end{equation*}
Thus, conditioned on the event $\mathcal{A}_{\gamma_\delta,\rho}$,
by the characterization of the largest eigenvalue of stars in Lemma \ref{spectrum_star_weights},
\begin{align} \label{20}
 \mathbb{P}\big(\lambda_1(Z) \geq (1+\delta) \lambda_{\alpha } \mid  X \big )   
 \geq  
 \P \left (\max_{k= 1, \dots, m} \sum_{(i,j)\in E(S_k)} Z_{ij}^2 \geq (1+\delta)^2 \lambda_{\alpha } ^2  \mid   X \right )
\end{align}
(recall that  $(i,j)$  denotes the  \emph{undirected} edge joining vertices $i$ and $j$ with $i<j$).
In the appendix, we derive a tail bound \eqref{common_tail_terms:upper}  for the sum of squares of Weibull random variables. Plugging in the bound  with $d=1+\delta$ and $b=\gamma_\delta$, under the event  $\mathcal{A}_{\gamma_\delta,\rho}$, for each $k=1,\cdots,m$,
\begin{align*}
\P  \left ( \sum_{(i,j)\in E(S_k)} Z_{ij}^2\geq (1+\delta)^2 \lambda_{\alpha } ^2 \mid  X  \right )  
 \geq  n^{-(1+\delta)^\alpha  \frac{2}{\alpha-2}(1-\frac{2}{\alpha})^{\frac{\alpha}{2}} \gamma_\delta^{1-\frac{\alpha}{2}} + o(1)} =    n^{-(1+\delta)^2 \frac{2}{\alpha}  + o(1)},
\end{align*}
where we used $\gamma
_\delta = (1+\delta)^2 \left ( 1 - \frac{2}{\alpha} \right)$ in the last equality.
Using the independence of edge-weights and recalling $m = \left \lceil  \frac{1}{4} n^{1-\gamma_\delta  - \rho} \right \rceil $, under the event  $\mathcal{A}_{\gamma_\delta,\rho}$,
\begin{align*}
\P \left (\max_{k= 1, \dots, m} \sum_{(i,j)\in E(S_k)} Z_{ij}^2\geq (1+\delta)^2 \lambda_{\alpha } ^2  \mid  X \right )
& \geq 1 - \left ( 1 - n^{-(1+\delta)^2 \frac{2}{\alpha}+ o(1)}  \right )^{ m} \\
& \geq  1 - e^{-  n^{1- \gamma_\delta -\rho   - (1+\delta)^2 \frac{2}{\alpha} + o(1)}}  \geq \frac{1}{2} n^{1-(1+\delta)^2 - \rho+ o(1)},
\end{align*}
where we used  the fact that $1-e^{-x} \geq \frac{1}{2}x$ for small $x>0$ in the last inequality.

Therefore, applying this to  \eqref{20} and using that $\mathbb{P}(\mathcal{A}_{\gamma_\delta,\rho}) \geq \frac{1}{2}$ (see Proposition \ref{sequential_revealing}), 
 \begin{align*}
\P \big ( \lambda_1(Z) \geq (1+\delta) \lambda_{\alpha } \big )  
& \geq  
\mathbb{E} \left [ \mathbb{P} \big (\lambda_1(Z) \geq (1+\delta) \lambda_{\alpha } \mid  X \big ) 1_{ \mathcal{A}_{\gamma_\delta,\rho} } \right ] 
\geq 
  \frac{1}{4} n^{1-(1+\delta)^2 - \rho+ o(1)}.
%& \geq \P(\mathcal{D}) \P \left ( \sum_{w \in \Lambda_*}X_{\sigma_* w}^2 \geq (1+\delta)^2 \lambda_{\alpha, }^2, \Lambda_{*} \text{has no edges} \Big | \mathcal{D} \right ) \\
%& \geq \P(\mathcal{D}) \P \left ( \sum_{w \in \Lambda_*}X_{\sigma_* w}^2 \geq (1+\delta)^2 \lambda_{\alpha, }^2 | \mathcal{D} \right ) \P \left (  \Lambda_{*} \text{has no edges} \Big | \mathcal{D} \right ) \\
%& \quad \text{ since the edges and weights that determine these two events are independent} 
\end{align*}
Due to the arbitrariness of $\rho>0$,  
\begin{align*}
\limsup_{n \to \infty} - \frac{ \log \P \big ( \lambda_{1} (Z) \geq (1+\delta) \lambda_{\alpha }  \big )}{\log{n}} \leq  (1+\delta)^2 - 1.
\end{align*}

\textbf{Case 2: $\bm{\gamma_\delta \geq 1}$.}
For any $\rho>0$, we condition on the event $\mathcal{A}_{ (1+\rho)\gamma_\delta}'$ measurable with respect to $X$, defined in Proposition \ref{reveal2}.
 Under this event there exists a vertex $v$ with $ \left \lceil (1+\rho) \gamma_\delta \frac{\log n}{\log \log n} \right  \rceil $  neighbors with no edges between them.  Denote by  $S$ {the star induced by $v$} and these neighbors.
As before, using the tail bound \eqref{common_tail_terms:upper} with $d=1+\delta$ and $b=(1+\rho) \gamma_\delta$, under the event $\mathcal{A}_{ (1+\rho)\gamma_\delta}'$,
\begin{align*}
\P \big ( \lambda_1(Z) \geq (1+\delta) \lambda_{\alpha }  \mid   X \big) 
& \geq 
\P \left( \sum_{(i,j)\in E(S)} Z_{ij}^2 \geq (1+\delta)^2 \lambda_{\alpha } ^2  \mid X \right )  \\
& \geq n^{-(1+\delta)^\alpha  \frac{2}{\alpha-2}(1-\frac{2}{\alpha})^{\frac{\alpha}{2}}  ((1+\rho)\gamma_\delta)^{1-\frac{\alpha}{2}} + o(1)}  =  n^{-(1+\delta)^2 \frac{2}{\alpha} (1+\rho)^{1-\frac{2}{\alpha}} + o(1) }.
\end{align*}
By Proposition \ref{reveal2},
\begin{align*}
\P \left (\mathcal{A}_{ (1+\rho)\gamma_\delta}' \right ) =  n^{1- (1+\rho)\gamma_\delta + o(1)}.
\end{align*}
Thus, as above,
\begin{align*}
\limsup_{n \to \infty} \frac{ - \log \P \big ( \lambda_1(Z) \geq (1+\delta) \lambda_{\alpha } \big ) }{\log n} 
& \leq 
-1 + (1+\rho)  \gamma_\delta + (1+\delta)^2 \frac{2}{\alpha} (1+\rho)^{1-\frac{2}{\alpha}} \\
& = -1+(1+\delta)^2  \Big[ (1+\rho) \Big(1-\frac{2}{\alpha}\Big) +   \frac{2}{\alpha}(1+\rho)^{1-\frac{2}{\alpha}}\Big].
\end{align*}
Since $\rho>0$ is arbitrary, this implies the desired bound.

\qed

\begin{remark} \label{remark critical}
If $\gamma_\delta = 1$, then the precise behavior of the number of vertices of degree close to $ \gamma_\delta \frac{\log n}{\log \log n}= \frac{\log n}{\log \log n}$ {or the probability of the existence of such  a vertex is somewhat delicate to track}. Hence, we considered vertices with a slightly larger degree instead to exploit the large deviation bound for atypically large degrees from \eqref{eq:degrees_bbg}. This shortens the proof  by not dealing with the case $\gamma_\delta = 1$ separately.

\end{remark}
\subsubsection{Upper bound of the upper tail} 
 
We proceed in a sequence of steps:
\begin{enumerate}[(1)]
\item \label{outline_light_step_decompose} We first truncate the weights $Y$ and then accordingly decompose $Z$ into $Z^{(1)} + Z^{(2)}$:
\begin{align} \label{100}
Z^{(1)}_{ij} = X_{ij} Y^{(1)}_{ij}  \quad  \text{and} \quad  Z^{(2)}_{ij} = X_{ij} Y^{(2)}_{ij},
\end{align} 
where
\begin{align}  
  Y^{(1)}_{ij}  = Y_{ij}\1_{|Y_{ij}| > (\e \log \log n )^\frac{1}{\alpha}}    \quad  \text{and} \quad   Y^{(2)}_{ij}  = Y_{ij}\1_{|Y_{ij}| \leq (\e \log \log n )^\frac{1}{\alpha}} .
\end{align}
Similarly, write $ X=X^{(1)}+X^{(2)}$ with 
\begin{align} \label{101}
X^{(1)}_{ij}=X_{ij}\1_{|Y_{ij}| > (\e \log \log n )^\frac{1}{\alpha}}   \quad  \text{and} \quad   X^{(2)}_{ij}=X_{ij}\1_{|Y_{ij}| \leq  (\e \log \log n )^\frac{1}{\alpha}}.
\end{align} 
{This particular threshold is chosen so that, as we will soon see, $Z^{(2)}$ is spectrally negligible.}

 By the tail decay of Weibull distributions, $X^{(1)}$  is distributed as   $\calG_{n,q}$ with 
\begin{align} \label{qq}
q \leq  \frac{d'}{n (\log n)^{  \e}}
\end{align} 
for some constant $d'>0$.  Also, given  $X^{(1)}$, the edge-weights on the network $Z^{(1)}$ can be regarded as  i.i.d. Weibull distributions conditioned to be greater than $(\e \log \log n)^{\frac{1}{\alpha}}$ in absolute value.

\item \label{outline_light_step_component_analysis} 
 We analyze the component structure of $X^{(1)}$, the underlying graph of the network  $Z^{(1)}$. {The sparsity of $X^{(1)}$ allows the results in Section \ref{connectivity} to be applicable. In particular, connected components of $X^{(1)}$ are tree-like (i.e. the number of tree-excess edges are small)} and their sizes are relatively small with high probability. 
\item \label{outline_light_step_decompose2} We further decompose the network $Z^{(1)}$ into  $Z^{(1)}_1$, consisting of vertex-disjoint weighted stars, and  $Z^{(1)}_2$, whose degrees are well-controlled.
 
\item \label{outline_light_step_Z^1_2_negligible} 
Using the results in Step \eqref{outline_light_step_component_analysis} and the fact that the maximal degree in $Z^{(1)}_2$ is relatively small, we prove  that  $Z^{(1)}_2$ is spectrally negligible as well.
\item \label{outline_light_step_stars} 
 We analyze the spectral contribution of $Z^{(1)}_1$ by grouping stars according to their degrees.  Since we have a complete characterization of the largest eigenvalue of a (weighted) star graph (see Lemma \ref{spectrum_star_weights}),  one can explicitly compute the contribution from the collections of stars of a given degree. It turns out that the main contribution, which leads to the large deviation probability, comes from the stars of degree close to $\gamma_\delta \frac{\log n}{\log \log n}$ (see \eqref{31}).

\end{enumerate}

Given the truncation in Step \eqref{outline_light_step_decompose} above, 
we first estimate $\lambda_1(Z^{(2)})$.

\begin{lemma} \label{lem:bulk_negligible}
{For any $\delta,\e>0$,}
\begin{equation}
\liminf_{n \to \infty} \frac{-\log \P\Big (\lambda_1\left (Z^{(2)} \right ) \geq \e^{\frac{1}{\alpha}} (1+\delta)  \frac{\lambda_\alpha}{B_\alpha} \Big )}{\log n} 
\geq
 (1+\delta)^2 - 1.
\end{equation}
\end{lemma}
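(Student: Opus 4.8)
The plan is to reduce the statement to the already-known large deviations for the largest eigenvalue of the \emph{unweighted} sparse graph $X=\calG_{n,\frac dn}$, exploiting that the surviving entries of $Z^{(2)}$ are deterministically bounded. From the construction in \eqref{100}--\eqref{101}, for every $i,j$ one has
\[
|Z^{(2)}_{ij}| = X_{ij}\,|Y^{(2)}_{ij}| \le (\e\log\log n)^{1/\alpha}\,X_{ij}.
\]
Writing $M:=(\e\log\log n)^{1/\alpha}$ and using the variational formula \eqref{variational} together with the nonnegativity of the entries of $X$, for any unit vector $f$,
\[
\sum_{i,j} Z^{(2)}_{ij}f_if_j \;\le\; M\sum_{i,j}X_{ij}\,|f_i|\,|f_j| \;\le\; M\,\lambda_1(X),
\]
so that $\lambda_1(Z^{(2)}) \le M\,\lambda_1(X)$ holds deterministically.

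Next I would observe that the threshold in the statement has been calibrated exactly so that this deterministic bound turns the event of interest into a degree event for $X$. Indeed, by \eqref{typ11} and \eqref{b_alpha} we have $\lambda_\alpha/B_\alpha = (\log n)^{1/2}/(\log\log n)^{1/2-1/\alpha}$, so on the event $\{\lambda_1(Z^{(2)}) \ge \e^{1/\alpha}(1+\delta)\lambda_\alpha/B_\alpha\}$ the previous bound forces
\[
(\e\log\log n)^{1/\alpha}\,\lambda_1(X) \;\ge\; (1+\delta)\,\e^{1/\alpha}\,\frac{(\log n)^{1/2}}{(\log\log n)^{1/2-1/\alpha}},
\]
which, upon cancelling the factor $\e^{1/\alpha}(\log\log n)^{1/\alpha}$, is exactly $\lambda_1(X) \ge (1+\delta)\sqrt{\log n/\log\log n}=(1+\delta)\sqrt{t_n}$. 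Hence
\[
\P\!\left(\lambda_1(Z^{(2)}) \ge \e^{1/\alpha}(1+\delta)\tfrac{\lambda_\alpha}{B_\alpha}\right) \;\le\; \P\!\left(\lambda_1(X) \ge (1+\delta)\sqrt{t_n}\right).
\]

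To conclude I would invoke the large deviation estimate for the largest eigenvalue of the unweighted sparse Erd\H{o}s-R\'{e}nyi graph, i.e. \cite[Theorem 1.1]{ganguly1} (the ``$\alpha=\infty$'' case, recorded later as Theorem \ref{thm:eigenvalues_bbg}), giving $\P(\lambda_1(X)\ge(1+\delta)\sqrt{t_n}) = n^{-((1+\delta)^2-1)+o(1)}$. Combined with the previous display this yields $\liminf_{n}\frac{-\log\P(\cdots)}{\log n}\ge(1+\delta)^2-1$, as claimed. I do not anticipate a genuine obstacle here: the lemma is essentially bookkeeping once the domination $\lambda_1(Z^{(2)})\le M\lambda_1(X)$ is in place, and the only nontrivial ingredient is already available. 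If one preferred to avoid citing \cite{ganguly1}, the bound $\P(\lambda_1(X)\ge(1+\delta)\sqrt{t_n})\le n^{1-(1+\delta)^2+o(1)}$ can be reproved by a union bound over vertices via the binomial degree tail \eqref{05} together with the vertex-disjoint star / low-degree decomposition of $X$ used in the upper-bound proof of Theorem \ref{thm:light_upper}; there the only mildly delicate point is controlling the tree-excess of the subcritical remainder.
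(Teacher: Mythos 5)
Your proof is correct and is essentially identical to the paper's: the paper also uses the deterministic domination $\lambda_1(Z^{(2)})\le(\e\log\log n)^{1/\alpha}\lambda_1(X)$, notes that the threshold cancels to give exactly $\{\lambda_1(X)\ge(1+\delta)\sqrt{t_n}\}$, and concludes by Theorem \ref{thm:eigenvalues_bbg}. No gaps.
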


We start by stating the following theorem from  \cite{ganguly1}. While the latter covers a varied range of values for $p,$ we state it only in the sparsity regime considered in this paper. {Recall that we set $t_n = \frac{\log n}{\log \log n}$.}
 
\begin{theorem}[{\cite[Thm. 1.1]{ganguly1}}] \label{thm:eigenvalues_bbg}
For any   $\delta > 0$,
\begin{equation}
\lim_{n \to \infty} \frac{ -\log \P ( \lambda_1(X) \geq (1+\delta) t_n^\frac{1}{2}  ) }{ \log n} = (1+\delta)^2 - 1.
\end{equation}
\end{theorem}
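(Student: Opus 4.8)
The plan is to establish matching inequalities, $\limsup_n\big(-\log\P(\lambda_1(X)\ge(1+\delta)t_n^{1/2})\big)/\log n\le(1+\delta)^2-1$ and $\liminf_n(\cdots)/\log n\ge(1+\delta)^2-1$. Writing $\gamma:=(1+\delta)^2>1$, the guiding picture — which is exactly the $\alpha\to\infty$ specialization of the light-tailed analysis of Section~\ref{sec:light} — is that the event is created by a single star of degree $\approx\gamma t_n$, whose top eigenvalue is $\sqrt{\gamma t_n}=(1+\delta)t_n^{1/2}$ (Lemma~\ref{spectrum_star_weights} with all weights equal to $1$), and that such a vertex is present with probability $n^{1-\gamma+o(1)}$ by \eqref{eq:degrees_bbg}.

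\emph{Lower bound on the probability.} Apply Proposition~\ref{reveal2} with parameter $\gamma=(1+\delta)^2>1$: with probability $n^{1-\gamma+o(1)}$ there exist a vertex $v$ and a set $W$ of $g(\gamma)=\lceil\gamma t_n\rceil$ neighbours of $v$ carrying no internal edges. On this event the subgraph induced on $\{v\}\cup W$ is precisely a star of degree $g(\gamma)$, so inserting its (non-negative) Perron vector, padded by zeros, into the variational formula for $\lambda_1(X)$ gives $\lambda_1(X)\ge\sqrt{g(\gamma)}\ge(1+\delta)t_n^{1/2}$ deterministically. Hence $\P(\lambda_1(X)\ge(1+\delta)t_n^{1/2})\ge n^{1-\gamma+o(1)}=n^{-((1+\delta)^2-1)+o(1)}$, which is the desired bound on the $\limsup$.

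\emph{Upper bound on the probability.} Fix a small $\rho>0$; it suffices to show the probability is at most $n^{1-\gamma+\rho+o(1)}$ and then let $\rho\downarrow0$. First peel off the event that the degree profile is atypical, namely $|D_c|>n^{1-c+\rho}$ for some $c$ in a fixed fine discretisation of $(0,1]$; by Proposition~\ref{degree_distribution} (choosing the free parameter $\mu$ with $\mu\rho>\gamma-1$) this has probability at most $n^{-\mu\rho}$, which is negligible. On the complement one decomposes $X$, level by level in the degree, into a \emph{star forest} around its high-degree vertices together with a \emph{remainder}: the edges incident to vertices whose degree exceeds a slowly growing threshold are assigned to essentially vertex-disjoint stars, while the remaining edges form a graph of bounded maximum degree, hence of spectral norm $o(t_n^{1/2})$ by the subadditivity in Lemma~\ref{spectrum_graphs}. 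Since the largest eigenvalue of a star forest is the maximum over its stars of the square root of the star's degree (Lemma~\ref{spectrum_star_weights}), on this good event $\lambda_1(X)\ge(1+\delta)t_n^{1/2}$ forces some vertex to have degree at least $(1-\rho)\gamma t_n$; by \eqref{eq:degrees_bbg} this occurs with probability $n^{1-(1-\rho)\gamma+o(1)}$. Altogether $\P(\lambda_1(X)\ge(1+\delta)t_n^{1/2})\le n^{1-(1-\rho)\gamma+o(1)}+n^{-\mu\rho}=n^{1-\gamma+O(\rho)+o(1)}$, and letting $\rho\downarrow0$ yields $\liminf_n(-\log\P)/\log n\ge(1+\delta)^2-1$, matching the lower bound and completing the proof.

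\emph{Main obstacle.} The real work is justifying the star-forest/remainder decomposition with the required precision — showing that, off an event of probability $\ll n^{1-\gamma}$, the high-degree vertices and their neighbourhoods genuinely split into vertex-disjoint stars with a bounded-degree leftover. Note that $\cG_{n,d/n}$ is \emph{super}critical, with a linear number of independent cycles, so one cannot simply delete all excess edges as one could for the subcritical graphs of Section~\ref{connectivity}; instead one needs a multi-scale argument over dyadic degree levels together with a localisation statement of the type ``$\lambda_1(X)\ge\lambda$ is witnessed by a connected vertex set of size $O(t_n)$ supporting spectral norm $\ge\lambda(1-o(1))$''. One then union-bounds over the finitely many ``types'' of such cores and checks that every non-star type (two adjacent high-degree vertices, a high-degree vertex whose neighbourhood carries an extra edge or short cycle, and so on) also costs probability at least $n^{1-\gamma+o(1)}$ — e.g.\ a ``bowtie'' joining a degree-$a$ and a degree-$b$ vertex needs $a+b\gtrsim\gamma t_n$, which again has probability $n^{1-\gamma+o(1)}$ — so none of them beats the star's rate. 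This enumeration, carried out in detail in \cite{ganguly1}, is the heart of the argument.
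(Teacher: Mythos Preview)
The paper does not prove this theorem at all: it is stated as a quotation of \cite[Theorem~1.1]{ganguly1} and used as a black box (notably in the proof of Lemma~\ref{lem:bulk_negligible} and in the lower-tail argument of Section~\ref{5.2}). So there is no ``paper's own proof'' to compare against.

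Your sketch is a fair outline of the argument in \cite{ganguly1}. The lower bound via Proposition~\ref{reveal2} is essentially complete and uses only tools developed in the present paper. For the upper bound, however, your plan leans on the star-forest/remainder decomposition, which in this paper is Lemma~\ref{BBG_decomposition} --- itself quoted from \cite[Lemma~3.5]{ganguly1}. So the proposal is somewhat circular: the ``main obstacle'' you correctly identify (vertex-disjoint stars plus $o(t_n)$-degree remainder, off an event of probability $\le n^{1-\gamma+o(1)}$) is precisely the content borrowed from the same source as the theorem. Your closing paragraph acknowledges this honestly, and the enumeration-of-core-types heuristic you describe is indeed the flavor of what \cite{ganguly1} does, but as written the upper bound is not self-contained relative to the present paper's toolkit.
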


\begin{proof}[Proof of Lemma \ref{lem:bulk_negligible}]
Since $\big|Y_{ij}^{(2)} \big | \leq  (\varepsilon \log \log n)^{\frac{1}{\alpha}}$ {for all $ i, j$,}
\begin{equation} \label{Z2_wrt_X2}
\lambda_1(Z^{(2)})   \leq (\e \log \log n)^\frac{1}{\alpha} \lambda_1(X).
\end{equation} 
{By Theorem \ref{thm:eigenvalues_bbg} and recalling $\frac{\lambda_\alpha}{B_\alpha} =  \frac{(\log n) ^\frac{1}{2}}{ (\log \log n )  ^{\frac{1}{2}- \frac{1}{\alpha}}}$ (see \eqref{typ11} and  \eqref{b_alpha}), this immediately concludes the proof.}
 
\end{proof}

{Now we analyze $Z^{(1)}$, the main spectral part for $\lambda_1(Z)$. Since the edge density of its underlying graph $X^{(1)} \overset{\text{d}}{\sim} \cG_{n,q}$ satisfies \eqref{qq}, the  results in Section \ref{connectivity} hold for $X^{(1)}$ as indicated in Step \eqref{outline_light_step_component_analysis}.}

We now implement Step \eqref{outline_light_step_decompose2}, i.e. we decompose $X^{(1)}$ into two parts, one of which consists of a \emph{vertex-disjoint}  union of stars while the other one has a relatively small maximum degree. The latter part will be spectrally negligible and the dominating factor will be the former part. 
 {For this we rely on a result from \cite[Lemma 3.5]{ganguly1}, that we simplified slightly for our setting.}

\begin{lemma}[{\cite[Lemma 3.5]{ganguly1}}] \label{BBG_decomposition} 
{There exists  an event $\mathcal{W}$ measurable with respect to  $X^{(1)}$}   that happens with probability at least \footnote{Here, the quantity $x=w(\log n)$ means that $\lim_{n \rightarrow \infty} \frac{x}{\log n} = \infty$. }$ 1 - e^{-\omega(\log n)}$ under which $X^{(1)}$  can be decomposed into a graph $X^{(1)}_1$  which is a vertex-disjoint union of stars, and a graph $X^{(1)}_2$  whose maximum degree is $o \left ( \frac{\log n}{\log \log n} \right )$.
 
\end{lemma}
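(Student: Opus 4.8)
The plan is to reduce the statement to the corresponding result in \cite{ganguly1}, using the fact that $X^{(1)} \overset{\mathrm{d}}{\sim} \cG_{n,q}$ with $q$ satisfying \eqref{qq}, so that all the structural input from Section \ref{connectivity} applies with room to spare. First I would recall the event $\cW$ explicitly: work on the intersection $\cD_{\delta_1} \cap \cC_{\e,\delta_2} \cap \cT$ (connectivity structure of a highly subcritical graph from Lemmas \ref{lemma 33}, \ref{lem:biggest_component}, \ref{lem:excess_edges}), so that with probability at least $1 - e^{-\omega(\log n)}$ every connected component of $X^{(1)}$ is a tree of size at most $O(\log n / \log\log n)$ and maximum degree at most $(1+\delta_1)\frac{\log n}{\log\log n}$. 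The probability bound follows because $\P(\cD_{\delta_1}^c)$, $\P(\cC_{\e,\delta_2}^c)$ decay like $n^{-\delta_1}$, $n^{-\delta_2}$ respectively while $\P(\cT^c) \leq C(\log n)^{-2\e}$; since we only need $1 - e^{-\omega(\log n)}$ (not $1 - e^{-\omega(\log n)}$ of the sharper type), one actually has to be slightly more careful — the cited statement in \cite[Lemma 3.5]{ganguly1} gets the stretched-exponential error by a separate and more delicate argument, not merely from $\cT$. So the honest route is to invoke \cite[Lemma 3.5]{ganguly1} directly as a black box, after checking that the hypothesis there (an Erd\H{o}s–R\'enyi graph at density $q$ with $q = O(1/(n(\log n)^\e))$, or more generally $q \ll 1/n$) is met by $X^{(1)}$.

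Concretely, the steps are: (i) observe $X^{(1)} \overset{\mathrm{d}}{\sim} \cG_{n,q}$ with $q \leq \frac{d'}{n(\log n)^\e}$ by the Weibull tail bound \eqref{weibull} applied to $\P(|Y_{ij}| > (\e\log\log n)^{1/\alpha}) \leq C_2 e^{-\e \log\log n} = C_2 (\log n)^{-\e}$; (ii) verify that this density is within the regime covered by \cite[Lemma 3.5]{ganguly1} — in that reference the decomposition is carried out precisely for the graph obtained after truncation, which is this $\cG_{n,q}$; (iii) quote the conclusion: there is an event $\cW$, measurable w.r.t.\ $X^{(1)}$, of probability at least $1 - e^{-\omega(\log n)}$, on which the edge set of $X^{(1)}$ splits as $E(X^{(1)}_1) \sqcup E(X^{(1)}_2)$ where $X^{(1)}_1$ is a vertex-disjoint union of stars and $d_1(X^{(1)}_2) = o(\log n / \log\log n)$. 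The mechanism in \cite{ganguly1} is to take, in each tree component, the vertex of maximum degree together with its leaves as one star, peel it off, and iterate; the tree structure (no excess edges) guarantees vertex-disjointness of the extracted stars, and the "leftover" graph has strictly smaller maximum degree at each round, with the final bound $o(\log n/\log\log n)$ coming from the component-size and max-degree control on $\cG_{n,q}$.

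The main obstacle — and the only real content here — is making sure the quantitative hypotheses of \cite[Lemma 3.5]{ganguly1} line up with our setting: that reference is stated for a range of $p$, and one must confirm that the specific density $q \leq d'/(n(\log n)^\e)$ falls into the regime where the stretched-exponential probability bound $1 - e^{-\omega(\log n)}$ holds, rather than merely a polynomial bound. If the cited lemma is stated with enough generality this is immediate; otherwise one reproves it by the peeling argument sketched above, where the error probability $e^{-\omega(\log n)}$ comes from a union bound over the (few) ways the peeling can fail, each controlled by the large-deviation estimates of Section \ref{connectivity} together with the extra $(\log n)^{-\e}$ gain in the edge density. Since the paper explicitly cites \cite[Lemma 3.5]{ganguly1} and says it is "simplified slightly for our setting," I would present this as a short deduction from that lemma, with steps (i)–(ii) as the verification that the hypotheses apply, and leave the peeling construction to the reference.
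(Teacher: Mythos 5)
Your proposal matches the paper exactly: the paper gives no independent proof of this lemma but simply quotes \cite[Lemma 3.5]{ganguly1} as a black box and remarks that its hypothesis ($\cG_{n,p}$ with $p = O(1/n)$) is satisfied by $X^{(1)} \overset{\mathrm{d}}{\sim} \cG_{n,q}$ with $q \leq d'/(n(\log n)^{\e})$, which is precisely your steps (i)--(iii). Your self-correction that the events of Section \ref{connectivity} alone cannot yield the stretched-exponential bound (since $\P(\cT^c)$ decays only polylogarithmically) is also accurate, and rightly leads you to the direct citation.
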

{The decomposition in \cite[Lemma 3.5]{ganguly1} is stated for $\cG_{n,p}$ with $p = O(\frac{1}{n})$ which is applicable for $X^{(1)}$ by \eqref{qq}.}

 From now on, we condition on the high probability event $\cW$. Let $Z^{(1)}_1$ and $Z^{(1)}_2$ be the corresponding networks of $X^{(1)}_1$ and $X^{(1)}_2$ respectively.  
We will first focus on the spectral behavior of  $   Z^{(1)}_2$ by analyzing its underlying graph $X^{(1)}_2$.  {By Lemmas \ref{lem:biggest_component}, \ref{lem:excess_edges} and \ref{BBG_decomposition}, each connected component $C_\ell$ of   $X^{(1)}_2$}
satisfies  the following properties {with high probability}  for any $\delta_1,\delta_2>0$:
\begin{enumerate}
\item $d_1(C_\ell) = o \left ( \frac{\log n}{\log \log n} \right )$,
\item $|V(C_\ell)| \leq ( 1 + \delta_1 )  \frac{1}{   \e} \frac{\log n}{\log \log n}$,
\item $|E(C_\ell)| \leq |V(C_\ell)| + \delta_2$
\end{enumerate}
(recall that for any graph $G$, $d_1(G)$ denotes the maximum degree of $G$).  
 
We now state the following key general proposition, which claims that it is  costly that any connected network satisfying the above three properties has the largest eigenvalue of order $\lambda_\alpha$ {(recall that $\lambda_\alpha$ denotes the quantity which turns out to be the typical largest  eigenvalue, as defined in \eqref{typ11}).}

\begin{proposition}  \label{prop:eigenvalue_components_Z^1_2}
Assume that $\alpha>2$. For any $n \in \mathbb{N}$ and $\e, \delta_1, \delta_2 >0$, {let $\cG := \cG_{n,\e,\delta_1,\delta_2}$ be the set of}   connected networks $G=(V,E,A)$ ($A= (a_{ij})_{i,j\in V}$ denotes the conductance matrix) such that
\begin{enumerate}[(1)]
\item $d_1(G) = o \left ( \frac{\log n}{\log \log n} \right )$,
\item $|V| \leq ( 1 + \delta_1 )  \frac{1}{   \e} \frac{\log n}{\log \log n}$,
\item $|E| \leq |V| + \delta_2$.
\end{enumerate} Assume that the edge-weights  are i.i.d.   Weibull distributions with a shape parameter $\alpha>2$ conditioned to be greater than $(\e \log \log n )^\frac{1}{\alpha}$ in absolute value. Then, for any constant $c > 0$,
\begin{align*}
\lim_{n \to \infty} & \frac{ - \log \sup_{G \in \cG} \P \left ( \lambda_1(A) \geq c \lambda_{\alpha }   \right ) }{ \log n }  = \infty.
%& \geq 
%r_{\alpha, }
%\frac{1}{(1+\delta_c)^{\frac{\alpha}{2}-1}  (1+\delta_e)^\frac{\alpha}{4}}
%\frac{\zeta^\alpha  \e^{\frac{\alpha}{2}-1}  }{( \pi^2(1-\pi^2))^\frac{\alpha}{4} } - (1+\delta_c),
\end{align*}
 
\end{proposition}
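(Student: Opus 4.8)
The plan is to bound $\lambda_1(A)$ from above for each $G \in \cG$ using the spectral estimates for tree-like networks from Section \ref{sec:spectral_prop}, combined with tail bounds on sums of squares of the conditioned Weibull edge-weights, and then to optimize over the degree of the dominant star-like substructure. First I would use property $(3)$: since $|E| \le |V| + \delta_2$, the graph $G$ is within a bounded number of edges of a tree, so by Lemma \ref{spectrum_graphs} we can write $\lambda_1(A) \le \lambda_1(A_{\mathrm{tree}}) + \sum_{e} \lambda_1(A_e)$, where $A_{\mathrm{tree}}$ is the restriction to a spanning forest and the sum is over the at most $\delta_2 + (\textup{number of components beyond a single tree})$ excess edges, each contributing a single edge-weight. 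Each such excess-edge contribution is at most the maximal edge-weight on $\lceil (1+\delta_1)\e^{-1} t_n\rceil$-ish edges, which by the Weibull tail (conditioned to exceed $(\e\log\log n)^{1/\alpha}$) is $O((\log n)^{1/\alpha}) = o(\lambda_\alpha)$ except with probability $n^{-\omega(1)}$. So the excess edges are spectrally negligible at scale $\lambda_\alpha$, and it suffices to control $\lambda_1$ of the spanning forest.

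For the forest part, I would invoke the variational characterization $\lambda_1(A_{\mathrm{tree}}) = \sup_{\|f\|_2 = 1}\sum_{i\sim j} a_{ij} f_i f_j$ and apply Cauchy–Schwarz (the $p=2$ endpoint, essentially \eqref{p=2}) or more directly Lemma \ref{spectrum_star_weights}-type reasoning: on a tree, $\lambda_1(A_{\mathrm{tree}}) \le \sqrt{\max_v \sum_{j \sim v} a_{vj}^2} \cdot (\textup{something polylog})$ — actually the cleanest route is that any tree decomposes (again via Lemma \ref{spectrum_graphs}, assigning each edge to one of its endpoints) into stars, one centered at each vertex, so $\lambda_1(A_{\mathrm{tree}}) \le \sum_{v} \sqrt{\sum_{j\sim v} a_{vj}^2}$. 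Using property $(1)$, each star has degree $k_v = o(t_n)$, and property $(2)$ bounds the total number of vertices, hence the number of stars, by $O(t_n/\e)$. Thus $\lambda_1(A_{\mathrm{tree}}) \le O(t_n)\cdot \max_v \sqrt{\sum_{j\sim v}a_{vj}^2}$, and the event $\{\lambda_1(A) \ge c\lambda_\alpha\}$ forces some star of degree $k = o(t_n)$ to have $\sum a_{vj}^2 \gtrsim (c\lambda_\alpha / t_n)^2$. Now I apply the Weibull sum-of-squares tail bound (the appendix estimate, analogous to \eqref{common_tail_terms:upper}, but for weights conditioned to exceed $(\e\log\log n)^{1/\alpha}$): for a star of degree $k = o(t_n)$,
\begin{align*}
\P\Big(\sum_{j=1}^{k} (\textup{cond. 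Weibull})_j^2 \ge s^2\Big) \le \exp\big(-(1+o(1)) s^\alpha k^{1-\alpha/2}\big),
\end{align*}
valid when $s^2/k \to \infty$, which holds here since $s \gtrsim \lambda_\alpha/t_n \gg \sqrt{k}$. Plugging $s \asymp \lambda_\alpha/t_n \asymp (\log n)^{1/2}/(\log\log n)^{1/2 - 1/\alpha}/ t_n$ and $k = o(t_n)$ gives $s^\alpha k^{1-\alpha/2} \gg (\log n)$ — indeed $s^\alpha k^{1-\alpha/2}$ is of order $(\log n)^{\alpha/2} (\log\log n)^{-\alpha(1/2-1/\alpha)} t_n^{-\alpha} k^{1-\alpha/2}$, and since $\alpha > 2$ the exponent of $\log n$ works out to beat $1$ by an arbitrarily large factor as $k/t_n \to 0$. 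A union bound over the $O(n)$ possible star centers and the polynomially-many choices of relevant degree classes only costs a $\log n$ factor in the exponent, which is swallowed.

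The main obstacle I anticipate is making the "conditioned Weibull" sum-of-squares tail estimate sufficiently uniform: the edge-weights in $Z^{(1)}$ are Weibull conditioned to exceed the growing threshold $(\e\log\log n)^{1/\alpha}$, so one must check that conditioning does not inflate $\sum_j W_j^2$ enough to matter — heuristically each $W_j^2 \approx \e \log\log n$ typically, so $\sum_j W_j^2 \approx \e k \log\log n = o(t_n \log\log n) = o(\log n)$, comfortably below $s^2 \asymp \log n / (\log\log n)^{1-2/\alpha} / t_n^2$... wait, one must be careful that $s^2$ here is actually the relevant scale; the point is that the conditioning threshold contributes $k \cdot \e\log\log n$ to the typical sum, which is lower-order, and the large-deviation cost to reach $s^2$ is still governed by the "all weights uniformly large" mechanism giving the $s^\alpha k^{1-\alpha/2}$ rate. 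Verifying this carefully — that the conditioned tail estimate has the same leading exponential order as the unconditioned one in the regime $s^2/k\to\infty$ — is the one genuinely technical step; everything else is bookkeeping with Lemma \ref{spectrum_graphs}, the degree/component bounds from Section \ref{connectivity}, and a union bound. Since $c > 0$ is an arbitrary fixed constant and $k/t_n \to 0$ can be exploited, the exponent $s^\alpha k^{1-\alpha/2}/\log n \to \infty$, which is exactly the claim.
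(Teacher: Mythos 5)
There is a fatal quantitative gap in the core step of your argument. After reducing to a spanning forest, you decompose it into one star per vertex and invoke $\lambda_1(A_{\mathrm{tree}}) \le \sum_v \lambda_1(S_v)$, so that the event $\{\lambda_1(A)\ge c\lambda_\alpha\}$ only forces \emph{some} star to satisfy $\sqrt{\sum_{j\sim v}a_{vj}^2}\gtrsim \lambda_\alpha/t_n$ (with $t_n=\frac{\log n}{\log\log n}$ the order of the number of vertices). But $\lambda_\alpha/t_n \asymp (\log\log n)^{\frac12+\frac1\alpha}(\log n)^{-\frac12}\to 0$, whereas every edge-weight is conditioned to exceed $(\e\log\log n)^{1/\alpha}$ in absolute value, so each nonempty star \emph{deterministically} satisfies $\sum_{j\sim v}a_{vj}^2\ge (\e\log\log n)^{2/\alpha}\gg (\lambda_\alpha/t_n)^2$. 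The "forced" event is a sure event and your tail bound yields no decay at all: plugging $s\asymp\lambda_\alpha/t_n$ into $s^\alpha k^{1-\alpha/2}$ gives a quantity of order $(\log n)^{-\alpha/2}(\log\log n)^{O(1)}\to 0$, not $\gg\log n$ as you assert. (Your own sanity check at the end nearly catches this: the conditioning floor $k\,\e\log\log n$ is far \emph{above} $s^2\asymp(\log\log n)^{1+2/\alpha}/\log n$, not "comfortably below".) The additive loss of a factor $\Theta(t_n)$ from the star decomposition cannot be avoided on a tree (e.g.\ a path needs $\Theta(|V|)$ stars and is not a vertex-disjoint union of them), so this route cannot work.

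The paper avoids this by never leaving the global scale $\lambda_\alpha$: it takes a top eigenvector $f$ with $\|f\|_2=1$, splits coordinates into $V_S=\{|f_i|<\xi\}$ and $V_L=\{|f_i|\ge\xi\}$ (so $|V_L|\le\xi^{-2}$), and writes $\lambda_1=2\lambda_S+2\lambda_L$ accordingly. For $\lambda_S$, Cauchy--Schwarz plus the tree estimate of Lemma \ref{lemma 03} (applied to a spanning tree, with the $\le\delta_2+1$ excess edges handled separately) gives $\sum_{(i,j)\in E_S}f_i^2f_j^2\le(2+\delta_2)\xi^2$, so $\{2\lambda_S\ge\tau c\lambda_\alpha\}$ forces $\sum_{E}a_{ij}^2\gtrsim \xi^{-2}\lambda_\alpha^2$ over $O(t_n/\e)$ edges, whose probability cost grows like $\xi^{-\alpha/2}$ by \eqref{cond1} and is made arbitrarily large by shrinking $\xi$. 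For $\lambda_L$, the degree hypothesis gives $|E_L|=o(\xi^{-2}t_n)$, so $\{2\lambda_L\ge(1-\tau)c\lambda_\alpha\}$ forces a sum of $o(t_n)$ squared weights to exceed $\asymp\lambda_\alpha^2$, which by \eqref{cond2} has superpolynomially small probability, and a union bound over the $n^{o(1)}$ choices of $V_L$ finishes. The essential idea you are missing is to exploit the $\ell^2$ normalization of the eigenvector so that the threshold the weights must exceed stays of order $\lambda_\alpha^2$ rather than being divided by the size of the component.
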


\begin{proof}
 
 The general strategy is to bound $\lambda_1(A)$ by expressing it in terms of the corresponding (random) top eigenvector and then analyzing the contributions from  the high and low values of the entries separately.  To make this precise, 
suppose that $V  = [m]$, and let $ f= (f_i)_{ i \in [m]}$ with $||f||_2 =1$ be any (random) eigenvector of $A$ such that
\begin{align} \label{410}
 \lambda_1 (A)= f^T A f  = \sum_{i,j = 1}^m a_{ij} f_i f_j = 2\sum_{ (i,j) \in E} a_{ij}f_if_j
\end{align}  
 (recall that  $(i,j)$  denotes  the  \emph{undirected} edge joining two vertices $i<j$).

For a constant $\xi  \in \left (0, \frac{1}{2}\right)$ which will be chosen sufficiently small later, define
\begin{align} \label{400}
V_S:= \{i \in [m]: |f_i| < |\xi|  \},\qquad V_L := \{i \in [m]:  |f_i| \geq  |\xi|  \},
\end{align}
where the indices stand for \emph{small} and \emph{large} respectively.
Since $\sum_{i=1}^m f_i^2 = 1$, we have
\begin{align} \label{411}
|V_L| \leq \left \lfloor \frac{1}{\xi ^2}  \right \rfloor.
\end{align}

We also partition the set of edges into two parts, those that are incident on a vertex in $V_L$ and the rest:
\begin{align}\label{401}
E_S := \left \{ (i,j) \in E: i<j, i, j \in V_S \right \}, \qquad E_L := \left \{ (i,j) \in E:i < j,  i \in V_L \text{ or } j \in V_L \right \}.
\end{align}
We now decompose the summation in \eqref{410} into two parts $\lambda_S$  and $\lambda_L$:
\begin{align} \label{413}
\lambda_1 (A)= 2\sum_{ (i,j) \in E} a_{ij}f_if_j = 2 \sum_{(i,j)\in E_S} a_{ij} f_i f_j   + 2 \sum_{(i,j)\in E_L} a_{ij} f_i f_j  =: {2 \lambda_S + 2\lambda_L}.
\end{align}
{The above is expressed in a way such that both $\lambda_S$ and $\lambda_L$ can be bounded by sums of i.i.d. random variables which will be convenient.}\\
 
Thus, for  any constant $0 \leq \tau \leq 1$ which will be chosen later, 
\begin{equation} \label{decompose_eigenvalue}
\mathbb{P}( \lambda_1 (A) \geq  c \lambda_{\alpha } ) \leq \P( 2 \lambda_S \geq \tau c \lambda_{\alpha })  + \P(2 \lambda_L \geq (1-\tau)c \lambda_{\alpha } ).
\end{equation} 
We now analyze these two probabilities separately. 

\textbf{Bounding $\bm{\lambda_S}$.}
We apply the Cauchy-Schwarz inequality to $\sum_{(i,j)\in E_S}a_{ij}f_i f_j$, and then use a bound on $\sum_{(i,j)\in E_S} f^2_i f^2_j$ which we now derive.
Let $T$ be a spanning tree of  $G$ (recall that $G$ is connected) and $E(T)$ be the collection of   edges in $T$. Since $|E(T)| = |V| - 1$, by our assumption on the number of tree-excess edges, {$|E_S \setminus E(T)| \leq |E| - |E(T)| \leq  \delta_2 +1$}. Hence,  
\begin{align} \label{416}
\sum_{(i,j)\in E_S} f^2_i f^2_j 
& = \sum_{(i,j)\in E_S \cap E(T)} f^2_i f^2_j + \sum_{(i,j)\in E_S \setminus E(T)} f^2_i f^2_j   \leq \xi ^2  + (\delta_2 +1)\xi ^4 \leq (2+\delta_2)\xi ^2,
\end{align}
{where we used   \eqref{lemma 36}  with $\theta=1$ to bound the first term (note that $|f_i| \leq \xi < \frac{1}{2}$).}
Thus, setting
\begin{align} \label{theta}
\tau: =  (2+\delta_2)^\frac{1}{4} \xi^\frac{1}{2},
\end{align}  
 by the Cauchy-Schwarz inequality together with the bound \eqref{416},
\begin{align*}
\lambda_S 
\leq \Bigg ( \sum_{(i,j) \in E_S} f_i^2f_j^2 \Bigg )^{\frac{1}{2}} \Bigg ( \sum_{(i,j) \in E_S} a_{ij}^2 \Bigg )^{\frac{1}{2}} 
\leq \tau^2 \Bigg ( \sum_{(i,j) \in E_S} a_{ij}^2 \Bigg )^{\frac{1}{2}} \leq \tau^2 \Bigg ( \sum_{(i,j) \in E} a_{ij}^2 \Bigg )^{\frac{1}{2}}.
\end{align*}
By  assumptions on the network size and the number of tree-excess edges, $\sum_{(i,j) \in E} a_{ij}^2$ is the sum of at most $\left \lfloor  ( 1 + \delta_1 )  \frac{1}{   \e} \frac{\log n}{\log \log n} + \delta_2  \right \rfloor$ many squares of Weibull random variables   conditioned to be greater than $\left ( \e \log \log n \right )^\frac{1}{\alpha}$ in absolute value. Hence, by the tail estimate for   such sum of squares ({the bound \eqref{cond1} with $d=\frac{c}{2\tau} = \frac{c}{2 (2+\delta_2)^\frac{1}{4} \xi^\frac{1}{2}}$ and $b = \frac{1+\delta_1}{\e}$}),
\begin{align} \label{415}
\liminf_{n\rightarrow \infty} \frac{-\log \P (2\lambda_S \geq \tau c \lambda_{\alpha }  )  }{\log n}& \geq  \liminf_{n\rightarrow \infty} \frac{-\log \P \Big ( \sum_{(i,j) \in E} a_{ij}^2 \geq \frac{ c^2 }{4 \tau^2} \lambda_{\alpha } ^2 \Big )  }{\log n} \nonumber \\
& \geq   \frac{c^\alpha}{2^\alpha (2+\delta_2)^\frac{\alpha}{4} \xi^\frac{\alpha}{2}}   \frac{2}{\alpha-2} \Big ( 1 - \frac{2}{\alpha}  \Big )^\frac{\alpha}{2} \Big(  \frac{1 + \delta_1}{  \e}\Big)^{1-\frac{\alpha}{2}}  - (1+\delta_1)  .
\end{align}

\textbf{Bounding $\bm{\lambda_L}$.}
By  the Cauchy-Schwarz inequality and the fact that $$\sum_{(i,j) \in E_L} f_i^2f_j^2 \leq \big (\sum_{i \in V} f_i^2 \big ) \big ( \sum_{j \in V} f_j^2 \big ) = 1,$$ we have 
$$\lambda_L \leq \Bigg ( \sum_{(i,j) \in E_L} f_i^2f_j^2 \Bigg )^{\frac{1}{2}} \Bigg ( \sum_{(i,j) \in E_L} a_{ij}^2 \Bigg )^{\frac{1}{2}} 
\leq \Bigg ( \sum_{(i,j) \in E_L} a_{ij}^2 \Bigg )^{\frac{1}{2}}.$$
Since $ |V_L| \leq \left \lfloor \frac{1}{\xi ^2}  \right \rfloor$ by \eqref{411}, the event $\{2\lambda_L \geq (1-\tau)c \lambda_{\alpha } \} $ implies the existence of a  random subset $J \subseteq V$ with $|J| \leq \left \lfloor \frac{1}{\xi ^2}  \right \rfloor$ such that
\begin{equation*}
\sum_{(i,j) \in E_J} a_{ij}^2  \geq \frac{(1-\tau)^2 c^2 }{4} \lambda_{\alpha}^2,
\end{equation*}
where
$E_J:= \left \{ (i,j) \in E:i < j,  i \in J \text{ or } j \in J \right \}$.
For any deterministic subset $J'$ with $|J'| \leq  \left \lfloor \frac{1}{\xi ^2}  \right \rfloor$,  by our assumption on the maximum degree, $|E_{J'}|  =  o \left (  \frac{1}{\xi ^2}  \frac{\log n}{\log \log n} \right )$.
Thus, by the tail probability  estimate for the sum of squares  of Weibull random variables (the bound \eqref{cond2} with $d= \frac{(1-\tau)c}{2}$),
\begin{align*}
\lim_{n \to \infty} \frac{ - \log \P \left ( \sum_{(i,j) \in E_{J'}} a_{ij}^2  \geq \frac{(1-\tau)^2 c^2 }{4} \lambda_{\alpha } ^2  \right) }{\log n} = \infty.
\end{align*}
By the  assumption on the component size,
the cardinality of different values that  a random subset $J$ with $|J| \leq \left \lfloor \frac{1}{\xi ^2}  \right \rfloor$ can  take is bounded by $  \left ( ( 1 + \delta_1 )  \frac{1}{   \e} \frac{\log n}{\log \log n} \right )^\frac{1}{\xi ^2} = n^{o(1)}$.
Thus, by a union bound,
\begin{align}\label{419}
\lim_{n \to \infty} \frac{ - \log \P (2\lambda_L \geq (1-\tau)c \lambda_{\alpha }   ) }{\log n}
= \infty.
\end{align}
Therefore,  applying \eqref{415} and \eqref{419} to  \eqref{decompose_eigenvalue},
 \begin{align*}
 \liminf_{n\rightarrow \infty} \frac{-\log \P ( \lambda_1(A) \geq  c \lambda_{\alpha }  )  }{\log n} \geq  \frac{c^\alpha}{2^\alpha (2+\delta_2)^\frac{\alpha}{4} \xi^\frac{\alpha}{2}}    \frac{2}{\alpha-2} \Big ( 1 - \frac{2}{\alpha}  \Big )^\frac{\alpha}{2} \Big(  \frac{1 + \delta_1}{  \e}\Big)^{1-\frac{\alpha}{2}}  - (1+\delta_1)  .
 \end{align*}
Since $\xi>0$ is arbitrary, the RHS above  can be made arbitrarily large, concluding the proof.
\end{proof}

{Since each connected component of   $X^{(1)}_2$ satisfies the  conditions in Proposition \ref{prop:eigenvalue_components_Z^1_2} by the discussion following Lemma \ref{BBG_decomposition}    with high probability, this completes Step \eqref{outline_light_step_Z^1_2_negligible}. }Therefore, it remains to analyze the spectral behavior of $Z^{(1)}_1$, a collection of disjoint stars.
We will  group these stars according to their sizes and then show that the main contribution comes from the group of stars with degrees close to $\gamma_\delta \frac { \log n}{\log \log n}$.

As a preparation, we now introduce some notations and a few lemmas.
The first lemma concerns the spectral behaviour of a single weighted star. {Recall from \eqref{g} that we set
$
g(\gamma) = \left \lceil  \gamma \frac{\log n}{\log \log n} \right \rceil
$
}
and let us  define, for a star graph $S$, $d(S)$ to be the degree of the root vertex of $S$.

\begin{lemma} \label{lem:prob_one_star}
Suppose that $S$ is a weighted star graph such that $d(S) \leq g(\gamma)$ for some $\gamma>0$, with i.i.d. weights given by the Weibull distributions with  a shape parameter $\alpha>2$ conditioned to be greater than $(\e \log \log n )^\frac{1}{\alpha}$ in absolute value.
Then, for any $\rho>0$,
\begin{align*}
\liminf_{ n \to \infty} \frac{ - \log \P ( \lambda_1( S) \geq (1+\rho) \lambda_{\alpha }  ) }{\log n } \geq (1+\rho)^\alpha \frac{2}{\alpha-2} \left (1 - \frac{2}{\alpha}  \right )^\frac{\alpha}{2} \gamma^{1- \frac{\alpha}{2}} -   \e \gamma.
\end{align*}
\end{lemma}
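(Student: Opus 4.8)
The idea is to use the explicit formula for the largest eigenvalue of a weighted star (Lemma \ref{spectrum_star_weights}), which reduces the event $\{\lambda_1(S) \geq (1+\rho)\lambda_\alpha\}$ to $\{\sum_{j=1}^{d(S)} w_j^2 \geq (1+\rho)^2 \lambda_\alpha^2\}$, where $w_1,\dots,w_{d(S)}$ are the i.i.d.\ edge-weights. Since $d(S) \leq g(\gamma) = \lceil \gamma \frac{\log n}{\log\log n}\rceil$ and the event becomes more likely as more weights are added, it suffices to bound the probability when there are exactly $g(\gamma)$ weights; monotonicity lets us reduce to this case. The weights are Weibull with shape $\alpha$, conditioned to have absolute value at least $(\e\log\log n)^{1/\alpha}$.

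First I would recall the tail estimate for sums of squares of such conditioned Weibull variables proved in the Appendix — this is exactly the bound \eqref{cond1} (or the relevant display among \eqref{common_tail_terms:upper}, \eqref{cond1}, \eqref{cond2}) referenced repeatedly in the preceding proofs, with the number of summands being of order $\gamma \frac{\log n}{\log\log n}$ and with the deviation level $d = 1+\rho$, $b = \gamma$. Plugging $d = 1+\rho$ and $b = \gamma$ into that estimate yields
\begin{align*}
\liminf_{n\to\infty} \frac{-\log \P\big(\textstyle\sum_{j=1}^{g(\gamma)} w_j^2 \geq (1+\rho)^2 \lambda_\alpha^2\big)}{\log n} \geq (1+\rho)^\alpha \frac{2}{\alpha-2}\Big(1-\frac{2}{\alpha}\Big)^{\frac{\alpha}{2}} \gamma^{1-\frac{\alpha}{2}} - \e\gamma,
\end{align*}
where the $-\e\gamma$ term is the entropic cost coming from the conditioning to be large in absolute value (there are $\sim \gamma\frac{\log n}{\log\log n}$ weights, each conditioned on an event of probability $\sim n^{-\e/\log\log n}$ raised appropriately — i.e.\ a factor $e^{-\e \gamma \frac{\log n}{\log\log n}\cdot \frac{\log\log n}{\log n}}$-type correction; more precisely it reflects the normalization $(C_2 e^{-\e\log\log n})^{-g(\gamma)} = n^{\e\gamma + o(1)}$). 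Combining this with the reduction via Lemma \ref{spectrum_star_weights} and the monotonicity in $d(S)$ gives the claim directly.

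The only genuinely non-routine point is to make sure the constants match: one must verify that $\lambda_\alpha^2 = B_\alpha^2 \frac{\log n}{\log\log n}$ with $B_\alpha^2 = 2^{2/\alpha}\alpha^{-1}(\alpha-2)^{1-2/\alpha}$, substitute into the Appendix tail formula, and check that the exponent of $n$ comes out as stated after simplifying $(1+\rho)^2 B_\alpha^2 / 2 \cdot (\text{stuff})$. This is the same computation that appears (with $\rho=\delta$, $\gamma = \gamma_\delta$) in the lower-bound part of Theorem \ref{thm:light_upper}, where it was checked that with $\gamma_\delta = (1+\delta)^2(1-\frac{2}{\alpha})$ the leading exponent equals $(1+\delta)^2 \frac{2}{\alpha}$; here we keep $\gamma$ general rather than optimizing it. So the main (modest) obstacle is bookkeeping of the constant $B_\alpha$ inside the Weibull-sum tail bound, and confirming that the $o(1)$ errors from the ceiling in $g(\gamma)$ and from the conditioning normalization are genuinely negligible on the $\log n$ scale; both are handled exactly as in the earlier applications of \eqref{cond1}.
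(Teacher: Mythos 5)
Your proposal is correct and matches the paper's proof essentially verbatim: the paper likewise reduces via Lemma \ref{spectrum_star_weights} and monotonicity to the event $\tilde{Y}_1^2+\cdots+\tilde{Y}_{g(\gamma)}^2 \geq (1+\rho)^2\lambda_\alpha^2$ and then invokes the Appendix tail bound \eqref{cond1} with $d=1+\rho$, $b=\gamma$. The bookkeeping of $B_\alpha$ and the conditioning normalization that you flag is exactly what \eqref{cond1} already packages, so nothing further is needed.
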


\begin{proof}
 Let $ \left \{ \tilde{Y}_i \right \}_{i=1,2,\cdots}$ be i.i.d.   Weibull random variables with a  shape parameter $\alpha>2$ conditioned to be greater than $  (\e \log \log n)^{ \frac{1}{\alpha}}$ in absolute value.  Since the largest eigenvalue of a weighted star is nothing other than the square root of the sum of squares of edge-weights (see Lemma \ref{spectrum_star_weights}),
\begin{align*}
\P \big ( \lambda_1( S) \geq (1+\rho) \lambda_{\alpha } \big ) &\leq  \P \left ( \tilde{Y}_1^2 + \dots +\tilde{Y}_{ g(\gamma)} ^2 \geq (1+\rho)^2 \lambda_{\alpha } ^2 \right ).
\end{align*}
By the  tail bound \eqref{cond1} with $d=1 + \rho$ and $b=\gamma$, we are done.
 
\end{proof}

Next, we  estimate the spectral contribution from the group of stars with degree close to $g(\gamma)$. For this we first introduce some additional notations.
Let {$d \left ( X^{(1)}, v \right )$} be the degree of $v$ in the graph $X^{(1)}$, and for  $\gamma \geq 0$, define
 \begin{align} \label{degree}
 D^{(1)}_\gamma  = \bigg \{v\in V : d \big ( X^{(1)}, v \big ) \geq  g(\gamma) \bigg \}.
\end{align}
For small enough constant $\kappa>0$ which will be chosen later, 
define $m$ to be an integer such that $m \kappa  < 1 \leq (m+1)\kappa $. 
Then, define the event measurable with respect to  $X^{(1)}$:
\begin{equation} \label{eq:disc_deg_bounded}
{\cP_{\kappa}} := \Big \{ \big | D^{(1)}_{i \kappa} \big | \leq n^{1- i\kappa + \kappa } \text{ for all }  i  = 0 ,1,\cdots,m\Big \},
\end{equation} 
{which guarantees that for the discretization $\{\kappa, 2\kappa, \dots, m\kappa \}$ of the interval $(0,1),$ there are not unusually many vertices whose degrees fall} into any bin of degree range given by the discretization.

Additionally we define the event   measurable with respect to  $X^{(1)}$: 
\begin{equation} \label{eq:big_deg_bounded}
\cL_{\delta,\kappa} := \bigg \{ \big | D^{(1)}_{1+\kappa} \big | \leq \frac{(1+\delta)^2}{\kappa} \bigg \},
\end{equation} 
which  guarantees that there are uniformly bounded   number of vertices of unusually large degree.

Using the estimate for the contribution of a single star (Lemma \ref{lem:prob_one_star}), we now prove a lemma that captures the contribution from the group of stars of degree close to $g(\gamma)$.  {Recall from Lemma \ref{BBG_decomposition} that  $X^{(1)}_1$  is  a vertex-disjoint union of stars, and $\kappa>0$ is a given   constant.}

\begin{lemma}  \label{lem:stars_contribution} Let $\mathcal{S}$ be the collection of stars in $X^{(1)}_1$. Moreover, define, for any $h>0$ and $\gamma = i \kappa < 1$  ($i$ is a non-negative integer) or $\gamma \geq 1 +\kappa$, 
\begin{equation}
\lambda_{\max}(\gamma, h) := \underset{S \in \mathcal{S}, d(S) \in   (g(\gamma), g(\gamma + h)]}{\max} \{\lambda_1(S)\}
\end{equation}
if there is a star $S \in \mathcal{S}$ satisfying $d(S) \in   (g(\gamma), g(\gamma + h)]$, and set  $\lambda_{\max}(\gamma, h) := 0$ otherwise.
Then, for any $\rho > 0$,
\begin{align} \label{570}
 \liminf_{n \to \infty} &\frac{ - \log  \E  \Big[ \P \Big ( \lambda_{\max}(\gamma, h) \geq (1+\rho) \lambda_{\alpha }  \mid  X^{(1)} \Big ) \1_{   {\cP_{\kappa} \cap \cL_{\delta,\kappa}} } \Big] } {\log n}  \nonumber \\
& \geq  - f_{\alpha,\rho} (\gamma + h) - h - \kappa -   \e (\gamma + h),
\end{align}
where the function $f_{\alpha,\rho}: (0,\infty)  \rightarrow \mathbb{R} $ is defined by
\begin{align} \label{f}
f_{\alpha,\rho}(x) := 1 -x -  (1+\rho)^\alpha \frac{2}{\alpha-2} \left (1 - \frac{2}{\alpha}  \right )^\frac{\alpha}{2} x^{1- \frac{\alpha}{2}}.
\end{align}
\end{lemma}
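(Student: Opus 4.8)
The plan is to condition on the underlying graph $X^{(1)}$ and control $\P(\lambda_{\max}(\gamma,h)\ge(1+\rho)\lambda_\alpha\mid X^{(1)})$ by a union bound over the (random, $X^{(1)}$-measurable) family $\mathcal{S}_{\gamma,h}$ of stars $S\in\mathcal{S}$ with $d(S)\in(g(\gamma),g(\gamma+h)]$. Conditionally on $X^{(1)}$ the weights on the vertex-disjoint stars are independent, each being a family of i.i.d.\ Weibull variables conditioned to exceed $(\e\log\log n)^{1/\alpha}$ in absolute value, so Lemma~\ref{lem:prob_one_star} applies to each $S\in\mathcal{S}_{\gamma,h}$; since $d(S)\le g(\gamma+h)$ we may take the parameter in that lemma to be $\gamma+h$, which yields the \emph{uniform} bound $\P(\lambda_1(S)\ge(1+\rho)\lambda_\alpha\mid X^{(1)})\le p_n$ with
\[
p_n:=n^{-\big[(1+\rho)^\alpha\frac{2}{\alpha-2}(1-\frac2\alpha)^{\alpha/2}(\gamma+h)^{1-\alpha/2}-\e(\gamma+h)\big]+o(1)}
\]
(the $o(1)$ here is a single deterministic sequence coming from the Weibull sum-of-squares tail estimate with $g(\gamma+h)$ summands). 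Setting $N_\gamma:=|\mathcal{S}_{\gamma,h}|$, the union bound gives $\P(\lambda_{\max}(\gamma,h)\ge(1+\rho)\lambda_\alpha\mid X^{(1)})\le N_\gamma p_n$, hence $\E[\,\cdot\,\1_{\cP_\kappa\cap\cL_{\delta,\kappa}}]\le p_n\,\E[N_\gamma\1_{\cP_\kappa\cap\cL_{\delta,\kappa}}]$.

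The remaining task is to prove $\E[N_\gamma\1_{\cP_\kappa\cap\cL_{\delta,\kappa}}]\le n^{1-\gamma+\kappa+o(1)}$ in both regimes of $\gamma$. Since the stars are vertex-disjoint, each $S\in\mathcal{S}_{\gamma,h}$ has a distinct root of $X^{(1)}$-degree $>g(\gamma)$, so $N_\gamma\le|D^{(1)}_\gamma|$. When $\gamma=i\kappa<1$, the event $\cP_\kappa$ (see \eqref{eq:disc_deg_bounded}) gives $N_\gamma\le|D^{(1)}_{i\kappa}|\le n^{1-\gamma+\kappa}$ deterministically, so $\E[N_\gamma\1_{\cP_\kappa}]\le n^{1-\gamma+\kappa}$. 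When $\gamma\ge1+\kappa$, the event $\cP_\kappa$ says nothing about $D^{(1)}_\gamma$, and instead one combines two facts: on $\cL_{\delta,\kappa}$ (see \eqref{eq:big_deg_bounded}) we have $N_\gamma\le|D^{(1)}_{1+\kappa}|\le(1+\delta)^2/\kappa$, a constant; and $\{N_\gamma\ge1\}$ forces $d_1(X^{(1)})>g(\gamma)$, which, since $X^{(1)}$ is a subgraph of $X=\cG_{n,\frac{d}{n}}$, has probability at most $\P(d_1(X)\ge\gamma t_n)=n^{1-\gamma+o(1)}$ by \eqref{eq:degrees_bbg} (equivalently one may use Lemma~\ref{lemma 33} for $\cG_{n,q}$). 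Hence $\E[N_\gamma\1_{\cL_{\delta,\kappa}}]\le\frac{(1+\delta)^2}{\kappa}\P(N_\gamma\ge1)\le n^{1-\gamma+o(1)}$. In either case the claimed bound holds.

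Putting the two displays together, $\E[\P(\lambda_{\max}(\gamma,h)\ge(1+\rho)\lambda_\alpha\mid X^{(1)})\1_{\cP_\kappa\cap\cL_{\delta,\kappa}}]\le n^{1-\gamma+\kappa-[(1+\rho)^\alpha\frac{2}{\alpha-2}(1-\frac2\alpha)^{\alpha/2}(\gamma+h)^{1-\alpha/2}-\e(\gamma+h)]+o(1)}$, so taking $-\log(\cdot)/\log n$ and letting $n\to\infty$ produces the liminf lower bound $-(1-\gamma+\kappa)+(1+\rho)^\alpha\frac{2}{\alpha-2}(1-\frac2\alpha)^{\alpha/2}(\gamma+h)^{1-\alpha/2}-\e(\gamma+h)$; a one-line computation using the definition \eqref{f} of $f_{\alpha,\rho}$ rewrites this as $-f_{\alpha,\rho}(\gamma+h)-h-\kappa-\e(\gamma+h)$, which is exactly \eqref{570}. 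I expect the main obstacle to be the regime $\gamma\ge1+\kappa$: there $\cP_\kappa$ carries no information, and the factor $n^{1-\gamma}$ must be produced not from counting (as in the $\gamma<1$ case) but from the large-deviation cost of the mere existence of a vertex of degree exceeding $g(\gamma)$ in the highly subcritical graph $X^{(1)}$, paired with the constant bound afforded by $\cL_{\delta,\kappa}$; secondary care is needed to ensure the $o(1)$ errors (from the conditional Weibull sum-of-squares tail and from the degree estimate) are uniform over $\mathcal{S}_{\gamma,h}$.
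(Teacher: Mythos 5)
Your proposal is correct and follows essentially the same route as the paper's proof: a union bound over the stars combined with the single-star tail estimate (Lemma \ref{lem:prob_one_star} applied with parameter $\gamma+h$), with the star count controlled by $\cP_\kappa$ when $\gamma=i\kappa<1$, and for $\gamma\geq 1+\kappa$ by the constant bound from $\cL_{\delta,\kappa}$ multiplied by the probability $n^{1-\gamma+o(1)}$ that $X^{(1)}$ has any vertex of degree at least $g(\gamma)$. Your closing remarks on the uniformity of the $o(1)$ and on where the factor $n^{1-\gamma}$ comes from in the large-$\gamma$ regime correctly identify the only delicate points, and they are handled exactly as in the paper.
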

{In \eqref{570}, the quantity $f_{\alpha,\rho} (\gamma + h)$ should be thought of as the dominant term with the rest being error terms.}
\begin{proof}
The proof  depends on whether $\gamma<1$ or $\gamma \geq 1+\kappa$.
In the case $\gamma = i\kappa<1$,  the number of stars in  $X^{(1)}_1$ of degree at least $g(\gamma)$ {is bounded by $ n^{1-\gamma+\kappa}$ under the event $\cP_{\kappa}$}. Hence, by union bound and    Lemma \ref{lem:prob_one_star}  with   $\gamma+h$ in place of $\gamma$,
\begin{align*}
\liminf_{n\rightarrow \infty} &\frac{-\log \E \Big[ \P  \Big ( \lambda_{\max}(\gamma, h) \geq (1+\rho) \lambda_{\alpha }  \mid X^{(1)} \Big ) \1_{\cP_\kappa}  \Big]}{\log n}
\\
 &\geq  - ( 1-\gamma+  \kappa) + 
 (1+\rho)^\alpha \frac{2}{\alpha-2} \left (1 - \frac{2}{\alpha}  \right )^\frac{\alpha}{2} ( \gamma +  h)^{1- \frac{\alpha}{2}} -   \e (\gamma+h)\\
&= - f_{\alpha,\rho}(\gamma+h) - h - \kappa  -   \e (\gamma+h).
\end{align*}

Let us now consider the second case $\gamma \geq  1 + \kappa$.   Note that by Lemma \ref{lem:prob_one_star}, for any star $S$ with $d(S) \leq g(\gamma+h)$,
\begin{align*}
\P (\lambda_1(S) \geq   (1+\rho) \lambda_{\alpha }  ) \leq  n^{ -  (1+\rho)^\alpha \frac{2}{\alpha-2} \left (1 - \frac{2}{\alpha}  \right )^\frac{\alpha}{2} ( \gamma + h) ^{1- \frac{\alpha}{2}}  +    \e(\gamma+h)+o(1)}.
\end{align*} 
Thus, since $\lambda_{\max}(\gamma, h)=0$ if there is no star  $S$ in $\cS$ satisfying $d(S) > g(\gamma)$,   we have
{\begin{align*}
\E & \Big[ \P  \Big ( \lambda_{\max}(\gamma, h) \geq (1+\rho) \lambda_{\alpha }  \mid  X^{(1)} \Big ) \1_{\cL_{\delta,\kappa}}  \Big]\\
&=
 \E \Big[ \P  \Big ( \lambda_{\max}(\gamma, h) \geq (1+\rho) \lambda_{\alpha }  \mid  X^{(1)} \Big ) \1_{\cL_{\delta,\kappa}} \1_{\{ \exists S \in \mathcal{S}: d(S) > g(\gamma )\}}  \Big]\\
& \leq  \frac{(1 + \delta)^2}{\kappa} \cdot  n^{ -  (1+\rho)^\alpha \frac{2}{\alpha-2} \left (1 - \frac{2}{\alpha}  \right )^\frac{\alpha}{2} ( \gamma + h) ^{1- \frac{\alpha}{2}}  +    \e(\gamma+h)+o(1)} \P  (  d_1(X^{(1)}) \geq g(\gamma)  ),
\end{align*}}
where the last inequality follows from  the  union bound (under the event $\cL_{\delta, \kappa}$, the number of stars in  $X^{(1)}_1$ of degree at least $g(\gamma) \geq  g(1+\kappa)  $ is bounded by $\frac{(1 + \delta)^2}{\kappa}$).  Also, since $\gamma>1$, by \eqref{eq:degrees_bbg} and the fact that $X^{(1)}$ is distributed as $\calG_{n,q}$ with $q \leq  p = \frac{d}{n}$,
\begin{align*}
\P  (  d_1(X^{(1)}) \geq g( \gamma )   ) \leq  n^{1-\gamma+o(1)}.
\end{align*}  
Therefore, 
\begin{align*}
 \liminf_{n \to \infty} & \frac{ - \log\E  \Big[ \P  \Big (  \lambda_{\max}(\gamma, h)  \geq (1+\rho) \lambda_{\alpha }  \mid  X^{(1)} \Big ) \1_{\cL_{\delta,\kappa}}  \Big] } {\log n} \\
& \geq  - (1- \gamma )  + (1+\rho)^\alpha \frac{2}{\alpha-2} \left (1 - \frac{2}{\alpha}  \right )^\frac{\alpha}{2} (\gamma + h)^{1-\frac{\alpha}{2}} -   \e (\gamma + h)\\
& = - f_{\alpha,\rho}(\gamma + h) - h -   \e (\gamma + h).
\end{align*} 
\end{proof}

Having the  expression for the contribution of any group of stars under the assumption that the underlying graph is reasonably nice, we now identify the group of stars for which this contribution is maximized. This is done in the following technical lemma by optimizing the value of the function $f_{\alpha,\rho}$. {Note that $f_{\alpha,\rho}$ was originally defined  for $\rho>0$, but below we consider the wider range $\rho>-1$  for a later application.}

\begin{lemma} \label{star_probability_max}
For $\alpha>2$ and $\rho>-1$,
{recall the  function  $f_{\alpha,\rho}: (0,\infty) \rightarrow \mathbb{R}$  in  \eqref{f}:}
\begin{align*}
f_{\alpha,\rho}(\gamma) = 1 -\gamma -  (1+\rho)^\alpha \frac{2}{\alpha-2} \left (1 - \frac{2}{\alpha}  \right )^\frac{\alpha}{2} \gamma^{1- \frac{\alpha}{2}}.
\end{align*}
Then,
\begin{equation}
\max_{\gamma>0} f_{\alpha,\rho}(\gamma) = 1 - (1+\rho)^2\quad 
\text{ and }  \quad 
\gamma_\rho := \argmax_{\gamma > 0} f_{\alpha,\rho}(\gamma) =  (1+\rho)^2 \left (  1- \frac{2}{\alpha} \right ).
\end{equation}
 
\end{lemma}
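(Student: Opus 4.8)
The plan is to treat this as an elementary one–variable calculus problem, the only structural input being that $\alpha>2$ forces the exponent $1-\tfrac{\alpha}{2}$ to be strictly negative. First I would abbreviate the constant by setting $c:=(1+\rho)^\alpha\,\tfrac{2}{\alpha-2}\,(1-\tfrac{2}{\alpha})^{\alpha/2}>0$, so that $f_{\alpha,\rho}(\gamma)=1-\gamma-c\gamma^{1-\alpha/2}$ on $(0,\infty)$, and differentiate: $f_{\alpha,\rho}'(\gamma)=-1+c(\tfrac{\alpha}{2}-1)\gamma^{-\alpha/2}$. Since $\tfrac{\alpha}{2}-1>0$, the map $\gamma\mapsto\gamma^{-\alpha/2}$ is strictly decreasing, hence $f_{\alpha,\rho}'$ is strictly decreasing with $f_{\alpha,\rho}'(\gamma)\to+\infty$ as $\gamma\downarrow 0$ and $f_{\alpha,\rho}'(\gamma)\to-1$ as $\gamma\to\infty$. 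Together with the boundary behaviour $f_{\alpha,\rho}(\gamma)\to-\infty$ at both ends (at $0$ because $\gamma^{1-\alpha/2}\to\infty$, at $\infty$ because of the $-\gamma$ term), this shows $f_{\alpha,\rho}$ has a unique critical point on $(0,\infty)$ and that it is the global maximum.

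Next I would solve $f_{\alpha,\rho}'(\gamma)=0$, i.e. $\gamma^{\alpha/2}=c(\tfrac{\alpha}{2}-1)=c\cdot\tfrac{\alpha-2}{2}$. Substituting the definition of $c$, the factors $\tfrac{2}{\alpha-2}$ and $\tfrac{\alpha-2}{2}$ cancel and one is left with $\gamma^{\alpha/2}=(1+\rho)^\alpha(1-\tfrac{2}{\alpha})^{\alpha/2}$, whence $\gamma_\rho=(1+\rho)^2(1-\tfrac{2}{\alpha})$, which is the claimed maximizer. Finally I would evaluate $f_{\alpha,\rho}$ at $\gamma_\rho$: from the critical-point relation $c\gamma_\rho^{-\alpha/2}=\tfrac{1}{\alpha/2-1}=\tfrac{2}{\alpha-2}$, so $c\gamma_\rho^{1-\alpha/2}=\tfrac{2}{\alpha-2}\gamma_\rho=\tfrac{2}{\alpha-2}(1+\rho)^2\tfrac{\alpha-2}{\alpha}=\tfrac{2}{\alpha}(1+\rho)^2$; plugging in,
\[
f_{\alpha,\rho}(\gamma_\rho)=1-(1+\rho)^2\Big(1-\tfrac{2}{\alpha}\Big)-\tfrac{2}{\alpha}(1+\rho)^2=1-(1+\rho)^2,
\]
as asserted.

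There is no serious obstacle here — the computation is routine. The only point worth stating carefully is that the argument genuinely uses $\alpha>2$, both to guarantee that $c$ is a well-defined positive real and to obtain the correct monotonicity of $f_{\alpha,\rho}'$ (hence the existence and uniqueness of an interior maximizer); for $\alpha\le 2$ the shape of $f_{\alpha,\rho}$ degenerates and the formula for $\gamma_\rho$ fails, consistent with the critical role of $\alpha=2$ elsewhere in the paper. If one prefers to avoid calculus, an equivalent route is to write $f_{\alpha,\rho}(\gamma)=1-\big[\gamma+c\gamma^{1-\alpha/2}\big]$ and lower-bound the bracket by the weighted AM--GM inequality applied to the two summands with weights $1-\tfrac{2}{\alpha}$ and $\tfrac{2}{\alpha}$, which makes the $\gamma$-dependence cancel and yields the same $\gamma_\rho$ and the same optimal value; either route is short.
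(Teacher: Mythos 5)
Your proof is correct and follows essentially the same route as the paper's: differentiate, observe that the factors $\tfrac{2}{\alpha-2}$ and $\tfrac{\alpha}{2}-1$ cancel to give the critical point $\gamma_\rho=(1+\rho)^2(1-\tfrac{2}{\alpha})$, and substitute back. Your additional care in verifying that the unique critical point is a global maximum (via the boundary behaviour of $f_{\alpha,\rho}$), and the AM--GM alternative, are fine but not needed beyond what the paper does.
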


\begin{proof}
For the sake of  readability, we will drop the subscripts of $f_{\alpha,\rho}$ in the proof. Note that 
\begin{align*}
\frac{d}{d\gamma} f(\gamma) 
% = - 1 + \left ( \frac{\alpha}{2} - 1 \right )  (1+\rho)^\alpha \frac{2}{\alpha-2} \left (1 - \frac{2}{\alpha}  \right )^\frac{\alpha}{2}\gamma^{-\frac{\alpha}{2}} 
= -1 + (1+\rho)^\alpha \left (1 - \frac{2}{\alpha}  \right )^\frac{\alpha}{2} \gamma^{-\frac{\alpha}{2}},
\end{align*}
and thus $f$ is maximized at  $ \gamma=(1+\rho)^2 \left (  1- \frac{2}{\alpha} \right )$.
Plugging this back into $f(\gamma)$, we get
$
\max_{\gamma >0} f(\gamma)  =  1 - (1+\rho)^2.
$
 
\end{proof}

We are now ready to put all of this together to prove the upper bound of the upper tail. 

\begin{proof}[Proof of the upper bound of the upper tail.]
By using the decomposition \eqref{100}-\eqref{101}, we  write $Z = Z^{(1)} + Z^{(2)}$. {Since $\lambda_1(Z) \leq \lambda_1 \big ( Z^{(1)} \big ) + \lambda_1 \big (Z^{(2)} \big )$ by Lemma \ref{spectrum_graphs},   we have
\begin{align} \label{311}
& \P  (\lambda_1(Z)\geq (1+\delta) \lambda_{\alpha } )  \leq 
\P \Big (\lambda_1(Z^{(1)}) \geq (1+\delta) \Big ( 1 - \frac{\e^\frac{1}{\alpha}}{B_{\alpha }} \Big ) \lambda_{\alpha }  \Big )  +
\P \Big (\lambda_1(Z^{(2)}) \geq \e^\frac{1}{\alpha} (1+\delta)  \frac{\lambda_\alpha}{B_\alpha} \Big ).
\end{align}
}
By Lemma \ref{lem:bulk_negligible},  the second term above can be bounded by
{
\begin{align} \label{310}
\P \Big (\lambda_1 \big (Z^{(2)} \big ) \geq \e^\frac{1}{\alpha} (1+\delta) \frac{\lambda_\alpha}{B_\alpha}  \Big ) \leq n^{1-(1+\delta)^2 + o(1)}.
\end{align}}
Hence, it suffices to  bound the probability
\begin{align} \label{312}
\P \Big (\lambda_1(Z^{(1)}) \geq (1+\delta) \Big ( 1 - \frac{\e^\frac{1}{\alpha}}{B_{\alpha }} \Big ) \lambda_{\alpha }  \Big ) . 
\end{align}  

\textbf{Step 1.}  
{Given the previous results, we will work on the event ensuring:}
\begin{enumerate}[(1)]
\item   Existence of the decomposition of  $X^{(1)}$ into  $X^{(1)}_1$ (vertex-disjoint union of  stars) and $X^{(1)}_2$ (relatively small maximum degree).
\item   All connected components of $X^{(1)}_2$ are relatively small and tree-like.
\item  $X^{(1)}_1$ {has a controlled number of  stars of each given degree.}
\end{enumerate}
{The first condition is achieved by the event $\cW$  in Lemma \ref{BBG_decomposition},  and the second one  is fulfilled by the series of events  in  Section \ref{connectivity} (applied to $X^{(1)}  \overset{\text{d}}{\sim} \cG_{n,q} $). For the last condition,   we  consider the events $\cP_{\kappa}$ and $\cL_{\delta,\kappa}$  in \eqref{eq:disc_deg_bounded} and \eqref{eq:big_deg_bounded} respectively.}\\
\noindent
The events above make up {the event} $\cK_0$  measurable with respect to $X^{(1)}$:
\begin{align} \label{nice_event_upper_light}
\mathcal{K}_0 &:=  \mathcal{W} \cap \cD_{(1+\delta)^2-1} \cap  \mathcal{C}_{\e,(1+\delta)^2-1}  \cap   \mathcal{E}_{(1+\delta)^2-1}  \cap \cP_\kappa \cap  \cL_{\delta,\kappa}.
\end{align}
Using the previously proven or cited results, we have
\begin{align} 
&\P( \mathcal{W}^c ) \leq e^{-\omega(\log n )} \quad \text{ by Lemma \ref{BBG_decomposition}},  \nonumber\\
& 
\P  (   \cD^c_{(1+\delta)^2-1} ) \leq n^{ 1 - (1+\delta)^2  + o(1)} \quad  \text{ by Lemma \ref{lemma 33}}, \nonumber\\
& \P ( \mathcal{C}_{\e,(1+\delta)^2-1}^c )
 \leq n^{1 - (1+\delta)^2+o(1)}  \quad \text{ by Lemma  \ref{lem:biggest_component},} \nonumber\\
& \P(\mathcal{E}_{(1+\delta)^2-1}^c) \leq n^{1 - (1+\delta)^2+o(1)} \quad  \text{ by Lemma  \ref{lem:excess_edges},}
 \label{light_upper_nice_event_prob}\\ 
& \P(\cP_\kappa^c) \leq n^{ - (1+\delta)^2} \quad  \text{ by Proposition \ref{degree_distribution} with } \mu = \frac{(1+\delta)^2}{\kappa},  \nonumber\\
&
\P (\cL_{\delta, \kappa}^c) \leq n^{-(1+\delta)^2 + o(1) }  \quad \text{ by \eqref{eq:degrees_bbg}}. \nonumber
\end{align}
Note that Proposition \ref{degree_distribution} was proven for the random graph  $\calG_{n,\frac{d}{n}}$,  whereas the events $\cP_\kappa$ and $\cL_{\delta,\kappa}$ are defined in terms of the sparser graph $X^{(1)} 
\overset{\text{d}}{\sim} \cG_{n,q}$. 
{However, since these events are decreasing the same bounds hold.}
Combining these  together,
\begin{align} \label{314}
\P ( \mathcal{K}_0^c) \leq  n^{1-(1+\delta)^2+o(1)}.
\end{align}
Since
$\lambda_1(Z^{(1)}) \leq \lambda_1(Z^{(1)}_1) + \lambda_1(Z^{(1)}_2)$, defining $\delta'$ as
 \begin{align} \label{delta'}
 (1 + \delta) \Big ( 1 -  \frac{\e^\frac{1}{\alpha}}{B_{\alpha }} \Big ) =  (1+\delta' ) + \e (1+\delta),
\end{align} we have
\begin{align} \label{313}
\P \left ( \lambda_1 \big (Z^{(1)} \big ) \geq (1+\delta) \left ( 1 - \frac{\e^\frac{1}{\alpha}}{B_{\alpha }} \right ) \lambda_{\alpha }  \right ) & \nonumber  \leq   \E \left  [ \P  \left  (\lambda_1 \big (Z^{(1)}_1 \big ) \geq (1+\delta') \lambda_{\alpha }  \mid  X^{(1)} \right ) \1_{\mathcal{K}_0}  \right ]  \\
& \hspace{-1cm}  + \E  \left  [ \P \left   ( \lambda_1 \big (Z^{(1)}_2 \big ) \geq \e(1+\delta) \lambda_{\alpha }    \mid  X^{(1)}   \right ) \1_{\mathcal{K}_0} \right  ]  + \P(\mathcal{K}_0^c).
\end{align}
From now on, we estimate the quantities in the RHS above.

\textbf{Step 2. Contribution from $\bm{Z^{(1)}_2}$.} 
Under the event $\cC_{\e,(1+\delta)^2-1} \cap \cE_{(1+\delta)^2-1}$, and hence under  the event $\mathcal{K}_0$, each connected component  of  $X^{(1)}$, and thus of  its subgraph $X^{(1)}_2$, satisfies the conditions in Proposition \ref{prop:eigenvalue_components_Z^1_2} with $\delta_1 = \delta_2 = (1+\delta)^2 - 1$ (recall that the largest degree of $X^{(1)}_2$ is $o(\frac{\log n}{\log \log n})$). Since the number of connected components is bounded by $n$,  by Proposition \ref{prop:eigenvalue_components_Z^1_2}  combined with a union bound,  
%\ecomm{Multiply by $\1_\mathcal{K}_0$?}
 whenever the event $\mathcal{K}_0$ holds, 
\begin{align}\label{315}
\lim_{n \to \infty} & \frac{ - \log \P  ( \lambda_1(Z^{(1)}_2) \geq\e (1+\delta) \lambda_{\alpha }   \mid  X^{(1)}  )   }{ \log n }  = \infty. 
\end{align}

\textbf{Step 3. Contribution from $\bm{Z^{(1)}_1}$.} 
Let  $M$ be the smallest integer such that $(1+\delta)^2  <  M \kappa$. Note that this in particular implies that
\begin{align} \label{bound m}
M \leq  \frac{(1+\delta)^2}{\kappa}+1.
\end{align} 
Note that  under the event $\cD_{(1+\delta)^2-1}$, $  d_1(X_1^{(1)})\leq d_1(X^{(1)}) \leq (1 + \delta)^2 \frac{\log n}{\log \log n}  $. Thus, the degree of any star $S$  in $X_1^{(1)}$  falls into in one of the following (not necessarily disjoint) categories:
\begin{enumerate}
\item $d(S) \leq g( \kappa)$.
\item  $d(S) \in ( g( i \kappa), g( (i+2) \kappa ) ] $ for  $i = 1,\cdots, M-1$ and $i \neq m+1$ (recall that  $m$ is a unique integer such that $ m \kappa < 1 \leq  (m+1)\kappa $),
\end{enumerate}
{Reiterating Remark \ref{remark critical}, the reason  why we exclude $i =  m+1$ is essentially because we do not have a precise understanding on the number of vertices of degree close to $\frac{\log n}{\log \log n}$. }

To bound the contribution of the first category, by Lemma \ref{lem:prob_one_star} (with $\gamma = \kappa$ and $\rho = \delta'$) combined with a union bound, for sufficiently small $\kappa,\e>0$,
{\begin{align} \label{321}
 \liminf_{n \to \infty}& \frac{ - \log  \E \Big[ \P \Big ( \max_{\substack{ S \in \mathcal{S} \\ d(S) \leq g(\kappa)  }} \{ \lambda_1 (S) \} \geq (1+ \delta')  \lambda_{\alpha } \mid  X^{(1)} \Big ) \1_{\mathcal{K}_0} \Big]  }{ \log n } \nonumber  \\
 & \qquad \geq
 -1 + (1+\delta')^\alpha \frac{2}{\alpha -2 } \left ( 1 - \frac{2}{\alpha} \right )^\frac{\alpha}{2}\kappa^{1-\frac{\alpha}{2}} -   \e \kappa  > (1+\delta)^2 -1.
\end{align}}
Note that the additional `$-1$' term in the middle quantity {comes from a union bound  (the  number of stars is bounded by $n$)}, and
the last inequality holds once $\kappa>0$ is sufficiently small (recall that $\alpha>2$).

For the  group of stars in the second category,
by Lemma \ref{lem:stars_contribution} (with $\gamma=i\kappa$, $h=2\kappa$ and $\rho = \delta'$), for each $i=1,\cdots,M-1$ with $i\neq m+1$,
\begin{align} \label{323}
& \liminf_{n \to \infty} \frac{ - \log \E \Big[  \P \Big ( \max_{\substack { S \in \mathcal{S} \\ d(S) \in ( g( i \kappa), g( (i+2) \kappa ) ] }} \{ \lambda_1 (S)\} \geq (1+ \delta')  \lambda_{\alpha }  \mid  X^{(1)} \Big ) \1_{\mathcal{K}_0} \Big]  }{ \log n } \nonumber \\
&\geq - f_{\alpha,\delta'}( (i + 2) \kappa) - 2 \kappa - \kappa -   \e  (i + 2) \kappa    \nonumber  \\
&\geq   (1+\delta')^2 -1 - 3 \kappa -   \e (M + 1) \kappa  \overset{\eqref{bound m}}{\geq}  (1+\delta')^2 -1 - 3 \kappa -   \e \Big ( \frac{(1+\delta)^2}{\kappa}  + 2\Big) \kappa   =: L,
\end{align}
where we used Lemma \ref{star_probability_max} to bound $f_{\alpha,\delta'}( (i + 2) \kappa)$ in the second inequality.

Since the categories considered in \eqref{321} and \eqref{323} make up the total contribution of the stars  in the network $Z^{(1)}$, by a  union bound,
\begin{align} \label{325}
\liminf_{n \to \infty} 
&\frac{ - \log   \E  [ \P   (\lambda_1(Z^{(1)}_1) \geq (1+\delta') \lambda_{\alpha }    )  \mid  X^{(1)}   ) \1_{\mathcal{K}_0}   ]   }{\log n} \geq \min \{ (1+\delta)^2 - 1, L \}.
\end{align} 
Applying  the bounds \eqref{314}, \eqref{315} and \eqref{325} to  \eqref{313}, 
\begin{align*}
\liminf_{n \to \infty} 
&\frac{ - \log   \P \Big (\lambda_1(Z^{(1)}) \geq (1+\delta) \Big ( 1 - \frac{\e^\frac{1}{\alpha}}{B_{\alpha }} \Big ) \lambda_{\alpha }  \Big ) }{\log n}  \geq   \min \{ (1+\delta)^2 - 1, L \}.
\end{align*}
Applying this with \eqref{310} to \eqref{311}, we obtain
 \begin{align*}
\liminf_{n \to \infty} 
&\frac{ - \log   \P (\lambda_1(Z) \geq (1+\delta)  \lambda_{\alpha }  ) }{\log n}  \geq   \min \{ (1+\delta)^2 - 1, L \}.
\end{align*}
Since $\lim_{\e \rightarrow 0} \delta' = \delta$ (see \eqref{delta'} for the definition of $\delta'$), the quantity $L$  defined in \eqref{323} becomes sufficiently close to $(1+\delta)^2-1$  for  small enough $\e,\kappa>0$, which completes the proof.

\end{proof}
 
 {
\begin{remark}
The above proof indicates that the following structural result  holds: {Conditioned on the upper tail event $  \{\lambda_1(Z) \geq (1+\delta) \lambda_{\alpha }\}$}, with high probability, $X$ contains a star of size roughly   $\gamma_\delta \frac{\log n}{\log \log n}$ with edge-weights  greater than  $ ( \frac{2}{\alpha-2} \log \log n )^{\frac{1}{\alpha}}$ in absolute value. {Indeed,  since $ f_{\alpha,\rho}(\gamma)$ at  $ \gamma = \gamma_\rho $ is a strict maximum, by Lemma \ref{lem:stars_contribution},  the contribution from the stars of degree $ g(\gamma) $ with $\gamma \notin (\gamma_\delta - \chi, \gamma_\delta + \chi)$ for $\chi>0$ is negligible compared to that from the  stars  of  degree $ g(\gamma_\delta) $.}

This, combined with Remark \ref{condition1} in the Appendix, which says that if the sum of squares of light-tailed random variables is large then these random variables tend to be uniformly large, implies that with high probability conditionally on the upper tail event, there exists a star of degree {close to} $ \gamma_\delta \frac{\log n}{\log \log n}$ with {edge-weights close to}
\begin{align*}
 \left ( \frac{(1+\delta)^2 \lambda_\alpha^2 }{ \gamma_\delta  \frac{\log n}{\log \log n}} \right )^{\frac{1	}{2}} = \Big ( \frac{2}{\alpha-2} \log \log n \Big)^{\frac{1}{\alpha}}
\end{align*}
in absolute value. In other words, the optimal size of the star increases in $\delta$, whereas the edge-weights on the star, while atypically large, do not depend on the amount of deviation  $\delta$. 

\end{remark}
 }
 
\subsection{The lower tail} \label{5.2}
 
We now move on to prove a large deviation result for the lower tail, that we restate here for the reader's convenience.

\lightlower*

{Analogous to  the upper tail case, the governing structure in this case will turn out to be the collection of $n^{1-\gamma_\delta' }$  vertex-disjoint  stars of degree  close to $\gamma_{\delta}' \frac{\log n}{\log \log n}$ with}
\begin{align} \label{gammaprime}
\gamma_{\delta}' := (1-\delta)^2 \left ( 1 - \frac{2}{\alpha} \right).
\end{align}
Note that $\gamma'_\delta$ is nothing but $\gamma_{-\delta}$ from \eqref{31}.

\subsubsection{Lower bound for the lower tail}
 
Before embarking on the proof, we establish a lemma about the lower tail behavior of the maximum  among the largest eigenvalue of $n^{1-\gamma +o(1)}$ weighted stars of degree close to $g(\gamma) = \left \lceil  \gamma \frac{\log n}{\log \log n} \right \rceil$.
 
\begin{lemma} \label{lower lower}

Suppose that  $\gamma>0$, $h\geq 0$ and $\kappa \geq \gamma-1$. Let $\mathcal{S}$ be a collection of at most $n^{1-\gamma + \kappa}$  vertex-disjoint weighted stars of size less than $g(\gamma + h)$. Assume that  edge-weights are  i.i.d.  Weibull distributions with a shape parameter $\alpha>2$  conditioned to be greater than $(\e \log \log n )^\frac{1}{\alpha}$ in absolute value. Then, for any $0<\rho<1$,
{\begin{equation} \label{510}
 \limsup_{n \to \infty} \frac{  1}{\log n}  \left ( \log \log \frac{1}{\P  \left ( \max_{ S \in \mathcal{S} } \{ \lambda_1(S) \} \leq ( 1- \rho )\lambda_{\alpha } \right  )} \right )
 \leq 
 f_{\alpha,-\rho}(\gamma +h ) + \kappa  + h +   \e (\gamma + h),
\end{equation}}
where the function $ f_{\alpha,-\rho}$ is as defined in \eqref{f}:
\begin{align*}
f_{\alpha,-\rho}(x) = 1 -x -  (1-\rho)^\alpha \frac{2}{\alpha-2} \left (1 - \frac{2}{\alpha}  \right )^\frac{\alpha}{2} x^{1- \frac{\alpha}{2}}.
\end{align*}
\end{lemma}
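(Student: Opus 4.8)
The plan is to produce a lower bound on the probability $\P(\max_{S\in\mathcal{S}}\{\lambda_1(S)\}\le (1-\rho)\lambda_\alpha)$ by multiplying the per-star probabilities, using the vertex-disjointness of the stars to get independence. First I would note that since the stars are vertex-disjoint and the edge-weights are i.i.d., the events $\{\lambda_1(S)\le(1-\rho)\lambda_\alpha\}$ for $S\in\mathcal{S}$ are mutually independent, so that
\begin{align*}
\P\Big(\max_{S\in\mathcal{S}}\{\lambda_1(S)\}\le(1-\rho)\lambda_\alpha\Big)=\prod_{S\in\mathcal{S}}\P\big(\lambda_1(S)\le(1-\rho)\lambda_\alpha\big)=\prod_{S\in\mathcal{S}}\big(1-\P(\lambda_1(S)>(1-\rho)\lambda_\alpha)\big).
\end{align*}
Using Lemma \ref{spectrum_star_weights}, $\lambda_1(S)=\sqrt{\sum_j \widetilde Y_j^2}$, so $\P(\lambda_1(S)>(1-\rho)\lambda_\alpha)=\P(\sum_j\widetilde Y_j^2>(1-\rho)^2\lambda_\alpha^2)$. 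Since each star has at most $g(\gamma+h)$ edges, I would upper bound this by the corresponding probability for a sum of exactly $g(\gamma+h)$ i.i.d.\ conditioned Weibull squares, and invoke the appendix tail estimate (the bound \eqref{common_tail_terms:upper}, with $d=1-\rho$ and $b=\gamma+h$) to get
\begin{align*}
\P\big(\lambda_1(S)>(1-\rho)\lambda_\alpha\big)\le n^{-(1-\rho)^\alpha\frac{2}{\alpha-2}(1-\frac2\alpha)^{\alpha/2}(\gamma+h)^{1-\alpha/2}+\e(\gamma+h)+o(1)}=:n^{-a_n},
\end{align*}
where $a_n=(1-\rho)^\alpha\frac{2}{\alpha-2}(1-\frac2\alpha)^{\alpha/2}(\gamma+h)^{1-\alpha/2}-\e(\gamma+h)+o(1)$. (I should be slightly careful about whether the relevant appendix bound is an upper bound valid for all stars in the family — this is the one place where I need to check that the tail estimate is monotone in the number of summands, which it is since adding non-negative terms only increases the sum; that is why I may take $g(\gamma+h)$ as the worst case.)

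Next I would take logarithms. Writing $N=|\mathcal{S}|\le n^{1-\gamma+\kappa}$ and using $\log(1-x)\ge -\frac{x}{1-x}\ge -2x$ for $x\le\frac12$ (which holds here since $n^{-a_n}\to 0$), I get
\begin{align*}
-\log\P\Big(\max_{S\in\mathcal{S}}\{\lambda_1(S)\}\le(1-\rho)\lambda_\alpha\Big)=-\sum_{S\in\mathcal{S}}\log\big(1-\P(\lambda_1(S)>(1-\rho)\lambda_\alpha)\big)\le 2N\,n^{-a_n}\le 2\,n^{1-\gamma+\kappa-a_n}.
\end{align*}
Taking $\log$ once more,
\begin{align*}
\log\log\frac{1}{\P(\max_S\{\lambda_1(S)\}\le(1-\rho)\lambda_\alpha)}\le \log 2+(1-\gamma+\kappa-a_n)\log n=(1-\gamma+\kappa-a_n+o(1))\log n.
\end{align*}
Dividing by $\log n$ and sending $n\to\infty$, the $o(1)$ and the $\log 2/\log n$ terms vanish, and $1-\gamma-a_n\to 1-(\gamma+h)-(1-\rho)^\alpha\frac{2}{\alpha-2}(1-\frac2\alpha)^{\alpha/2}(\gamma+h)^{1-\alpha/2}+h+\e(\gamma+h)=f_{\alpha,-\rho}(\gamma+h)+h+\e(\gamma+h)$, so the limsup is at most $f_{\alpha,-\rho}(\gamma+h)+\kappa+h+\e(\gamma+h)$, as claimed.

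The main obstacle — really the only subtle point — is making sure the double-logarithm manipulation is valid, i.e.\ that $N\,n^{-a_n}\to 0$ so that $1-x\le e^{-x/2}$-type bounds apply and the outer logarithm is of something less than $1$; this requires $1-\gamma+\kappa-a_n<0$ eventually, which need not hold for all parameter ranges. However, the statement is a $\limsup\le$ upper bound on $\frac{1}{\log n}\log\log(1/\P(\cdots))$, and when $1-\gamma+\kappa-a_n\ge 0$ the probability is bounded below by a constant (or at least $1-N n^{-a_n}$ stays bounded away from $0$ along a subsequence) so the inner $\log(1/\P)$ is $O(1)$ or at most polynomial and the outer quantity is $\le o(1)$ or trivially dominated; in either regime the claimed bound holds since $f_{\alpha,-\rho}(\gamma+h)+\kappa+h+\e(\gamma+h)$ is the relevant non-negative RHS exponent. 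So the proof reduces to a clean case split on the sign of that exponent, plus the appendix tail estimate and independence from vertex-disjointness.
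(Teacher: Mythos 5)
Your proposal is correct and follows essentially the same route as the paper: bound each star's upper-tail probability via Lemma \ref{spectrum_star_weights} and the appendix tail estimate for conditioned Weibull squares, then use vertex-disjointness to multiply the complementary probabilities and apply $1-x\ge e^{-2x}$ before taking the double logarithm. One small slip: since the weights are conditioned, the relevant estimate is \eqref{cond1} rather than \eqref{common_tail_terms:upper}, though the exponent you wrote (including the $\e(\gamma+h)$ correction) is the correct one from \eqref{cond1}.
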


{Again, as before, above $\kappa  + h +   \e (\gamma + h)$ should be thought of as an error term.}

\begin{proof}
{We use the notation $ \left \{ \tilde{Y}_i \right \}_{i=1,2,\cdots}$ from the proof of  Lemma \ref{lem:prob_one_star}}. By  Lemma \ref{spectrum_star_weights},
\begin{align*}
\P \big ( \lambda_1( S) \geq (1-\rho) \lambda_{\alpha } \big ) &\leq  \P \left ( \tilde{Y}_1^2 + \dots +\tilde{Y}_{ g(\gamma+h)} ^2 \geq (1-\rho)^2 \lambda_{\alpha } ^2 \right ).
\end{align*}
By the tail estimate \eqref{cond1} with $d=1-\rho$ and $b=\gamma+h$, this probability is {upper bounded} by
\begin{align*}
n^{  \e   (\gamma +h)  - (1-\rho)^\alpha\frac{2}{\alpha - 2} \left ( 1 - \frac{2}{\alpha} \right )^\frac{\alpha}{2}  (\gamma  + h)^{1-\frac{\alpha}{2} } + o(1) },
\end{align*}
Thus, using that the number of stars in $\mathcal{S}$ is bounded by $ n^{1-\gamma + \kappa}$, by the independence of edge-weights,
\begin{align*}
\P \left  ( \max_{ S \in \mathcal{S} } \{ \lambda_1(S) \} \leq ( 1- \rho) \lambda_{\alpha }   \right )
& \geq 
\left (  1 - n^{  \e (\gamma +h)   -(1-\rho)^\alpha  \frac{2}{\alpha - 2} \left ( 1 - \frac{2}{\alpha} \right )^\frac{\alpha}{2}   (\gamma + h)^{1-\frac{\alpha}{2} } + o(1) } \right )^{n^{1-\gamma + \kappa }}\\
& \geq 
\exp \left ( - n^{1- \gamma+\kappa+    \e (\gamma  + h)   - (1-\rho)^\alpha  \frac{2}{\alpha - 2} \left ( 1 - \frac{2}{\alpha} \right )^\frac{\alpha}{2}  (\gamma  +h) ^{1-\frac{\alpha}{2} } + o(1) } \right ) \\
&= \exp \big(-n^{ f_{\alpha,-\rho}(\gamma +h ) + \kappa  + h +   \e (\gamma + h) + o(1)}\big),
\end{align*}
where the second inequality follows since $1 - x > e^{-2x}$ for small $ x>0$ and the constant can be absorbed into $n^{o(1)}$. 
\end{proof}

\begin{proof}[Proof of the lower bound of the lower tail]

$\empty$

\textbf{Step 1.}
Using the decomposition \eqref{100}-\eqref{101}, we write $Z = Z^{(1)} + Z^{(2)}$. First, we define the  event measurable with respect to $X$:
\begin{align} %\label{b}
\mathcal{B}_\delta := \Big \{ \lambda_1\left (X \right ) \leq (1+\delta) \frac{(\log n)^\frac{1}{2}}{(\log \log n)^{\frac{1}{2}} } \Big \}.
\end{align}

As before, conditioning on the event $\cW$ defined in Lemma \ref{BBG_decomposition}, allows us to decompose $Z^{(1)}$ into  $Z^{(1)}_1$ (vertex-disjoint union of  stars) and $Z^{(1)}_2$ (relatively small maximum degree). 
 Let
\begin{align*}
\cR_\kappa :  = \{ \text{Maximum degree in}  \  X^{(1)}  \ \text{is less than}  \ g(1+\kappa) \}.
\end{align*}
We now define an event similar to \eqref{nice_event_upper_light} by additionally excluding  the existence of an  atypically large degree vertex using the above event, by defining the event $\cK_1$ which is measurable with respect to $\{X,X^{(1)}\}$:
\begin{equation}
\cK_1 : = \cB_\delta \cap \mathcal{W}  \cap   \mathcal{C}_{\e,(1+\delta)^2 -1} \cap \mathcal{E}_{(1+\delta)^2 -1} \cap \cP_{\kappa} \cap \cR_\kappa.
\end{equation}

By Theorem \ref{thm:eigenvalues_bbg}, $ \lim_{n\to \infty} \mathbb{P}(\mathcal{B}_\delta) = 1$. Also, by  \eqref{eq:degrees_bbg},  $ \lim_{n\to \infty} \mathbb{P}(\mathcal{R}_\kappa) = 1$ (note that  although \eqref{eq:degrees_bbg} is stated for the random graph $\cG_{n,\frac{d}{n}}$, since $\cR_\kappa$ is a decreasing event and $X^{(1)}$ is sparser than $\cG_{n,\frac{d}{n}}$, we still have this estimate).    Together with the analysis  in \eqref{light_upper_nice_event_prob}, we have
\begin{align} \label{k1}
 \lim_{n\to \infty} \mathbb{P}(\mathcal{K}_1) = 1.
\end{align}
{Since $\lambda_1(Z) \leq   \lambda_1 \big ( Z^{(1)}_1 \big ) + \lambda_1 \big ( Z^{(1)}_2 \big ) + \lambda_1 \big ( Z^{(2)} \big ) $}, setting $\delta'$ via
\begin{align} \label{delta'''}
 1-\delta  = (1-\delta' ) +  (1+\delta) \left(  \e + \frac{ \e^\frac{1}{\alpha}}{B_\alpha}  \right ),
\end{align}
we have
\begin{align} \label{lower_tail_lower_bound_decomp}
 \P \big ( \lambda_1 ( Z ) \leq (1-\delta) \lambda_{\alpha }  \big ) \geq 
\E \Bigg  [ \;  \P \bigg ( & \;  \lambda_1 \big ( Z^{(1)}_1 \big ) \leq  (1-\delta') \lambda_{\alpha }  , \quad \lambda_1 \big ( Z^{(1)}_2 \big ) \leq  (1+\delta)  \e \lambda_{\alpha } , \nonumber \\
&  \lambda_1 \big ( Z^{(2)} \big ) \leq (1+\delta)  \frac{ \e^\frac{1}{\alpha} }{B_\alpha}  \lambda_\alpha  \; \Big |  \; X, X^{(1)} \bigg )  \; \1_{\mathcal{K}_1} \; \Bigg ].
\end{align}

First of all, under  the event $\mathcal{B}_\delta$, and hence under the event $\mathcal{K}_1$, by the same argument as in \eqref{Z2_wrt_X2},
\begin{align} \label{573}
\lambda_1 \big ( Z^{(2)} \big ) \leq (1+\delta)  \frac{ \e^\frac{1}{\alpha} }{B_\alpha}   \lambda_\alpha.
\end{align}

Furthermore, note that   $ Z^{(1)}_1 ,Z^{(1)}_2 $ and $X$  are conditionally independent given $X^{(1)}$. 
Thus by \eqref{573}, under the event $\mathcal{K}_1$, the conditional probability inside the expectation in  \eqref{lower_tail_lower_bound_decomp} is written as 
{ \begin{align} \label{577}
\P & \left (  \lambda_1 \big ( Z^{(1)}_1 \big ) \leq  (1-\delta') \lambda_{\alpha }  , \lambda_1 \big ( Z^{(1)}_2 \big ) \leq  (1+\delta) \e \lambda_{\alpha }   \mid  X, X^{(1)} \right  ) \nonumber \\
=\P & \left (  \lambda_1 \big ( Z^{(1)}_1 \big ) \leq  (1-\delta') \lambda_{\alpha }  , \lambda_1 \big ( Z^{(1)}_2 \big ) \leq  (1+\delta) \e \lambda_{\alpha }   \mid   X^{(1)} \right  ) \nonumber \\
=  \P & \left (  \lambda_1 \big ( Z^{(1)}_1 \big ) \leq  (1-\delta') \lambda_{\alpha }    \mid  X^{(1)} \right ) \P \left ( \lambda_1 \big ( Z^{(1)}_2 \big ) \leq (1+\delta) \e \lambda_{\alpha }   \mid  X^{(1)} \right ).
\end{align}}
Therefore, by \eqref{lower_tail_lower_bound_decomp}-\eqref{577},
\begin{align} \label{30}
 \P \big ( \lambda_1 ( Z ) \leq (1-\delta) \lambda_{\alpha }  \big ) \geq 
\E \Big[ \P \left (  \lambda_1 \big ( Z^{(1)}_1 \big ) \leq  (1-\delta') \lambda_{\alpha }    \mid  X^{(1)} \right ) \P \left ( \lambda_1 \big ( Z^{(1)}_2 \big ) \leq (1+\delta) \e \lambda_{\alpha }   \mid  X^{(1)} \right )     \1_{\mathcal{K}_1} \Big].
\end{align}

We now estimate the two conditional probabilities above.

\textbf{Step 2. Contribution from $ Z_2^{(1)} $.}
 Note that, by definition, under the event $\cK_1,$ all components of $ X_2^{(1)} $ as well as of  $ X^{(1)} $ satisfy {the properties described in the events $\mathcal{C}_{\e,(1+\delta)^2 -1} $ and $ \mathcal{E}_{(1+\delta)^2 -1} $}. Hence by Proposition \ref{prop:eigenvalue_components_Z^1_2} together with a union bound (the number of connected components is bounded by $n$),   for large  enough $n$, under the event $\mathcal{C}_{\e,(1+\delta)^2 -1} \cap \mathcal{E}_{(1+\delta)^2 -1} $, and hence under the event $\mathcal{K}_1$,
\begin{align} \label{574}
\P \left  (\lambda_1 \big ( Z^{(1)}_2 \big ) \leq  (1 + \delta) \e \lambda_{\alpha }   \mid   X^{(1)} \right  ) \geq \frac{1}{2}.
\end{align}

\textbf{Step 3. Contribution from $ Z_1^{(1)} $.}
We proceed by considering groups of stars of similar degrees. Let $\mathcal{S}$ be the collection of stars in its underlying graph $X^{(1)}_1$ given the latter, by construction, is a vertex-disjoint union of stars. We define the events capturing the contributions of {the small and large stars}, by 
\begin{align*}
\cJ_0 & := \Bigg \{  \max_{\substack { S \in \mathcal{S}   \\  d(S) \leq g(\kappa) } } \{ \lambda_1(S) \} \leq (1-\delta') \lambda_{\alpha }  \Bigg \}  ,  \\
\mathcal{J}_m &:=  \Bigg \{ \max_{\substack { S \in \mathcal{S} \\ d(S) \in ( g(  m \kappa),g( (m+2) \kappa) ] }} \{ \lambda_1 \left (S \right ) \} \leq (1 - \delta')  \lambda_{\alpha }   \Bigg \},
\end{align*}
and the events capturing the contribution of the stars with intermediate degree, i.e. for $i = 1,\cdots,m-1$ (recall that $m$ is an integer such that $m\kappa  < 1 \leq (m+1)\kappa  $), by
\begin{align*}
 \mathcal{J}_i &:=   \Bigg \{ \max_{\substack { S \in \mathcal{S} \\ d(S) \in (g( i \kappa),  g((i+1) \kappa )] }} \{ \lambda_1 \left (S \right ) \} \leq (1 - \delta')  \lambda_{\alpha }   \Bigg \} .
\end{align*}  
%wey
  Since $ (m+2)\kappa >1+\kappa$, under the event $\cR_\kappa$ and thus under $\mathcal{K}_1$, there are no stars in  $X^{(1)}_1$ of degree at least $g((m+2)\kappa)$. Thus, conditioned on $\mathcal{K}_1$, the event $\bigcap_{i=0}^m \mathcal{J}_i$ implies $\lambda_1 \big ( Z^{(1)}_1 \big ) \leq  (1-\delta') \lambda_{\alpha } $.

We will now lower bound the probabilities of the events $\cJ_i$, conditioned on $\mathcal{K}_1$.
{{By   Lemma \ref{lower lower} with  $\rho = \delta',\gamma=\kappa$ and $ h=0$ (the number of stars  in $X_1^{(1)}$  is bounded by $n$)},  under the event $\mathcal{K}_1$,}
\begin{align*}
 \P  \left (  \mathcal{J}_0  \big | X^{(1)} \right )& \geq  \exp \left ( - n^{f_{\alpha,-\delta'}(\kappa) + \kappa  +   \e \kappa  + o(1)} \right ).
\end{align*}
{We will lower bound the probability of   the remaining events.} Recall that under the event $\cP_{\kappa}$, and thus under the event $\mathcal{K}_1$, for $\gamma =  i\kappa$ with $i=1,\cdots,m$, the number of stars in $X_1^{(1)}$ of degree at least $g(\gamma)$ is bounded by $n^{1-\gamma+\kappa}$. Thus,
by Lemma \ref{lower lower} with $\rho = \delta', \gamma = i\kappa$ and $h = \kappa$ or $2\kappa$, under the event $\mathcal{K}_1$,
\begin{align*}
  \P  \left (  \mathcal{J}_i \big  | X^{(1)} \right ) & \geq  \exp \left (-n^{ f_{\alpha,-\delta'}( (i+1) \kappa) + 2\kappa +   \e ( (i+1) \kappa ) + o(1)} \right ) \quad \text{ and } \\
  \P  \left (  \mathcal{J}_m \big | X^{(1)} \right ) & \geq  \exp \left (- n^{ f_{\alpha,-\delta'}(  (m+2) \kappa  ) + 3\kappa +   \e ( (m+2) \kappa ) + o(1)} \right ).
\end{align*} 
 Now note that  the events $\cJ_i$ are conditionally independent given  $X^{(1)}$. Since    $ m  \kappa<1$  and $f_{\alpha,-\delta'} (\gamma) \leq 1 - (1-\delta')^2$ for any $\gamma>0$ by Lemma  \ref{star_probability_max}, all exponents of $n$ in the above lower bounds  for $ \P  \left (  \mathcal{J}_i \big  | X^{(1)} \right ) $ with  $i=0,\cdots,m$  are less than  
 \begin{align*}
 1-(1-\delta')^2 + 3\kappa +  \e(1+2\kappa) +o(1).
 \end{align*}
Thus,
whenever the event $\mathcal{K}_1$ holds,
\begin{align} \label{575}
& \P \left  ( \lambda_1 \big ( Z^{(1)}_1 \big )  \leq (1-\delta') \lambda_{\alpha }  \big |  X^{(1)} \right ) 
\geq 
\P \left  (  \bigcap_{i=0}^m \mathcal{J}_i \big | X^{(1)} \right ) =  \prod _{i = 0}^{m}  \P \left  (   \mathcal{J}_i \big | X^{(1)} \right ) \nonumber \\
% &\geq 
% e^{- \frac{ 1}{2}n^{1 +   \e \xi - 2(1-\delta')^2 + o(1)}} 
%  e^{- n^{ f( \xi + (m+2) \kappa  ) + 3\kappa +   \e ( \xi +(m+2) \kappa ) + o(1)}}  \prod_{i = 0}^{m-1} e^{-n^{ f(\xi + (i+1) \kappa) + 2\kappa +   \e ( \xi + (i+1) \kappa ) + o(1)}} \\
& \geq \left ( \exp \left (-n^{1-(1-\delta')^2 + 3\kappa +  \e(1+2\kappa) +o(1) } \right ) \right )^{ m + 2 } \geq \exp\Big(- \Big ( \frac{1}{\kappa} + 2 \Big ) n^{1-(1-\delta')^2 + 3\kappa +  \e(1+2\kappa) +o(1) }\Big),
\end{align}
where  we used $m \leq \frac{1}{\kappa} $ in the last inequality.

Therefore,  applying \eqref{574} and \eqref{575} to \eqref{30},
\begin{align*}
\P \left ( \lambda_1 ( Z ) \leq (1-\delta) \lambda_{\alpha }  \right )  \geq \frac{1}{2}  \exp\Big(- \Big ( \frac{1}{\kappa} + 2 \Big ) n^{1-(1-\delta')^2 + 3\kappa +  \e(1+2 \kappa) +o(1) }\Big)  \mathbb{P}(\mathcal{K}_1).
\end{align*}
Since $\P(\mathcal{K}_1) \geq \frac{1}{2}$ for large enough $n$ (see \eqref{k1}) and $\lim_{\e \rightarrow 0} \delta' = \delta$ (see \eqref{delta'''} for the definition of $\delta'$),  by taking  $\kappa,\e>0$ small enough, we establish the desired bound.
\end{proof}

We now move on to the final part of our analysis of light-tailed weights.

\subsubsection{Upper bound for the lower tail}
 
We show it is unlikely that all stars induced by vertices of degree close to $g(\gamma_\delta') = \lceil \gamma_\delta' \frac{\log n}{\log \log n} \rceil$, where $\gamma_\delta' = (1-\delta)^2 \left ( 1 -\frac{2}{\alpha} \right )$ was defined in \eqref{gammaprime}, have a largest eigenvalue  less than $(1-\delta)\lambda_\alpha$.
 
To prove this, we condition, for small enough $\rho>0$, on the event $\mathcal{A}_{\gamma_\delta',\rho}$ defined in Proposition \ref{sequential_revealing}, i.e. there exist $m: = \left \lceil  \frac{1}{4} n^{1-\gamma_\delta'  - \rho} \right \rceil $ vertices  having $g(\gamma_\delta') $ disjoint neighbors with no edges between each neighbors.   Let $ S_1,\cdots,S_m$  be  the  vertex-disjoint stars induced by these vertices and   their  $g(\gamma_\delta') $ neighbors.
By Proposition \ref{sequential_revealing},
\begin{align} \label{580}
\P \left ( \mathcal{A}_{\gamma_{\delta}',\rho}^c \right ) \leq e^{ - n^{1-\gamma_\delta'  - \rho + o(1)}}.
\end{align}
Since 
\begin{align*}
\lambda_1^2(Z) \geq \max_{k=1,\cdots,m} \lambda_1^2(S_k) =  \max_{k=1,\cdots,m} \sum_{(i,j) \in E(S_k)} Z_{ij}^2,
\end{align*} 
we have
\begin{align} \label{582}
\P \big ( \lambda_{1} (Z)  \leq (1 - \delta) \lambda_{\alpha } \big )  
\leq   
 \E\bigg[ \P \Big (\max_{k=1,\cdots,m}    \sum_{(i,j) \in E(S_k)} Z_{ij}^2 \leq (1 - \delta) ^2 \lambda_{\alpha } ^2  \mid  X \Big )\1_{ \mathcal{A}_{\gamma_{\delta}',\rho} } \bigg] +\P  (\mathcal{A}_{\gamma_{\delta}',\rho}^c  ) .
\end{align}
Using the   tail estimate \eqref{common_tail_terms:upper} with $d = 1-\delta$ and $b=\gamma_\delta'$, under the event  $\mathcal{A}_{\gamma_{\delta}',\rho}$,
\begin{align} \label{581}
 \P &\Big (\max_{k=1,\cdots,m}    \sum_{(i,j) \in E(S_k)} Z_{ij}^2 \leq (1 - \delta) ^2 \lambda_{\alpha }  ^2 \mid  X \Big ) \nonumber  \\
% & = \P(\mathcal{S}_{\gamma_{\delta}'})
% \left ( 1- \P \left ( \sum_{i = 1}^{\left \lceil \gamma_\delta \frac{ \log n}{\log \log n} \right \rceil} Y_i^2 \geq (1-\delta) \lambda_{\alpha }   \right )  \right )^{\frac{1}{8} n^{1-\gamma_\delta +o(1)}} \nonumber \\
& \leq 
\Big( 1 - n^{ -(1-\delta)^\alpha \frac{2}{\alpha-2} \left (1 - \frac{2}{\alpha}  \right )^\frac{\alpha}{2}  (\gamma_{\delta}' ) ^{1-\frac{\alpha}{2}} +o(1) }  \Big )^ m   \nonumber \\
&
\leq  \exp(-n^{ 1 - \gamma_\delta' - \rho -(1-\delta)^\alpha \frac{2}{\alpha-2} \left (1 - \frac{2}{\alpha}  \right )^\frac{\alpha}{2}  (\gamma_{\delta}' ) ^{1-\frac{\alpha}{2}} +o(1)}) \leq 
  \exp( -n^{1 - (1-\delta)^2  - \rho +o(1)}),
\end{align}
where we used $\gamma_\delta' = (1-\delta)^2 \left ( 1 -\frac{2}{\alpha} \right )$  to simplify the exponent.
Since $\gamma_{\delta}' = (1-\delta)^2\left ( 1 - \frac{2}{\alpha} \right ) < (1-\delta)^2$,  \eqref{580} and  \eqref{581} show that the dominant term in \eqref{582} is $e^{-n^{1 - (1-\delta)^2  - \rho +o(1)}}$. By taking $\rho>0$  sufficiently small enough, we obtain the matching upper bound.

\qed

\section{Heavy-tailed weights} \label{sec:heavy}
{In this section, we prove Theorems \ref{thm:heavy_upper} and \ref{thm:heavy_lower} in Sections \ref{6.1} and \ref{6.2} respectively. As before, for notational brevity, we define  $ \lambda_\alpha : = \lambda^{\textup{heavy}}_\alpha = ( \log n )^\frac{1}{\alpha}.$}

\subsection{The upper tail} \label{6.1}
Let us first recall the theorem that we will prove in this section.
Recall that  for $\theta>1$ and  the integer $k\geq 2$, we defined the following function
\begin{align*}
\phi_{\theta} (k) = \sup_ {f = (f_1,\cdots,f_k): \norm{f}_1=1} \sum_{i,j\in [k], i \neq j} |f_i| ^{\theta}  |f_j|^{\theta}.
\end{align*}
\heavyupper*

\subsubsection{Lower bound for the upper tail}
  
As mentioned in the idea of proof section, we lower bound the large deviation probability by having high edge-weights on a suitable size of clique.
In the case when $\alpha < 1$, it turns out to suffice to only consider a clique of size 2 (i.e. an  edge). When $1 < \alpha < 2$ on the other hand, we also need to consider the possibility of larger cliques {appearing in $X$}. The lower bound then follows by optimizing over the clique size.

We first note that using 
 $\phi_{\beta/2}(2) = 2^{1-\beta}$ (see  \eqref{clique 2} in Lemma \ref{lemma 02}), 
 \begin{align} \label{formula}
\psi_{\alpha,\delta}(2) = -1 +  \frac{1}{2}  (1+\delta)^\alpha \phi_{\beta/2}(2)^{1 -\alpha}  = -1 +  \frac{1}{2}  (1+\delta)^\alpha   \Big(\frac{1}{2^{\beta-1}}\Big)^{1-\alpha} = (1+\delta)^\alpha - 1 ,
\end{align}
where we used the conjugacy relation  $\frac{1}{\alpha}+\frac{1}{\beta}=1$ in the last identity.

We establish the lower bound by separately proving
\begin{align} \label{147}
\limsup_{n\to \infty}  -  \frac{  \log \P( \lambda_1 (Z) \geq (1+\delta) \lambda_{\alpha } )  }{\log n}
 \leq    \psi_{\alpha,\delta} (2) =   (1+\delta)^\alpha - 1
\end{align}
and for any $k\geq 3$,
\begin{align} \label{148}
\limsup_{n\to \infty}  -  \frac{  \log \P( \lambda_1 (Z) \geq (1+\delta) \lambda_{\alpha } )  }{\log n}
 \leq     \psi_{\alpha,\delta} (k).
\end{align}

\textbf{Single large edge-weight.}
We consider the scenario that there is a large edge-weight, which provides the bound \eqref{147}.
Let us define the event that the number edges in the random graph $X$ is not unusually small:
\begin{align} \label{event f}
\mathcal{M}:=\Big \{|E(X)| \geq \frac{d(n-1)}{4}\Big\}.
\end{align}
Then,
\begin{align*}  
\P  \big  ( \lambda_{1}(Z) \geq (1+\delta) \lambda_{\alpha } \big ) &  \geq  \E \Big[ \P   \big ( \lambda_{1}(Z) \geq (1+\delta ) \lambda_{\alpha }  \mid  X  \big )  \1_{\mathcal{M}} \Big] 
\end{align*}

Since the number of missing edges $\binom{n}{2} - E(X)$ has a  distribution $\Binom\left (\binom{n}{2}, 1 - \frac{d}{n} \right )$, by Lemmas \ref{rel_ent_bern} and \ref{binomial_tails} about the relative entropy  and binomial tail estimates respectively, there is a constant $c>0$ such that 
{\begin{align}  \label{prob f}
\mathbb{P} \big (\mathcal{M}^c \big ) = \P \left ( \binom{n}{2} - |E(X)|  >  \binom{n}{2} - \frac{ d(n-1) }{4} \right )  \leq e^{-\binom{n}{2} I_{ 1 - \frac{d}{n} } \left ( 1- \frac{d}{2n} \right )}\leq  e^{-cn}.
\end{align}}
Also, under the event $\cM$,  by the independence of edge-weights,
\begin{align*}  
\P   \left ( \max_{(i,j)\in E(X)}|Z_{ij}|  \geq (1+\delta ) \lambda_{\alpha }  \mid  X  \right )\geq  1  -  \left ( 1 - C_1 n^{-(1+\delta)^\alpha }  \right )^{ \frac{d(n-1)}{4} } \geq n^{1-(1+\delta)^\alpha + o(1)}.
\end{align*}
Combining these two bounds, we obtain  \eqref{147}.
This already concludes the proof in the case $0< \alpha \leq 1$.

\textbf{Large edge-weights on a bigger clique.}
{Next, we establish the lower bound \eqref{148}  in the case $ 1 < \alpha < 2$}.  {For any  $k\geq 3$}, using the result and notation from Lemma \ref{equality_L_p}, take $k_1,k_2\geq 0$ with $k_1+k_2\leq k$, $x,y\geq 0$ and the $k\times k$ matrix
  $A = (a_{ij})_{i,j \in [k]}$ given by
\begin{align} \label{A}
a_{ij}=
\begin{cases}
x^2 \qquad &  i\neq j, i,j\in \{1,\cdots,k_1\}=: V_1 ,\\
y^2  \qquad  &  i\neq j,  i,j\in \{k_1+1,\cdots,k_1+k_2\} =:V_2 ,\\
xy \qquad &  i\in V_1, j\in V_2 \text{ or } i\in V_2,j\in V_1,\\
0 \qquad &\text{otherwise},
\end{cases}
\end{align}
which achieves the  equality in \eqref{case1}, i.e.
\begin{align} \label{222}
 \lambda_1(A) = \phi_{ \frac{\alpha}{2(\alpha-1)}} (k)^{\frac{\alpha-1}{\alpha}}  \norm{A}_{\alpha}= \phi_{\frac{\beta}{2}} (k)^{\frac{\alpha-1}{\alpha}}  \norm{A}_{\alpha}.
\end{align}

Conditioned on the event that $X$ contains a clique of size $k$ denoted  by $H$, let  $V(H) := \{v_1,\cdots,v_{k}\}$. Now consider the event that
\begin{equation} \label{y}
Y_{v_i v_j} \geq  
\begin{cases}
  \frac{1}{\lambda_1(A)} (1+\delta) {\lambda_{\alpha }} x^2 \quad  & i\neq j, i,j  \in V_1, \\ 
 \frac{1}{\lambda_1(A)} (1+\delta) \lambda_{\alpha } y^2 \quad  &  i\neq j, i,j  \in V_2 ,\\
 \frac{1}{\lambda_1(A)}(1+\delta) \lambda_{\alpha } xy \quad & i\in V_1, j\in V_2 \text{ or } i\in V_2,j\in V_1 , \\
 0 \quad & \text{otherwise}.
\end{cases}
\end{equation}
By the distribution of the edge-weights as defined in \eqref{weibull},  {the conditional probability} of this event is lower bounded by
\begin{align} \label{145}
 n^{-(1+\delta)^\alpha \frac{1}{\lambda_1(A)^\alpha} \left (\binom{k_1}{2} x^{2\alpha} + \binom{k_2}{2} y^{2\alpha} + k_1 k_2 x^\alpha y^\alpha \right) + o(1)} & =    n^{  -   (1+\delta)^\alpha  \frac{ ||A||_{\alpha}^\alpha}{2\lambda_1(A)^\alpha}     +    o(1)} \nonumber \\
 & \overset{\eqref{222}}{=} n^{-\frac{1}{2}  (1+\delta)^\alpha \phi_{\beta/2}(k)^{1 -\alpha} + o(1)}.
\end{align}
{Also, under this event, we have
$\lambda_1 (Z) \geq  \lambda_1(Z|_{H}) \geq  (1+\delta) \lambda_{\alpha }$,  {since  $Z|_H$ is entrywise greater than  or equal to the matrix $\frac{(1+\delta) \lambda_\alpha}{\lambda_1(A)}  A$, having non-negative entries, whose largest eigenvalue is  $ (1+\delta) \lambda_{\alpha }$.}
}
By Lemma \ref{lemma clique} the probability that $X$ contains a  clique of size $k\geq 3$ is lower bounded by $C n^{- \binom{k}{2} +k }$. Therefore, combining this  with \eqref{145},   for any $k\geq 3$,
\begin{align} \label{146} 
\limsup_{n\to \infty}  -  \frac{  \log \P( \lambda_1 (Z)  \geq (1+\delta) \lambda_{\alpha } )  }{\log n}
 \leq   \binom{k}{2} - k +  \frac{1}{2}  (1+\delta)^\alpha \phi_{\beta/2}(k) ^{1 -\alpha} =\psi_{\alpha,\delta}(k).
\end{align}  
 
 \qed

\subsubsection{Upper bound for the upper tail}
{As in the light-tailed   case, we decompose $Z = Z^{(1)} + Z^{(2)}$ with a negligible part  $Z^{(2)}$. However, the analysis of $Z^{(1)}$ will be significantly different since the governing structures will be distinct.  
 
We first present  a counterpart of Lemma \ref{lem:bulk_negligible}.  The proof is  almost identical, so we omit it.}

\begin{lemma} \label{heavy_lem:bulk_negligible}
For $\delta>0$,
\begin{equation}
\liminf_{n \to \infty} \frac{-\log \P (\lambda_1  (Z^{(2)})  \geq \e^{\frac{1}{\alpha}} (1+\delta) \lambda_{\alpha } )  }{\log n} \geq (1+\delta)^2 -1 .
\end{equation}
\end{lemma}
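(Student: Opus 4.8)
The plan is to copy, essentially verbatim, the proof of Lemma~\ref{lem:bulk_negligible}, the only point of care being the different scaling of the target threshold. The decomposition $Z = Z^{(1)} + Z^{(2)}$ used in this section is the analogue of \eqref{100}--\eqref{101}, so that $Z^{(2)}_{ij} = X_{ij} Y_{ij}\1_{|Y_{ij}| \le (\e\log\log n)^{1/\alpha}}$; in particular every entry of $Z^{(2)}$ is bounded in absolute value by $(\e\log\log n)^{1/\alpha}X_{ij}$. Hence the same Perron--Frobenius argument that produced \eqref{Z2_wrt_X2} yields the deterministic bound
\[
\lambda_1\big(Z^{(2)}\big) \le (\e\log\log n)^{1/\alpha}\,\lambda_1(X),
\]
which reduces the problem to a tail estimate for $\lambda_1(X)$, supplied by Theorem~\ref{thm:eigenvalues_bbg}.

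Next I would turn the deterministic bound into an inclusion of events. Recalling that in this section $\lambda_\alpha = \lambda_\alpha^{\textup{heavy}} = (\log n)^{1/\alpha}$ and writing $t_n = \tfrac{\log n}{\log\log n}$,
\[
\big\{\lambda_1(Z^{(2)}) \ge \e^{1/\alpha}(1+\delta)\lambda_\alpha\big\}
\ \subseteq\
\big\{\lambda_1(X) \ge (1+\delta)\,t_n^{1/\alpha}\big\}.
\]
Here is the one place where the heavy-tailed case genuinely differs from the light-tailed one: in Lemma~\ref{lem:bulk_negligible} the two powers of $\log\log n$ cancelled exactly and one landed precisely on the scale $(1+\delta)t_n^{1/2}$, whereas here $\alpha<2$ gives $\tfrac1\alpha>\tfrac12$, so $t_n^{1/\alpha} = t_n^{1/\alpha - 1/2}\,t_n^{1/2}$ with $t_n^{1/\alpha-1/2}\to\infty$. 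In particular $(1+\delta)t_n^{1/\alpha} \ge (1+\delta)t_n^{1/2}$ for all large $n$, whence, for such $n$,
\[
\P\big(\lambda_1(Z^{(2)}) \ge \e^{1/\alpha}(1+\delta)\lambda_\alpha\big)
\le \P\big(\lambda_1(X) \ge (1+\delta)t_n^{1/2}\big),
\]
and Theorem~\ref{thm:eigenvalues_bbg} gives $-\log\P(\lambda_1(X)\ge(1+\delta)t_n^{1/2}) = \big((1+\delta)^2-1+o(1)\big)\log n$, which is exactly the claimed bound.

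There is no real obstacle — this is why the statement is quoted without proof — but I would append a remark that $Z^{(2)}$ is in fact \emph{more} spectrally negligible in the heavy-tailed regime: since $(1+\delta)t_n^{1/\alpha} \ge c\,t_n^{1/2}$ eventually for every fixed $c>0$, applying Theorem~\ref{thm:eigenvalues_bbg} with arbitrarily large $c$ shows that the $\liminf$ in the statement is actually $+\infty$, and only the weaker bound $\ge (1+\delta)^2-1$ is needed later. One should also recall (though this is used in the analysis of $Z^{(1)}$, not here) that the truncation level $(\e\log\log n)^{1/\alpha}$ is chosen precisely so that the underlying graph $X^{(1)}$ of $Z^{(1)}$ is distributed as $\cG_{n,q}$ with $q \le d'/(n(\log n)^{\e})$, making the structural results of Section~\ref{connectivity} applicable.
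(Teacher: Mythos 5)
Your proposal is correct and follows exactly the route the paper intends: the paper omits this proof precisely because it is the argument of Lemma \ref{lem:bulk_negligible} verbatim, namely the deterministic bound $\lambda_1(Z^{(2)}) \le (\e\log\log n)^{1/\alpha}\lambda_1(X)$ followed by Theorem \ref{thm:eigenvalues_bbg}. Your observation that the mismatch of $\log\log n$ powers only helps here (since $1/\alpha>1/2$ for $\alpha<2$, so the relevant threshold for $\lambda_1(X)$ is $(1+\delta)t_n^{1/\alpha}\gg t_n^{1/2}$, making the true rate $+\infty$) is accurate and correctly handled.
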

 
{The results in Section \ref{connectivity}  provide the structural properties of $Z^{(1)}$. The following key proposition, a counterpart of Proposition \ref{prop:eigenvalue_components_Z^1_2},  states a bound on the largest eigenvalue of  such  networks. Recall that $ \lambda_{\alpha }   =  (  \log n)^{\frac{1}{\alpha}} $.}

{A key distinction between Proposition  \ref{prop:eigenvalue_components_Z^1_2} and  this proposition     is that   the former claims the smallness of $\lambda_1(A)$  under the condition  that the  maximum degree is $o(\frac{\log n}{\log \log n})$ and edge-weights are light. Whereas the latter provides the bound on    $\lambda_1(A)$  for heavy-tailed weights  under the weaker condition on the maximum degree, $O(\frac{\log n}{\log \log n})$.}

{
\begin{proposition} \label{keyprop}
For any $k,n \in \mathbb{N}$   and constants $\e, c_1,c_2,c_3 >0$, let  $G=(V,E,A)$ be a   connected network ($A = (a_{ij})_{i,j\in V}$ is a conductance matrix) whose  maximum clique size is $k$ and which satisfies
\begin{enumerate}
\item $d_1(G) \leq c_1   \frac{\log n}{\log \log n} $,
\item  $|V| \leq   \frac{c_2}{\e}  \frac{\log n}{\log \log n} $,
\item $|E| \leq |V|+c_3$.
\end{enumerate}
Suppose that the edge-weights   are given by i.i.d. Weibull distributions  with a shape parameter $0<\alpha<2$ conditioned to be  greater than $ (\e \log \log n)^{\frac{1}{\alpha}} $ in absolute value. Let $\rho>0$ and $0<\xi<\frac{1}{2}$ be constants, then:
\begin{enumerate}[1.]
\item In the case $1<\alpha<2$,  let $\beta$ be the conjugate of $\alpha$ and set  $\tau :=  (c_3 +2 )^{ \frac{1}{2\beta}  }\xi^{\frac{1}{2}}$. Then,
\begin{align}\label{keydisp}
\mathbb{P} \left ( \lambda_1(A) \geq  \rho^{\frac{1}{\alpha}} \lambda_{\alpha }  \right )  \leq      n^{- 2^{-\alpha}  \tau^{-\alpha  }\rho+  c_2    +  o(1)} +n^{-  2^{-1} \phi_{\beta/2} (k)^{1-\alpha }(1-\tau )^\alpha    \rho + c_1 \xi^{-2}   \varepsilon    + o(1)}   ,
\end{align}
where  $\phi_{\beta/2}$ is defined in \eqref{varphi}.

\item  In the case $0<\alpha\leq 1$, 
\begin{align} \label{keydisp2}
\mathbb{P} \left ( \lambda _1(A) \geq  \rho^{\frac{1}{\alpha}} \lambda_{\alpha } \right )  \leq       n^{- 2^{-\alpha}  \xi^{-\alpha  }\rho   +  c_2    + o(1)}  +   n^{-\rho (1- \xi)^\alpha  + c_1  \xi^{-2}  \e   +  o(1)} . 
\end{align}
\end{enumerate}

\end{proposition}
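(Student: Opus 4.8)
The plan is to mimic the structure of the proof of Proposition \ref{prop:eigenvalue_components_Z^1_2}: write $\lambda_1(A) = f^T A f = 2\sum_{(i,j)\in E} a_{ij} f_i f_j$ for the (random) unit top eigenvector $f$, split the vertex set into $V_S = \{i : |f_i| < \xi\}$ and $V_L = \{i : |f_i| \geq \xi\}$ (so $|V_L| \leq \lfloor \xi^{-2}\rfloor$), and split the edge sum accordingly into $\lambda_S$ (edges inside $V_S$) and $\lambda_L$ (edges touching $V_L$). Then for any $\tau$, $\mathbb{P}(\lambda_1(A) \geq \rho^{1/\alpha}\lambda_\alpha) \leq \mathbb{P}(2\lambda_S \geq \tau \rho^{1/\alpha}\lambda_\alpha) + \mathbb{P}(2\lambda_L \geq (1-\tau)\rho^{1/\alpha}\lambda_\alpha)$, and I bound each piece. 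The key novelty versus Proposition \ref{prop:eigenvalue_components_Z^1_2} is that, because the weights are now \emph{heavy}-tailed, one cannot afford to pass to the Frobenius ($\ell^2$) norm of $A$; instead one must use the $\ell^\alpha$-quasinorm bound from Proposition \ref{prop:spectral_bound_L_p}, which is exactly where the clique-size constant $\phi_{\beta/2}(k)^{1-\alpha}$ (in the case $1<\alpha<2$) or the constant $2^{-1}$, i.e.\ $(1-\xi)^\alpha$-type factor (in the case $0<\alpha\leq 1$) enters.

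For the $V_S$ term, in the case $1<\alpha<2$ I apply H\"older with exponents $\alpha$ and $\beta$ (rather than Cauchy--Schwarz): $\lambda_S \leq \big(\sum_{(i,j)\in E_S}|a_{ij}|^\alpha\big)^{1/\alpha}\big(\sum_{(i,j)\in E_S}|f_i|^\beta |f_j|^\beta\big)^{1/\beta}$. Using a spanning tree as in \eqref{416} together with Lemma \ref{lemma 03} (applied with $\theta = \beta/2$, or the forest remark following it), the tree edges contribute at most $\xi^\beta$ and the at most $c_3+1$ excess edges contribute at most $(c_3+1)\xi^{2\beta}$, so $\sum_{(i,j)\in E_S}|f_i|^\beta|f_j|^\beta \leq (c_3+2)\xi^\beta$; hence $\lambda_S \leq \tau^2 \|A\|_\alpha$ with $\tau = (c_3+2)^{1/(2\beta)}\xi^{1/2}$ exactly as in the statement. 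Then $\|A\|_\alpha^\alpha$ is a sum of at most $|V|+c_3 = O(\frac{c_2}{\e}\frac{\log n}{\log\log n})$ i.i.d.\ Weibulls conditioned to exceed $(\e\log\log n)^{1/\alpha}$ in absolute value, and the appendix tail estimate (the bound \eqref{cond1}, with the appropriate choice of the constants $d,b$) gives $\mathbb{P}(\|A\|_\alpha^\alpha \geq (\tfrac{\rho^{1/\alpha}\lambda_\alpha}{2\tau^2})^\alpha) \leq n^{-2^{-\alpha}\tau^{-\alpha}\rho + c_2 + o(1)}$, the first term of \eqref{keydisp}. In the case $0<\alpha\leq 1$ I instead bound $\lambda_S$ directly by $\sum_{(i,j)\in E_S, i<j} |a_{ij}|$ using $|f_if_j|\leq \xi^2 \leq \tfrac12$ on $E_S$... more carefully, $|f_if_j| < \xi \cdot |f_j| \le \xi$, giving $\lambda_S \le \xi \sum |a_{ij}| \le \xi(\sum|a_{ij}|^\alpha)^{1/\alpha}$ by monotonicity of $\ell^p$ quasinorms, i.e.\ $2\lambda_S \le \xi\,\|A\|_\alpha^{\phantom{1}}$ up to the $2^{-1/\alpha}$ factor, and the same appendix estimate yields $n^{-2^{-\alpha}\xi^{-\alpha}\rho + c_2 + o(1)}$.

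For the $V_L$ term I use the crucial structural input that $|V_L| \leq \lfloor\xi^{-2}\rfloor$ and the degree bound $d_1(G)\leq c_1\frac{\log n}{\log\log n}$, so the edge set $E_L$ has size at most $c_1\xi^{-2}\frac{\log n}{\log\log n}$; moreover $\lambda_1(A|_{E_L})$ is itself the largest eigenvalue of a subnetwork whose maximum clique size is still at most $k$, so Proposition \ref{prop:spectral_bound_L_p} gives $\lambda_L \leq \phi_{\beta/2}(k)^{(\alpha-1)/\alpha}\|A|_{E_L}\|_\alpha$ (case $1<\alpha<2$) or $\lambda_L \leq 2^{-1/\alpha}\|A|_{E_L}\|_\alpha$ (case $0<\alpha\leq1$, where I'll actually want the $(1-\xi)^\alpha$ bookkeeping coming from combining with the $V_S$ split). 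Then $\|A|_{E_L}\|_\alpha^\alpha$ is a sum of at most $c_1\xi^{-2}\frac{\log n}{\log\log n}$ i.i.d.\ conditioned Weibulls, and invoking the appendix tail bound \eqref{cond2}-type estimate with the exponent $c_1\xi^{-2}\e$ coming from the number of terms (recall each conditioned weight contributes an $\e\log\log n$-size offset, and the count times that offset over $\log n$ gives the stated $c_1\xi^{-2}\e$), one gets the second term $n^{-2^{-1}\phi_{\beta/2}(k)^{1-\alpha}(1-\tau)^\alpha\rho + c_1\xi^{-2}\e + o(1)}$, respectively $n^{-\rho(1-\xi)^\alpha + c_1\xi^{-2}\e + o(1)}$. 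Summing the two probabilities and using a union bound over the at most $n^{o(1)}$ choices of the set $V_L$ (its size is bounded and each coordinate lives in a vertex set of size $O(\frac{c_2}{\e}\frac{\log n}{\log\log n})$) gives \eqref{keydisp} and \eqref{keydisp2}.

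The main obstacle I anticipate is the bookkeeping in the $V_L$ term: unlike the light-tailed Proposition \ref{prop:eigenvalue_components_Z^1_2}, where $\lambda_L$ was made negligible simply by the smallness of $|E_L|$ together with the $o(\frac{\log n}{\log\log n})$ degree bound, here the weaker $O(\frac{\log n}{\log\log n})$ degree bound forces $|E_L|$ to be genuinely of order $\frac{\log n}{\log\log n}$, so $\|A|_{E_L}\|_\alpha$ is \emph{not} negligible and one must carry the clique constant $\phi_{\beta/2}(k)$ through via Proposition \ref{prop:spectral_bound_L_p}, being careful that restricting to $E_L$ does not increase the clique number, and that the $\e$-dependence (from conditioning the weights to be large) appears as the additive $c_1\xi^{-2}\e$ term rather than being absorbed into $o(1)$. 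Getting the exact constants $2^{-\alpha}\tau^{-\alpha}$, $2^{-1}\phi_{\beta/2}(k)^{1-\alpha}(1-\tau)^\alpha$ (and their $0<\alpha\leq1$ analogues) to match the statement will require tracking the factors of $2$ from $\lambda_1(A) = 2\sum_{i<j}\cdots$ and the offsets in the conditioned-Weibull tail estimates precisely, but this is routine given the appendix lemmas.
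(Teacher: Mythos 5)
Your proposal follows essentially the same route as the paper's proof: the same eigenvector split into $V_S,V_L$ and $E_S,E_L$, the spanning-tree bound $\sum_{E_S}|f_i|^\beta|f_j|^\beta\le(c_3+2)\xi^\beta$ giving the same $\tau$, H\"older with exponents $(\alpha,\beta)$ (resp.\ monotonicity of $\ell^p$ quasinorms for $\alpha\le 1$), the clique constant entering through $\phi_{\beta/2}(k)$, the union bound over the $n^{o(1)}$ possible sets $E_L$, and the appendix tail estimate for sums of $\alpha$-th powers of conditioned Weibulls. Routing the $E_L$ term through Proposition \ref{prop:spectral_bound_L_p} applied to the subnetwork $A|_{E_L}$ (whose clique number is still at most $k$) is a cosmetic variant of the paper's direct use of Lemma \ref{same}, and it produces the identical constant $2^{-1}\phi_{\beta/2}(k)^{1-\alpha}$.

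Two small corrections. First, the relevant appendix estimate is Lemma \ref{chi tail} (i.e.\ \eqref{212}), not \eqref{cond1}: the latter concerns sums of \emph{squares} in the light-tailed regime $\alpha>2$, whereas here $\|A\|_\alpha^\alpha$ is a sum of $\alpha$-th powers with $\alpha<2$. Second, in the case $0<\alpha\le 1$ your ``more careful'' replacement of $|f_if_j|\le\xi^2$ by $|f_if_j|\le\xi$ on $E_S$ is a step backwards: with the weaker pointwise bound and the threshold $\tau=\xi$ you only obtain $n^{-2^{-\alpha}\rho+c_2+o(1)}$ for the first term, losing the factor $\xi^{-\alpha}$ that appears in \eqref{keydisp2} and that is needed downstream (with $\xi=\e^{1/4}$) to make this term negligible. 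Your original bound $|f_if_j|\le\xi^2$ is the correct one and yields the stated exponent.
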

}

The expression on the right hand side is technical  but the constants $\varepsilon,\xi$ will be suitably chosen  so that  the dominant behaviors  in the case  $1<\alpha \leq 2$ and $0<\alpha \leq 1$ are $n^{- 2^{-1}  \phi_{\beta/2} (k)^{1-\alpha }    \rho }$ and $n^{-\rho}$ respectively.
Note that in the case $1<\alpha<2$, since $\phi_{\beta/2}(k)$ is non-decreasing in $k$ (see \eqref{uniform} in  Lemma \ref{lemma 02}), the upper bound \eqref{keydisp} gets worse as $k$ increases.  Whereas, the upper bound \eqref{keydisp2}  in the case $0<\alpha\leq 1$ does not depend on $k$.

\begin{proof}[Proof of Proposition \ref{keyprop}]
 
{
Let $f$ be a (random) top eigenvector.  For  a constant $ \xi \in (0,\frac{1}{2})$, define the subsets $ V_S, V_L, E_S, E_L$   as  \eqref{400} and \eqref{401}  in Proposition \ref{prop:eigenvalue_components_Z^1_2}.  Since $|V_L| \leq \frac{1}{\xi^2}$, by the condition (1),  
\begin{align} \label{362}
|E_L| \leq  \frac{c_1}{\xi^2}  \frac{\log n}{\log \log n}.
\end{align}
{
As in \eqref{413}, we write  $\lambda_1(A) = 2  \lambda_S +2  \lambda_L$. Then, for any $0<\tau<1$,
\begin{align} \label{363}
\mathbb{P}& \left ( \lambda_1(A) \geq  \rho^{\frac{1}{\alpha}} \lambda_{\alpha }  \right )   \leq   \mathbb{P} \left ( 2\lambda_S\geq  \tau \rho^{\frac{1}{\alpha}} \lambda_{\alpha }  \right )  + \mathbb{P} \left ( 2 \lambda_L\geq  (1-  \tau ) \rho^{\frac{1}{\alpha}} \lambda_{\alpha } \right ) .
\end{align}
How we proceed from here depends on the value of $\alpha$, but in both cases the parameter $\tau$ will be chosen in such way that it is costly for $\lambda_S$ to be large.} {In the case  $1<\alpha<2$ we apply  H\"older's inequality to bound $\lambda_S$ and $\lambda_L$, while in the case $0<\alpha\leq 1$ we use the monotonicity of $\ell^p$ norms.}

\textbf{Case 1. $\bm{1 < \alpha <  2}$.}
We first estimate $\lambda_S$. As in \eqref{416},
\begin{align*} 
 \sum_{(i,j)\in E_S } |f_i|^\beta |f_j|^\beta   \leq  \xi^{\beta}+  (c_3+1) \xi^{2\beta}\leq   (c_3+2) \xi^\beta,
\end{align*}
where the first term $\xi^\beta$ is obtained as an application of \eqref{lemma 36} with $\theta = \frac{\beta}{2} >1$.

Note that by assumptions, $|E| \leq |V| + c_3 \leq \frac{c_2}{\e} \frac{\log n}{\log \log n} + c_3$. Hence,
setting $\tau :=   (c_3 +2  )^{ \frac{ 1}{2\beta} }\xi^{\frac{1}{2}} $, by  H\"older's inequality,  }
$$
\lambda_S  \leq \bigg (\sum_{(i,j)\in E_S}  |f_i|^\beta |f_j|^\beta \bigg )^{\frac{1}{\beta} }  \bigg(\sum _{(i,j)\in E_S}  |a_{ij}|^\alpha \bigg)^{\frac{1}{\alpha} }     \leq  \tau^2  \bigg(\sum _{(i,j)\in E_S}  |a_{ij}|^\alpha \bigg)^{\frac{1}{\alpha}}   \leq  \tau^2  \bigg(\sum _{(i,j)\in E}  |a_{ij}|^\alpha \bigg)^{ \frac{1}{\alpha} }. $$
By a tail estimate for the sum of Weibull random variables (the bound \eqref{212} with $a= \frac{\rho}{2^\alpha \tau^\alpha} , b=\frac{c_2}{\e}, c=c_3$),
\begin{align} \label{365}
\mathbb{P}\left ( 2 \lambda_S \geq \tau     \rho^{\frac{1}{\alpha}} \lambda_{\alpha }   \right ) 
&\leq 
\mathbb{P}\bigg ( \sum _{(i,j)\in E} |a_{ij}|^\alpha \geq  \frac{\rho}{2^\alpha \tau^\alpha  }\log n  \bigg )  \leq  n^{-2^{-\alpha} \tau^{-\alpha  } \rho +  c_2    +  o(1)} .
\end{align}
 
Next, we estimate $\lambda_L$. By the definition of $\varphi_{\beta/2}$ in  \eqref{varphi} (where sup is taken over $\norm{f}_1=1$ whereas our vector $f$ satisfies $\norm{f}_2=1$) and since $\varphi_{\beta/2}=\phi_{\beta/2}$ (see Lemma \ref{same}),
\begin{align*}
\sum_{(i,j)\in E_L}  |f_i|^\beta |f_j|^\beta   \leq  \sum_{(i,j)\in E}  |f_i|^\beta |f_j|^\beta    \leq \frac{1}{2}\varphi_{\beta/2} (k)= \frac{1}{2 }\phi_{\beta/2} (k).
\end{align*}
Hence, by  H\"older's inequality,  
\begin{align} \label{366}
\lambda_L \leq \bigg ( \sum_{(i,j)\in E_L}  |f_i|^\beta |f_j|^\beta \Big)^{\frac{1}{\beta}}   \bigg (\sum _{(i,j)\in E_L}  |a_{ij}|^\alpha \bigg)^{\frac{1}{\alpha}}  \leq \bigg (\frac{1}{2} \phi_{\beta/2}(k)\bigg )^{\frac{1}{\beta}}   \bigg (\sum _{(i,j)\in E_L}  |a_{ij}|^\alpha \bigg )^{\frac{1}{\alpha}}. 
\end{align} 
Since the  number of possible subsets that  a random subset $E_L$ can take is bounded by $  |V|^{\left \lfloor {  \frac{1}{\xi^2} }   \right  \rfloor }  = n^{o(1)}$, by a union bound and the tail estimate \eqref{212} with
\begin{align*}
a= 2^{\frac{\alpha}{\beta}} \phi_{\beta/2} (k)^{-\frac{\alpha}{\beta}}   (1 -  \tau )^\alpha   \frac{\rho}{2^\alpha  } ,\quad b= \frac{c_1}{\xi^2},\quad c=0
\end{align*}
(recall the bound of $|E_L|$ in  \eqref{362}), we have
\begin{align} \label{367}
\mathbb{P} \left ( 2 \lambda_L \geq    (1-  \tau ) \rho^{\frac{1}{\alpha}} \lambda_{\alpha }  \right )  &  \leq 
\mathbb{P} \bigg (  \sum _{ (i,j)\in E_L}  |a_{ij}|^\alpha   \geq 2^{\frac{\alpha}{\beta}} \phi_{\beta/2} (k)^{-\frac{\alpha}{\beta}}   (1 -  \tau )^\alpha   \frac{\rho}{2^\alpha  } \log n \bigg ) \nonumber \\
 & \leq n^{-  2^{-1}  \phi_{\beta/2}(k)^{1-\alpha }(1-\tau )^\alpha    \rho + c_1 \xi^{-2}  \varepsilon   + o(1)}   ,
\end{align}
where we used $ 2^{\frac{\alpha}{\beta}} \cdot 2^{-\alpha} = 2^{-1}$ and $- \frac{\alpha}{\beta} = 1-\alpha$ by a conjugacy relation $\frac{1}{\alpha}+\frac{1}{\beta}=1$.

Plugging the bounds \eqref{365} and \eqref{367} into \eqref{363} gives the desired bound.

\textbf{Case 2. $\bm{0<\alpha \leq 1}$.} {In this case we use \eqref{363} with $\tau = \xi$.}
We first estimate $\lambda_S$. 
Since $|f_if_j| \leq  \xi^2   $ when $|f_i|,|f_j| <\xi$, by the monotonicity of $\ell^p$ norms (note that $0<\alpha\leq 1$),
\begin{align*}
  \sum_{(i,j)\in E_S}  a_{ij} f_if_j   \leq      \xi ^2   \sum_{(i,j)\in E_S}  |a_{ij}| \leq      \xi^2    \sum_{(i,j)\in E}  |a_{ij}|\leq    \xi ^2  \Big ( \sum_{(i,j)\in E}  |a_{ij}|^\alpha \Big)^{\frac{1}{\alpha}}.
\end{align*}
Thus,   recalling $|E|   \leq \frac{c_2}{\e} \frac{\log n}{\log \log n} + c_3$, by a tail estimate  \eqref{212} with $a= \frac{\rho}{(2\xi )^\alpha}, b=\frac{c_2}{\e},c=c_3$,
\begin{align} \label{371}
\mathbb{P}(2  \lambda_S \geq  \xi  \rho^{\frac{1}{\alpha}} \lambda_{\alpha }   )&\leq \mathbb{P} \bigg ( \sum _{i<j, (i,j)\in E} |a_{ij}|^\alpha \geq \frac{\rho}{(2\xi )^\alpha  } \log n    \bigg ) \leq  n^{- 2^{-\alpha}  \xi^{-\alpha  }\rho + c_2  + o(1)}.
\end{align}

Next, we estimate $\lambda_L$. Since $|f_if_j| \leq  \frac{1}{2}   $  when $\norm{f}_2=1$,  again by the  monotonicity of $\ell^p$ norms,
\begin{align*}
 \sum_{(i,j)\in E_L}  a_{ij} f_if_j   \leq   \frac{1}{2}  \sum_{(i,j)\in E_L}  |a_{ij}  |  \leq  \frac{1}{2} \bigg  (    \sum_{(i,j)\in E_L}  |a_{ij}  |^\alpha  \bigg )^{\frac{1}{\alpha}}.
\end{align*}
Thus, as in \eqref{367},
\begin{align} \label{372}
\mathbb{P} \left (2  \lambda_L \geq   (1  - \xi   )   \rho^{\frac{1}{\alpha}} \lambda_{\alpha } \right   )&\leq \mathbb{P}\bigg ( \sum _{(i,j)\in E_L} |a_{ij}|^\alpha \geq  (1- \xi )^\alpha \rho \log n    \bigg )  \leq  n^{-\rho (1- \xi )^\alpha  + c_1 \xi^{-2} \e    +  o(1)} . 
\end{align}
Therefore, by plugging the bounds \eqref{371} and \eqref{372} into \eqref{363}, the proof of \eqref{keydisp2} is concluded.

\end{proof}

With all this preparation, we are now ready to prove the upper bound for the upper tail.

\begin{proof}[Proof of the upper bound of the upper tail]
{By a decomposition   $Z = Z^{(1)} + Z^{(2)}$, 
setting $\delta'$ as}
\begin{align}\label{del}
1+\delta = (1+\delta')  +\e^{\frac{1}{\alpha}} (1+\delta),
\end{align}
we have
\begin{align} \label{851}
 \P \big ( \lambda_1(Z) \geq  (1+\delta) \lambda_{\alpha } \big )  
 \leq
   \P \left ( \lambda_1 \big   (Z^{(1)} \big )  \geq  (1+\delta')  \lambda_{\alpha } \right ) 
 +
  \P  \left ( \lambda_1 \big ( Z^{(2)} \big ) \geq \e^{\frac{1}{\alpha}}(1+\delta) \lambda_{\alpha } \right ).
\end{align}
By Lemma \ref{heavy_lem:bulk_negligible},
\begin{align}
  \P \left (\lambda_1 \big (Z^{(2)} \big )  \geq \e^{\frac{1}{\alpha}} (1+\delta) \lambda_{\alpha } \right )  \leq n^{1 - (1+\delta)^2 + o(1)},
\end{align} which implies that  $Z^{(2)}$ is spectrally negligible. Thus, it suffices to focus on  the spectral behavior of  $Z^{(1)}$.
Let $C_1,\cdots,C_m$ be the connected components of $X^{(1)}$ and
 let $\mathcal{H}$ be the event defined by
\begin{align} \label{tree}
\mathcal{H}:= \big  \{|\{\ell = 1,\cdots,m: C_\ell  \ \textup{not tree} \}|  <  \log n \big \}.
\end{align}
Then, by Lemma \ref{lem:excess_edges} and the Van-den Berg-Kesten  (BK) inequality \cite{bk},  for large enough $n$,
 {
$$
\mathbb{P}( \mathcal{H} ^c) \leq  C^{\log n} (\log n)^{- 2\varepsilon  \log n} \leq    (\log n)^{- \varepsilon  \log n} 
$$}
 
Setting $\delta_0 := (1+\delta)^\alpha-1$, 
{define  the  event $\cF_0$ measurable with respect to $X^{(1)}$}  which guarantees that all components $C_1,\cdots,C_m$ satisfy the conditions of Proposition \ref{keyprop} and that there are only few components which are not trees:
\begin{align*}
\cF_0:=\cD_{2\delta_0}   \cap \cC_{\e,2\delta_0} \cap \cE_{2\delta_0} \cap \mathcal{H},
\end{align*}
where $\cD_{2\delta_0}  ,\cC_{\e,2\delta_0} $ and $ \cE_{2\delta_0}$ are the events defined in Lemmas \ref{lemma 33}, \ref{lem:biggest_component} and \ref{lem:excess_edges} respectively  (applied to the graph  $X^{(1)}$).
 {By the discussion above as well as the results in these lemmas,  for large enough $n$,
\begin{align} \label{129}
\mathbb{P}(\cF_0^ c ) \leq n^{-2 \delta_0+o(1)}.
\end{align}}

 {Conditioned on $X^{(1)}$,} let $Z^{(1)}_\ell$ be the network $Z^{(1)}$ restricted to $C_\ell$ and denote by $k_\ell$ the size of the maximal clique in $C_\ell$.
We consider the case $1<\alpha<2$ and $0<\alpha\leq 1$ separately,  {since the maximum clique size turns out to be only relevant  in the former case.}

\textbf{Case 1: $\bm{1<\alpha<2}$.} Let $\beta>2$ be the conjugate of $\alpha$. Under the event $\cF_0$, in order to control  $\lambda_1  ( Z^{(1)}_\ell )$ for each $\ell=1,\cdots,m$,  we apply Proposition \ref{keyprop}  with 
\begin{align} \label{setting}
c_1=c_2=1 + 2\delta_0,\quad c_3=2\delta_0,\quad  \xi := \e^{\frac{1}{4}} ,\quad \tau = (2\delta_0 + 2)^{\frac{1}{2\beta}} \e^{\frac{1}{8}},\quad  \rho = (1+\delta')^\alpha
\end{align}
(recall that  $\delta_0 = (1+\delta)^\alpha-1$).
Observing that for  small enough $\e>0$,  the first term in the bound  \eqref{keydisp} is negligible compared to the second term,
\begin{align} \label{127}
\mathbb{P} \left (\lambda_1 \big ( Z^{(1)}_\ell \big ) \geq  (1+\delta')  \lambda_{\alpha }   \mid  X^{(1)} \right ) 
\1_{\cF_0}    
\leq 
 n^{-  2^{-1}  \phi_{\beta/2} (k_\ell)^{1-\alpha }(1-\tau )^\alpha     (1+\delta')^\alpha + (1+2\delta_0)   \varepsilon^{1/2}    + o(1)} .
\end{align}

The argument for a component  now depends on the size of its maximal clique, i.e. whether $k_\ell \geq 3$ or $k_\ell=2$.  For this define
\begin{align}
\nonumber
I := \{\ell = 1,\cdots,m: &\,\, k_\ell \geq 3\},\quad J:= \{\ell = 1,\cdots, m: k_\ell=2\},
\end{align} 
and let $ \bar{k} := \max \{k_1,\cdots,k_m\}.$  Since the maximum size of clique in any tree is equal to 2, under the event  $\mathcal{H}$ defined in \eqref{tree},
\begin{align} \label{bound I}
|I| \leq \log n.
\end{align}
Then, by \eqref{127} and using the fact that $\phi_{\beta/2}(k_\ell) \leq \phi_{\beta/2}(\bar{k})$ (recall that $\phi_{\beta/2}$ is non-decreasing, see \eqref{uniform} in Lemma \ref{lemma 02}),  for $\ell\in I$,
\begin{align}  \label{141}
\mathbb{P} \left ( \lambda_1 \big ( Z^{(1)}_\ell \big ) \geq  (1+\delta')  \lambda_{\alpha }    \mid X^{(1)} \right )  \1_{\cF_0} 
&\leq 
  n^{-  2^{-1}  \phi_{\beta/2}(\bar{k})^{1-\alpha }(1-\tau )^\alpha     (1+\delta')^\alpha + (1+2\delta_0)   \varepsilon^{1/2}    + o(1)}    \nonumber \\
   & \leq n^{-  2^{-1}  \phi_{\beta/2}(\bar{k})^{1-\alpha }   (1+\delta)^\alpha    + \zeta_1+ o(1)}  
\end{align}
for some $\zeta_1 = \zeta_1(\e)$ with $\lim_{\e \rightarrow 0} \zeta_1 = 0$. Note that the last inequality follows from the fact that $\lim_{\e \rightarrow 0} \delta' = \delta$, $\lim_{\e \rightarrow 0} \tau = 0$ (recall the definition of $\delta'$ and $\tau$ in \eqref{del} and \eqref{setting} respectively) and the uniform boundedness of the quantity $\phi_{\beta/2}(\bar{k})^{1-\alpha }  $ ({recall the monotonicity property  of $\phi_{\beta/2}$ by  Lemma \ref{lemma 02} \eqref{uniform}  and the fact $\alpha>1$}).

Similarly, for $\ell\in J$, by \eqref{127} again,
\begin{align}  \label{142}
\mathbb{P} \left ( \lambda_1 \big ( Z^{(1)}_\ell \big ) \geq  (1+\delta')  \lambda_{\alpha }    \mid X^{(1)} \right )  \1_{\cF_0} 
\leq  n^{-  2^{-1}  \phi_{\beta/2}(2)^{1-\alpha }   (1+\delta)^\alpha    + \zeta_2+ o(1)}   
\end{align} 
for some $\zeta_2= \zeta_2(\e)$ with $\lim_{\e \rightarrow 0} \zeta_2 = 0$.

Note that since $X^{(1)}$ is distributed as  $ \calG_{n,q} $ with $q \leq p=\frac{d}{n}$, Lemma \ref{lemma clique} implies that  {for  any $k \geq 3$}, the probability that  
$X^{(1)}$  contains a clique of size $k$ is bounded by $  d^{{k \choose 2}}n^{ -{k \choose 2} + k} $.
 Thus, by  \eqref{129}, \eqref{141} and  \eqref{142} combined with a union bound, 
\begin{align}    \label{149}
\mathbb{P}& \left ( \lambda_1 \big ( Z^{(1)} \big ) \geq  (1+\delta')  \lambda_{\alpha }  \right   )  \nonumber\\
  &\leq 
  C   \log n  \cdot   \sum_{k=3}^n   d^{{k \choose 2}} n^{ -{k \choose 2} + k   } \cdot  n^{-  2^{-1}  \phi_{\beta/2}(k)^{1-\alpha }   (1+\delta)^\alpha    + \zeta_1+ o(1)}  \nonumber   \\
  &+ Cn\cdot n^{-  2^{-1}  \phi_{\beta/2}(2)^{1-\alpha }   (1+\delta)^\alpha    + \zeta_2+ o(1)}      +Cn^{-2\delta_0 + o(1) } ,
\end{align}
where the multiplicative factors $\log n $ and $n$ arise from \eqref{bound I} and the fact $|J| \leq n$ respectively.  We analyze each term above.

{Recalling the definition of $\psi_{\alpha,\delta}(k)$ in  \eqref{psi},  the exponent of $n$ in each summation is less than $- \min_{k\geq 3} \psi_{\alpha,\delta}(k) + \zeta_1 + o(1)$.  By a straightforward argument, one can deduce that the first term is bounded by $
 n^{- \min_{k\geq 3} \psi_{\alpha,\delta}(k) + \zeta_1 + o(1) }
$. In addition, by the first identity in  \eqref{formula}, 
the second term  is nothing other than
$ n^{-  \psi_{\alpha,\delta}(2) + \zeta_2 + o(1) }$. Furthermore,  since 
$
 \psi_{\alpha,\delta}(2) \overset{\eqref{formula}}{=}  (1+\delta)^\alpha-1 =\delta_0
$,
 the  last term  is  bounded by $n^{-2  \psi_{\alpha,\delta}(2) + o(1)}$.
}

Therefore, by combining the  above bounds together,
  there exists $\zeta = \zeta(\e) > 0$ with $\lim_{\e \ro} \zeta  = 0$  such that the  RHS  in \eqref{149} is bounded by $n^{-\min_{k\geq 2} \psi_{\alpha,\delta}(k) + \zeta + o(1) }$. 
  By taking $\e>0$ sufficiently small, we conclude the proof.

\textbf{Case 2: $\bm{0 < \alpha \leq 1}$.}  
We apply Proposition \ref{keyprop} as before. The dominating term in  the bound  \eqref{keydisp2} is the second term for  small enough $\e>0$. For each $\ell=1,\cdots,m$,
\begin{align}  
\mathbb{P} \Big (\lambda_1 \big (Z^{(1)}_\ell \big ) \geq  (1+\delta')  \lambda_{\alpha }   \mid  X^{(1)} \Big)  \1_{\cF_0}  \leq  n^{-  ( 1+\delta')^\alpha (1- \e^{1/4})^\alpha  + (1+2\delta_0) \e^{1/2}    +  o(1)} \leq  n^{-(1+\delta)^\alpha + \zeta' + o(1)} 
\end{align}
for some  $\zeta' = \zeta'(\e) >0$ with $\lim_{\e \ro} \zeta'  = 0$.
One can then  conclude the proof by applying a union bound over at most $n$ many connected components $C_1,\cdots,C_m$.
\end{proof}

\begin{remark} \label{rem:str_upper_light}
Although we do not pursue proving this formally,  with some additional work, {one may be able to prove the following structure theorem}: Let 
\begin{align*}
\bar{k} := \argmin_{k \in \mathbb{N}_{\geq 2}} \psi_{\alpha,\delta}(k).
\end{align*} 
Then, conditioned on the upper tail event $\{\lambda_1(Z) \geq (1+\delta) \lambda_\alpha\}$, with high probability, there exists a clique of size close to $\bar{k}$ in $X$ with high edge-weights on it. 

{
Also recall from Section  \ref{sec 2.3} that the conditional structure given atypically large $\lambda_1(Z)$ is different in the Gaussian and heavy-tailed edge-weight cases. In the former case, the optimal size of the clique tends to infinity as the amount of deviation $\delta$ goes to infinity.
whereas, in the latter case, it stays bounded.}
 
\end{remark}

\subsection{The lower tail} \label{6.2}
We start by recalling the theorem we prove in this section.

\heavylower*

\subsubsection{Lower bound of the lower tail}
 
{We start by defining the $X$-measurable event} $\mathcal{B}_1$ as
\begin{align*}
\mathcal{B}_1 := \left  \{ \lambda_1\left (X \right ) \leq 2\frac{(\log n)^\frac{1}{2}}{(\log \log n)^{\frac{1}{2}} } \right \}.
\end{align*}
Recalling the event $\mathcal{H} $ introduced in \eqref{tree}, we define the event $\cF_1$ measurable with respect to $\big \{ X,X^{(1)} \big \}$:
{\begin{align*}
\cF_1:=  \cD_{\delta}  \cap \cC_{\e, \delta} \cap \cE_{\delta} \cap \mathcal{H}  \cap \cB_1.
\end{align*}}
By Theorem \ref{thm:eigenvalues_bbg}, $\lim_{n\to \infty} \mathbb{P}(\mathcal{B}_1) = 1$. Combining this with a previous argument to derive  \eqref{129},  for large enough $n$,
\begin{align} \label{860}
 \P \big ( \cF_1 \big ) \geq \frac{1}{2}.
\end{align} 
Since $\lambda_1 (Z) \leq \lambda_1 (Z^{(1)}) + \lambda_1 (Z^{(2)})$,
setting
\begin{align} \label{delta''}
\delta'' :=  \delta + \varepsilon^{\frac{1}{\alpha}},
\end{align} we have
\begin{align} \label{861}
\P & \big ( \lambda_1(Z) \leq (1-\delta) \lambda_{\alpha }   \big ) \geq 
\E \left  [ \P \left  ( \lambda_1 \big (Z^{(1)} \big ) \leq (1-\delta'') \lambda_{\alpha } , \;  \lambda_1 \big (Z^{(2)} \big ) \leq   \varepsilon^{\frac{1}{\alpha}}  \lambda_{\alpha }   \mid  X, X^{(1)}  \right ) \1_{\cF_1} \right  ].
\end{align}
 Since $|Y_{ij}^{(2)}|\leq  (\e \log \log n)^{\frac{1}{\alpha}} $,   under the event $\mathcal{B}_1$, hence under the event $\mathcal{F}_1$, for large enough $n$,
$$\lambda_1 \left ( Z^{(2)} \right  ) \leq  2 \frac{(\log n)^\frac{1}{2}}{(\log \log n)^{\frac{1}{2}} } \cdot   (\e \log \log n)^{\frac{1}{\alpha}} \leq   \varepsilon^{\frac{1}{\alpha}} \lambda_{\alpha } $$
(recall that $\lambda_\alpha =(\log n)^{\frac{1}{\alpha}}$ and $\alpha<2$).
 Thus, using the  conditional independence of $X$ and $Z^{(1)}$   given $X^{(1)}$, under the event $\cF_1$,
 \begin{align} \label{862}
 \P & \left ( \lambda_1 \big ( Z^{(1)} \big ) \leq (1-\delta'') \lambda_{\alpha } ,   \lambda_1 \big ( Z^{(2)} \big ) \leq  \varepsilon^{\frac{1}{\alpha}}  \lambda_{\alpha }    \mid  X, X^{(1)} \right )  \nonumber\\ 
 &= \P \left ( \lambda_1 \big (Z^{(1)} \big ) \leq (1-\delta'') \lambda_{\alpha }     \mid  X, X^{(1)}  \right ) 
= \P \left  ( \lambda_1 \big ( Z^{(1)} \big ) \leq (1-\delta'') \lambda_{\alpha }   \mid   X^{(1)} \right  ).
\end{align}   
Therefore, applying this to \eqref{861},
\begin{align} \label{11}
\P  \big ( \lambda_1(Z) \leq (1-\delta) \lambda_{\alpha }   \big ) \geq  \mathbb{E} \left [ \P   \left ( \lambda_1 \big ( Z^{(1)} \big ) \leq (1-\delta'') \lambda_{\alpha }   \mid   X^{(1)}  \right ) \1_{\cF_1}\right].
\end{align}
Let $C_1,\cdots,C_m$ be the connected components of $X^{(1)}$ and   $Z^{(1)}_\ell$ be the restriction of $Z^{(1)}$ to $C_\ell$ for $\ell = 1,\cdots,m$.  
Let $k_\ell$ be the size of the maximal clique in $C_\ell$.

{By a similar reasoning as in the upper tail case, we separately consider $1<\alpha<2$ and $0<\alpha\leq 1$.}

\textbf{Case 1: $\bm{1<\alpha<2}$.} 
Let $\beta>2$ be the conjugate of $\alpha$.
Under the event $\cF_1$, in order to control  $\lambda_1  ( Z^{(1)}_\ell )$ for each $\ell=1,\cdots,m$,  we apply Proposition \ref{keyprop}  with 
\begin{align}  \label{setting2}
c_1=c_2=1 + \delta,\quad c_3=\delta,\quad  \xi := \e^{\frac{1}{4}} ,\quad \tau = (\delta+2)^{\frac{1}{2\beta}} \e^{\frac{1}{8}},\quad  \rho = (1-\delta'')^\alpha.
\end{align}
The dominating term in  the bound \eqref{keydisp} is the second term for   small enough $\e>0$. Thus,
\begin{align}  \label{900}
\mathbb{P} \left (\lambda_1 \big ( Z^{(1)}_\ell \big ) \geq  (1 - \delta'')  \lambda_{\alpha }   \mid  X^{(1)} \right ) 
\1_{\cF_1}    
\leq 
 n^{-  2^{-1}  \phi_{\beta/2} (k_\ell)^{1-\alpha }(1-\tau )^\alpha     (1-\delta'')^\alpha + (1+\delta)   \varepsilon^{1/2}    + o(1)} .
\end{align}
As in the proof of upper bound for upper tails,  we proceed differently for the components depending on the size of  maximal cliques:
\begin{align}
\nonumber
I := \{\ell = 1,\cdots,m: &\,\, k_\ell \geq 3\},\quad J:= \{\ell = 1,\cdots, m: k_\ell=2\},
\end{align} 
and let $ \bar{k} := \max \{k_1,\cdots,k_m\}.$    To bound \eqref{900}, we use  the fact that  there is a constant $c=c(\beta)>0$ such that $\phi_{\beta/2}(k) \leq c $ for all $k\geq 2$ (see \eqref{uniform} in  Lemma \ref{lemma 02}). 
Thus, for $\ell \in I$,
\begin{align} \label{901}
\P \left  ( \lambda_1 \big ( Z_\ell^{(1)} \big ) \geq (1-\delta'') \lambda_{\alpha }   \mid   X^{(1)} \right  ) \1_{\cF_1} &\leq  n^{-  2^{-1}  c^{1-\alpha }(1-\tau )^\alpha     (1-\delta'')^\alpha + (1+\delta)   \varepsilon^{1/2}    + o(1)}  \nonumber \\
&  \leq  n^{-  2^{-1}  c^{1-\alpha }  (1-\delta)^\alpha + \zeta_1+ o(1)} 
\end{align}
for some $\zeta_1   = \zeta_1(\e) $ with $\lim_{\e \ro} \zeta_1=0$. The last inequality follows from the fact that $\lim_{\e \rightarrow 0} \delta'' = \delta$  and $\lim_{\e \rightarrow 0} \tau = 0$ (see the definition of $\delta''$ and $\tau$ in \eqref{delta''} and \eqref{setting2} respectively). 

In addition, for  $\ell\in J$,  using the fact
$\phi_{\beta/2}(2) = 2^{1-\beta}$ (see \eqref{clique 2} in  Lemma \ref{lemma 02}),
by \eqref{900} again,
\begin{align} \label{902}
\P \left ( \lambda_1 \big ( Z_\ell^{(1)} \big ) \geq (1-\delta'') \lambda_{\alpha }   \mid   X^{(1)} \right   )\1_{\cF_1}  \leq n^{-  (1- \tau)^\alpha (1-\delta'')^\alpha + (1+\delta)   \varepsilon^{1/2}  + o(1)} \leq  n^{-    (1-\delta)^\alpha +\zeta_2+ o(1)} 
\end{align}
for some $\zeta_2   = \zeta_2(\e) $ with $\lim_{\e \ro} \zeta_2=0$, where we used $2^{-1}\cdot  (2^{1-\beta})^{1-\alpha} = 1$ in the first inequality.

{Thus,  by \eqref{901} and \eqref{902} together with the independence of edge-weights across different components, under the event $\cF_1$,}
\begin{align*}
 \P  \left ( \lambda_1 \big ( Z^{(1)} \big ) \leq (1-\delta'') \lambda_{\alpha }  \mid   X^{(1)} \right ) &\geq  \left (1-   n^{-  2^{-1}  c^{1-\alpha }  (1-\delta)^\alpha + \zeta_1+ o(1)} \right )^{\log n} \left  (1  -  n^{-    (1-\delta)^\alpha +\zeta_2+ o(1)} \right )^n
 \\
 &\geq  e^{- n^{1-(1-\delta)^\alpha + \zeta_2 + o(1)}},
\end{align*}
where the powers $\log n$ and $n$ come from the fact that $|I| \leq \log n$ and $|J| \leq n$ respectively.
Therefore, applying this  and  \eqref{860} to \eqref{11},
\begin{align*}
\P \big ( \lambda_{1} (Z) \leq (1 - \delta)  \lambda_{\alpha } \big ) \geq  \frac{1}{2}e^{- n^{1-(1-\delta )^\alpha +  \zeta_2 +  o(1)}}.
\end{align*}
By taking  sufficiently small $\e>0$,  we conclude the proof.

\textbf{Case 2: $\bm{0<\alpha\leq 1}$.} 
We apply Proposition \ref{keyprop} as in the case $1<\alpha<2$.
{As mentioned after Proposition \ref{keyprop}, the dominating term in the bound \eqref{keydisp2} is the second term. Hence, for each $\ell=1,\cdots,m$,}
\begin{align*}  
\mathbb{P} \left ( \lambda_1(Z^{(1)}_\ell) \geq (1-\delta'')  \lambda_{\alpha }   \mid  X^{(1)} \right )  \1_{\cF_1}  
<
 n^{-   ( 1-\delta'')^\alpha (1- \e^{ 1/4 })^\alpha  +  (1+\delta) \e^{1/2}   +  o(1)} \leq n^{-    (1-\delta)^\alpha +\zeta_3+ o(1)} 
\end{align*}
for some $\zeta_3   = \zeta_3(\e) $ with $\lim_{\e \ro} \zeta_3=0$.
Thus, under the event $\cF_1$,
\begin{align*}
 \P \left  ( \lambda_1 \big ( Z^{(1)} \big ) \leq (1-\delta'') \lambda_{\alpha }  \mid   X^{(1)} \right )  &\geq  \left (1-   n^{-    (1-\delta)^\alpha +\zeta_3+ o(1)} \right )^n  \geq  e^{- n^{1-(1-\delta )^\alpha +  \zeta_3 +  o(1)}}.
\end{align*}
By the similar reasoning as before,  we conclude the proof. \qed

\subsubsection{Upper bound for the lower tail}
 
Recall the event $\mathcal{M} = \left \{|E(X)| \geq \frac{d(n-1)}{4} \right \}$ defined in \eqref{event f}. Since $\lambda_1(Z)  \geq \max_{(i,j)\in E(X)}|Z_{ij}|  = \max_{(i,j)\in E(X)}|Y_{ij}| $ (see  Lemma \ref{graphs:max_entry}),
\begin{align*}
 \P \big ( \lambda_1(Z) \leq (1-\delta) \lambda_{\alpha } \big )
 \leq \E \left [ \P \left ( \max_{(i,j)\in E(X)}|Y_{ij}| \leq (1-\delta) \lambda_{\alpha } \mid  X \right ) \1_{\mathcal{M}} \right ]  + \P \left ( \mathcal{M}^c \right ).
\end{align*} 
Since
$
\P (  |Y_{ij}|  > (1-\delta) \lambda_{\alpha }) \geq C_1 n^{-(1-\delta)^\alpha}
$ for any $i\neq j$,
we have
\begin{align*}
 \P  \left ( \max_{(i,j)\in E(X)}|Y_{ij}|  \leq (1-\delta) \lambda_{\alpha } \mid  X \right ) \1_{\mathcal{M}} \leq  \left ( 1 - C_1 n^{-(1-\delta)^\alpha} \right  )^{\frac{d(n-1)}{4}}  \leq 
e^{ -   n^{1 - (1-\delta)^\alpha +o(1)}}.
\end{align*}
Combining this with the bound  $\P(\cM^c) \leq e^{-cn}$ obtained in \eqref{prob f}, we conclude the proof.
 
\qed

\section{Appendix} \label{sec:app}

\subsection{Tails of  the sum of random variables}
In this section, we state two key lemmas about the tail of a sum of i.i.d. Weibull random variables and their conditioned version. {Throughout this section, we assume that $\{Y_i\}_{i=1,2,\cdots}$ are  i.i.d.  random variables such that for $t>0$,}
\begin{align} \label{dist:appendix}
\frac{C_1}{2} e^{-   t^\alpha} \leq \P\big (Y_i \geq  t \big ) \leq \frac{C_2}{2} e^{-   t^\alpha} \quad  \text{ and } \quad  \frac{C_1}{2} e^{-   t^\alpha} \leq \P \big (  Y_i \leq  -t \big ) \leq \frac{C_2}{2} e^{-   t^\alpha}.
\end{align} 
This implies that 
\begin{align*}
 C_1 e^{-  t^\alpha} \leq \P (|Y_i|  \geq  t)  \leq C_2 e^{-  t^\alpha} .
\end{align*}
{Also, for our applications, as has appeared several times in this paper, we define, for  $\e>0$,   $ \tilde{Y}_i $ as the random variable  $Y_i$  conditioned to be greater than $  (\e \log \log n)^{ \frac{1}{\alpha}}$ in absolute value.}

First, we introduce a tail bound for the sum of  squares of i.i.d.  Weibull random variables having a lighter tail than the  Gaussian distribution, i.e. $\alpha>2$. {Recall that $ \lambda^{\textup{light}}_{\alpha } $ denotes the typical value of the largest eigenvalue,  defined in \eqref{typ1}:}
\begin{align*}
\lambda^{\textup{light}}_{\alpha}  
= \left (\frac{2}{\alpha} \right )^\frac{1}{\alpha} \left ( 1- \frac{2}{\alpha} \right )^{\frac{1}{2} - \frac{1}{\alpha}} \frac{(\log n)^\frac{1}{2}}{(\log \log n)^{\frac{1}{2}-\frac{1}{\alpha}}}.
\end{align*}

\begin{lemma}  \label{lemma_prob_sums_light}
Assume that $\alpha>2$.  
Then, for any $t > k \geq 2$, 
\begin{equation} \label{sum_bounds_light_tail}
C_1^k e^{-  t^\frac{\alpha}{2}k^{1-\frac{\alpha}{2}}}
\leq \P \left ( Y_1^2 + \dots + Y_k^2 \geq t \right )
\leq C_2^k \left ( \frac{2 e t }{ k} \right )^{k} e^{-   (t - k)^\frac{\alpha}{2}k^{1-\frac{\alpha}{2} }}.
\end{equation}
In particular, assume that   
$t = d^2 \left ({\lambda^{\textup{light}}_{\alpha }} \right )^2 + o \left ( \frac{ \log n}{\log \log n} \right )$ and  $k =b \frac{\log n}{\log \log n} + o \left ( \frac{\log n}{\log \log n} \right ) $ for some constants $b, d > 0$. Then, 
\begin{equation} \label{common_tail_terms:upper}
\lim_{n \to \infty} - \frac{\log \P \left (Y_1^2 + \dots + Y_k^2 
\geq 
t \right ) }{\log n} 
= 
d^\alpha \frac{2}{\alpha-2} \left (1 - \frac{2}{\alpha}  \right )^\frac{\alpha}{2} b^{1-\frac{\alpha}{2}}  
\end{equation}
and
 \begin{align} \label{cond1}
\lim_{n \to \infty} - \frac{\log \P  \left ( \tilde{Y}_1^2 + \dots + \tilde{Y}_k^2 
\geq   t  \right ) }{\log n} 
= d^\alpha \frac{2}{\alpha-2} \left (1 - \frac{2}{\alpha}  \right )^\frac{\alpha}{2} b^{1-\frac{\alpha}{2}}  -b    \e .
\end{align}
(recall that $\tilde{Y}_i$ is a  conditioned version of $Y_i$).

Finally, in the case
$
  t =  d^2 \left ({\lambda^{\textup{light}}_{\alpha }} \right )^2 + o \left ( \frac{\log n}{\log \log n} \right )$ and $k =o(1) \frac{ \log n}{\log \log n},$
we have
   \begin{align}   \label{cond2}
 \lim_{n \to \infty} - \frac{\log \P \left (Y_1^2 + \dots + Y_k^2 
\geq 
t \right ) }{\log n} 
 = \lim_{n \to \infty} - \frac{\log \P \left  ( \tilde{Y}_1^2 + \dots + \tilde{Y}_k^2 
\geq   t  \right ) }{\log n}  = 
\infty.
  \end{align}

\end{lemma}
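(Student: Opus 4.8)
\textbf{Proof proposal for Lemma \ref{lemma_prob_sums_light}.}

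The plan is to first establish the two-sided bound \eqref{sum_bounds_light_tail} and then deduce the three asymptotic statements \eqref{common_tail_terms:upper}, \eqref{cond1}, \eqref{cond2} as consequences. For the lower bound in \eqref{sum_bounds_light_tail}, I would simply force each of $Y_1,\dots,Y_k$ to be large: by independence and the tail bound in \eqref{dist:appendix},
\[
\P(Y_1^2+\cdots+Y_k^2\ge t)\ge \prod_{i=1}^k \P\big(|Y_i|\ge (t/k)^{1/2}\big)\ge C_1^k e^{-t^{\alpha/2}k^{1-\alpha/2}},
\]
using $(t/k)^{\alpha/2}\cdot k = t^{\alpha/2}k^{1-\alpha/2}$. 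This captures the heuristic \eqref{00}: for $\alpha>2$ the convex function $s\mapsto s^{\alpha/2}$ makes it cheapest to spread the mass uniformly. For the upper bound, I would use a union-bound/covering argument over the simplex: on the event $\{\sum Y_i^2\ge t\}$, write $s_i=Y_i^2$, so $\sum s_i\ge t$; discretize the possible profiles $(s_1,\dots,s_k)$ into dyadic blocks, and on each block bound the probability by $\prod_i \P(|Y_i|\ge \sqrt{s_i})\le C_2^k e^{-\sum s_i^{\alpha/2}}$, then minimize $\sum s_i^{\alpha/2}$ subject to $\sum s_i\ge t$, which by convexity (Jensen) is attained at $s_i\equiv t/k$, giving $\sum s_i^{\alpha/2}\ge t^{\alpha/2}k^{1-\alpha/2}$. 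The factor $(2et/k)^k$ absorbs the combinatorial cost of the discretization (number of dyadic blocks meeting $\{\sum s_i\ge t\}$ is at most $(Ct/k)^k$ up to constants, and a small shift from $t$ to $t-k$ accounts for the discretization error), which is the standard way such Weibull-sum tail estimates are proven.

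Given \eqref{sum_bounds_light_tail}, the asymptotics follow by plugging in the stated scales and taking logarithms. For \eqref{common_tail_terms:upper}, with $t = d^2(\lambda^{\textup{light}}_\alpha)^2 + o(\frac{\log n}{\log\log n})$ and $k = b\frac{\log n}{\log\log n}+o(\frac{\log n}{\log\log n})$, both sides of \eqref{sum_bounds_light_tail} have exponent $-(1+o(1))t^{\alpha/2}k^{1-\alpha/2}$ (the prefactors $C_1^k$, $C_2^k(2et/k)^k$ are $e^{O(k\log\log n)} = n^{o(1)}$ since $k = O(\frac{\log n}{\log\log n})$ and $\log(t/k) = O(\log\log n)$, and replacing $t$ by $t-k$ changes $t^{\alpha/2}$ by a $(1+o(1))$ factor since $k = o(t)$). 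So it remains to compute
\[
\frac{t^{\alpha/2}k^{1-\alpha/2}}{\log n} \longrightarrow \Big(d^2 B_\alpha^2\Big)^{\alpha/2} b^{1-\alpha/2},
\]
where $B_\alpha^2 = (2/\alpha)^{2/\alpha}(1-2/\alpha)^{1-2/\alpha}$ from \eqref{typ1}, and checking that $(B_\alpha^2)^{\alpha/2}$ combined with the $(\log n)$-powers of $t$ and $k$ cancels correctly: $t^{\alpha/2} \sim (d^2 B_\alpha^2)^{\alpha/2}(\log n)^{\alpha/2}(\log\log n)^{-(\alpha/2-1)}$ and $k^{1-\alpha/2}\sim b^{1-\alpha/2}(\log n)^{1-\alpha/2}(\log\log n)^{\alpha/2-1}$, so the $\log\log n$ factors cancel and the $\log n$ exponent is $\alpha/2 + 1 - \alpha/2 = 1$, with constant $(d^2 B_\alpha^2)^{\alpha/2}b^{1-\alpha/2} = d^\alpha (2/\alpha)^{1}(1-2/\alpha)^{\alpha/2-1}b^{1-\alpha/2} = d^\alpha\frac{2}{\alpha-2}(1-\frac{2}{\alpha})^{\alpha/2}b^{1-\alpha/2}$, matching the claim after the identity $\frac{2}{\alpha}(1-\frac2\alpha)^{\alpha/2-1} = \frac{2}{\alpha-2}(1-\frac2\alpha)^{\alpha/2}$.

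For the conditioned version \eqref{cond1}, I would observe that $\P(\tilde Y_i\in\cdot) = \P(Y_i\in\cdot \mid |Y_i|\ge(\e\log\log n)^{1/\alpha})$, and the normalizing constant satisfies $\P(|Y_i|\ge(\e\log\log n)^{1/\alpha}) = e^{-(1+o(1))\e\log\log n} = (\log n)^{-(1+o(1))\e}$, so conditioning $k$ of them multiplies the probability by $(\log n)^{(1+o(1))\e k}$, contributing an extra $+b\e$ to the $\frac{1}{\log n}\log$ scale since $k = (b+o(1))\frac{\log n}{\log\log n}$ gives $\e k\log\log n = (b\e + o(1))\log n$; the main tail exponent is unchanged because the conditioning only removes mass near the (sub-dominant) origin — one re-runs the same two-sided estimate noting $\sqrt{t/k}\gg(\e\log\log n)^{1/\alpha}$, so the optimal configuration remains feasible and dominant. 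Finally, \eqref{cond2} is the easy regime: with $k = o(1)\frac{\log n}{\log\log n}$ and $t\asymp \frac{\log n}{\log\log n}$, the exponent $t^{\alpha/2}k^{1-\alpha/2} = t\cdot(t/k)^{\alpha/2-1}$, and since $t/k\to\infty$ while $t\asymp\frac{\log n}{\log\log n}$, we get $t^{\alpha/2}k^{1-\alpha/2}/\log n\to\infty$; the prefactor $C_2^k(2et/k)^k = e^{o(\log n)}$ is negligible, and the same bound holds for the conditioned sum since conditioning only further decreases the probability (the extra factor $(\log n)^{O(\e k)} = e^{O(\e k\log\log n)} = e^{o(\log n)}$ is harmless). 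The main obstacle is the clean execution of the discretization in the upper bound of \eqref{sum_bounds_light_tail} — making sure the combinatorial prefactor is genuinely $n^{o(1)}$ uniformly over the relevant range of $k$ and $t$, and that the shift $t\mapsto t-k$ suffices to control the discretization error — but this is routine and essentially identical to known arguments (cf.\ \cite{bc, augeri2}); everything else is bookkeeping with the explicit constants.
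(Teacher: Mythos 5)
Your proposal follows essentially the same route as the paper: the lower bound by forcing each $Y_i^2\ge t/k$; the upper bound by discretizing the event $\{\sum s_i\ge t\}$ over the simplex (the paper uses a unit/integer discretization and counts weak compositions rather than dyadic blocks, but this produces the same $(2et/k)^k$ prefactor and the same $t\mapsto t-k$ shift), followed by Jensen's inequality for the convex map $x\mapsto x^{\alpha/2}$; and then direct substitution of the stated scales of $t$ and $k$ for the three asymptotic consequences, with the constant algebra checked correctly. One minor imprecision: $\sqrt{t/k}$ and $(\e\log\log n)^{1/\alpha}$ are in fact of the same order $\Theta((\log\log n)^{1/\alpha})$ rather than satisfying $\sqrt{t/k}\gg(\e\log\log n)^{1/\alpha}$, but this does not affect the conclusion — the paper handles \eqref{cond1} the same way, via the ratio identity for the conditioned variables together with the unchanged forcing argument for the matching lower bound.
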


{The particular choices of $t$ and $k$ considered in  \eqref{common_tail_terms:upper}-\eqref{cond2}  appear in our applications in Section \ref{sec:light}.}

The next crucial result is about the  tail estimate for the i.i.d. sum of  $\alpha$th-power of \emph{conditioned} Weibull random variables for any $\alpha>0$.

\begin{lemma} \label{chi tail}
Suppose that $\alpha,\e>0$. Then,
there exists a constant $C>0$ depending only on $C_1,C_2$ such that   the following holds: For any $L>m$,
\begin{align} \label{211}
\mathbb{P} \left (|\tilde{Y}_1|^\alpha+\cdots+|\tilde{Y}_m|^\alpha  \geq   L \right )  \leq    C^m  e^{-  L} e^{  m} \left ( \frac{L}{m} \right)^m    e^{\varepsilon     m \log \log n }.
\end{align}
In particular, assume that $ m \leq b   \frac{\log n}{\log \log n} + c $ and $L = a \log n$  for some constants $a,b,c>0$. Then,
\begin{align} \label{212}
\mathbb{P} \left (|\tilde{Y}_1|^\alpha+\cdots+|\tilde{Y}_m|^\alpha \geq    a\log n \right ) \leq    n^{-  a +\varepsilon   b + o(1)}.
\end{align}

\end{lemma}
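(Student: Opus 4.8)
The plan is to reduce the tail of the conditioned sum to a tail of an unconditioned sum of i.i.d.\ $\alpha$-th powers, and then estimate the latter by a union bound over which coordinates are large, using the heavy-tail heuristic that the cheapest way to make the sum large is to concentrate the mass on a single coordinate. First I would compute the normalizing constant: writing $\sigma := (\e \log\log n)^{1/\alpha}$, the definition \eqref{dist:appendix} gives $\P(|Y_i| \ge \sigma) = e^{-\e\log\log n + O(1)} = (\log n)^{-\e + o(1)}$, so conditioning on $\{|Y_i|\ge \sigma\}$ multiplies densities by at most $C(\log n)^{\e}$ for some constant $C = C(C_1,C_2)$; this is the source of the factor $e^{\e m \log\log n}$ in \eqref{211}. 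More precisely, for a single conditioned variable, $\P(|\tilde Y_i|^\alpha \ge s) = \min\{1, \P(|Y_i|^\alpha \ge s)/\P(|Y_i|\ge\sigma)\} \le C(\log n)^{\e} e^{-s}$ for all $s \ge 0$ (for $s < \e\log\log n$ the probability is $1$, which is still $\le C(\log n)^\e e^{-s}$ up to enlarging $C$, since $e^{-s}\ge (\log n)^{-\e}$ there).

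Next I would carry out the union bound. For the event $\{\sum_{i=1}^m |\tilde Y_i|^\alpha \ge L\}$ with $L > m$, I partition according to how the total is distributed; the standard approach is to write $\P(\sum |\tilde Y_i|^\alpha \ge L) \le \sum_{j=1}^m \P(|\tilde Y_j|^\alpha \ge (L-m)/1)$-type bounds — more cleanly, use the elementary fact that if $\sum_{i=1}^m x_i \ge L$ with each $x_i \ge 0$ then $\max_i x_i \ge L/m$, together with a more refined layered bound. Concretely, I would bound
\begin{align*}
\P\Big(\sum_{i=1}^m |\tilde Y_i|^\alpha \ge L\Big) \le m \,\P\Big(|\tilde Y_1|^\alpha \ge \tfrac{L}{m},\ \sum_{i=2}^m|\tilde Y_i|^\alpha \ge 0\Big)
\end{align*}
is too lossy; instead, integrate: condition on the largest coordinate taking value in $[\tfrac Lm, \infty)$ and the remaining $m-1$ coordinates carrying the rest. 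A convenient route is the inductive/convolution bound: using $\P(|\tilde Y_i|^\alpha \ge s)\le C(\log n)^\e e^{-s}$ and the elementary inequality $\int_0^L e^{-s} \frac{(L-s)^{m-1}}{(m-1)!}\,ds \le e^{-L}\cdot(\text{polynomial})$ fails since $e^{-s}e^{-(L-s)}=e^{-L}$ only in the all-light regime; the heavy-tail gain comes precisely from $e^{-s}$ being replaced, for the \emph{sum}, by $e^{-\max} \ge e^{-L/1}$ only once. I would therefore argue directly: the density-type bound $\P(|\tilde Y_i|^\alpha \in ds) \le C(\log n)^\e e^{-s}\,ds$ gives, by the standard $m$-fold convolution estimate for exponential-type tails,
\begin{align*}
\P\Big(\sum_{i=1}^m |\tilde Y_i|^\alpha \ge L\Big) \le \big(C(\log n)^\e\big)^m e^{-L}\frac{L^m}{m!} \le C^m (\log n)^{\e m} e^{-L} e^{m}\Big(\frac{L}{m}\Big)^m,
\end{align*}
where the last step uses $m! \ge (m/e)^m$. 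This is exactly \eqref{211}.

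Finally, \eqref{212} is a direct substitution: with $m \le b\frac{\log n}{\log\log n}+c$ and $L = a\log n$, take logarithms. The term $e^{-L}$ contributes $-a\log n$; the term $(\log n)^{\e m}$ contributes $\e m \log\log n \le \e b \log n + o(\log n)$; the term $C^m e^m (L/m)^m$ has logarithm $m(\log C + 1 + \log(L/m)) = m \cdot O(\log\log n) = O\big(\frac{\log n}{\log\log n}\cdot \log\log n\big) = O(\log n)$ — wait, this needs care: $\log(L/m) = \log(a\log n) - \log m$ and $m \asymp \log n/\log\log n$, so $m\log(L/m) = O(\log n)$, which would \emph{not} be $o(\log n)$. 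I expect this to be the main obstacle, and the resolution is that $\log(L/m) \le \log(a\log n) = O(\log\log n)$ while the competing $-\log m \approx -\log\log n + \log\log\log n$ nearly cancels it, giving $m\log(L/m) = m\cdot O(\log\log\log n) = o(\log n)$; one must track that $L/m \asymp \log\log n$ so $\log(L/m) \asymp \log\log\log n$ and hence $m\log(L/m) \asymp \frac{\log n \log\log\log n}{\log\log n} = o(\log n)$. Likewise $m(\log C + 1) = O(\log n/\log\log n) = o(\log n)$. Collecting, $-\log \P \ge a\log n - \e b\log n - o(\log n)$, i.e.\ the probability is $n^{-a + \e b + o(1)}$, as claimed. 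The only genuinely delicate point is the convolution bound $\P(\sum |\tilde Y_i|^\alpha \ge L) \le (C(\log n)^\e)^m e^{-L} L^m/m!$, which I would justify by first removing the conditioning (replacing each $|\tilde Y_i|^\alpha$ stochastically by $|Y_i|^\alpha$ up to the factor $C(\log n)^\e$ per variable via the density comparison above), then applying the classical Chernoff-type or direct inductive estimate for sums of nonnegative random variables whose tails are dominated by $C' e^{-s}$.
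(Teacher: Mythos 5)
Your proposal is correct in substance but reaches \eqref{211} by a different route than the paper. The paper applies the exponential Chebyshev inequality directly to the conditioned variables, computes $\E[e^{s|\tilde Y_1|^\alpha}]\le (1+\tfrac{Cs}{1-s})e^{\e s\log\log n}$ by integrating the tail, and then optimizes at $s=1-\tfrac{m}{L}$; that choice of $s$ is precisely the Chernoff optimizer for a Gamma tail, which is why your convolution bound $(C(\log n)^{\e})^m e^{-L}L^m/m!$ lands on the same expression after $m!\ge (m/e)^m$. Your route — first establish the single-variable bound $\P(|\tilde Y_i|^\alpha\ge s)\le C(\log n)^{\e}e^{-s}$ for \emph{all} $s\ge 0$ (including the trivial regime $s<\e\log\log n$), then sum — is arguably more transparent about where the factor $e^{\e m\log\log n}$ comes from, while the paper's MGF computation avoids any convolution identity. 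Your analysis of \eqref{212}, in particular the observation that $L/m\asymp\log\log n$ so $m\log(L/m)=O\bigl(\tfrac{\log n}{\log\log n}\log\log\log n\bigr)=o(\log n)$, is exactly right and is the genuinely delicate point.

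One step needs to be phrased more carefully: the Weibull law here is specified only through two-sided tail bounds, so there is no density, and the bound $\P(|\tilde Y_i|^\alpha\in ds)\le C(\log n)^{\e}e^{-s}\,ds$ is not literally available; moreover $|\tilde Y_i|^\alpha$ is stochastically \emph{larger} than $|Y_i|^\alpha$, so "replacing the conditioned variable by the unconditioned one up to a factor per variable" does not directly transfer to the sum. The clean fix is the one implicit in your last sentence: from $\P(|\tilde Y_i|^\alpha\ge s)\le\min\{1,Ke^{-s}\}$ with $K=C(\log n)^{\e}$ deduce that $|\tilde Y_i|^\alpha$ is stochastically dominated by $\log K+E_i$ with $E_i\sim\mathrm{Exp}(1)$, hence the sum by $m\log K+\Gamma(m,1)$, and then use the explicit Gamma tail $e^{-t}\sum_{j<m}t^j/j!\le m\,e^{-t}t^{m-1}/(m-1)!$ for $t\ge m$ (absorbing the prefactor $m\le 2^m$ into $C^m$). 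With that substitution your argument is complete.
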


\subsubsection{Tails of sums of light-tailed random variables}

We verify Lemma \ref{lemma_prob_sums_light} in this section. We first state the following easy lemma regarding the discretization of a tail.

\begin{lemma} \label{discretize_tail}
Suppose that 
  $k \in \mathbb{N}$ and $t>k$. Then, 
\begin{equation} 
\big \{ x_1 + \cdots + x_k \geq t, x_1,\cdots,x_k\geq 0 \big \} \subseteq 
\bigcup_{
\substack{
(t_1, \dots, t_k) \in \mathbb{N}_{\geq 0}^k\\
t_1 + \cdots + t_k = \lfloor t \rfloor - k + 1
}} 
\big \{x_1 \geq t_1 , \dots , x_k \geq t_k \big \} .
\end{equation}
Here, 
$\mathbb{N}_{\geq 0}$ denotes the set of non-negative integers.
\end{lemma}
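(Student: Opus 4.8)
The plan is to exhibit, for every point $(x_1,\dots,x_k)$ lying in the left-hand set, an explicit integer vector $(t_1,\dots,t_k)$ which places that point inside one of the sets on the right; this immediately gives the claimed inclusion. Note throughout that $k\ge 1$ and that $t>k$ forces $\lfloor t\rfloor\ge k$, so the target sum $\lfloor t\rfloor-k+1$ is a positive integer.

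First I would take the ``floor'' vector. Given $x_1,\dots,x_k\ge 0$ with $x_1+\cdots+x_k\ge t$, set $s_i:=\lfloor x_i\rfloor\in\mathbb{N}_{\geq 0}$, so that $x_i\ge s_i$ for each $i$. Writing $x_i=s_i+\{x_i\}$ with $\{x_i\}\in[0,1)$ and summing over $i$, one obtains $\sum_{i=1}^k s_i=\sum_{i=1}^k x_i-\sum_{i=1}^k\{x_i\}>t-k$, the inequality being strict because each $\{x_i\}<1$ and $k\ge 1$. Since $t\ge\lfloor t\rfloor$, this yields $\sum_{i=1}^k s_i>\lfloor t\rfloor-k$, and because $\sum_i s_i$ is an integer it follows that $\sum_{i=1}^k s_i\ge\lfloor t\rfloor-k+1$.

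Second, I would cut the $s_i$ down to the required total. Since $\sum_i s_i\ge\lfloor t\rfloor-k+1\ge 0$, the surplus $r:=\sum_{i=1}^k s_i-(\lfloor t\rfloor-k+1)$ is a nonnegative integer with $r\le\sum_{i=1}^k s_i$, so one can remove $r$ units greedily (lower $s_1$ toward $0$ first, then $s_2$, and so on, stopping exactly when the running total reaches $\lfloor t\rfloor-k+1$). This produces integers $t_i$ with $0\le t_i\le s_i$ and $\sum_{i=1}^k t_i=\lfloor t\rfloor-k+1$. Then $t_i\le s_i\le x_i$ for every $i$, so $(x_1,\dots,x_k)\in\{x_1\ge t_1,\dots,x_k\ge t_k\}$, and $(t_1,\dots,t_k)$ is one of the index tuples appearing in the union on the right-hand side.

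There is no serious obstacle; the only step needing a little care is the strict inequality $\sum_i\{x_i\}<k$, since it is precisely what upgrades $\sum_i s_i\ge\lfloor t\rfloor-k$ to $\sum_i s_i\ge\lfloor t\rfloor-k+1$ and thereby makes the target value $\lfloor t\rfloor-k+1$ (rather than $\lfloor t\rfloor-k$) attainable after the greedy reduction.
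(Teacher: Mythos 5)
Your proof is correct and takes essentially the same route as the paper's: floor each coordinate, use that the fractional parts sum to strictly less than $k$ to upgrade $\sum_i \lfloor x_i\rfloor > \lfloor t\rfloor - k$ to $\sum_i \lfloor x_i\rfloor \ge \lfloor t\rfloor - k + 1$, and then place the point in one of the sets of the union indexed by a tuple summing exactly to $\lfloor t\rfloor - k + 1$. The only difference is cosmetic: the paper rewrites the superlevel set $\{\sum_i \lfloor x_i\rfloor \ge \lfloor t\rfloor - k + 1\}$ directly as the union, leaving implicit the existence of a dominating integer tuple with the exact target sum, whereas you construct it explicitly by a greedy reduction.
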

\begin{proof}
 
 Since $0\leq x-\lfloor x \rfloor <1$,
 we have  $x_1 - \lfloor x_1 \rfloor + \dots + x_k - \lfloor x_k \rfloor < k$. Using this fact,  for any non-negative $x_1,\cdots,x_k$,
\begin{align*}
 \big \{ x_1 + \dots + x_k \geq t \big \} 
%& \subseteq \{ x_1 + \dots + x_k \geq \lfloor t \rfloor \} \\
%& = \{ \lfloor x_1 \rfloor + \dots + \lfloor x_k \rfloor  \geq \lfloor t \rfloor - \left (x_1 - \lfloor x_1 \rfloor + \dots + x_k - \lfloor x_k \rfloor \right ) \} \\
&    \subseteq \
\big \{ \lfloor x_1 \rfloor + \dots + \lfloor x_k \rfloor > \lfloor t \rfloor - k \big \} \\
&  = \ 
\big \{ \lfloor x_1 \rfloor + \dots + \lfloor x_k \rfloor \geq \lfloor t \rfloor - k + 1 \big \}
\quad \\
& =
\bigcup_{
\substack{
(t_1, \dots, t_k) \in \mathbb{N}_{\geq 0}^{k} \\
t_1 + \dots + t_k = \lfloor t \rfloor - k + 1
}} 
\big \{ \lfloor x_1 \rfloor \geq t_1, \dots, \lfloor x_k \rfloor \geq t_k \big \}.
\end{align*}
Since  $t_i$s are integers, $\lfloor x_i\rfloor \geq t_i $ is equivalent to $ x_i \geq t_i $. Thus, we are done.
\end{proof}

\begin{proof}[Proof of Lemma \ref{lemma_prob_sums_light}]
For the lower bound, note that
\begin{align*}
\P \left (Y_1^2 + \dots + Y_k^2 \geq t \right ) &\geq \P \Big (Y_1^2 \geq \frac{t}{k},  \dots,  Y_k^2 \geq \frac{t}{k}  \Big ) = \P \left (Y_1^2 \geq \frac{t}{k}\right )^k   \geq C_1^k e^{-  t^\frac{\alpha}{2}k^{1-\frac{\alpha}{2}}}.
\end{align*}

The upper bound is obtained as an application of the discretization from Lemma \ref{discretize_tail}. In fact,
\begin{align*} %\label{533}
\P \left ( Y_1^2 + \dots + Y_k^2 \geq t \right ) & \leq 
\sum 
\P \left ( Y^2_1 \geq  t_1, \dots, Y^2_k \geq  t_k \right )\leq  C_2^k \sum   
e^{-  \sum_{i=1}^k t_i^\frac{\alpha}{2}} ,
\end{align*}
where the summation is taken over $(t_1, \dots, t_k) \in \mathbb{N}_{\geq 0}^k$ with $
t_1 + \cdots + t_k = \lfloor t \rfloor - k +1$. Since the function $f(x) = x^\frac{\alpha}{2}$ with $\alpha > 2$ is convex, by Jensen's inequality,
\begin{equation*} %\label{convex_discrete_tail}
\frac{1}{k} \sum_{i=1}^{k} t_i^\frac{\alpha}{2} \geq \left ( \frac{1}{k} \sum_{i=1}^{k} t_i  \right )^\frac{\alpha}{2}  .
\end{equation*}
Hence,  
\begin{align} \label{961}
\P \left (Y_1^2 + \dots + Y_k^2 \geq t \right ) \leq  C_2^k \sum
 e^{-  (\lfloor t \rfloor - k +1)^\frac{\alpha}{2}k^{1-\frac{\alpha}{2} }} <  C_2^k \sum
e^{-  (t- k )^\frac{\alpha}{2}k^{1-\frac{\alpha}{2} }} .
\end{align}
We now bound the number of summands, i.e. the number of tuples $(t_1, \dots, t_k) \in \mathbb{N}_{\geq 0}^k$ such that $t_1 + \cdots + t_k = \lfloor t \rfloor - k +1$. This is known as the number of weak compositions of $\lfloor t \rfloor - k +1$ into $k$ terms, and is given by $\binom{\lfloor t \rfloor - k +1 + k -1}{k-1} = \binom{\lfloor t \rfloor}{k-1}$. Using the fact that $\binom{n}{k} \leq \left ( \frac{en}{k} \right )^k $ and the condition $t>k$, the number of summands is thus bounded by
\begin{align*} 
 \binom{\lfloor t \rfloor}{k-1} \leq \left ( \frac{e \lfloor t \rfloor}{ k-1} \right )^{k-1} \leq \left ( \frac{2 e t }{ k} \right )^{k}.
\end{align*} 
The upper bound is established by applying this to \eqref{961}.

To obtain \eqref{common_tail_terms:upper}, we simply plug the given values into \eqref{sum_bounds_light_tail} and then use that $(\frac{2e t}{k})^k = n^{o(1)}$ and
\begin{align*}
 (t - k)^\frac{\alpha}{2}k^{1-\frac{\alpha}{2} }
 = 
(1+o(1))   d^\alpha  \frac{2}{\alpha-2}  \Big(1-\frac{2}{\alpha}\Big)^{\frac{\alpha}{2}} b^{1-\frac{\alpha}{2}}\log n .
\end{align*}
 Noting in addition that
\begin{align*}
 \P  \left ( \tilde{Y}_1^2 + \dots + \tilde{Y}_k^2 \geq  t \right  ) = \frac{\P \left (Y_1^2 + \dots + Y_k^2 \geq   t \right )}{\P \left ( |Y_1| \geq (\e \log \log n \right )^\frac{1}{\alpha})^k},
\end{align*}
and recalling the tail probabilities of $Y_i$ in \eqref{dist:appendix}, we establish  \eqref{cond1}.

The final statement \eqref{cond2} follows by similar calculations. Note that formally sending $b\rightarrow 0$ in   \eqref{common_tail_terms:upper} and \eqref{cond1} gives \eqref{cond2} (recall that $\alpha>2$).
\end{proof}

{
\begin{remark} \label{condition1}
The proof of Lemma  \ref{lemma_prob_sums_light}  shows that   the lower bound for the probability $\P(Y_1^2 + \dots + Y_k^2 \geq t )$, obtained by forcing  $Y_i^2 \geq \frac{t}{k}$ for all $i=1,\cdots,k$,  is of the same order (at the exponential scale) as the upper bound. 
 
\end{remark}
}

\subsubsection{Tails of sums of $\alpha$th-power of  random variables}

\begin{proof} [Proof of Lemma \ref{chi tail}]

By  the  exponential Chebyshev bound, for any $s>0$,
\begin{align} \label{500}
\mathbb{P} \left (|\tilde{Y}_1|^\alpha+\cdots+|\tilde{Y}_m|^\alpha \geq    L \right )   \leq e^{-  sL }  \mathbb{E}\left [ e^{s|\tilde{Y}_1|^\alpha} \right ]^m.
\end{align}
{
Recalling that $\tilde{Y}_1$ is  a random variable $Y_1$   conditioned to have an absolute value greater than $(\e \log \log n)^{\frac{1}{\alpha}}$,  using a tail bound \eqref{dist:appendix}, there exists a constant $C \geq 1$ such that   for $x>    (\e \log \log n)^{\frac{1}{\alpha}}  $,
\begin{align*}
\mathbb{P} \left ( |\tilde{Y}_1 |> x \right ) \leq   C e^{ \e    \log \log n }    e^{-   x^\alpha } ,
\end{align*}
and for $0\leq x\leq (\e \log \log n)^{\frac{1}{\alpha}} $, $\mathbb{P}(|\tilde{Y}_1 |> x)  = 1$.
Hence,  for any {$0<s< 1$,}
\begin{align*}
\mathbb{E} \left [ e^{s|\tilde{Y}_1|^\alpha} \right ] &= 1  +  \int_0^\infty  e^{sx^\alpha} s \alpha x^{\alpha-1 } \mathbb{P} \left (|\tilde{Y}_1| > x \right ) dx \\
&=   1+ \int_0^{  (\e \log \log n)^{\frac{1}{\alpha}}  } e^{sx^\alpha} s \alpha x^{\alpha-1 }  dx +  C \int_{ (\e \log \log n)^{\frac{1}{\alpha}}   }^\infty  e^{sx^\alpha} s \alpha x^{\alpha-1 }  e^{\e   \log \log n }   e^{-   x^\alpha } dx \\
&\leq      e^{ \e s \log \log n }      +   C e^{ \e    \log \log n }   \frac{s}{ 1-s} e^{- \varepsilon  ( 1-s) \log \log n } \leq   \Big(1+ \frac{Cs}{ 1-s}\Big)  e^{ \e s \log \log n } .
\end{align*}
{Note that the tail bound \eqref{dist:appendix} implies that the moment generating function is infinity for $s\geq 1$.}
 
Applying this to \eqref{500}, {by Chernoff's bound, for any $0<s< 1$,}
\begin{align*}
\mathbb{P} \left ( |\tilde{Y}_1|^\alpha+\cdots+|\tilde{Y}_m|^\alpha \geq    L \right ) \leq    e^{-sL}   \Big ( 1+ \frac{Cs}{ 1-s}  \Big)^m        e^{ \e s m  \log \log n } .
\end{align*}
Now recall that  $L>m$ and set $s: =    1- \frac{m}{  L   } \in (0,1) $ in order to balance the two terms  $e^{-sL}$ and $ (1+ \frac{Cs}{ 1-s}  )^m   $. Then, 
\begin{align*}
\mathbb{P} \left ( |\tilde{Y}_1|^\alpha+\cdots+|\tilde{Y}_m|^\alpha \geq    L \right ) &\leq  e^{-  L} e^{  m} \left (  1+ \frac{C(L-m)}{m}  \right )^m    e^{ \e   m \log \log n } \\
&\leq   C^m  e^{-  L} e^{ m} \left ( \frac{L}{m} \right )^m    e^{\varepsilon  m \log \log n },
\end{align*}
which concludes the proof of \eqref{211}. 
Since the function $x\mapsto (\frac{L}{x})^x$ is increasing on $(0, \frac{L}{e})$, by taking $m=b\frac{\log n}{\log \log n}+c$ and $L=a \log n$ in the RHS of \eqref{211}, we  establish \eqref{212}.
}

\end{proof}

{
\begin{remark} \label{remark 7.5}
The above  lemmas \ref{lemma_prob_sums_light} and \ref{chi tail}, with the same argument, hold as well for a slightly more general class of Weibull distributions satisfying 
\begin{align*}
C_1  t^{-c_1} e^{-\eta t^\alpha} \leq \P (|Y_i| > t) \leq C_2 t^{-c_2} e^{-\eta t^\alpha} 
\end{align*}
with some constants $c_1,c_2\geq 0$.  This can be used to generalize our results as indicated in Remark \ref{poly}. 
\end{remark}
}

\subsection{Relative entropy  results}
 {In this final section, we state  bounds on the relative entropy and a tail of binomial distributions.
We denote by $I_p$ the relative entropy functional
\begin{align}\label{entropy}
I_p(q) := q \log \frac{q}{p} + (1-q) \log \frac{1-q}{1-p}.
\end{align} }

\begin{lemma}\label{rel_ent_bern}
 {There is a universal constant $c>0$ such that} for any $0<p<1$,
\begin{equation}
I_p \left ( \frac{p}{2} \right ) \geq  c p.
\end{equation}
This implies that 
$ % \label{entropy 2}
I_{1-p} \left (1- \frac{p}{2} \right ) \geq  c p$ and for any $0<q\leq \frac{p}{2}$,
$ % \label{entropy 1}
I_p(q) \geq c p.$

\end{lemma}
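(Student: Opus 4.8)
The plan is to prove the single inequality $I_p(p/2) \geq cp$ for a universal constant $c>0$ and then note that the two ``This implies'' statements follow immediately.

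First, I would write out $I_p(p/2)$ explicitly from the definition \eqref{entropy}:
\begin{align*}
I_p\left(\tfrac{p}{2}\right) = \frac{p}{2}\log\frac{1}{2} + \left(1-\frac{p}{2}\right)\log\frac{1-p/2}{1-p}.
\end{align*}
The first term is $-\tfrac{p}{2}\log 2$, a negative quantity linear in $p$. The second term is positive (since $1-p/2 > 1-p$), so the content of the lemma is that the second term beats the first by a linear-in-$p$ margin. To see this I would lower bound $\log\frac{1-p/2}{1-p} = \log\left(1 + \frac{p/2}{1-p}\right)$. Using $\log(1+x) \geq \frac{x}{1+x}$ for $x \geq 0$ with $x = \frac{p/2}{1-p}$ gives $\log\frac{1-p/2}{1-p} \geq \frac{p/2}{1-p/2}$, hence
\begin{align*}
\left(1-\frac{p}{2}\right)\log\frac{1-p/2}{1-p} \geq \frac{p}{2}.
\end{align*}
Therefore $I_p(p/2) \geq \frac{p}{2} - \frac{p}{2}\log 2 = \frac{p}{2}(1-\log 2) > 0$, and one may take $c = \tfrac{1}{2}(1-\log 2)$. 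This is clean and requires no case analysis, since the elementary bound $\log(1+x)\geq x/(1+x)$ holds for all $x\geq 0$ and $p\in(0,1)$ forces $x = \frac{p/2}{1-p}\geq 0$.

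For the consequences: the symmetry $I_p(q) = I_{1-p}(1-q)$ is immediate from \eqref{entropy} (swapping the roles of the two summands), so $I_{1-p}(1-p/2) = I_p(p/2) \geq cp$. For the last claim, I would show that $q \mapsto I_p(q)$ is decreasing on $(0,p)$: indeed $\frac{d}{dq}I_p(q) = \log\frac{q}{p} - \log\frac{1-q}{1-p} = \log\frac{q(1-p)}{p(1-q)}$, which is negative precisely when $q < p$. Hence for $0 < q \leq p/2$ we have $I_p(q) \geq I_p(p/2) \geq cp$. I anticipate no serious obstacle here; the only mild care needed is phrasing the monotonicity argument (or, to avoid calculus, simply noting convexity of $I_p$ together with $I_p(p)=0$ and the fact that $I_p$ attains its minimum $0$ at $q=p$, so on $[q,p]$ it is decreasing).
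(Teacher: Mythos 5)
Your proposal is correct and is essentially identical to the paper's proof: the same explicit expansion of $I_p(p/2)$, the same elementary bound $\log(1+x)\geq x/(1+x)$ applied to the same quantity (yielding the same constant $c=\tfrac12(1-\log 2)$), and the same symmetry and monotonicity arguments for the two consequences.
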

\begin{proof}
{Using the  inequality $\log(1 + x)   \geq  \frac{x}{x+1}$ for $x \geq 0$ \footnote{$\frac{d}{dx} (\log(1 + x)  -   \frac{x}{x+1} )= \frac{x}{(x+1)^2} \geq 0$ for $x\geq 0$.} },
 {
\begin{align*}
I_p \left ( \frac{p}{2}  \right ) 
%& = 
%\frac{p}{2} \log \left ( \frac{\frac{p}{2} }{p} \right ) + \left ( 1-\frac{p}{2} \right ) \log \left ( \frac{1 - \frac{p}{2} }{1-p} \right ) \\
& = 
- \frac{p}{2} \log 2 + \frac{2-p}{2} \log \left ( 1 + \frac{p}{2(1 - p)} \right )  
\geq
- \frac{p}{2} \log 2 + \frac{2-p}{2}   \frac{ \frac{p}{2(1 - p)}}{ 1 + \frac{p}{2(1 - p)} } = \frac{p}{2}  (1-\log 2).
\end{align*}
}
The second inequality is a direct consequence of  $I_p \left ( \frac{p}{2}  \right ) = I_{1-p} \left (1- \frac{p}{2} \right )$ and the {last statement uses the fact that $I_p(x)$ is decreasing on $x\in (0,p)$ \footnote{$I_p'(x) = \log (\frac{x}{p}\cdot  \frac{1-p}{1-x}) < 0 $ for $x\in (0,p)$.}.}
\end{proof}

\begin{lemma} [Lemma 3.3 in \cite{lzvariational}]\label{entropy bound}
If $0<p\ll q$ (i.e.  $\frac{q}{p} \to \infty$ as $p,q\rightarrow 0$) and $q \leq 1-p$ then
\begin{align*}
I_p(p+q) = (1+o(1)) q \log \Big(\frac{q}{p}\Big).
\end{align*}
\end{lemma}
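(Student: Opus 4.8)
The plan is to expand $I_p(p+q)$ directly from its definition and estimate the two resulting terms separately, using the hypotheses $p\to 0$, $q\to 0$ and $q/p\to\infty$ (the condition $q\le 1-p$ only serves to keep the logarithms well defined).

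Starting from \eqref{entropy},
\[
I_p(p+q) = (p+q)\log\frac{p+q}{p} + (1-p-q)\log\frac{1-p-q}{1-p}.
\]
For the first term I would factor $\frac{p+q}{p} = 1 + \frac{q}{p}$ and write $\log\bigl(1+\tfrac{q}{p}\bigr) = \log\tfrac{q}{p} + \log\bigl(1+\tfrac{p}{q}\bigr)$. Since $q/p\to\infty$ forces $\log(q/p)\to\infty$ while $\log(1+p/q)\to 0$, this gives $\log\frac{p+q}{p} = (1+o(1))\log\frac{q}{p}$; combined with $p+q = q\bigl(1+\tfrac{p}{q}\bigr) = q(1+o(1))$, the first term equals $q(1+o(1))\log\frac{q}{p}$, which is exactly the claimed leading order.

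Next I would check that the second term is of strictly smaller order than $q\log(q/p)$. Writing $\frac{1-p-q}{1-p} = 1 - \frac{q}{1-p}$ and using $\frac{q}{1-p}\to 0$ together with the elementary estimate $\log(1-x) = -x + O(x^2)$ as $x\to 0$, one obtains $\log\frac{1-p-q}{1-p} = -\frac{q}{1-p} + O(q^2)$; multiplying by $1-p-q\in[0,1]$, the second term equals $-q(1+o(1)) + O(q^2)$. Since $\log(q/p)\to\infty$, this is $o\bigl(q\log(q/p)\bigr)$, so it is absorbed into the error term. Adding the two estimates gives $I_p(p+q) = (1+o(1))q\log(q/p)$.

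I do not anticipate a genuine obstacle here; the only point needing a little care is confirming that the linear-in-$q$ contribution from the second logarithm (and the $\log(1+p/q)$ correction in the first) are genuinely lower order than the main term $q\log(q/p)$ — and this is precisely where $q/p\to\infty$, hence $\log(q/p)\to\infty$, is used.
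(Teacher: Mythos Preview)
Your argument is correct: the direct expansion of $I_p(p+q)$ into its two summands, together with the observation that $\log(q/p)\to\infty$ kills both the $\log(1+p/q)$ correction and the $-q(1+o(1))$ contribution from the second logarithm, yields the asymptotic cleanly. The paper does not supply its own proof of this lemma---it is quoted from \cite{lzvariational}---so there is nothing to compare against beyond noting that your computation is the standard one.
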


The next lemma provides a sharp bound on the tail probability of the Binomial distribution.

\begin{lemma}[Lemma 4.7.2 in \cite{Ash}] \label{binomial_tails}
For $m \in \mathbb{N}$ and $0<q<1$, let $X$ be a random variable that has the distribution $\Binom(m, q)$. Then, for any $q < \theta < 1$,
\begin{equation} \label{tail1}
\frac{1}{\sqrt{8m \theta(1-\theta)} }e^{- m I_q(\theta)}
\leq 
\P \left ( X \geq \theta m \right ) 
\leq 
e^{- m I_q(\theta)}.
\end{equation}
\end{lemma}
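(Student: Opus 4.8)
\textbf{Proof proposal for Lemma \ref{binomial_tails}.} Although this is a classical result (cited from \cite[Lemma 4.7.2]{Ash}), here is the plan one would follow to prove it from scratch. Write $X = \sum_{i=1}^m B_i$ where $B_i$ are i.i.d.\ Bernoulli$(q)$, and fix $q < \theta < 1$. The key observation is that the event $\{X \geq \theta m\}$, when $\theta m$ is an integer, has its dominant contribution from exactly $\{X = \theta m\}$, so both bounds will come from carefully estimating a single binomial coefficient times $q^{\theta m}(1-q)^{(1-\theta)m}$ and then summing a geometric-like tail for the upper bound.

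For the \textbf{upper bound}, the cleanest route is the exponential Chebyshev (Chernoff) inequality: for any $s > 0$,
\begin{equation*}
\P(X \geq \theta m) \leq e^{-s\theta m}\,\E[e^{sB_1}]^m = e^{-s\theta m}(1-q+qe^s)^m.
\end{equation*}
Optimizing over $s$ by setting $e^s = \frac{\theta(1-q)}{q(1-\theta)}$ (which is $>1$ since $\theta > q$, so $s>0$ is admissible) and simplifying the exponent yields exactly $e^{-mI_q(\theta)}$ with $I_q$ as in \eqref{entropy}. This is a short computation: substitute and check that $-s\theta + \log(1-q+qe^s)$ collapses to $-\theta\log\frac{\theta}{q} - (1-\theta)\log\frac{1-\theta}{1-q}$.

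For the \textbf{lower bound}, first reduce to the case where $n_0 := \lceil \theta m\rceil$ and bound $\P(X \geq \theta m) \geq \P(X = n_0) = \binom{m}{n_0} q^{n_0}(1-q)^{m-n_0}$. Then apply a sharp form of Stirling's approximation to $\binom{m}{n_0}$ — specifically, the two-sided bound $\frac{1}{\sqrt{2\pi m \hat\theta(1-\hat\theta)}}e^{m H(\hat\theta)} \cdot c \leq \binom{m}{n_0} $ where $\hat\theta = n_0/m$ and $H$ is the binary entropy — to extract $e^{-mI_q(\hat\theta)}$ up to a polynomial prefactor of order $m^{-1/2}$, and absorb constants to get the stated $\frac{1}{\sqrt{8m\theta(1-\theta)}}$. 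One must be slightly careful that replacing $\hat\theta$ by $\theta$ in the entropy only changes things by $o(1)$ in the exponent when $\theta m$ is not an integer, but since the lemma is applied in this paper with $\theta m$ essentially an integer (or with an $n^{o(1)}$ slack already being tolerated, cf.\ the proof of Lemma \ref{degree_tails}), this is harmless; alternatively one phrases the lower bound directly at the integer point and notes monotonicity.

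The \textbf{main obstacle} — really the only delicate point — is getting the \emph{constant} $\frac{1}{\sqrt{8m\theta(1-\theta)}}$ rather than just some $\frac{c}{\sqrt m}$, which requires the non-asymptotic Stirling bounds (e.g.\ $\sqrt{2\pi n}\,(n/e)^n \leq n! \leq e\sqrt{n}\,(n/e)^n$) tracked through the binomial coefficient, together with the elementary inequality $\sqrt{2\pi} \geq \sqrt{8/\pi}\cdot(\text{something})$ to fold the stray factors into the clean form. Since in every application in this paper (Lemmas \ref{degree_tails}, \ref{lemma 33}, Propositions \ref{sequential_revealing}, \ref{reveal2}, \ref{degree_distribution}) the prefactor only ever contributes $n^{o(1)}$, one could in fact get away with the weaker statement; but as it stands the lemma is quoted verbatim from \cite{Ash}, so no independent proof is needed here.
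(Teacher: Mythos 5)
The paper gives no proof of this lemma at all --- it is quoted verbatim from \cite[Lemma 4.7.2]{Ash} --- and your sketch (Chernoff bound with $e^{s}=\tfrac{\theta(1-q)}{q(1-\theta)}$ for the upper bound, Stirling applied to $\P(X=\lceil\theta m\rceil)$ for the lower bound) is exactly the standard argument behind that reference, so there is nothing to reconcile. The one point worth stating more carefully is the rounding in the lower bound: since $I_q$ is increasing on $(q,1)$, replacing $\theta$ by $\hat\theta=\lceil\theta m\rceil/m\geq\theta$ makes $e^{-mI_q(\hat\theta)}$ \emph{smaller} than $e^{-mI_q(\theta)}$, so the discrepancy goes the wrong way for a non-asymptotic bound and must be absorbed into the prefactor (it is $O(1)$ for fixed $\theta<1$ because $\hat\theta-\theta<1/m$ and $I_q'$ is bounded near $\theta$); as you correctly note, every application in the paper tolerates an $n^{o(1)}$ factor, so this is harmless here.
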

{ By applying the above to the random variable $m-X$ and using that $I_{q}(\theta) = I_{1-q}(1-\theta)$ we additionally have that, for  $0< \theta < q < 1$, 
\begin{equation} \label{tail2}
\frac{1}{\sqrt{8m \theta(1-\theta)} }e^{- m I_q(\theta)}
\leq 
\P \left ( X \leq \theta m \right ) 
\leq 
e^{- m I_q(\theta)}.
\end{equation}
}

\bibliography{sparse}

\begin{thebibliography}{10}

\bibitem{adk}
Johannes Alt, Rapha{\"e}l Ducatez, and Antti Knowles.
\newblock Extremal eigenvalues of critical {E}rd{\H{o}}s-{R}{\'e}nyi graphs.
\newblock {\em The Annals of Probability}, 49(3):1347 -- 1401, 2021.

\bibitem{arnold67}
Ludwig Arnold.
\newblock On the asymptotic distribution of the eigenvalues of random matrices.
\newblock {\em Journal of Mathematical Analysis and Applications},
  20(2):262--268, 1967.

\bibitem{adg}
G~Ben Arous, Amir Dembo, and Alice Guionnet.
\newblock Aging of spherical spin glasses.
\newblock {\em Probability theory and related fields}, 120(1):1--67, 2001.

\bibitem{freeentropy}
Gerard~Ben Arous and Alice Guionnet.
\newblock Large deviations for {W}igner's law and {V}oiculescu's
  non-commutative entropy.
\newblock {\em Probability Theory and Related Fields}, 108(4):517--542, 1997.

\bibitem{Ash}
Robert~B. Ash.
\newblock {\em Information Theory}.
\newblock Dover Publications, 1990.

\bibitem{augeri2}
Fanny Augeri.
\newblock Large deviations principle for the largest eigenvalue of {W}igner
  matrices without {G}aussian tails.
\newblock {\em Electron. J. Probab.}, 21:49 pp., 2016.

\bibitem{augeri1}
Fanny Augeri.
\newblock Nonlinear large deviation bounds with applications to wigner matrices
  and sparse erd{\H{o}}s--r{\'e}nyi graphs.
\newblock {\em The Annals of Probability}, 48(5):2404--2448, 2020.

\bibitem{agh}
Fanny Augeri, Alice Guionnet, and Jonathan Husson.
\newblock Large deviations for the largest eigenvalue of sub-gaussian matrices.
\newblock {\em Communications in Mathematical Physics}, 383(2):997--1050, 2021.

\bibitem{bai_yin88}
Z.~D. Bai and Y.~Q. Yin.
\newblock Necessary and sufficient conditions for almost sure convergence of
  the largest eigenvalue of a wigner matrix.
\newblock {\em The Annals of Probability}, 16(4):1729 -- 1741, 1988.

\bibitem{bbk_dense}
Florent Benaych-Georges, Charles Bordenave, and Antti Knowles.
\newblock {Spectral radii of sparse random matrices}.
\newblock {\em Annales de l'Institut Henri Poincaré, Probabilités et
  Statistiques}, 56(3):2141 -- 2161, 2020.

\bibitem{ganguly1}
Bhaswar~B Bhattacharya, Sohom Bhattacharya, and Shirshendu Ganguly.
\newblock Spectral edge in sparse random graphs: Upper and lower tail large
  deviations.
\newblock {\em The Annals of Probability}, 49(4):1847--1885, 2021.

\bibitem{uppertail_eigenvalue}
Bhaswar~B Bhattacharya and Shirshendu Ganguly.
\newblock Upper tails for edge eigenvalues of random graphs.
\newblock {\em SIAM Journal on Discrete Mathematics}, 34(2):1069--1083, 2020.

\bibitem{bc}
Charles Bordenave and Pietro Caputo.
\newblock A large deviation principle for {W}igner matrices without {G}aussian
  tails.
\newblock {\em Annals of Probability}, 42(6):2454--2496, 2014.

\bibitem{CD16}
Sourav Chatterjee and Amir Dembo.
\newblock Nonlinear large deviations.
\newblock {\em Adv. Math.}, 299:396--450, 2016.

\bibitem{CV11}
Sourav Chatterjee and S.~R.~S. Varadhan.
\newblock The large deviation principle for the {E}rd{\H o}s-{R}{\'e}nyi random
  graph.
\newblock {\em European J. Combin.}, 32(7):1000--1017, 2011.

\bibitem{CV12}
Sourav Chatterjee and S.~R.~S. Varadhan.
\newblock Large deviations for random matrices.
\newblock {\em Comm. Stoch. Analysis}, 6(1):1--13, 2012.

\bibitem{CD18}
Nicholas Cook and Amir Dembo.
\newblock Large deviations of subgraph counts for sparse erd{\H{o}}s--r{\'e}nyi
  graphs.
\newblock {\em Advances in Mathematics}, 373:107289, 2020.

\bibitem{furedi81}
Zolt{\'a}n F{\"u}redi and John Komlos.
\newblock The eigenvalues of random symmetric matrices.
\newblock {\em Combinatorica}, 1:233--241, 1981.

\bibitem{gn}
Shirshendu Ganguly and Kyeongsik Nam.
\newblock Large deviations for the largest eigenvalue of {G}aussian networks
  with constant average degree.
\newblock {\em arXiv preprint arXiv:2102.08364}, 2021.

\bibitem{guionnet1}
Alice Guionnet and Jonathan Husson.
\newblock Large deviations for the largest eigenvalue of rademacher matrices.
\newblock {\em The Annals of Probability}, 48(3):1436--1465, 2020.

\bibitem{juhasz81}
Ferenc Juhász.
\newblock On the spectrum of a random graph.
\newblock In János Bolyai~Mathematical Society, editor, {\em Algebraic methods
  in graph theory (Coll. Math. Soc. J. Bolyai)}, volume~25, pages 313--316.
  North-Holland Publishing Co., Amsterdam; Budapest, 1981.

\bibitem{khor01}
A.~Khorunzhy.
\newblock Sparse random matrices: Spectral edge and statistics of rooted trees.
\newblock {\em Advances in Applied Probability}, 33(1):124--140, 2001.

\bibitem{KS}
Michael Krivelevich and Benny Sudakov.
\newblock The largest eigenvalue of sparse random graphs.
\newblock {\em Combinatorics, Probability and Computing}, 12(1):61--72, 2003.

\bibitem{LZ-dense}
Eyal Lubetzky and Yufei Zhao.
\newblock On replica symmetry of large deviations in random graphs.
\newblock {\em Random Structures Algorithms}, 47(1):109--146, 2015.

\bibitem{lzvariational}
Eyal Lubetzky and Yufei Zhao.
\newblock On the variational problem for upper tails in sparse random graphs.
\newblock {\em Random Structures \& Algorithms}, 50(3):420--436, 2017.

\bibitem{turan}
Theodore~S Motzkin and Ernst~G Straus.
\newblock Maxima for graphs and a new proof of a theorem of {T}ur{\'a}n.
\newblock {\em Canadian Journal of Mathematics}, 17:533--540, 1965.

\bibitem{bk}
David Reimer.
\newblock Proof of the van den {B}erg-{K}esten conjecture.
\newblock {\em Combinatorics, Probability and Computing}, 9(1):27--32, 2000.

\bibitem{lag}
James Stewart.
\newblock {\em Multivariable calculus}.
\newblock Cengage learning, 2015.

\bibitem{tikhomirov20}
Konstantin Tikhomirov and Pierre Youssef.
\newblock Outliers in spectrum of sparse {W}igner matrices.
\newblock {\em Random Structures \& Algorithms}, 58(3):517--605, 2021.

\bibitem{wigner58}
Eugene~P. Wigner.
\newblock On the distribution of the roots of certain symmetric matrices.
\newblock {\em Annals of Mathematics}, 67(2):325--327, 1958.

\end{thebibliography}
\bibliographystyle{plain}

\end{document}